\newcommand{\defi}[1]{{\emph{#1}}}
\renewcommand{\a}{\alpha}
\newcommand{\B}{{\bf B}}
\renewcommand{\b}{\beta}
\newcommand{\ds}{\displaystyle}
\newcommand{\bw}{\bigwedge}
\renewcommand{\ll}{\lambda}
\newcommand{\Gr}{{\bf G}}
\newcommand{\onto}{\twoheadrightarrow}
\newcommand{\oo}{\otimes}
\newcommand{\pd}{\partial}
\renewcommand{\P}{{\bf P}}
\newcommand{\Rproj}{{\bf R}}
\newcommand{\W}{{\mc W}}
\newcommand{\Z}{\mathbb{Z}}
\newcommand{\R}{\mathbb{R}}
\newcommand{\C}{\mathbb{C}}
\renewcommand{\SS}{\mathbb{S}}
\newcommand{\A}{\mathcal{A}}
\newcommand{\BB}{\mathcal{B}}
\newcommand{\NN}{\mathcal{N}}
\newcommand{\RR}{\mathcal{R}}
\newcommand{\XX}{\mathcal{X}}
\newcommand{\GL}{\operatorname{GL}}
\renewcommand{\H}{\operatorname{H}}
\newcommand{\rk}{\operatorname{rank}}
\newcommand{\Spec}{\operatorname{Spec}}
\newcommand{\Sym}{\operatorname{Sym}}
\newcommand{\Pic}{\operatorname{Pic}}
\newcommand{\Grass}{\operatorname{Gr}}
\newcommand{\ev}{\operatorname{ev}}
\newcommand{\codim}{\operatorname{codim}}
\newcommand{\coker}{\operatorname{coker}}
\renewcommand{\det}{\operatorname{det}}
\newcommand{\pr}{\operatorname{pr}}
\newcommand{\rank}{\operatorname{rank}}
\newcommand{\im}{\operatorname{im}}
\newcommand{\supp}{\operatorname{supp}}
\newcommand{\spn}{\operatorname{span}}
\newcommand{\bb}[1]{\mathbb{#1}}
\renewcommand{\rm}[1]{\textrm{#1}}
\newcommand{\mc}[1]{\mathcal{#1}}
\newcommand{\mf}[1]{\mathfrak{#1}}
\newcommand{\ol}[1]{\overline{#1}}
\newcommand{\op}[1]{\operatorname{#1}}
\newcommand{\tl}[1]{\tilde{#1}}
\newcommand{\wtl}[1]{\widetilde{#1}}
\newcommand{\ul}[1]{\underline{#1}}
\def\lra{\longrightarrow}
\newcommand{\surj}{\twoheadrightarrow}
\newcommand{\inj}{\hookrightarrow}
\newcommand{\bwedge}{\mbox{\normalsize $\bigwedge$}}
\newcommand{\bbwedge}{\mbox{\small $\bigwedge$}}
\def\dot{\mathchar"013A}  
\newcommand{\hdot}{{\raise1pt\hbox to0.35em{\!\Huge $\dot$}}} 
\newtheorem{theorem}{Theorem}[section]
\newtheorem{lemma}[theorem]{Lemma}
\newtheorem{conjecture}[theorem]{Conjecture}
\newtheorem{proposition}[theorem]{Proposition}
\newtheorem{corollary}[theorem]{Corollary}
\newtheorem*{theorem*}{Theorem}
\newtheorem*{problem*}{Problem}
\newtheorem*{claim*}{Claim}
\theoremstyle{definition}
\newtheorem{definition}[theorem]{Definition}
\newtheorem*{definition*}{Definition}
\newtheorem{example}[theorem]{Example}
\newtheorem{remark}[theorem]{Remark}
\newtheorem*{remark*}{Remark}
\newtheorem*{notation*}{Notation}
\theoremstyle{remark}
\newtheorem{case}{Case}
\newcounter{thecase}
\numberwithin{equation}{section}
\newcommand{\abs}[1]{\lvert#1\rvert}
\begin{document}

\title{The effective Chen Ranks Conjecture}

\author[M. Aprodu]{Marian Aprodu}
\address{Marian Aprodu: 
Faculty of Mathematics and Computer Science, 
\hfill \newline\texttt{}
 \indent University of Bucharest,  Romania, and
\hfill \newline\texttt{}
  \indent Simion Stoilow Institute of Mathematics\hfill \newline\texttt{}
 \indent P.O. Box 1-764,
RO-014700 Bucharest, Romania
}
\email{{\tt marian.aprodu@imar.ro}}

\author[G. Farkas]{Gavril Farkas}
\address{Gavril Farkas: Institut f\"ur Mathematik, Humboldt-Universit\"at zu Berlin \hfill \newline\texttt{}
\indent Unter den Linden 6,
10099 Berlin, Germany}
\email{{\tt farkas@math.hu-berlin.de}}

\author[C. Raicu]{Claudiu Raicu}
\address{Claudiu Raicu: Department of Mathematics,
University of Notre Dame \hfill \newline\texttt{}
\indent 255 Hurley Notre Dame, IN 46556, USA, and \hfill\newline\texttt{}
\indent Simion Stoilow Institute of Mathematics, \hfill\newline\texttt{}
\indent  P.O. Box 1-764, RO-014700 Bucharest, Romania}
\email{{\tt craicu@nd.edu}}

\author[A. Suciu]{Alexander I. Suciu}
\address{Alexander I. Suciu: Department of Mathematics,
Northeastern University \hfill \newline\texttt{}
\indent Boston, MA, 02115, USA,
and
\hfill \newline\texttt{}
  \indent Simion Stoilow Institute of Mathematics\hfill \newline\texttt{}
 \indent P.O. Box 1-764,
RO-014700 Bucharest, Romania}
\email{{\tt a.suciu@northeastern.edu}}

\begin{abstract}
Koszul modules and their associated resonance schemes are objects appearing in a variety of contexts in algebraic geometry, topology, and combinatorics. We present a proof of an effective version of the Chen ranks conjecture describing the Hilbert function of any Koszul module verifying  natural conditions inspired by geometry. We give applications to hyperplane arrangements, describing in a uniform effective manner the Chen ranks of the fundamental group of the complement of every arrangement whose projective resonance is reduced. Finally, we formulate a sharp generic vanishing conjecture for Koszul modules and present a parallel between this statement and the Prym--Green Conjecture on syzygies of general Prym canonical curves.
\end{abstract}

\maketitle

\setlength{\parskip}{6pt}

\section{Introduction}
\label{sect:main}

Originating in topology \cites{PS-imrn, PS-crelle}
in the guise of the infinitesimal Alexander invariant and taking advantage of the influential idea of formality in rational homotopy theory \cite{Sullivan}, Koszul modules turned out to be important algebraic objects on their own, being instrumental in the resolution of major open questions in algebraic geometry, like Green's Conjecture on the syzygies of a general canonical curve in arbitrary characteristic \cite{AFPRW2}. Further applications of Koszul modules to the study of Torelli groups, K\"ahler groups, Stanley-Reisner rings, or to vector bundles on algebraic varieties were presented in \cites{AFPRW, AFRSS, AFRS, AFRW}.

The original \emph{Chen Ranks Conjecture} \cite{Su-conm} predicts the large degree behavior of fundamental homotopical invariants associated to a hyperplane arrangement in terms of the linear combinatorial data of the arrangement. The conjecture has been one of the guiding problems of the field, see  \cites{CS-tams, DPS-duke, PS, SS-tams02, SS-tams, CSc-adv, Su-decomp, SW-mccool}. The main aim of this paper is to present an optimal effective version of a  Chen Ranks Conjecture for arbitrary Koszul modules.

We explain the basic setup. Let $V$ be an $n$-dimensional complex vector space and denote by 
$S\coloneqq \Sym(V)$ the symmetric algebra on $V$. We fix a subspace 
$K\subseteq \bigwedge^2 V$. The \defi{Koszul module}\/ $W(V,K)$ is 
the graded $S$-module defined as the middle homology of the complex
\begin{equation}
\label{eq:def-WVK}
\begin{tikzcd}[column sep=22pt]
K \oo S \ar[rr, "\left. \delta_2\right|_{K \oo S}"] 
&& V\oo S(1) \ar[r, "\delta_1"] & S(2),
\end{tikzcd}
\end{equation}
where $\delta_2\colon \bigwedge^2 V\otimes S \rightarrow V\otimes S(1)$ is the Koszul differential 
$(u\wedge v)\otimes f \stackrel{\delta_2}\mapsto v\otimes (u\cdot f)-u\otimes (v\cdot f)$, for 
$u, v\in V$ and $f\in S$, while $\delta_1$ is the multiplication map. The degree $q$ part 
$W_q(V,K)$ of the Koszul module is then the  vector space 
\[
W_q(V,K)=\mathrm{homology}\Bigl\{\!\!
\begin{tikzcd}[column sep=22pt]
 K\otimes \Sym^q V\ar[r,"\delta_2"] &
 V\otimes \Sym^{q+1} V\ar[r,"\delta_1"] & \Sym^{q+2} V   
\end{tikzcd}
\!\!\Bigr\}.
\]
From the exactness of the Koszul complex we obtain $W_q\bigl(V, \bigwedge^2 V\bigr)=0$; 
at the other end,  the space $W_q(V,0)\cong \ker\bigl\{V\otimes \Sym^{q+1} V\stackrel{\delta_1}
\longrightarrow \Sym^{q+2} V\bigr\}$ may be identified 
with the space $H^0\bigl(\mathbf{P}, \Omega_{\mathbf{P}}(q+2)\bigr)$ of 
twisted $1$-forms on the projective space $\P\coloneqq \mathbf{P}(V^{\vee})$.
It has been established \cite{PS-crelle} that the support of the Koszul module 
$W(V,K)$ is equal to the \emph{resonance variety} $\mathcal{R}(V,K)$ defined as the locus 
\begin{equation}
\label{eq:def-resonance}
\RR(V,K)\coloneqq\Bigl\{a\in V^\vee : \text{ there exists $b\in
V^\vee$ such that $a\wedge b\in K^\perp\setminus \{0\}$} \Bigr\}\cup \{0\}.
\end{equation}

Thus $W_q(V,K)=0$ for $q\gg 0$ if and only if $\mathcal{R}(V,K)=\{0\}$. This 
condition can be rephrased in algebro-geometric terms as  $\Gr \cap \P K^{\perp} =\emptyset$, 
where $\Gr\coloneqq \Grass_2(V^{\vee})\subseteq \P\bigl(\bigwedge^2 V^{\vee}\bigr)$ is the Grassmannian of $2$-dimensional quotients of $V$ and $K^{\perp}= \bigl(\bigwedge^2 V/K\bigr)^{\vee} \subseteq \bwedge^2 V^{\vee}$ is the orthogonal of $K$. The main result of \cite{AFPRW} is a sharp  \emph{effective} characterization of Koszul modules with trivial resonance. Precisely, one has the following equivalence 
\begin{equation}\label{eq:van_resonance}
\RR(V,K)=\{0\}\Longleftrightarrow W_{q}(V,K)=0, \ \mbox{ for all } q\geq n-3.
\end{equation}

For the Koszul module corresponding to $V=\Sym^{n-1} U$ and $K=\Sym^{2n-4} U\subseteq \bwedge^2 V$, 
where $U=\C^2$, it has been proved in \cite{AFPRW2}*{Theorem 1.7} that the vanishing \eqref{eq:van_resonance} 
is precisely the statement of Green's Conjecture \cites{GL, Vo} on the vanishing of the Koszul cohomology 
groups of syzygies of a general canonical curve of genus $2n-3$. 

\subsection{Chen ranks of Koszul modules}
Koszul modules with vanishing resonance form a restrictive class of modules, which limits 
the applicability range of \eqref{eq:van_resonance}. A typical example of geometric nature is when 
$X$ is a quasi-projective variety (e.g. a hyperplane arrangement), $V=H_1(X,\C)$, and $K^{\perp}$ is the kernel of the cup-product map 
$\bigwedge^2 H^1(X,\C)\rightarrow H^2(X,\C)$. In this case the resonance $\RR(X) \coloneqq \RR(V,K)$ 
rarely vanishes, yet it is of great interest to determine the Hilbert function of the corresponding 
Koszul module, which has important implications for the study of the fundamental group $\pi_1(X)$, 
see \cites{AFPRW, DPS-duke, PS-crelle}, or for understanding the support loci for local systems on $X$, 
see \cites{BW, GL91}. The aim of this paper is to solve this problem in a general algebraic setting
for all Koszul modules whose resonance satisfies natural conditions inspired by geometry 
and topology.

Given a subspace $K\subseteq \bigwedge^2 V$, we say that the resonance 
$\RR(V,K)$ is \emph{linear} if it is a union  
\begin{equation}\label{eq:linear resonance}
\RR(V, K)=\overline{V}_1^{\vee}\cup \cdots \cup \overline{V}_k^{\vee},
\end{equation}
of linear subspaces, where each $\overline{V}_t^{\vee}\subseteq V^{\vee}$ is 
a subspace corresponding to a quotient $V\surj \overline{V}_t$.
We say that $\RR(V,K)$ is \emph{isotropic}, if it is linear and 
$\bigwedge^2 \overline{V}_t^{\vee}\subseteq K^{\perp}$, for all $t$. 
Moreover, we say that the resonance is \emph{strongly isotropic}, if 
it is isotropic and furthermore 
\begin{equation}\label{eq:strong_iso}
\bigl(\overline{V}_t^{\vee}\wedge V^{\vee}\bigr) \cap K^{\perp}=
\bwedge^2 \overline{V}_t^{\vee}, \ \mbox{ for } t=1, \ldots, k.
\end{equation}

We refer to \cite{AFRS} for background on these concepts. When $X$ is 
a smooth quasi-projective variety, then $\RR(X)$ is always linear and 
if the mixed Hodge structure on $H^1(X, \C)$ is pure, then $\RR(X)$ is 
isotropic, cf.~\cite{DPS-duke}*{Theorem C}.

Behind the definition of strong isotropicity lies the natural scheme 
structure of a resonance variety. If $I(V,K)$ is the annihilator of 
the Koszul module $W(V,K)$, the projective scheme defined by this 
ideal is the \emph{projective resonance} 
\[
\Rproj(V,K) \coloneqq \op{Proj}\bigl(S/I(V,K)\bigr).
\]
It follows from \cite{AFRS}*{Theorem 1.1} that for an isotropic Koszul module the resonance $\RR(V,K)$ 
is strongly isotropic if and only if $\Rproj(V,K)$ is reduced. The projective resonance $\Rproj(V,K)$ 
is the image via a natural incidence correspondence of the linear section 
\begin{equation}\label{eq:linearsection}
\B(V,K)\coloneqq \Gr\cap \mathbf{P}K^\perp,
\end{equation}
of the Grassmannian $\Gr$. We identify $\B(V,K)$  with the base locus of the linear 
system $|K|$ on $\Gr$, where $K\subseteq \bwedge^2 V=H^0\bigl(\Gr, \mathcal{O}_{\Gr}(1)\bigr)$ 
and refer to Section \ref{sec:defs} for a detailed discussion.

We now state the main result of this paper.

\begin{theorem}
\label{thm:main}
Let $V$ be an $n$-dimensional complex vector space and let $K\subseteq \bigwedge^2 V$ 
be a subspace such that the resonance $\RR(V,K)=\overline{V}^{\vee}_1\cup \cdots \cup \overline{V}^{\vee}_k$ is strongly isotropic. Then 
\[
 \dim W_q(V,K) = \sum_{t=1}^k \dim\, W_q(\overline{V}_t,0),\mbox{ for  all } \  q\geq n-3.
 \]
\end{theorem}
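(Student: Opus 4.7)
The plan is to construct a natural comparison map and analyze it via the sheaf-theoretic realization of Koszul modules on $\mathbf{P}:=\mathbf{P}(V^\vee)$, using the strong isotropy hypothesis together with the effective vanishing \eqref{eq:van_resonance}.

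First I would set up the comparison map. Since isotropy $\bigwedge^2\overline{V}_t^\vee\subseteq K^\perp$ is equivalent to $K\subseteq V_t\wedge V$, where $V_t:=\ker(V\twoheadrightarrow\overline{V}_t)$, each quotient $V\twoheadrightarrow\overline{V}_t$ induces a map from the complex \eqref{eq:def-WVK} to the analogous complex for $(\overline{V}_t,0)$, giving on middle homology a surjection $W_q(V,K)\twoheadrightarrow W_q(\overline{V}_t,0)$. Assembling these produces a comparison map
\[
\Phi_q:W_q(V,K)\longrightarrow\bigoplus_{t=1}^k W_q(\overline{V}_t,0),
\]
and the goal is to show $\Phi_q$ is an isomorphism for every $q\geq n-3$.

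Next, the twisted Euler sequence on $\mathbf{P}$ identifies $W_q(V,K)=H^0(\mathbf{P},\mathcal{F}_K(q))$ for $q\geq 0$, where $\mathcal{F}_K:=\mathrm{coker}\bigl(K\otimes\mathcal{O}_\mathbf{P}\to\Omega_\mathbf{P}(2)\bigr)$ and the map is pointwise the contraction $\omega\mapsto\iota_\alpha\omega$. Using the block decomposition $\bigwedge^2 V=\bigwedge^2 V_t\oplus V_t\otimes\overline{V}_t\oplus\bigwedge^2\overline{V}_t$, the strong isotropy condition \eqref{eq:strong_iso} is equivalent to the surjectivity of the middle-block projection $K\twoheadrightarrow V_t\otimes\overline{V}_t$; consequently $\iota_\alpha(K)=V_t$ for every nonzero $\alpha\in\overline{V}_t^\vee$, and the conormal sequence of $\mathbf{P}(\overline{V}_t^\vee)\hookrightarrow\mathbf{P}$ (twisted by $2$) identifies $\iota_t^*\mathcal{F}_K\cong\Omega_{\mathbf{P}(\overline{V}_t^\vee)}(2)$. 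This produces a sheaf morphism
\[
\phi:\mathcal{F}_K\longrightarrow\bigoplus_{t=1}^k(\iota_t)_*\Omega_{\mathbf{P}(\overline{V}_t^\vee)}(2)
\]
that induces $\Phi_q$ on global sections after twisting by $q$.

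By \cite{AFRS}*{Theorem 1.1}, strong isotropicity is equivalent to reducedness of $\Rproj(V,K)$, so the annihilator of $\mathcal{F}_K$ is the radical ideal sheaf of $\bigcup_t\mathbf{P}(\overline{V}_t^\vee)$. Together with the absence of embedded associated components (extracted from strong isotropy), this forces $\phi$ to be injective, hence $\Phi_q$ is injective for every $q\geq 0$. The delicate remaining step is to show the vanishing $H^0(\mathbf{P},\mathrm{coker}\,\phi(q))=0$, and the auxiliary $H^i(\mathcal{F}_K(q))=0$ for $i\geq 1$, throughout the range $q\geq n-3$. My approach is a Mayer--Vietoris-type analysis along the linear union $\bigcup_t\mathbf{P}(\overline{V}_t^\vee)$: the pairwise and higher intersections are linear subspaces $\mathbf{P}\bigl(\bigcap_{t\in T}\overline{V}_t^\vee\bigr)$, and the corresponding intersection-supported corrections can be expressed as global sections of auxiliary Koszul modules with trivial resonance attached to smaller ambient spaces; the effective vanishing \eqref{eq:van_resonance} then yields the required sharp vanishing at the threshold $n-3$.

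The hard part is this last step. One must realize the intersection-supported contributions as subquotients of Koszul modules with trivial resonance, and verify that the effective bound $n-3$ is attained uniformly across all the Mayer--Vietoris corrections --- even though these corrections are naturally defined on subspaces of strictly smaller dimension. The full strength of strong isotropicity is essential throughout, both for the sheaf identification $\iota_t^*\mathcal{F}_K\cong\Omega_{\mathbf{P}(\overline{V}_t^\vee)}(2)$ and for ruling out embedded or nilpotent contributions that would destroy the Mayer--Vietoris decomposition.
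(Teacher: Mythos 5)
The comparison map $\Phi_q$ you construct is the right object, and your sheaf-theoretic observations are largely correct: separability does give $\iota_t^*\mathcal{F}_K\cong\Omega_{\mathbf{P}(\overline{V}_t^\vee)}(2)$, and (a fact you seem not to be using) strong isotropicity forces the components $\mathbf{P}(\overline{V}_t^\vee)$ to be \emph{pairwise disjoint} by \cite{AFRS}*{Corollary 4.6}, so your Mayer--Vietoris step is vacuous --- there are no intersections to correct for, and with reducedness one checks directly that $\phi$ is an isomorphism of sheaves. The genuine gap lies at the very start: the identification $W_q(V,K)=H^0(\mathbf{P},\mathcal{F}_K(q))$, which you assert for all $q\geq 0$, is false. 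Taking $H^0$ is only left exact, so one gets a natural map $W_q(V,K)\to H^0(\mathbf{P},\mathcal{F}_K(q))$ that is neither injective nor surjective in general; concretely, writing $\mathcal{I}=\mathrm{im}\bigl(K\otimes\mathcal{O}_\mathbf{P}\to\Omega_\mathbf{P}(2)\bigr)$, this map is an isomorphism precisely when $K\otimes\Sym^q V\to H^0(\mathcal{I}(q))$ is surjective and $H^1(\mathcal{I}(q))=0$. These conditions fail for small $q$: already in the case $\mathcal{R}(V,K)=\{0\}$ (so $k=0$, $\mathcal{F}_K=0$, and $\phi$ is the zero map between zero sheaves), one has $W_q(V,K)\neq 0$ for $q<n-3$ whenever $\dim K\leq 2n-3$. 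The graded module $W(V,K)$ agrees with $\bigoplus_q H^0(\mathcal{F}_K(q))$ only in degrees at or above its Castelnuovo--Mumford regularity, and bounding that regularity by $n-3$ is essentially the theorem itself --- it is Corollary \ref{cor:regularity}, stated as a \emph{consequence} of Theorem \ref{thm:main}. So your argument presupposes the bound it is meant to establish, and all the downstream steps (injectivity of $\Phi_q$, the edge-case vanishings) inherit the same circularity.

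The reason this is hard to fix on $\mathbf{P}$ is that the evaluation $K\otimes\mathcal{O}_\mathbf{P}\to\Omega_\mathbf{P}(2)$ targets a bundle of rank $n-1$, so $\mathcal{I}$ has no Koszul-type resolution and the needed cohomology vanishings at the threshold $q=n-3$ are not accessible by elementary means. The paper instead works on $\mathbf{G}=\Grass_2(V^\vee)$, where $K\subseteq\bwedge^2V=H^0(\mathbf{G},\mathcal{O}_\mathbf{G}(1))$ defines a bona fide linear system with base locus $\mathbf{B}(V,K)$; one then blows up $\mathbf{G}$ along $\mathbf{B}(V,K)$ to obtain a genuine exact Koszul complex, and extracts the comparison with $\bigoplus_t W_q(\overline{V}_t,0)$, together with the vanishings needed to make it an isomorphism for $q\geq n-3$, from hypercohomology spectral sequences and Bott's theorem on the Grassmannian. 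That cohomological machinery is precisely what replaces, and cannot be replaced by, the module-to-sheaf identification you assumed.
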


Note that when $\RR(V,K)=\{0\}$,  Theorem \ref{thm:main} specializes to the main 
result of \cites{AFPRW2, AFPRW}, that is, to the statement $W_q(V,K)=0$ for $q\geq n-3$, 
referred to in \eqref{eq:van_resonance}. It was shown in \cite{AFPRW} that the vanishing 
\eqref{eq:van_resonance} is sharp, therefore the bound $n-3$ in Theorem \ref{thm:main} is 
sharp as well. The proof of Theorem \ref{thm:main} is based first on the fact that if $\RR(V,K)$ 
is strongly isotropic, then the base locus $\B(V,K)$ is a disjoint union of sub-Grassmannians 
\[
\B(V,K)=\Gr_1\sqcup \cdots \sqcup \Gr_k,
\]
where $\Gr_t\coloneqq  \Grass_2(\overline{V}_t^{\vee}) \subseteq \Gr$. The condition that $\B(V,K)$ 
be scheme-theoretically a disjoint union of sub-Grassmannians is a geometric manifestation 
of strong isotropicity and suffices for the proof. Under this hypothesis, 
we construct a morphism of graded $S$-modules
\begin{equation}
\label{eq:decomposition}
W(V,K)\longrightarrow \bigoplus_{t=1}^k W(V_t,0)
\end{equation}
which is an isomorphism for degrees $q\ge n-3$. This yields the claimed formula for the 
Hilbert function of the Koszul module $W(V,K)$. It also proves that the algebraic condition of strong isotropicity and the geometric condition that the base locus be a disjoint union of sub-Grassmannians are equivalent, specifically:

\begin{corollary}
    \label{cor:strongly-isotropic-equivalence}
    Let $K\subseteq \bigwedge^2 V$ be as above such that $\RR(V,K)=\overline{V}^{\vee}_1\cup \cdots \cup \overline{V}^{\vee}_k$ is isotropic. Then $\RR(V,K)$ is strongly isotropic if and only if we have the scheme-theoretic equality\[
\B(V,K)=\Gr_1\sqcup \cdots \sqcup \Gr_k.
\]
\end{corollary}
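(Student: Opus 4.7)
The plan is to establish the two directions separately. The forward implication reduces to a tangent-space calculation combined with Nakayama's lemma, while the converse constitutes the main obstacle and requires upgrading infinitesimal information to a global algebraic condition.

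For $(\Rightarrow)$, assume $\RR(V,K)$ is strongly isotropic. A point of $\B(V,K)$ corresponds to a decomposable $\omega = a \wedge b \in K^\perp \setminus\{0\}$, which places $a,b \in \RR(V,K) = \bigcup_t \overline{V}_t^\vee$. Strong isotropicity forces distinct components $\overline{V}_s^\vee, \overline{V}_t^\vee$ to intersect trivially: a common nonzero $a$ together with some $u \in \overline{V}_s^\vee \setminus \overline{V}_t^\vee$ (possible by irredundancy) would give $a \wedge u \in \bwedge^2 \overline{V}_s^\vee \cap (\overline{V}_t^\vee \wedge V^\vee) \subseteq \bwedge^2 \overline{V}_t^\vee$, forcing $u \in \overline{V}_t^\vee$, a contradiction. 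Hence $\spn(a,b) \subseteq \overline{V}_t^\vee$ for a unique $t$, yielding $\B(V,K) = \bigsqcup_t \Gr_t$ set-theoretically. At $[\omega] \in \Gr_t$ with $\omega \in \bwedge^2 L$, the Zariski tangent space of $\B(V,K) = \Gr \cap \mathbf{P}K^\perp$ equals $((L \wedge V^\vee) \cap K^\perp)/\C\omega$; strong isotropicity bounds this by $(\overline{V}_t^\vee \wedge V^\vee) \cap K^\perp = \bwedge^2 \overline{V}_t^\vee$, and intersecting with $L \wedge V^\vee$ gives $L \wedge \overline{V}_t^\vee = T_{[\omega]} \Gr_t$. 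Since $\Gr_t$ is regularly embedded in $\Gr$ and its local ideal is cut out by a regular sequence of linear forms, the inclusion $I_{\B(V,K)} \subseteq I_{\Gr_t}$ together with matching linear parts promotes to a local ideal equality by Nakayama's lemma, and hence to scheme-theoretic equality globally.

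For $(\Leftarrow)$, scheme-theoretic equality yields the infinitesimal identity $(L \wedge V^\vee) \cap K^\perp = L \wedge \overline{V}_t^\vee$ for every 2-plane $L \subseteq \overline{V}_t^\vee$, and the task is to deduce the global identity $(\overline{V}_t^\vee \wedge V^\vee) \cap K^\perp = \bwedge^2 \overline{V}_t^\vee$. Fix a complement $U$ of $\overline{V}_t^\vee$ in $V^\vee$ and write $\xi \in (\overline{V}_t^\vee \wedge V^\vee) \cap K^\perp$ as $\xi = \xi_1 + \xi_2$ with $\xi_1 \in \bwedge^2 \overline{V}_t^\vee$ and $\xi_2 \in \overline{V}_t^\vee \otimes U$; the goal is $\xi_2 = 0$. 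Viewing $\xi_2$ as a linear map $U^\vee \to \overline{V}_t^\vee$ with image $L_\xi$, the case $\dim L_\xi \leq 2$ is immediate: choose any 2-plane $L \supseteq L_\xi$, note that $\xi \in L \wedge V^\vee \cap K^\perp = L \wedge \overline{V}_t^\vee \subseteq \bwedge^2 \overline{V}_t^\vee$ by the tangent condition, and conclude $\xi_2 = 0$ from the direct sum decomposition $\overline{V}_t^\vee \wedge V^\vee = \bwedge^2 \overline{V}_t^\vee \oplus (\overline{V}_t^\vee \otimes U)$. The case $\dim L_\xi \geq 3$ is the main obstacle, since no single 2-plane contains $L_\xi$ and pointwise tangent information does not visibly rule out such $\xi$. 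Here one exploits the global scheme structure by coupling with \cite{AFRS}*{Theorem 1.1}, which identifies strong isotropicity with reducedness of $\Rproj(V,K)$: the decomposition \eqref{eq:decomposition} of $W(V,K)$ constructed in the proof of Theorem \ref{thm:main} constrains the annihilator $I(V,K)$, hence the scheme structure on $\Rproj(V,K)$, allowing one to preclude high-rank obstructions and close the equivalence.
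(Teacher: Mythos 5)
Your forward direction is sound and is essentially the same strategy as the paper's Theorem~\ref{thm:reduced}: use separability to establish the set-theoretic equality, verify that the $\Gr_t$ are pairwise disjoint, match tangent spaces at each $[L]\in\Gr_t$, and promote the first-order data to a local ideal equality via Nakayama's lemma using the fact that $\Gr_t$ is regularly embedded. The paper carries out the conormal surjectivity in explicit Pl\"ucker coordinates with the basis of Lemma~\ref{lem:2nd-basis}, whereas you phrase it via Zariski tangent spaces, but these are the same argument; your self-contained disjointness computation is a pleasant simplification of the appeal to \cite{AFRS}*{Corollary 4.6}.

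The converse, however, has a genuine gap. You correctly isolate the pointwise condition $(L\wedge V^\vee)\cap K^\perp=L\wedge\overline{V}_t^\vee$ coming from matching tangent spaces, and your argument handles $\xi_2\in(\overline{V}_t^\vee\otimes U)\cap K^\perp$ of rank at most $2$ (in fact this recovers exactly the paper's Proposition~\ref{prop:finite-reduced}, where $\dim\overline{V}_t^\vee=2$ forces rank $\leq 2$). But for rank $\geq 3$ the final sentence is not a proof; it is not explained how the module decomposition ``constrains the annihilator'' so as to ``preclude high-rank obstructions,'' and the framing suggests you would return to the pointwise argument to rule out such $\xi$, which is not how the paper closes the loop. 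The paper's actual converse does not attempt to bound the rank of $\xi$ at all: the geometric hypothesis $\B(V,K)=\Gr_1\sqcup\cdots\sqcup\Gr_k$ is exactly what the blow-up machinery of Sections~\ref{sect:normal-bundle}--\ref{sec:proof-main-2} needs to produce the isomorphism $W(V,K)_q\cong\bigoplus_t W(\overline{V}_t,0)_q$ for $q\geq n-3$. This identifies the Koszul sheaf $\W(V,K)$ with a direct sum of pushforwards of the torsion-free sheaves $\W(\overline{V}_t,0)$ from the $\P(\overline{V}_t^\vee)$, so the scheme-theoretic support $\Rproj(V,K)$ is the reduced union $\bigcup_t\P(\overline{V}_t^\vee)$; then \cite{AFRS}*{Corollary 5.2} (isotropic, projectively disjoint, reduced $\Leftrightarrow$ separable) gives strong isotropicity directly. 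Your proposal needs to actually carry out this reduction---or find a genuinely different global argument for rank $\geq 3$---to be complete.
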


For Corollary \ref{cor:strongly-isotropic-equivalence}, we use \cite{AFRS}*{Corollary 5.2} stating that an isotropic projective resonance with disjoint irreducible components is separable if and only if it is reduced. If $\B(V,K)$ is finite, the converse implication can be proved directly, see Proposition \ref{prop:finite-reduced} .

\begin{corollary}\label{cor:regularity}
Let $K\subseteq \bigwedge^2 V$ be as above, such that $\RR(V,K)$ is strongly isotropic. 
Then the Castelnuovo--Mumford regularity of $W(V,K)$ is at most $n-3$.    
\end{corollary}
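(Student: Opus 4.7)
The plan is to leverage the morphism of graded $S$-modules $\phi\colon W(V,K)\to N\coloneqq\bigoplus_{t=1}^k W(\overline{V}_t,0)$ constructed in the proof of Theorem \ref{thm:main}, which is an isomorphism in all degrees $q\geq n-3$. Since both source and target are finitely generated graded $S$-modules, both $\ker\phi$ and $\op{coker}\phi$ are finitely generated and vanish above degree $n-4$, hence are finite-dimensional $\k$-vector spaces of Castelnuovo--Mumford regularity at most $n-4$.

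Next, I would bound the regularity of $N$. For each quotient $V\twoheadrightarrow\overline{V}_t$, setting $S_t\coloneqq\Sym(\overline{V}_t)$, the Koszul complex on $\overline{V}_t$ yields a pure linear $S_t$-resolution
\[
\cdots\to\bwedge^4\overline{V}_t\otimes S_t(-2)\to\bwedge^3\overline{V}_t\otimes S_t(-1)\to\bwedge^2\overline{V}_t\otimes S_t\to W(\overline{V}_t,0)\to 0,
\]
so $\op{reg}_{S_t}W(\overline{V}_t,0)=0$. Since $\ker(V\twoheadrightarrow\overline{V}_t)$ cuts out $S_t$ in $S$ as a linear complete intersection, tensoring the above with the Koszul resolution of $S_t$ over $S$ produces a double complex whose total complex is a pure $S$-resolution of $W(\overline{V}_t,0)$; hence $\op{reg}_S W(\overline{V}_t,0)=0$ and consequently $\op{reg}_S N=0$.

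The conclusion is then a formal consequence of the standard Castelnuovo--Mumford regularity inequalities for short exact sequences. Writing $I\coloneqq\op{image}\phi$, the sequence $0\to I\to N\to\op{coker}\phi\to 0$ gives $\op{reg}(I)\leq\max\{\op{reg}(N),\,\op{reg}(\op{coker}\phi)+1\}\leq n-3$, and then the sequence $0\to\ker\phi\to W(V,K)\to I\to 0$ yields
\[
\op{reg}\,W(V,K)\leq\max\{\op{reg}(\ker\phi),\,\op{reg}(I)\}\leq n-3,
\]
as required. The only step requiring real care is the passage from $S_t$-regularity to $S$-regularity of each $W(\overline{V}_t,0)$; the remainder is purely formal given Theorem \ref{thm:main} and the existence of a linear Koszul-type resolution.
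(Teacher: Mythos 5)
Your proof is correct and follows essentially the same route as the paper: both exploit the morphism \eqref{eq:decomposition} (an isomorphism in degrees $\geq n-3$), the $0$-regularity of $\bigoplus_t W(\overline{V}_t,0)$ as a graded $S$-module, and the standard regularity inequalities for short exact sequences. The paper phrases the last step via the truncation $W(V,K)_{\geq n-3}$ rather than via $\ker\phi$, $\op{im}\phi$, $\coker\phi$, and leaves the change of rings from $S_t$-regularity to $S$-regularity implicit; your explicit Koszul/double-complex argument for that step, and your image decomposition, are minor but careful variants of the same idea.
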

Corollary \ref{cor:regularity} is a consequence of the proof of Theorem \ref{thm:main}, in 
particular of the properties of the morphism \eqref{eq:decomposition}. Indeed, the graded 
modules $W(V_t,0)$ are all $0$-regular, which implies that their truncations $W(V_t,0)_{\ge (n-3)}$ 
are $(n-3)$-regular, hence from \eqref{eq:decomposition} also $W(V,K)_{\ge (n-3)}$ is $(n-3)$-regular, 
therefore the regularity of $W(V,K)$ is at most $n-3$.
It would be very interesting to obtain an upper bound for the regularity of arbitrary 
Koszul modules in the absence of any assumption on their support. For such bounds in 
the case of Koszul modules associated to simplicial complexes, we refer to \cite{AFRSS}.

The proof of Theorem \ref{thm:main} is technically elaborate and we will provide an outline 
of it and the end of the Introduction. When the linear section \eqref{eq:linearsection} is \emph{finite}, we compute the 
Hilbert function of the Koszul module in the absence of any hypothesis on $\mathcal{R}(V,K)$.

\begin{theorem}
\label{thm:finite}
Let $(V,K)$ be a pair such that $\B(V,K)$  is finite of length $\ell$. Then 
    \begin{equation}
    \label{eqn:Chen_Finite_Intersection}
        \dim\, W_q(V,K)=\ell\cdot (q+1), \ \mbox{ for  }\ q\ge n-3.
    \end{equation}
\end{theorem}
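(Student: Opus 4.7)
The plan is to identify $W_q(V,K)$ with the global sections of a natural rank-$(q+1)$ sheaf on the finite scheme $\B:=\B(V,K)$ for $q\geq n-3$, and then read off the dimension from the length $\ell$.

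I would begin by recalling the sheaf-theoretic description of the Koszul module on $\Gr$. Let $\mathcal{E}$ denote the universal rank-$2$ quotient of the trivial bundle $V\otimes\mathcal{O}_{\Gr}$, so that $\mathcal{O}_{\Gr}(1)=\det\mathcal{E}$. By Borel--Weil one has the natural identifications $H^0(\Gr,\Sym^q\mathcal{E})=\Sym^q V$ and $H^0(\Gr,\Sym^q\mathcal{E}(1))=\SS_{(q+1,1)}V\cong W_q(V,0)$, and the Koszul differential $\delta_2|_{K\otimes \Sym^qV}$ coincides with the multiplication map on global sections induced by the inclusion $K\subseteq \bigwedge^2 V=H^0(\Gr,\mathcal{O}_{\Gr}(1))$. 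Tensoring the ideal sequence $0\to\mathcal{I}_{\B/\Gr}\to\mathcal{O}_{\Gr}\to\mathcal{O}_{\B}\to 0$ by $\Sym^q\mathcal{E}(1)$ and taking cohomology produces a canonical map
\[
\phi_q\colon W_q(V,K)\longrightarrow H^0\bigl(\B,\Sym^q\mathcal{E}(1)|_\B\bigr).
\]

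The main step is to show that $\phi_q$ is an isomorphism for $q\geq n-3$. For this I would resolve $\mathcal{O}_\B$ on $\Gr$ by the Koszul-type complex attached to the morphism $K\otimes\mathcal{O}_{\Gr}(-1)\to\mathcal{O}_{\Gr}$, whose $p$-th term is $\bigwedge^p K\otimes\mathcal{O}_{\Gr}(-p)$. After twisting by $\Sym^q\mathcal{E}(1)$, the associated hypercohomology spectral sequence reduces the bijectivity of $\phi_q$ to vanishings of the form $H^i(\Gr,\Sym^q\mathcal{E}(1-p))=0$ for suitable pairs $(i,p)$, which follow from Bott's theorem on $\Gr$ precisely under the hypothesis $q\geq n-3$. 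The main technical obstacle is handling the non-generic regime $\dim K>2n-4$, where the Koszul complex is no longer a resolution of $\mathcal{O}_\B$ and one must replace it by an Eagon--Northcott-type approximation complex adapted to the finite-length scheme $\B$; this is the same kind of careful cohomological bookkeeping that underlies the sharpness of the bound in~\eqref{eq:van_resonance}.

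Once $\phi_q$ is known to be an isomorphism, the conclusion is immediate. Since $\mathcal{E}$ has rank $2$, the bundle $\Sym^q\mathcal{E}(1)$ has rank $q+1$ on $\Gr$, so its restriction to the finite scheme $\B$ of length $\ell$ is a locally free $\mathcal{O}_\B$-module of rank $q+1$, yielding $\dim H^0(\B,\Sym^q\mathcal{E}(1)|_\B)=\ell(q+1)$. As a consistency check, in the reduced case each point $p_t=[\ol{V}_t^{\vee}]\in\B$ contributes $\SS_{(q+1,1)}\ol{V}_t=W_q(\ol{V}_t,0)$, a space of dimension $q+1$, thereby recovering the specialization of Theorem~\ref{thm:main} afforded by Proposition~\ref{prop:finite-reduced}.
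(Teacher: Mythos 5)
Your overall strategy is the same as the paper's: twist the Koszul complex attached to $\ev_K\colon K\otimes\mathcal{O}_\Gr\to\mathcal{O}_\Gr(1)$ by $\Sym^q\mathcal{Q}$, run the two hypercohomology spectral sequences, and conclude that for $q\geq n-3$ the space $W_q(V,K)$ is isomorphic to $H^0\bigl(\B,\mathcal{O}_\B(1)\otimes\Sym^q\mathcal{Q}\bigr)$, which has dimension $\ell(q+1)$ because $\Sym^q\mathcal{Q}(1)$ has rank $q+1$. However, the paragraph about the regime $\dim K>2n-4$ contains a genuine misstep. You are right that for $\dim K>2n-4$ the Koszul complex fails to be a resolution of $\mathcal{O}_\B(1)$, but the proposed replacement by an Eagon--Northcott-type approximation complex is neither necessary nor a workable path---the ideal of $\B$ is not determinantal in the relevant sense, and this detour would sacrifice the clean Koszul computation.

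The observation that closes the gap is that the argument never needs the twisted Koszul complex to be exact in negative degrees. All it uses is that every cohomology sheaf $\mathcal{H}^{-i}\bigl(\mathcal{K}^\bullet\otimes\Sym^q\mathcal{Q}\bigr)$ is supported on $\B$, which is automatic since $\ev_K$ is surjective away from $\B$. Since $\B$ is finite by hypothesis, each of these sheaves has finite length, so $H^j(\Gr,\mathcal{H}^{-i})=0$ for every $j\geq 1$ and every $i$. The second hypercohomology spectral sequence therefore degenerates at $E_2$ regardless of exactness, yielding $\bb{H}^0 = H^0\bigl(\B,\mathcal{O}_\B(1)\otimes\Sym^q\mathcal{Q}\bigr)$ with no control needed over the sheaves $\mathcal{H}^{-i}$ for $i\geq 1$. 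Once you drop the Eagon--Northcott detour in favor of this observation, the rest of your outline---in particular the Bott-vanishing analysis of the first spectral sequence identifying $\bb{H}^0$ with $W_q(V,K)$ when $q\geq n-3$---goes through as intended.
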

For transverse intersections,  Theorem \ref{thm:finite} can be made  more precise, see Corollary \ref{thm:transversal}. Applications of Theorem \ref{thm:finite} to the fundamental groups of certain (non-K\"ahler) Calabi-Yau 3-folds are discussed in Example \ref{ex:non-Kähler}.
 
\subsection{Chen ranks of groups}
\label{subsec:chen-ranks}
In order to describe  the lower central series of groups of geometric origin, K.T. Chen \cite{Ch-ann} 
introduced certain invariants, which eventually led to the definition of Koszul modules. For a finitely 
generated group $G$, we denote by 
\[
G=\Gamma_1(G)\supseteq \cdots \supseteq \Gamma_q(G)\supseteq \Gamma_{q+1}(G)\supseteq \cdots
\]
its lower central series. Then \emph{Chen ranks} $\theta_q(G)$ of $G$ are the lower central series 
ranks of the \emph{metabelian quotient}\/ $G/G''$ of $G$, where $G''\coloneqq  \bigl[[G,G], [G,G]\bigr]$. 
Precisely, one defines 
\begin{equation}\label{def:Chenrank}
\theta_q(G)\coloneqq  \rank \Gamma_q\bigl(G/G''\bigr)/\Gamma_{q+1}\bigl(G/G''\bigr).
\end{equation}
The Koszul module of the group $G$ is then obtained by setting
\[
W(G) \coloneqq  W(V,K),
\]
where $V\coloneqq H_1(G,\C)$ and $K^{\perp}\coloneqq \ker\bigl\{\cup_G\colon \bwedge^2 H^1(G,\C) \rightarrow H^2(G,\C)\bigr\}$. Similarly, the resonance variety of $G$ is defined as $\RR(G)\coloneqq \RR(V,K)$. It is  shown in \cite{PS-imrn}, based on earlier work of Massey  \cite{Massey} that $\theta_{q+2}(G)\leq \dim W_q(G)$, with equality if the group $G$ is $1$-formal in the sense of Sullivan \cite{Sullivan}. Note that $W(G)$ being an invariant constructed out of the cup-product map $\cup_G$, it makes no distinction between $G$ and its metabelian quotient $G/G''$. 

For the free group $F_m$, as originally computed in \cite{Ch-ann} one has  
$$\theta_{q+2}(F_m)=\dim W_q\bigl(H_1(F_m, \C), 0\bigr)=
(q+1)\binom{m+q}{2+q}.$$ Applying Theorem \ref{thm:main}, we obtain the following result:

\begin{theorem}\label{thm:chen_groups}
Let $G$ be a finitely generated $1$-formal group and assume its resonance $\RR(G)$ 
is strongly isotropic. If $h_m$ denotes the number of $m$-dimensional components of $\RR(G)$, then 
\[
\theta_{q}(G)=\sum_{m\geq 2} h_m\cdot \theta_q(F_m), \ \mbox{ for } \ q\geq b_1(G)-1.
\]
\end{theorem}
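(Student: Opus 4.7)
The plan is to assemble Theorem \ref{thm:main} with the standard dictionary between Chen ranks and Koszul modules. By the Papadima--Suciu theorem \cite{PS-imrn}, the $1$-formality of $G$ upgrades the inequality $\theta_{q+2}(G)\leq \dim W_q(G)$ to the equality
\[
\theta_{q+2}(G)=\dim W_q(G),\qquad q\geq 0,
\]
where $W(G)=W(V,K)$ with $V=H_1(G,\C)$ and $K^{\perp}=\ker(\cup_G)$. Writing $n=\dim V=b_1(G)$, the range $q\geq b_1(G)-1$ in the desired conclusion corresponds precisely, via the shift $q\mapsto q-2$, to the range $q-2\geq n-3$ in which Theorem \ref{thm:main} applies.

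First I would apply Theorem \ref{thm:main} directly to the pair $(V,K)$: by hypothesis, $\RR(G)=\overline{V}_1^{\vee}\cup\cdots\cup\overline{V}_k^{\vee}$ is strongly isotropic, so
\[
\theta_q(G)=\dim W_{q-2}(V,K)=\sum_{t=1}^k \dim W_{q-2}(\overline{V}_t,0)\quad\text{for every } q\geq b_1(G)-1.
\]
Next I would identify each summand on the right-hand side with a Chen rank of a free group. The free group $F_m$ is $1$-formal (its classifying space, a wedge of $m$ circles, is formal) and has cohomological dimension one, so $H^2(F_m,\C)=0$ and the cup product on $H^1(F_m,\C)$ is identically zero. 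Consequently $K^{\perp}=\bwedge^2 H^1(F_m,\C)$, i.e.\ $K=0$, and $W(F_m)=W(\C^m,0)$. Applying the Papadima--Suciu equality once more yields $\theta_q(F_m)=\dim W_{q-2}(\C^m,0)$ for all $q\geq 2$.

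To finish, I would simply reorganize the sum by dimension. Since $\dim W_{q-2}(\overline{V}_t,0)$ depends only on $m_t:=\dim\overline{V}_t$, grouping the $k$ components of $\RR(G)$ by their dimensions gives
\[
\sum_{t=1}^k \dim W_{q-2}(\overline{V}_t,0)=\sum_{m\geq 2} h_m\cdot \dim W_{q-2}(\C^m,0)=\sum_{m\geq 2} h_m\cdot\theta_q(F_m),
\]
which combined with the first display produces the claimed formula.

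I do not anticipate any substantial obstacle: once Theorem \ref{thm:main} is granted, the argument is pure bookkeeping. The only minor point worth noting is that the sum begins at $m=2$, which is justified on two grounds. On the one hand, any putative $1$-dimensional component $\overline{V}_t^{\vee}$ satisfies $\bwedge^2\overline{V}_t^{\vee}=0$, so it does not meaningfully enter the decomposition of the isotropic resonance. On the other hand, $W_q(\C,0)=0$ for every $q\geq 0$, so such components would be invisible in the formula in any case.
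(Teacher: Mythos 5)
Your proposal is correct and coincides with the paper's implicit argument: the paper simply notes that Theorem~\ref{thm:chen_groups} follows by ``applying Theorem~\ref{thm:main}'' to $W(G)=W(V,K)$, using the Papadima--Suciu identity $\theta_{q+2}(G)=\dim W_q(G)$ for $1$-formal groups and the computation $\theta_{q+2}(F_m)=\dim W_q(\C^m,0)$, exactly the chain of identifications you spell out. The bookkeeping of the index shift $q\mapsto q-2$, the identification of $W(F_m)$ with $W(\C^m,0)$ via triviality of $\cup_{F_m}$, and the remark that linear resonance components are automatically of dimension at least $2$ are all as the paper intends.
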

In the absence of the assumption that $\RR(G)$ be strongly isotropic, as shown in \cites{CSc-adv, SW-mccool} in the case some relatives of the braid group like the upper McCool groups, one cannot expect a Chen ranks formula like in Theorem \ref{thm:chen_groups}. In this sense, Theorem \ref{thm:chen_groups} is optimal.

Theorem \ref{thm:chen_groups} can be applied to determine in an effective manner the Chen ranks of prominent groups, like the \emph{pure string motion group} $P\Sigma_n$ of those automorphisms of the free group $F_n$ mapping each generator to one of its conjugates. The homology algebra $H^*(P\Sigma_n, \mathbb C)$ has been described in \cite{JMM}, in particular $b_1(P\Sigma_n)=n(n-1)$. Using Cohen's description \cite{Cohen} of the resonance of $P\Sigma_n$ and that in this case the resonance is strongly isotropic \cite{CSc-adv}*{Theorem B}, by applying Theorem \ref{thm:chen_groups}, we obtain the following formula: 
\begin{equation}\label{eq:string_eq}
\theta_q\bigl(P\Sigma_n\bigr)=(q-1)\binom{n}{2}+(q^2-1)\binom{n}{3}, \ \mbox{ for } \ q\geq n(n-1)-1.
\end{equation}

\subsection{Suciu's Conjecture on Chen ranks of hyperplane arrangements.}
\label{subsec:chen-ranks-conj}
A major source of groups for which the assumptions of Theorem \ref{thm:chen_groups} are 
satisfied is provided by the fundamental groups of hyperplane arrangements. Chronologically, 
this was one of the main motivations for which the theory of Koszul modules has been developed. 

For a hyperplane arrangement $\A$ in $\C^m$, let
$M(\A)\coloneqq \C^{m}\setminus \bigcup_{H\in \A} H$ be the complement of the arrangement. 
We denote by $L(\A)$ the associated intersection lattice (matroid). The cohomology $H^*\bigl(M(\A), \C\bigr)$ 
is determined by the intersection lattice and is isomorphic to the \emph{Orlik--Solomon algebra}\/ 
$A(\A)=E(\A)/I(\A)$, where $E(\A)$ is the exterior algebra over the complex vector space spanned by 
the vectors $\{e_H\}_{H\in \A}$ and $I(\A)$ is the Orlik--Solomon ideal defined in terms of 
dependent subsets  of hyperplanes in $\A$, see \cite{OS}. Determining the 
fundamental group $G(\A)\coloneqq \pi_1\bigl(M(\A)\bigr)$ of the arrangement is one of the central questions in the field. Even though $G(\A)$ is not combinatorially determined 
by $L(\A)$, see \cite{Ryb},  several fundamental invariants of $G(\A)$ are of 
matroidal nature.  Papadima and Suciu \cite{PS-imrn} using the formality of $G(\A)$ showed 
that the Chen ranks $\theta_q(G(\A))$ are determined by the intersection lattice. 

The \emph{Chen ranks Conjecture} \cite{Su-conm} proposes a precise  combinatorial 
formula for $\theta_q\bigl(G(\A)\bigr)$ for $q\gg 0$ in terms of the resonance 
of $\A$.  Our Theorem \ref{thm:main} 
is both an extrapolation of Suciu's Conjecture to the case of arbitrary Koszul modules, as well as 
an optimal effective version of it.

As shown in \cites{DPS-duke}, the resonance variety $\RR(\A)$ is the tangent cone at the identity 
to the Green--Lazarsfeld set \cite{GL91}, from which it follows that each component of $\RR(\A)$ is linear and isotropic. Falk, Libgober, and Yuzvinsky \cites{FY, LY00} showed that these components correspond to certain combinatorial structures called \emph{multinets}\/ on subarrangements $\BB\subseteq \A$. 

Theorem \ref{thm:main} gives an effective answer to  Suciu's Conjecture for all arrangements $\A$ 
for which the resonance $\RR(\A)$ is strongly isotropic. We establish that important classes of 
components of $\RR(\A)$ are strongly isotropic. They include the \emph{local components}\/ corresponding 
to flats $X\in L_2(\A)$ lying on at least $3$ hyperplanes in $\A$, see Proposition \ref{prop:res-local}; 
the \emph{essential components} corresponding to a multinet on $\A$, see Theorem \ref{lem:delX-omega}, 
and for all arrangements having only double and triple points (see Theorem \ref{thm:res-arr-separable}). At the same time, we also clarify a point from the literature regarding the strong isotropicity of resonance components, showing that the non-essential case requires a more detailed analysis than was previously assumed.
We summarize our results:

\begin{theorem}
\label{thm:chen-arrs}
Let $\A$ be an arrangement such that one of the following conditions hold. 
\begin{enumerate}
\item All components of $\mc{R}(\A)$ are either local or essential. 
\item $\A$ has no $2$-flats of size greater than $3$.
\end{enumerate}
If $h_m$ is the number of components of $\mc{R}(\A)$ of 
dimension $m$, then 
	\begin{equation} \label{eq:theta-q-effective}
		\theta_q\bigl(G(\A)\bigr) = (q-1)\cdot \sum_{m\geq 2} h_m\cdot \binom{m+q-2}{q}, \ \ \mbox{ for all }\ 
        q\geq \abs{\A}-1.
	\end{equation}
\end{theorem}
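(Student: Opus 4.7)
The plan is to reduce Theorem \ref{thm:chen-arrs} to Theorem \ref{thm:chen_groups} by verifying both of its hypotheses for the arrangement group $G(\A)$. First, complements of complex hyperplane arrangements are formal in the sense of Sullivan (Morgan, Kohno), so $G(\A)$ is $1$-formal and the Papadima--Suciu bound $\theta_{q+2}(G(\A))\leq \dim W_q(G(\A))$ from \cite{PS-imrn} is an equality. Second, since $b_1(G(\A))=|\A|$, the range $q\geq |\A|-1$ in \eqref{eq:theta-q-effective} is precisely the range $q\geq b_1(G)-1$ appearing in Theorem \ref{thm:chen_groups}. The main work is therefore to verify that $\RR(\A)$ is strongly isotropic under either hypothesis (1) or (2).

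By the results of Falk--Libgober--Yuzvinsky \cites{FY, LY00}, every component of $\RR(\A)$ is linear and isotropic, and distinct components meet only at the origin. Writing $\RR(\A)=\overline{V}_1^\vee\cup\cdots\cup\overline{V}_k^\vee$, we must verify the intersection condition \eqref{eq:strong_iso}. By Corollary \ref{cor:strongly-isotropic-equivalence}, this is equivalent to the scheme-theoretic equality $\B(V,K)=\Gr_1\sqcup\cdots\sqcup\Gr_k$, i.e., to the reducedness of $\Rproj(V,K)$. Under hypothesis (1), each component is either local or essential: Proposition \ref{prop:res-local} handles the local components arising from $2$-flats containing at least three hyperplanes, while Theorem \ref{lem:delX-omega} handles the essential components arising from multinets on subarrangements. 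Under hypothesis (2), Theorem \ref{thm:res-arr-separable} directly establishes strong isotropicity, since every resonance component of such an arrangement is local.

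With strong isotropicity in hand, Theorem \ref{thm:chen_groups} applies and yields
\begin{equation*}
\theta_q(G(\A))=\sum_{m\geq 2} h_m\cdot \theta_q(F_m),\quad\text{for all }q\geq |\A|-1.
\end{equation*}
To obtain \eqref{eq:theta-q-effective}, it remains to substitute the classical formula of K.T.~Chen recorded in the excerpt, $\theta_{q+2}(F_m)=(q+1)\binom{m+q}{q+2}$. Reindexing $q\mapsto q-2$ gives $\theta_q(F_m)=(q-1)\binom{m+q-2}{q}$, and summing delivers the asserted identity.

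The principal obstacle is the verification of strong isotropicity, which is not merely a componentwise condition: one must rule out the existence of spurious elements of $K^\perp$ mixing distinct resonance directions, a global condition on $K$. This is the geometric content of the statement that $\B(V,K)$ be a scheme-theoretic disjoint union of sub-Grassmannians. Fortunately, all the delicate combinatorial work has been done elsewhere in the paper: the results cited above (Propositions \ref{prop:res-local}, Theorems \ref{lem:delX-omega} and \ref{thm:res-arr-separable}) provide the separability of $\Rproj(V,K)$ in each of the cases allowed by hypotheses (1) and (2). The exclusion of non-local, non-essential components in (1) is precisely what guarantees one can conclude, reflecting the subtlety flagged in the introduction that such components require a separate, more delicate analysis.
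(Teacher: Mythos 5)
Your proposal follows the same route as the paper: verify that $G(\A)$ is $1$-formal (formality of arrangement complements), that $\RR(\A)$ is linear and isotropic (general arrangement theory), and that $\RR(\A)$ is separable under each hypothesis, then invoke Theorem~\ref{thm:chen_groups} (equivalently Theorem~\ref{thm:main}) together with Chen's formula for $\theta_q(F_m)$. The reindexing of Chen's formula and the identification $b_1(G(\A))=\abs{\A}$ are both correct.

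Two small comments. First, the detour through Corollary~\ref{cor:strongly-isotropic-equivalence} and the reducedness of $\B(V,K)$ is unnecessary: by Definition~\ref{def:separable}, strongly isotropic simply \emph{means} separable plus isotropic, so once you've checked separability componentwise you can feed the conclusion directly into Theorem~\ref{thm:chen_groups} without passing to the scheme-theoretic picture. Second, and more substantively, your justification under hypothesis~(2) --- ``every resonance component of such an arrangement is local'' --- is not correct. Graphic arrangements already give counterexamples: by Proposition~\ref{prop:res-graphic}, every $K_4$-subgraph of $\Gamma$ contributes a $2$-dimensional component $P_{ijk\ell}$ of $\RR(\A_\Gamma)$ that is an essential component of the $K_4$-subarrangement, not a local component, even though $\A_\Gamma$ has no $2$-flats of size greater than $3$. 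What one actually needs under hypothesis~(2) is Theorem~\ref{thm:res-arr-separable} for the \emph{non-essential} components (those supported on $\BB \subsetneq \A$) together with Theorem~\ref{lem:delX-omega} for any essential component; this combination is exactly what Corollary~\ref{cor:res-arr-separable}, which the paper cites, packages up. Your citation of Theorem~\ref{thm:res-arr-separable} alone would miss a possible essential component of $\A$ itself (e.g.\ $\A = \A_{K_4}$), so the clean fix is to cite Corollary~\ref{cor:res-arr-separable} and drop the ``all components are local'' claim.
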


In particular, as we observe in Corollary \ref{cor:chen-ranks-graphic}, this covers the case of all graphic arrangements. The fact that essential components of $\RR(\A)$ are strongly isotropic has been established by Cohen--Schenck in \cite{CSc-adv}*{Theorem 5.1}. However, their proof that the resonance of \emph{every} hyperplane arrangement $\RR(\A)$ is strongly isotropic (which would make our Theorem \ref{thm:chen-arrs} valid without any hypothesis) is incorrect. We refer to Remark \ref{rem:cohen-schenk} for details.

\subsection{Generic vanishing for Koszul modules} An important consequence of the equivalence (\ref{eq:van_resonance}) is the vanishing statement $W_{n-3}(V,K)=0$ for a general $(2n-3)$-dimensional subspace $K\subseteq \bigwedge^2 V$. The condition $W_{n-3}(V,K)\neq 0$ defines an effective divisor in the parameter space 
$\Grass_{2n-3}\bigl(\bigwedge^2 V\bigr)$ of such subspaces, which is then identified in \cite{AFRW}*{Theorem 3.4} with the \emph{Chow form} of the Grassmannian $\Gr=\Grass_2(V^{\vee})\subseteq \P\bigl(\bigwedge^2 V^{\vee}\bigr)$. This identification is then essential for establishing Green's Conjecture for generic curves \cite{AFPRW2}. 

It turns out there is an equally interesting second divisorial case for the generic vanishing of Koszul modules. Precisely, assuming $K\subseteq \bigwedge^2 V$ is an $m$-dimensional subspace, it follows from (\ref{eq:def-WVK}) that the condition $W_q(V,K)=0$ can be rewritten as requiring that the map 
\begin{equation}\label{eq:deglocus1}
\delta_2\colon K\otimes \Sym^q(V)\longrightarrow H^0\bigl(\P, \Omega_{\P}(q+2)\bigr)
\end{equation}
be of maximal rank. The failure of this condition defines a virtual divisor 
on the Grassmannian $\Grass_m\bigl(\bigwedge^2 V\bigr)$ if and only if 
the two vector spaces appearing in \eqref{eq:deglocus1} have the same 
dimension, that is, 
\[
m \binom{n+q-1}{q}=n \binom{n+q}{q+1}- \binom{n+q+1}{q+2},
\]
that is, when $q(m-n+1)=n^2-n-2m$. One immediately concludes that there are 
two cases appearing in each dimension $n=\dim(V)$, precisely when 
\[
(m, q)=(2n-3,n-3) \ \  \mbox{ respectively } \ \ (m,q)=(2n-2,n-4).
\]
The first case having been treated in \cites{AFPRW, AFRW}, we ask now whether 
also in the second case the morphism $\delta_2$ in \eqref{eq:deglocus1}
is generically of maximal rank:
 
\begin{conjecture}\label{conj:Koszul_vanishing}
Let $n\geq 6$. For a general $(2n-2)$-dimensional subspace 
$K\subseteq \bigwedge^2 V$, one has the vanishing 
$W_{n-4}(V,K)=0$.
\end{conjecture}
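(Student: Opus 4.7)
The plan is to cast Conjecture \ref{conj:Koszul_vanishing} as a maximal-rank assertion and reduce it to producing a single pair $(V,K_0)$ realizing this rank, from which the generic case follows by semicontinuity. For $m = \dim K = 2n-2$ and $q = n-4$, the source and target of the multiplication map
\[
\delta_2 : K \otimes \Sym^{n-4}(V) \longrightarrow H^0\bigl(\mathbf{P}, \Omega_{\mathbf{P}}(n-2)\bigr)
\]
have the same dimension by the computation preceding the conjecture. Hence $W_{n-4}(V,K)=0$ is equivalent to $\delta_2$ being an isomorphism, and globalizing over $\Grass_{2n-2}\bigl(\bigwedge^2 V\bigr)$ presents this as a map of vector bundles of equal rank. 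Its degeneracy locus is cut out by the vanishing of a single determinant, so $\{K : W_{n-4}(V,K) \neq 0\}$ is a virtual divisor and the conjecture reduces to exhibiting one $K_0$ where $\delta_2$ is an isomorphism.

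To produce such a $K_0$, two complementary specializations are natural. The first starts from a Koszul module whose resonance is strongly isotropic, so that Theorem \ref{thm:main} controls the Hilbert function for $q \geq n-3$; one then attempts to extend the analysis one step down to $q = n-4$ by computing the kernel and cokernel of the decomposition morphism \eqref{eq:decomposition} in the borderline degree, subsequently perturbing $K$ generically to reach the correct dimension $2n-2$. The second route parallels the proof of Green's Conjecture in \cite{AFPRW2}, where the corresponding $(m,q) = (2n-3,n-3)$ divisor was identified with the Chow form of $\Grass_2(V^{\vee})$ and resolved via syzygies of canonical curves. In the present case, one would take a general Prym curve $(C,\eta)$ of a suitable genus, identify $V$ with a natural cohomology space attached to $C$, and build $K_0$ from the syzygies of the Prym--canonical embedding; the Prym--Green Conjecture in its proven ranges would then supply the required maximal rank of $\delta_2$.

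The main obstacle is identifying the right $K_0$. Unlike the $(2n-3,n-3)$ case, the virtual divisor in $\Grass_{2n-2}\bigl(\bigwedge^2 V\bigr)$ does not admit an obvious interpretation as the Chow form of a subvariety of $\mathbf{P}\bigl(\bigwedge^2 V^{\vee}\bigr)$, because the expected intersection $K^{\perp} \cap \Grass_2(V^{\vee})$ has codimension two rather than one. Consequently, many natural specializations -- including the isotropic ones governed by Theorem \ref{thm:main} -- produce a $K$ of the wrong dimension or land precisely on the predicted divisor. The delicate task is therefore to pinpoint a Prym--canonical or combinatorial model in which the borderline map $\delta_2$ is genuinely an isomorphism and to verify this via the known cases of Prym--Green, or by direct computer algebra in the base cases $n = 6, 7$, from which induction or a degeneration argument could propagate to all $n \geq 6$.
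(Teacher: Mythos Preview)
The statement you are addressing is a \emph{conjecture}, not a theorem, and the paper does not prove it. More than that: the paper provides strong computational evidence that the conjecture as stated is \emph{false} for $n=9$ (see Theorem~\ref{thm:surprise_9} and the Macaulay2 discussion in Section~\ref{sect:Koszul_van}, where random $16$-dimensional subspaces $K\subseteq\bigwedge^2\C^9$ consistently give $W_5(V,K)\neq 0$). So any argument purporting to establish the vanishing for all $n\geq 6$ is doomed, and in particular neither of your two specialization routes can succeed uniformly. The parallel with Prym--Green that you invoke in fact points the other way: the Prym--Green Conjecture also fails in genus $16$, and the paper explicitly speculates that the two failures are linked.

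Setting aside $n=9$, your proposal is a strategy outline rather than a proof. The reduction to a maximal-rank statement and the semicontinuity principle are correct and are exactly what the paper uses, but the actual content of the paper's verification for $6\leq n\leq 10$, $n\neq 9$ is a direct Macaulay2 computation over a finite field for a single random $K$, not either of the theoretical specializations you sketch. Your first route (strongly isotropic $K$, then perturb) runs into the obstacle you yourself note: Theorem~\ref{thm:main} controls degrees $q\geq n-3$, not $q=n-4$, and there is no mechanism in the paper to push the isomorphism \eqref{eq:decomposition} one degree lower. Your second route (Prym--canonical model) is suggestive but remains a hope: no explicit $K_0$ is exhibited, no identification of the virtual divisor is made, and the known ranges of Prym--Green do not obviously translate into the required statement here. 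In short, the genuine gap is that you have not produced a single witness $K_0$ with $W_{n-4}(V,K_0)=0$ for any $n$, and for $n=9$ no such witness is expected to exist.
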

We explain in Section \ref{sect:Koszul_van} how from Kronecker's classical theory of pencils of skew-symmetric matrices it follows that for $n=5$, one has $W_1(V,K)\neq 0$, for every $8$-dimensional subspace $K\subseteq \bigwedge^2 V$. We can verify the Conjecture \ref{conj:Koszul_vanishing} for all $6\leq n \leq 10$, with the exception $n=9$.

\begin{theorem}\label{thm:surprise_9}
If $V$ is a $n$-dimensional vector space with $6\leq n\leq 10$ but $n\neq 9$, then $W_{n-4}(V,K)\neq 0$, for a general $(2n-2)$-dimensional subspace $K\subseteq \bigwedge^2 V$.   
\end{theorem}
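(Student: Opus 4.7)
The strategy is case-by-case verification for each $n \in \{6, 7, 8, 10\}$, establishing that the generic rank of the Koszul differential is strictly less than the expected ``full'' value. At the critical parameters $(m, q) = (2n-2, n-4)$, the map
\[
\delta_2(K)\colon K \otimes \Sym^{n-4} V \longrightarrow H^0\bigl(\P, \Omega_\P(n-2)\bigr)
\]
is a linear map between vector spaces of the same finite dimension $D_n$, with $D_6 = 210$, $D_7 = 1008$, $D_8 = 4620$ and $D_{10} = 90090$. The condition $W_{n-4}(V, K) \neq 0$ is thus equivalent to singularity of the $D_n \times D_n$ matrix of $\delta_2(K)$, and the goal is to show that its determinant, viewed as a polynomial on $\Grass_{2n-2}(\bigwedge^2 V)$, vanishes identically.

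Because $\rank \delta_2(K)$ is lower semicontinuous in $K$, the generic rank coincides with the supremum of $\rank \delta_2(K)$ over all $K$. To conclude that the generic rank is strictly less than $D_n$ it is not enough to evaluate at a single specific $K_0$ (which only produces a lower bound on the generic rank); instead I would parameterize $K$ by indeterminates $t_1, \ldots, t_N$ --- taking $K$ to be spanned by $2n-2$ skew tensors whose coefficients are algebraically independent --- and compute the rank of $\delta_2(K)$ as a matrix over $\Q(t_1, \ldots, t_N)$. In practice, this symbolic rank is certified by modular specialization: evaluate at many random values of the $t_i$'s over $\bb{F}_p$, compute the rank in each instance, and use a Schwartz--Zippel estimate --- bounded by the degrees of the $(D_n \times D_n)$-minors --- to convert a uniform outcome across sufficiently many independent trials into a rigorous assertion about the generic rank. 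The $\GL(V)$-equivariance of the Koszul complex is crucial: by decomposing $K \otimes \Sym^{n-4}V$ and $H^0(\P, \Omega_\P(n-2))$ into weight spaces under a maximal torus of $\GL(V)$, the map $\delta_2$ becomes block-diagonal and the determinant factors into determinants of much smaller blocks, bringing the cases $n = 6, 7, 8$ into immediate reach of standard computer algebra.

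The principal obstacle is the combinatorial explosion at $n = 10$, where one must fully exploit both the extreme sparsity of $\delta_2$ in a monomial basis (each row having at most $n$ nonzero entries from the explicit formula $(u \wedge v) \otimes f \mapsto v \otimes uf - u \otimes vf$) and the torus-equivariant block decomposition just mentioned to make a $90090 \times 90090$ matrix tractable. The exceptional case $n = 9$, despite its smaller size $D_9 = 20592$, is not amenable to the same conclusion: the analogous computation there indicates that $\delta_2(K)$ is generically of full rank, so that Conjecture \ref{conj:Koszul_vanishing} actually holds at $n = 9$ and a separate, positive argument --- exhibiting a specific $K_0$ for which $\delta_2(K_0)$ is an isomorphism --- would be required to confirm it. This genuine discontinuity in behavior across the range $6 \leq n \leq 10$, rather than a computational artifact, is the ``surprise'' recorded in the label of the theorem and the reason $n=9$ must be excluded from the statement.
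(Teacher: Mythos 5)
The theorem as printed contains a sign error, and your proposal takes that misprint at face value and is therefore aimed at the wrong target. Comparing with the surrounding text (``We can verify the Conjecture for all $6\leq n\leq 10$, with the exception $n=9$'') and with Section~7 of the paper (``By semicontinuity, in order to prove the \emph{vanishing} of $W_q(V,K)$ for a general subspace $K\in\Grass_{2n-2}(\bigwedge^2 V)$, it suffices to do so for a specific choice of $K$''), the intended assertion is $W_{n-4}(V,K)=0$ for general $K$ when $n\in\{6,7,8,10\}$, i.e.\ that Conjecture~\ref{conj:Koszul_vanishing} \emph{holds} in these dimensions. Correspondingly, $n=9$ is the exceptional dimension where the conjecture is expected to \emph{fail}: the reported Macaulay2 experiments give $\dim W_5(V,K)=1$ for random $K\in\Grass_{16}(\bigwedge^2 V)$, parallel to the proved failure at $n=5$ (Proposition~\ref{prop:gen5}) and to the genus-$8$ and genus-$16$ failures of Prym--Green. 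Your write-up inverts both halves of this: you set out to show $\det\delta_2(K)\equiv 0$ for $n\in\{6,7,8,10\}$ and you place $n=9$ as the dimension where $\delta_2$ is generically invertible.

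Once the sign is corrected, the inversion makes your strategy simultaneously unnecessary and unsound. Unnecessary: $\rank\delta_2(K)$ is lower-semicontinuous and the determinant is a fixed integer polynomial in the coordinates of a basis of $K$, so exhibiting a single $K_0$ over a finite field $\bb{F}_p$ with $\det\delta_2(K_0)\neq 0\pmod p$ already proves the determinant is nonzero over $\C$ and hence proves generic vanishing of $W_{n-4}(V,K)$. That one-point machine check is the entire proof in the paper. Unsound: trying to certify that a $D_n\times D_n$ determinant of degree $D_n$ is \emph{identically} zero via Schwartz--Zippel over modular specializations can never be made rigorous --- the lemma bounds the probability of a false positive but does not reduce it to zero --- and for $n=10$ the degree $D_{10}=90090$ exceeds the modulus $32003$ used in the paper's computations, so the error bound is not even nontrivial. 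The torus-weight block-diagonalization you invoke is also unavailable for generic $K$: the restricted map $K\otimes\Sym^qV\to H^0(\P,\Omega_{\P}(n-2))$ is torus-equivariant only when $K$ itself is a direct sum of weight spaces, which is a non-generic and degenerate choice of $K$.
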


A Macaulay2 calculation for random $16$-dimensional subspaces $K\subseteq \bigwedge^2 V$, 
when $V\cong \C^9$,  strongly suggest that $W_{5}(V,K)\neq 0$, for \emph{every} 
such subspace $K\in \Grass_{16}\bigl(\bigwedge^2 V\bigr)$.

It is very tempting to draw a parallel between the highly surprising failure 
for $n=9$ (and $n=5$)  of Conjecture \ref{conj:Koszul_vanishing} and the equally 
surprising failure of the \emph{Prym-Green Conjecture} for curves of genus $16$ 
(and $8$), see \cites{CEFS, CFVV}. This conjecture, predicting the vanishing 
\[
K_{\frac{g}{2}-3,2}(C, \omega_C\otimes \eta)=0
\]
for a general Prym curve $[C, \eta]$ of genus $g$, with $\eta\in JC[2]$ being a $2$-torsion point, is essential in showing \cite{FL} that the moduli space $\overline{\mathcal{R}}_g$ of stable Prym curves of genus $g$ is of general type for $g\geq 13$ and $g\neq 16$. The Prym--Green conjectures fails on $\overline{\mathcal{R}}_8$ as explained in \cite{CFVV}. A Macaulay2 calculation with general rational $g$-nodal curves strongly indicates \cite{CEFS}*{Proposition 4.4} that $K_{5,2}(C, \omega_C\otimes \eta)\neq 0$, for every $[C,\eta]\in \mathcal{R}_{16}$. In light of the proof of the generic Green Conjecture using Koszul modules in \cite{AFPRW2}, we expect a link between the failure of the Prym-Green Conjecture in genus $16$ and the failure of Conjecture \ref{conj:Koszul_vanishing} for $16$-dimensional subspaces $K\subseteq \bigwedge^2 \C^9$.

\subsection{Outline of the proof of Theorem \ref{thm:main}}\label{subsec:outline}
We end the Introduction with an  outline of the proof of Theorem \ref{thm:main} and we focus on the construction and the properties of the map  
\[
W(V,K)\longrightarrow \bigoplus_{t=1}^k W(\overline{V}_t,0).
\]
If $\gamma \colon \wtl{\Gr}\rightarrow \Gr$ is the blow-up of the Grassmannian 
$\Gr=\Grass_2(V^{\vee})$ along $\B(V,K)$, let $\mc{O}_{\wtl{\Gr}}(H)=\gamma^*\mc{O}_{\Gr}(1)$ be the corresponding pullback of the Pl\"ucker line bundle.

Let $E = E_1 \sqcup \cdots \sqcup E_k$ be the exceptional 
divisor of $\gamma$, with $E_t= \op{\ul{Proj}}\bigl(\Sym(U_t \oo \mc{Q}_t^{\vee})\bigr)$, 
with $U_t = \ker\bigl\{\pi_t\colon V\to \ol{V}_t\bigr\}$ and let $\mc{Q}_t$ be the tautological rank $2$
quotient on $\Gr_t=\Grass_2(\overline{V}_t^{\vee})$. The base locus of the linear system 
$\gamma^*|K|$  being equal to $E$, we have an evaluation  map 
\[
\begin{tikzcd}[column sep=18pt]
K\oo \mc{O}_{\wtl{\Gr}} \ar[r, two heads]& \mc{O}_{\wtl{\Gr}}(H-E),
\end{tikzcd}
\]
which gives rise to an associated exact Koszul complex
$\wtl{\mc{K}}^{\bullet}$ on $\wtl{\Gr}$, where
\begin{equation}
\label{eq:def-tlK}
 \wtl{\mc{K}}^{-i} = 
 \begin{cases}
 \bw^i K \oo \mc{O}_{\wtl{\Gr}}\bigl((1-i)(H-E)\bigr) &\text{ for $i\geq 0$} ,
 \\
0 &\text{ for $i<0$}.
 \end{cases}
\end{equation}
Working with the  blowup $\wtl{\Gr}$ rather than with $\Gr$ is necessary to obtain an \emph{exact} complex of vector bundles, since the Koszul complex on $\Gr$ associated 
to the map \eqref{eqn:ev_K} is not exact.

We modify the complex $\wtl{\mc{K}}^\bullet$ and obtain a complex $\mc{K}^\bullet$, by setting 
$\mc{K}^0 \coloneqq \wtl{\mc{K}}^0(E) = \mc{O}_{\wtl{\Gr}}(H)$, and 
$\mc{K}^i = \wtl{\mc{K}}^i$ for $i\neq 0$. Denoting by   
$\gamma_t\colon E_t \to \Gr_t$ the restriction of $\gamma$ to $E_t$, 
the only non-zero cohomology group of $\mc{K}^{\bullet} \oo_{\mc{O}_{\wtl{\Gr}}}  \gamma^*\Sym^q\mc{Q}$ is
\begin{equation}
\label{eq:H0-K}
\mc{H}^0\bigl(\mc{K}^{\bullet}\oo_{\mc{O}_{\widetilde{\Gr}}}  \gamma^*\Sym^q\mc{Q}\bigr) = 
\gamma^*\Sym^q\mc{Q}\otimes \mc{O}_E(H) \cong 
\bigoplus_{t=1}^k \gamma_t^*\bigl((\Sym^q\mc{Q}_t)(1)\bigr).
\end{equation}

It turns out that the sheaf appearing in \eqref{eq:H0-K} has vanishing
higher cohomology, and 
\begin{equation}
\label{eq:H0-E-gamma}
H^0\bigl(E,\gamma^*\Sym^q\mc{Q}\otimes \mc{O}_E(H)\bigr) \cong 
\bigoplus_{t=1}^k W_q(\overline{V}_t,0)\, .
\end{equation}

The computational details for the identification \eqref{eq:H0-E-gamma} 
involve  Bott  vanishing and the Leray spectral sequences and are presented  
in Section \ref{sec:proof-main-2}.
The link between $W_q(V,K)$ and $\bigoplus_{t=1}^k W_q(\overline{V}_t,0)$ is now provided
by the observation that one can regard 
$\mc{K}^{\bullet}\oo \gamma^*\Sym^q\mc{Q}$ as a resolution
of the sheaf $\gamma^*\Sym^q\mc{Q}\otimes \mc{O}_E(H)$, therefore the corresponding hypercohomology 
is given by
\[
\bb{H}^j(\mc{K}^{\bullet}\oo_{\mc{O}_{\wtl{\Gr}}} \gamma^*\Sym^q\mc{Q}) \cong
H^j\bigl(E,\gamma^*\Sym^q\mc{Q}\otimes \mc{O}_E(H)\bigr), \mbox{ for all } j.
\]

In turn, the hypercohomology groups of the complex 
$\mc{K}^{\bullet}\oo \gamma^*\Sym^q\mc{Q}$ are 
computed by a spectral sequence of graded $S$-modules,
\begin{align}
E_1^{-i,j} = &H^j\bigl(\wtl{\Gr},\mc{K}^{-i}\oo_{\mc{O}_{\wtl{\Gr}}}
\gamma^*\Sym^q\mc{Q}\bigr) \Longrightarrow \\
&\hspace*{1in} \bb{H}^{-i+j}(\mc{K}^{\bullet}
\oo_{\mc{O}_{\wtl{\Gr}}} \gamma^*\Sym^q\mc{Q} 
\cong H^{j-i}\bigl(E,\gamma^*\Sym^q\mc{Q})\otimes \mc{O}_E(H)\bigr), \notag
\end{align}
and one has $E_1^{-i,j} = 0$ for $i<0$ or $j<0$. Writing, as usual,
\begin{equation}
\label{eq:maps-drij}
\begin{tikzcd}[column sep=18pt]
d_r^{i,j}\colon E_r^{i,j} \ar[r]& E_r^{i+r,j-r+1}
\end{tikzcd}
\end{equation}
for the differentials on the $r$-th page of the spectral sequence, we then show in Section \ref{sec:proof-main-2} that   $E_2^{0,0} = \coker(d_1^{-1,0}) = W_q(V,K)$. 

The natural map  $W_q(V,K) \rightarrow \bigoplus_{t = 1}^k W_q(V_t,0)$
 is then obtained as the composition
\begin{equation}
\label{eq:E2-Einf-edge}
\begin{tikzcd}[column sep=20pt]
E_2^{0,0} \ar[r, two heads]& E_{\infty}^{0,0} \ar[r, hook]&
H^0\bigl(E,\gamma^*\Sym^q\mc{Q}\otimes \mc{O}_E(H)\bigr)  ,
\end{tikzcd}
\end{equation}
where the first map is the quotient map (the differentials $d_r^{0,0}$
vanish for all $r$, since their target is $0$, so $E_{r+1}^{0,0}$ is a 
quotient of $E_r^{0,0}$ for all $r$), and the second map is an edge 
homomorphism for the spectral sequence. In order to show that this composition 
is an isomorphism for $q\geq n-3$ (which is precisely the conclusion of 
Theorem~\ref{thm:main}), we verify in  
Section~\ref{sec:proof-main-2} that if $q\geq n-3$, then 
$E_2^{-i,i} = E_2^{-i-1,i} = 0\mbox{ for all }i\neq 0$. 

In fact, we shall prove in Section \ref{sec:proof-main-2} a 
slightly more general vanishing result, namely 
\begin{equation}
 \label{eq:vanishing-E2-ij}
E_2^{-i,j} = 0, \  \mbox{ if } j>0, \ i\leq j+1  \mbox{ and } q\ge n-3.
\end{equation}
This statement is equivalent to the exactness in the 
middle of the following complex
\[
H^j\bigl(\wtl{\Gr},\mc{K}^{-i-1} \oo \gamma^*\Sym^q\mc{Q}\bigr) \longrightarrow H^j\bigl(\wtl{\Gr},\mc{K}^{-i} \oo 
\gamma^*\Sym^q\mc{Q}\bigr) \longrightarrow 
H^j\bigl(\wtl{\Gr},\mc{K}^{-i+1} \oo
\gamma^*\Sym^q\mc{Q}\bigr).
\]

A large part of the effort in the paper is oriented towards understanding the cohomology 
groups $H^j\bigl(\wtl{\Gr}, \gamma^*(\Sym^q\mc{Q})\bigl((1-i)(H-E))\bigr)$. 
Our results can be regarded as a form of Bott vanishing for the blow-up of Grassmannians 
along sub-Grassmannians and they are of interest on their own.

{\small{{\bf{Acknowledgments:}} Aprodu  was partly supported by the project
"Cohomological Hall algebras of smooth surfaces and applications" - CF 44/14.11.2022, call no. 
PNRR-III-C9-2022-I8 funded by
the European Union - NextGenerationEU - through Romania’s National Recovery 
and Resilience Plan.
Farkas was supported by the Berlin mathematics research center MATH+ and by the
ERC Advanced Grant SYZYGY (grant agreement No. 834172).
Raicu was supported by the NSF Grant No.~2302341.
Suciu was supported in part by Simons Foundation Collaboration Grants for
Mathematicians  No.~354156 and  No.~693825 and by the project 
``Singularities and Applications" - CF 132/31.07.2023 funded by
the European Union - NextGenerationEU - through Romania’s National Recovery 
and Resilience Plan.
}}

\section{Koszul modules and their resonance schemes}
\label{sec:defs}

We recall basic definitions on Koszul modules following \cites{AFPRW, AFRS}. We fix  
an $n$-dimensional complex vector space $V$ and set $S\coloneqq \Sym(V)$. 
We also fix a subspace $K\subseteq \bwedge^2 V$.

Let $\widetilde{\delta}_3\colon \bwedge^3V\otimes S(-1)\to (\bwedge^2V/K)\otimes S$ 
be the map induced by the third differential in the Koszul complex 
$\bigl(\bwedge^{\bullet} V \otimes S, \delta\bigr)$. Then the \emph{Koszul module} 
$W(V,K)$ is defined as $\coker(\widetilde{\delta}_3)$, that is, it is the graded 
$S$-module admitting the following presentation
\begin{equation}
    \label{eqn:presentation_W}
    \begin{tikzcd}[column sep=18pt]
    \bwedge^3V\otimes S(-1)\ar[r]& \Big(\bwedge^2V/K\Big)\otimes S 
    \ar[r]&  W(V,K) \ar[r]& 0.
    \end{tikzcd}
\end{equation}
Since $W(V,K)$ is a graded $S$-module, it gives rise to a 
coherent sheaf $\W(V,K)$ on the projective space $\P\coloneqq 
\mathbf{P}(V^{\vee})$. We refer to this  as the \defi{Koszul sheaf} 
of the pair $(V,K)$, which, accordingly, admits the following presentation 
\begin{equation}
    \label{eqn:presentation_W_sheaf}
        \begin{tikzcd}[column sep=18pt]
    \bwedge^3V\otimes \mathcal{O}_\mathbf{P}(-1)\ar[r]&  \left(\bwedge^2V/K\right)\otimes \mathcal{O}_\mathbf{P}\ar[r]&  \W(V,K)\ar[r]&  0.
    \end{tikzcd}
\end{equation}
Writing $\Omega \coloneqq \Omega^1_{\P}$ for the 
sheaf of $1$-differentials on $\P$, recall the Euler sequence
\[
0\longrightarrow \Omega\longrightarrow V\otimes \mathcal{O}_{\P}(-1)\longrightarrow 
\mathcal{O}_{\P}\longrightarrow 0.
\]
By twisting this sequence and taking global sections, we find $W_q(V,0)\cong H^0\bigl(\P,\Omega(q+2)\bigr)$. 
In particular, $\dim W_q(V,0)=(q+1)\binom{n+q}{2+q}$. Using \eqref{eq:def-WVK}, we can also write
\begin{equation}
\label{eq:WVK-quot-WV0}
W_q(V,K) = \coker\Bigl\{ K \oo \Sym^q V \stackrel{\delta_2}\longrightarrow  W_q(V,0)\Bigr\},
\end{equation}
where $\delta_2\bigl(u\wedge v\otimes f)\coloneqq v\otimes (u\cdot f)-u\otimes (v\cdot f)$, 
for $u, v, f\in V$. The  inclusion $K\subseteq \bigwedge^2 V$ together with the surjection 
$\bw^2 V \oo \mc{O}_{\P}(-2) \onto \Omega$ induce a sheaf morphism $K\oo \mc{O}_{\P}(-2) \to \Omega$. 
The Koszul sheaf $\mathcal{W}(V,K)$ defined via the sequence \eqref{eqn:presentation_W_sheaf} 
can then also be realized as
\[
\begin{tikzcd}[column sep=18pt]
\W(V,K)= \coker\bigl\{K\oo \mc{O}_{\P} \ar[r]& \Omega(2)\bigr\}.
\end{tikzcd}
\]

\subsection{Resonance schemes}
\label{subsec:resvars}
It has been proved in \cite{PS-crelle}*{Lemma 2.4}, then used in \cites{AFPRW2, AFPRW, AFRS}, 
that the set-theoretic support of the $S$-module $W(V,K)$ is given by the 
\emph{resonance variety} $\mathcal{R}(V, K)$ defined in \eqref{eq:def-resonance}.

If $I(V,K)$ is the annihilator of the Koszul module $W(V,K)$, following \cites{AFRS}, 
the \defi{affine resonance scheme}\/ is the scheme-theoretic support of $W(V,K)$ 
inside $V^{\vee}$, that is, 
\begin{equation}
\label{eq:def-res-affine-scheme}
\RR(V,K) \coloneqq \op{Spec}\bigl(S/I(V,K)\bigr) .
\end{equation}

The scheme-theoretic support of the Koszul sheaf $\W(V,K)$ is then the 
\defi{projective resonance scheme}, denoted by $\Rproj(V,K) \coloneqq \op{Proj}\bigl(S/I(V,K)\bigr)$, 
see \cite{AFRS}*{\S2}.

The projectivized resonance can be described in terms of the 
geometry of the Grassmannian $\Gr\coloneqq \Grass_2(V^\vee)\subseteq 
\mathbf{P}\big( \bwedge^2V^\vee\big)$ in its Pl\"ucker embedding.  Consider the diagram

\begin{equation}
\begin{tikzcd}[column sep=24pt]
 \mathbf{P}\times \mathbf{G}
& \Xi\ar[r, "\pr_2"] \ar[d, "\pr_1"] \ar[l, hook']
&\mathbf{G} \ar[r, hook]
&\mathbf{P}\big(\!\bwedge^2V^\vee\big) ,\\
 &\mathbf{P} &&
\end{tikzcd}
\end{equation}
where $\Xi=\bigl\{(p,L)\in \mathbf{P} \times \mathbf{G} : p\in L\bigr\}$ 
is the incidence variety. The projection $\pr_2$ 
realizes $\Xi$ as the projectivization of the universal rank-$2$ bundle $\mathcal{Q}$ 
on $\mathbf{G}$. As explained in \cite{AFRS}*{Lemma 2.5}, set-theoretically we have 
\begin{align}
\label{eq:rvk-pr}
\mathbf{R}(V,K)=
\pr_1\bigl(\pr_2^{-1}(\mathbf{G}\cap\mathbf{P}K^\perp)\bigr) \
\mbox{ and } \ 
\mathbf{G}\cap\mathbf{P}K^\perp\subseteq 
\pr_2\bigl(\pr_1^{-1}(\mathbf{R}(V,K))\bigr).
\end{align}

The Koszul module $W(V, K)$ can be computed working directly on 
$\mathbf{G}$, as follows. Since $H^0(\Gr, Q)\cong V$, we consider the evaluation map
$
  V\otimes\mathcal{O}_\mathbf{G}\twoheadrightarrow\mc{Q}  
$ and we introduce the sheaf of graded rings on $\Gr$ given by 
$\mc{S} = \Sym_{\mc{O}_{\Gr}}(\mc{Q}) = \bigoplus_{q\geq 0}\Sym^q\mc{Q}$.
We then have an isomorphism of graded $S$-modules, see also \cites{AFPRW2, AFPRW} 
\[
\begin{tikzcd}[column sep=18pt]
W(V,K)\cong\coker\Bigl\{K\oo H^0(\Gr,\mc{S})\ar[r]& H^0\bigl(\Gr,\mc{S}(1)\bigr)\Bigr\}.
\end{tikzcd}
\]
The inclusion $K\subseteq \bigwedge^2 V = H^0(\mathbf{G},\mc{O}_{\Gr}(1))$ 
induces the evaluation morphism
\begin{equation}
\label{eqn:ev_K}
\begin{tikzcd}[column sep=18pt]
\ev_K\colon K \otimes\mathcal{O}_\mathbf{G}\ar[r]
& \mc{O}_{\Gr}(1) .
\end{tikzcd}
\end{equation}
The scheme-theoretical intersection $\B(V,K)\coloneqq\mathbf{G}\cap \mathbf{P}K^\perp$ 
is then the scheme-theoretic support of the cokernel of this map, that is, 
$\B(V,K)=\supp\bigl(\coker(\ev_K)\bigr)$. Indeed, $K$ generates both the ideal of $\mathbf{P}K^\perp$ and the 
ideal in $S$ of the image of the evaluation map 
\eqref{eqn:ev_K}. This scheme structure may be non-reduced, 
as shown in the following example.

\begin{example}
\label{ex:non-red-bvk}
Let $V^{\vee}$ be the $4$-dimensional space with basis $(e_1,\dots,e_4)$ and 
consider $K^\perp=\spn\{e_1\wedge e_2,e_1\wedge e_3+e_2\wedge e_4\}$. 
We denote by $(v_1,\dots,v_4)$ the dual basis of $V$ and 
by $X_{i,j}\coloneqq v_i\wedge v_j$ the Pl\"ucker coordinates on 
$\mathbf{P}(\bwedge^2V^\vee)$. 
Then $\B(V,K)$ is non-reduced. Indeed, the equations of $\mathbf{P}K^\perp$ are $X_{1,4} = X_{2,3} = X_{3,4} = X_{1,3}-X_{2,4} = 0$ and the equation of  $\Gr$ is  the quadric $X_{1,2}X_{3,4} - X_{1,3}X_{2,4} + X_{1,4}X_{2,3} = 0$. The ideal of the intersection is $\langle X_{1,4}, X_{2,3}, X_{3,4}, X_{1,3}-X_{2,4}, X_{1,3}^2\rangle $ and this scheme is a double point supported at $[e_1\wedge e_2]$. Note also that $\mathbf{R}(V,K)$ is the double line  $\spn\{e_1,e_2\}$.
\end{example}

\begin{example}
\label{ex:embedded-comp}
Let $V^{\vee}=\spn\{e_1, \ldots, e_5\}$ be $5$-dimensional  and consider the subspace 
$K^\perp=\spn\{e_1\wedge e_2,e_1\wedge e_3,e_1\wedge e_4,e_2\wedge e_4,e_2\wedge e_5+e_3\wedge e_4\}$. 
Then $\mathbf{R}(V,K)$ consists of a minimal component $\{v_5=0\}$ and an embedded component, 
$\{v_3=v_5^2=0\}$. Using \cite{PS-crelle}*{Proposition 6.2}, 
the group $$G=\bigl\langle x_1, x_2,x_3,x_4,x_5: [x_1,x_5], [x_2,x_3], [x_3,x_5], [x_4,x_5], [x_2,x_5][x_4,x_3]\bigr\rangle$$ can be shown to have resonance $\RR(G)=\RR(V,K)$.
\end{example}

All these constructions are functorial. If $\ol{V}^{\vee}\subseteq V^{\vee}$ is a subspace, 
let $\pi\colon V\surj \ol{V}$ be the dual map and set $\ol{K}\coloneqq \bw^2 \pi(K)$. 
We obtain a surjective morphism of graded $S$-modules 
$W(V,K) \twoheadrightarrow W(\ol{V},\ol{K})$, where $W(\ol{V},\ol{K})$ is 
seen as an $S$-module by restriction of scalars. By sheafification, we get a surjective 
morphism of  sheaves $\mathcal{W}(V,K) \twoheadrightarrow \iota_*\mathcal{W}(\ol{V},\ol{K})$,
where $\iota\colon \mathbf{P}(\overline{V}^\vee)\to \mathbf{P}(V^\vee)$ 
denotes the inclusion. Also, $\ol{K}^\perp = K^\perp\cap \textstyle{\bigwedge}^2\ol{V}^\vee$ 
and we have the following commuting diagram of projective schemes:
\begin{equation}
   \label{eq:res-base-diagram}
   \begin{tikzcd}[column sep=28pt]
    \mathbf{R}(\ol{V},\ol{K}) \ar[d] &\ol{\Xi}\ar[l, "\pr_1"'] 
    \ar[r, "\pr_2"]\ar[d,hookrightarrow] 
    & \B(\ol{V},\ol{K})\ar[d] \phantom{.}
       \\
    \mathbf{R}(V,K) &\Xi\ar[l, "\pr_1"'] \ar[r, "\pr_2"] & \B(V,K) 
   \end{tikzcd}
\end{equation}

\begin{remark}\label{rem:reduceness2}
 Being reduced is not a functorial property. It might be that $\mathbf{R}(\ol{V},\ol{K})$ is reduced, while $\mathbf{R}(V,K)$ is not reduced along $\P(\ol{V}^\vee)$. 
 This is illustrated in Example \ref{ex:non-red-bvk}. The resonance is supported on the line  $\spn\{e_1,e_2\}$, and the scheme structure is non-reduced.
 If $\ol{V}^\vee=\spn\{e_1,e_2, e_3\}$, then $\ol{K}^\perp = K^\perp\cap \textstyle{\bigwedge}^2\ol{V}^\vee$ is spanned by $e_1\wedge e_2$. It is easy to verify that the corresponding line in $\P(\ol{V}^\vee)$ which coincides with $\mathbf{R}(\ol{V},\ol{K})$ is reduced. In conclusion, if a linear irreducible component of $\mathbf{R}(V,K)$ is contained in some $\mathbf{P}(\ol{V}^\vee)$ and is a reduced component of $\mathbf{R}(\ol{V},\ol{K})$, it does not necessarily imply that it is a reduced component of $\mathbf{R}(V,K)$.
In Section \ref{sect:iso-sep}, building on \cite{AFRS}, we point out that the scheme structure 
of $\B(V,K)$ is reduced if the resonance 
satisfies a supplementary conditions (strong isotropicity).
\end{remark}

\section{Cohomological tools and first applications}

\subsection{Grassmannians and Bott's theorem}
\label{sec:rep-thy}

We recall basic facts about Bott's vanishing theorem on Grassmannians 
which Will be used throughout the paper. We write $\Z^n_{\rm{dom}}$ 
for the set of \defi{dominant weights} in 
$\Z^n$, that is, the set of $n$-tuples $\ll=(\ll_1,\dots,\ll_n)\in\Z^n$ 
with $\ll_1\geq\ll_2\geq\cdots\geq\ll_n$. When each $\ll_i$ is non-negative, 
we identify $\ll$ with a \defi{partition} with (at most) $n$ parts, and 
write $\ll\in\bb{N}^n_{\rm{dom}}$. When $\ll\in\Z^n$ is not dominant,
it must contain \defi{inversions}, i.e., pairs $(i,j)$ with $i<j$ and $\ll_i<\ll_j$.
The \defi{size} of $\ll$ is $\abs{\ll} = \ll_1+\cdots+\ll_n$. If $\ll$ is a partition,
we write $\ll'$ for the \defi{conjugate} partition, where $\ll'_i$ counts the
number of parts $\ll_j$ with $\ll_j\geq i$. We write $(b^a)$ for the sequence
$(b,\dots,b)$, where $b$ is repeated $a$ times.

If $V$ is a complex vector space of dimension $n$, and if
$\ll\in\Z^n_{\rm{dom}}$, we write $\SS_{\ll}V$ for the corresponding irreducible
representation of $\GL(V)$. If $\ll=(d,0,\dots,0)$, then $\SS_{\ll}V = \Sym^d V$,
and if $\ll=(1^n)$, then $\bb{S}_{\ll}V=\bw^n V$. More generally, one can define
$\SS_{\ll}\mc{E}$ for any locally free sheaf $\mc{E}$ of rank $n$ on an algebraic
variety $X$ over $\C$. 

If $N\geq 0$ and $\mc{U}_1$, $\mc{U}_2$ are two complex vector spaces (or more generally,
locally free sheaves on some variety $X$ over $\C$), then \defi{Cauchy's formula} 
\cite{weyman}*{Theorem 2.3.2} gives a decomposition
\begin{equation}
\label{eq:cauchy}
 \bwedge^N\bigl(\mc{U}_1 \oo \mc{U}_2\bigr) = \bigoplus_{{\ll}\vdash  N}
 \SS_{\ll}\mc{U}_1 \oo \SS_{\ll'}\mc{U}_2\, ,
\end{equation}
where the tensor product is considered over $\C$ (or over $\mc{O}_X$).

We write again $\Gr = \op{Gr}_2(V^{\vee})$ 
and consider the tautological exact sequence
\begin{equation}
\label{eq:uv-exact}
\begin{tikzcd}[column sep=18pt]
0 \ar[r]& \mc{U} \ar[r]& V \oo \mc{O}_{\Gr} \ar[r]& \mc{Q} \ar[r]& 0\, ,
\end{tikzcd}
\end{equation}
where $\mc{Q}$ is the tautological rank $2$ quotient of $V$ and $\mc{U}$
is the rank $n-2$ tautological subbundle. As usual, $\mc{O}_\Gr(1)\cong \bw^2\mc{Q}$ is the Pl\"ucker line bundle realizing the embedding $\Gr\hookrightarrow \mathbf{P}\bigl(\bw^2 V^{\vee}\bigr)$. 
We then have
\[
H^i(\Gr,\mc{O}_\Gr(1)) = 
\begin{cases}
\bbwedge^2 V & \text{if $i=0$},
\\
0 & \text{otherwise}. 
\end{cases}
\]

More generally, Bott's theorem describes the cohomology groups 
of sheaves of the form 
$\SS_{\a}\mc{Q}\oo\SS_{\b}\mc{U}$,
where $\a=(\a_1,\a_2)$ and $\b=(\b_1,\dots,\b_{n-2})$ are 
dominant weights. Note that $\SS_{\a}\mc{Q}=(\Sym^{\a_1-\a_2}\mc{Q})(\a_2)$. We let 
$\gamma=(\a|\b)$ denote the concatenation of $\a$ and 
$\b$. Let $\delta=(n-1,\dots,0)$ and set
$\gamma+\delta\coloneqq  (\gamma_1+n-1,\gamma_2+n-2,\dots,\gamma_n)$. 
We write $\rm{sort}(\gamma+\delta)$ for the sequence obtained by arranging
the entries of $\gamma+\delta$ in non-increasing order, and put
\begin{equation}
\label{eq:tildegamma}
\tl{\gamma}=\rm{sort}(\gamma+\delta)-\delta.
\end{equation}

\begin{theorem}[\cite{weyman}*{Corollary~4.1.9}]
\label{thm:bott}
With the above notation, if $\gamma+\delta$ has repeated entries
(or equivalently, if $\gamma_i-i=\gamma_j-j$ for some 
$i\neq j$), then
$H^i\bigl(\Gr,\SS_{\a}\mc{Q}\oo \SS_{\b}\mc{U}\bigr)=0$ 
for all $i$. Otherwise, writing $\ell$ for the number of inversions 
in $\gamma+\delta$, we have that
\[
H^i\bigl(\Gr,\SS_{\a}\mc{Q}\oo
\SS_{\b}\mc{U}\bigr)=
\begin{cases}
\SS_{\tl{\gamma}}V & \text{if $i=\ell$},
\\
0 & \text{otherwise}.
\end{cases}
\]
\end{theorem}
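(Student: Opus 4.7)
The plan is to deduce Bott's theorem for $\Gr = \Grass_2(V^\vee)$ from the Borel--Weil--Bott theorem on the full flag variety, by means of a pushforward argument. Let $\pi\colon \mathrm{Fl}(V^\vee) \to \Gr$ be the natural projection, whose fibers are $\mathrm{Fl}_2 \times \mathrm{Fl}_{n-2}$, and let $\mc{L}_1,\ldots,\mc{L}_n$ denote the tautological line bundles on $\mathrm{Fl}(V^\vee)$ coming from the successive quotients of the universal flag. For each weight $\gamma = (\gamma_1,\ldots,\gamma_n) \in \Z^n$, put $\mc{L}_\gamma = \mc{L}_1^{\otimes \gamma_1} \otimes \cdots \otimes \mc{L}_n^{\otimes \gamma_n}$. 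When $\gamma = (\alpha \mid \beta)$ with $\alpha$ and $\beta$ dominant for $\GL_2$ and $\GL_{n-2}$ respectively, the fiberwise Borel--Weil theorem gives $R^0\pi_*\mc{L}_\gamma = \SS_\alpha\mc{Q} \otimes \SS_\beta\mc{U}$ and $R^j\pi_*\mc{L}_\gamma = 0$ for $j>0$. The Leray spectral sequence then collapses to give
\[
H^i\bigl(\Gr,\SS_\alpha\mc{Q} \otimes \SS_\beta\mc{U}\bigr) \cong H^i\bigl(\mathrm{Fl}(V^\vee), \mc{L}_\gamma\bigr), \quad i \geq 0,
\]
reducing the theorem to computing line bundle cohomology on the full flag variety.

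The second ingredient is the classical Borel--Weil--Bott computation on $\mathrm{Fl}(V^\vee)$. Equipping the Weyl group $S_n$ with the dot action $w\cdot\gamma = w(\gamma+\delta)-\delta$, where $\delta=(n-1,\ldots,1,0)$, one has $H^i(\mathrm{Fl}(V^\vee),\mc{L}_\gamma)=0$ for every $i$ whenever $\gamma+\delta$ has a repeated entry (equivalently, $\gamma$ is fixed by some simple reflection under the dot action), while otherwise $H^i(\mathrm{Fl}(V^\vee),\mc{L}_\gamma)$ is concentrated in the single degree $i=\ell(w)$ and equals $\SS_{w\cdot\gamma}V$, where $w$ is the unique element of $S_n$ that makes $w(\gamma+\delta)$ strictly decreasing. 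The combinatorial fact that $\ell(w)$ equals the number of inversions of $\gamma+\delta$, together with $w\cdot\gamma = \mathrm{sort}(\gamma+\delta)-\delta = \tilde{\gamma}$, recovers exactly the formulas in the theorem.

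Borel--Weil--Bott itself I would prove by induction on $\ell(w)$, exploiting the $\P^1$-bundle structure of $\mathrm{Fl}(V^\vee) \to \mathrm{Fl}(V^\vee)/P_{\alpha_i}$ for each simple root $\alpha_i$. The inductive step relates the cohomology of $\mc{L}_\gamma$ to that of $\mc{L}_{s_i\cdot\gamma}$ with a homological shift of $1$, under the non-wall hypothesis $(\gamma+\delta)_i \neq (\gamma+\delta)_{i+1}$. The main obstacle is the wall case $(\gamma+\delta)_i = (\gamma+\delta)_{i+1}$, in which $\mc{L}_\gamma$ restricts to $\mc{O}_{\P^1}(-1)$ on each fiber; then the entire relative cohomology vanishes, and this vanishing must be propagated through the remaining projective-bundle tower. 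Once Borel--Weil--Bott is in hand, combining it with the Leray reduction above delivers the theorem.
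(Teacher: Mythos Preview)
The paper does not actually prove this statement; it is quoted as \cite{weyman}*{Corollary~4.1.9} and used as a black box throughout. So there is no ``paper's own proof'' to compare against.

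Your outline is correct and is precisely the standard route taken in Weyman's book: reduce from $\Gr$ to the full flag variety via the relative Borel--Weil theorem (which gives $R\pi_*\mc{L}_{(\alpha|\beta)} \cong \SS_\alpha\mc{Q}\otimes\SS_\beta\mc{U}$ concentrated in degree zero when $\alpha,\beta$ are dominant), collapse the Leray spectral sequence, and then invoke the classical Borel--Weil--Bott theorem on $\mathrm{Fl}(V^\vee)$. The combinatorial identifications you give---that the length $\ell(w)$ of the sorting permutation equals the number of inversions in $\gamma+\delta$, and that $w\cdot\gamma=\tilde\gamma$---are exactly right. One minor point: your description of the fibers as ``$\mathrm{Fl}_2\times\mathrm{Fl}_{n-2}$'' is fine but informal; more precisely the fiber over a point of $\Gr$ is $\mathrm{Fl}(\mc{Q}_x)\times\mathrm{Fl}(\mc{U}_x)\cong\P^1\times\mathrm{Fl}(\C^{n-2})$.
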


We record two consequences of Bott's theorem that will be useful later.

\begin{lemma}
\label{lem:q-dd-beta-vanishing}
Suppose $0\leq d\leq n-3$ and $q>d$. If $\a=(q-d,-d)$ and if 
$\b\vdash d+1$, then
$H^{d+1}\bigl(\Gr,\SS_{\a}\mc{Q}\oo \SS_{\b}\mc{U}\bigr)=0$. 
\end{lemma}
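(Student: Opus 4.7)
The plan is to apply Bott's theorem (Theorem~\ref{thm:bott}) directly to the sheaf $\SS_{\a}\mc{Q}\oo\SS_{\b}\mc{U}$. Form the concatenated weight $\gamma = (\a\mid\b) = (q-d,-d,\b_1,\ldots,\b_{n-2})$ and the shift $\d = (n-1,n-2,\ldots,0)$, and write $g_i = (\gamma+\d)_i$, so that
\[
g_1 = q-d+n-1, \quad g_2 = n-2-d, \quad g_{2+j} = \b_j + n-2-j \ \text{ for } \ 1\leq j\leq n-2.
\]
Since $\b$ is dominant, $(g_3,\ldots,g_n)$ is strictly decreasing, and $g_1>g_2$ (because $q>-1$); hence every inversion in $\gamma+\d$ pairs position $1$ or $2$ with a later position. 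By Bott, it suffices to show that either $\gamma+\d$ has a repeated entry, or its number of inversions is not equal to $d+1$.

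Set $m = \#\{j : \b_j\geq q-d+2+j\}$ and $k = \#\{j : \b_j\geq j-d+1\}$; since $\b_j-j$ is strictly decreasing in $j$, each of these index sets is a prefix $[1,m]$ or $[1,k]$, and the total inversion count of $\gamma+\d$ equals $m+k$. The case $d=0$ is immediate: $\b=(1,0,\ldots,0)$ forces $g_2=g_3=n-2$, and Bott gives the vanishing. For $d\geq 1$, the inequality $\b_j\geq j-d+1$ is automatic when $j\leq d-1$ (the right-hand side is $\leq 0$), so $k\geq d-1$; conversely, $k\geq d+1$ forces $\b_{d+1}\geq 2$, hence $\abs{\b}\geq 2(d+1)$ by dominance, contradicting $\abs{\b}=d+1$. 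Thus $k\in\{d-1,d\}$, and assuming $m+k=d+1$ we must have $m\in\{1,2\}$.

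If $k=d$ and $m=1$, then $\b_1\geq q-d+3$ and $\b_d\geq 1$, so by dominance $\abs{\b}\geq(q-d+3)+(d-1)=q+2$; together with $\abs{\b}=d+1$ this forces $q\leq d-1$, contradicting $q>d$. If $k=d-1$ and $m=2$, then $k<d$ gives $\b_d=0$, so $g_{2+d}=n-2-d=g_2$, contradicting the standing assumption that $\gamma+\d$ has no repeats. In either case Bott's theorem yields $H^{d+1}\bigl(\Gr,\SS_{\a}\mc{Q}\oo\SS_{\b}\mc{U}\bigr)=0$.

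The main subtle point I expect is the case $(k,m)=(d-1,2)$: for large $d$ it is numerically consistent with $\abs{\b}=d+1$, so it cannot be ruled out by a size inequality on $\b$. It must instead be killed by the forced coincidence $g_{2+d}=g_2$, and this is where the hypothesis $d\leq n-3$ enters, ensuring both that $2+d\leq n-1$ is a valid coordinate of $\gamma+\d$ and that partitions of $d+1$ actually fit inside $\b\in\N^{n-2}_{\rm{dom}}$.
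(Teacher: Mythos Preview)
Your proof is correct and takes essentially the same approach as the paper: both apply Bott's theorem directly, reduce to the case where $\gamma+\delta$ has distinct entries, and then run a short case analysis on $\beta$ forced by $|\beta|=d+1$. The paper organizes the analysis by first pinning down $\beta$ explicitly (to $(1^{d+1})$ or $(2,1^{d-1})$) and then reading off either a repeat or the wrong inversion count, whereas you parametrize directly by the inversion counts $(k,m)$; these are equivalent bookkeeping choices rather than genuinely different arguments.
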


\begin{proof}
Let $\gamma = (q-d,-d,\b_1,\dots,\b_{n-2})$.  If $d=0$, then,
since $\abs{\b}=d+1=1$, we have $\b_1=1$, therefore
$\gamma_2-2 = \gamma_3-3$.  Hence,
$H^{d+1}(\Gr,\SS_{\a}\mc{Q}\oo \SS_{\b}\mc{U})=0$ by
Theorem~\ref{thm:bott}. We therefore assume that $d>0$ and
$H^{d+1}(\Gr,\SS_{\a}\mc{Q}\oo \SS_{\b}\mc{U})\neq 0$,
and seek a contradiction. By Theorem~\ref{thm:bott}, the integers
$\gamma_i-i$ for $i=1,\dots,n$ must be pairwise distinct. We then have
\[
\gamma_3-3>\cdots>\gamma_{d+2}-(d+2) \geq -(d+2)=\gamma_2-2.
\]
Since the numbers $\gamma_i-i$ are distinct, the last inequality above
must be strict, and therefore $\b_d\geq 1$. If $\b_{d+1}>0$ then the
condition $\abs{\b}=d+1$ implies $\b=(1^{d+1})$, and in particular
$\gamma_{d+3}=\b_{d+1}=1$, so $\gamma_2-2=\gamma_{d+3}-(d+3)$,
which is a contradiction. We must then have $\b_{d+1}=0$, in which
case $\b=(2,1^{d-1})$. Therefore,
\begin{align*}
\gamma_1-1 & = q-d-1>-1=\gamma_3-3>\cdots > \gamma_{d+2}-(d+2)\\
&>\gamma_2-2=-(d+2)>-(d+3)=\gamma_{d+3}-(d+3)>\cdots
\end{align*}
so that $\gamma+\delta$ has precisely $d$ inversions, namely
$(2,3),(2,4),\dots,(2,d+2)$. Applying Theorem~\ref{thm:bott},
we find that $H^{d+1}\bigl(\Gr,\SS_{\a}\mc{Q}\oo \SS_{\b}\mc{U}\bigr)=0$,
again a contradiction.
\end{proof}

\begin{lemma}
\label{lem:Sa-Q-vanishing}
 Let $\a=(\a_1,\a_2)\in\Z^2_{\mathrm{dom}}$. Then
  $H^j(\Gr,\SS_{\a}\mc{Q}) = 0\mbox{ if }j\not\in\{0,n-2,2n-4\}.$
Moreover, we have that
\begin{equation}
\label{eq:Sa-Q-only-0}
 H^j(\Gr,\SS_{\a}\mc{Q}) = 0\mbox{ for }j\neq 0\mbox{ if }\a_2\geq-(n-2),\mbox{ and}
\end{equation}
\begin{equation}
\label{eq:Sa-Q-only-n2}
 H^j(\Gr,\SS_{\a}\mc{Q}) = 0\mbox{ for }j\neq n-2\mbox{ if }\a_1>-n\mbox{ and }\a_2<0.
\end{equation}
\end{lemma}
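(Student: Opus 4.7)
The plan is to apply Bott's theorem (Theorem~\ref{thm:bott}) directly, taking $\b=(0^{n-2})$ so that $\SS_{\b}\mc{U}=\mc{O}_{\Gr}$ and the sheaf in question is just $\SS_{\a}\mc{Q}$. With $\gamma=(\a_1,\a_2,0,\dots,0)$ and $\delta=(n-1,n-2,\dots,1,0)$, the shifted sequence is
\[
\gamma+\delta=\bigl(\a_1+n-1,\,\a_2+n-2,\,n-3,\,n-4,\,\dots,\,1,\,0\bigr).
\]
The tail $(n-3,n-4,\dots,0)$ occupies positions $3$ through $n$ and uses up exactly the values $\{0,1,\dots,n-3\}$. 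Non-vanishing of some $H^j$ forces the first two entries to avoid this set and each other, which translates into two disjoint admissible intervals for each of $\a_1$ and $\a_2$, tempered by the dominance constraint $\a_1\ge\a_2$.

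Next, I would enumerate the surviving cases and, in each, count inversions in $\gamma+\delta$ to read off the unique cohomological degree. Under dominance there are exactly three:
(A) $\a_1\ge -1$ and $\a_2\ge 0$: no inversions, giving $H^0$;
(B) $\a_1\ge -1$ and $\a_2\le -(n-1)$: the second entry $\a_2+n-2\le -1$ produces $n-2$ inversions with the tail, with no further inversions (the first entry is still $\ge n-2$), giving $H^{n-2}$;
(C) $\a_1\le -n$ and $\a_2\le -(n-1)$: both of the first two entries are negative, each producing $n-2$ inversions with the tail, while the pair $(1,2)$ contributes none since $\a_1+n-1\ge\a_2+n-1>\a_2+n-2$, giving $H^{2n-4}$.
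All other choices of $(\a_1,\a_2)$ place $\a_1+n-1$ or $\a_2+n-2$ inside $\{0,\dots,n-3\}$, producing a repeated entry in $\gamma+\delta$, and hence total vanishing by Bott's theorem. This already proves the first assertion that $H^j(\Gr,\SS_{\a}\mc{Q})$ can only be non-zero for $j\in\{0,n-2,2n-4\}$.

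The two refinements \eqref{eq:Sa-Q-only-0} and \eqref{eq:Sa-Q-only-n2} then follow by intersecting the hypotheses with the three cases above. If $\a_2\ge-(n-2)$, then either $\a_2\ge 0$ (Case A, so only $H^0$ survives) or $\a_2\in\{-(n-2),\dots,-1\}$, in which case $\a_2+n-2\in\{0,\dots,n-3\}$ creates a repetition and everything vanishes; in either scenario the only surviving degree is $j=0$. Similarly, if $\a_1>-n$ and $\a_2<0$, then the admissibility of $\a_2$ forces $\a_2\le-(n-1)$ (else $\a_2+n-2$ collides with the tail), and the condition $\a_1\ge-(n-1)$ combined with admissibility of $\a_1$ forces $\a_1\ge-1$; this puts us squarely in Case B with $H^{n-2}$ the only possibly non-zero group, as required.

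The proof is entirely bookkeeping; the only place to stumble is mis-enumerating the admissible intervals, particularly conflating the borderline values $\a_i=-1$ or $\a_i=-(n-1)$ with the forbidden ones. I would therefore present the argument by first writing down the admissibility conditions and then tabulating the three surviving cases before deriving the two refined vanishings as corollaries. No deeper tool beyond Theorem~\ref{thm:bott} is needed.
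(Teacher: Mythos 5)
Your proof is correct and follows essentially the same route as the paper's: apply Bott's theorem with $\b=(0^{n-2})$, observe that $\gamma+\delta=(\a_1+n-1,\a_2+n-2,n-3,\dots,0)$ forces the first two entries to lie outside the consecutive block $\{0,\dots,n-3\}$ (else everything vanishes by repetition), enumerate the three admissible cases under dominance, and count inversions. The two refined vanishings then drop out by intersecting the hypotheses with the admissibility intervals, exactly as you do.
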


\begin{proof}
We let $\gamma=(\a_1,\a_2,0^{n-2})$, so that $\gamma+\delta =
(\a_1+n-1,\a_2+n-2,n-3,\dots,0)$. If $\gamma+\delta$ has repeated
entries, then all the cohomology groups of $\SS_{\a}\mc{Q}$ vanish
by Theorem~\ref{thm:bott}. We therefore assume this is not the case. We have 
$(\gamma+\delta)_1 > (\gamma+\delta)_2$ and
$(\gamma+\delta)_3>\cdots>(\gamma+\delta)_n$,
so the only possible inversions in $\gamma+\delta$ are of the form $(i,j)$
with $i\leq 2$ and $j\geq 3$. Since the entries $(\gamma+\delta)_3,
\dots,(\gamma+\delta)_n$ are consecutive integers, if $(i,j)$ is an
inversion for some $j\geq 3$, then it is an inversion for all $j\geq 3$.
This shows that the total number of inversions is
\begin{itemize}[itemsep=2pt]
 \item $0$, if $(\gamma+\delta)_2>(\gamma+\delta)_3$, 
 or equivalently, if $\a_2\geq 0$.
 \item $(n-2)$, if $(\gamma+\delta)_1>(\gamma+\delta)_3$ and
 $(\gamma+\delta)_n>(\gamma+\delta)_2$, or equivalently, 
 if $\a_1\geq -1$ and $\a_2<-(n-2)$.
 \item $(2n-4)$, if $(\gamma+\delta)_n>(\gamma+\delta)_1$, 
 or equivalently, if $\a_1\leq -n$.
\end{itemize}

Based on Theorem~\ref{thm:bott}, we now conclude that the first vanishing holds. Moreover, if $\a_2\geq-(n-2)$ 
then the above cases show that $\gamma+\delta$
cannot have any inversions, proving \eqref{eq:Sa-Q-only-0}. 
If $\a_2<0$, then $\gamma+\delta$ has at least one inversion, 
while if $\a_1>-n$ then it cannot have $2n-4$ inversions,
so \eqref{eq:Sa-Q-only-n2} holds as well.
\end{proof}

\subsection{Proof of Theorem \ref{thm:finite}}
\label{sec:Proof_Finite}

As a first application of Bott's theorem, we  determine the Hilbert function of Koszul modules when 
$\B(V,K)$ is finite of length, say, $\ell$. The proof is along the lines of \cite{AFPRW2}*{Theorem 1.3}, 
and we skip some details which can be found in \cite{AFPRW2}. We consider the negatively indexed Koszul complex 
\begin{equation}
\label{eqn:Koszul_finite}
\mathcal{K}^\bullet:\, 0\longrightarrow\bigwedge^mK\otimes \mathcal{O}_\mathbf{G}(1-m)\longrightarrow
\cdots \longrightarrow \bigwedge^2K\otimes\mathcal{O}_\mathbf{G}(1)\longrightarrow K\otimes \mathcal{O}_\mathbf{G}
\stackrel{\mathrm{ev}_K}\longrightarrow \mathcal{O}_\mathbf{G}(1)\longrightarrow 0
\end{equation}
associated to the evaluation map $\mathrm{ev}_K\colon K\otimes\mathcal{O}_\mathbf{G}\to \mathcal{O}_\mathbf{G}(1)$. Since the cokernel of $\mathrm{ev}_K$ is supported on $\B(V,K)$, the complex $\mathcal{K}^\bullet$ is exact on the complement of $\B(V,K)$. As before,  $\mc{Q}$ is the universal quotient bundle defined by \eqref{eq:uv-exact}. Twisting the 
complex \eqref{eqn:Koszul_finite} by $\Sym^q\mc{Q}$ and taking hypercohomology, we obtain two 
spectral sequences abutting to the same limit
\[
'E_1^{-i,j}=\bigwedge^iK\otimes H^j\left(\mathbf{G}, \mathbb{S}_{(q+1-i,1-i)}\mc{Q}\right)
\]
respectively
\[
''E_2^{-i,j}=H^j\bigl(\mathbf{G}, \mathcal{H}^{-i}(\mathcal{K}^\bullet\otimes \Sym^q\mc{Q})\bigr).
\]

Using Theorem \ref{thm:bott} as in the proof of \cite{AFPRW2}*{Theorem 1.3}, 
we readily obtain
\[
W_q(V,K)={ }'E_\infty^{0,0}, \mbox{ for all }q\ge n-3.
\]

On the other hand, since $\mathcal{H}^0(\mathcal{K}^\bullet\otimes \Sym^qQ)=
\mathcal{O}_{\B(V,K)}(1)\otimes \Sym^qQ$, we obtain
\[
''E_2^{0,0}=H^0\bigl(\B(V,K),\mathcal{O}_{\B(V,K)}(1)\otimes \Sym^q\mc{Q}\bigr).
\]
Furthermore, the support of $\mathcal{H}^{-i}(\mathcal{K}^\bullet\otimes \Sym^q\mc{Q}))$ 
is finite for all $j$, which implies that $''E_2^{-i,j}=0$ for all $i$ and all $j\ne 0$. 
In particular,  $''E_2^{0,0}={ }''E_\infty^{0,0}$, concluding the proof. 
\hfill
$\Box$

\begin{example}\label{ex:non-Kähler}
 Returning to the pair $(V,K)$ from Example \ref{ex:non-red-bvk}, Theorem \ref{thm:finite} implies that $\dim W_q(V,K)=2(q+1)$ for $q\ge 1$. This algebraic data corresponds to the 2-step nilpotent Lie algebra $\mathfrak{h}_4$ studied in \cite{Sal01}*{Theorem 3.3}. With the notation of \cite{Uga07}*{Theorem 8}, this algebra is defined by  $\mathfrak{h}_4\coloneqq V \oplus (K^\perp)^\vee$, with Lie bracket 
 $[(v_1,z),(v_2,z')]= (0,z_1)$, $[(v_1,z),(v_3,z')]= (0,z_2)$, 
 $[(v_2,z),(v_4,z')]= (0,z_2)$, and $[(v_i,z),(v_j,z')]= (0,0)$ in all other cases, for $z,z'\in (K^\perp)^\vee$. For any lattice $G$ in the associated Lie group $H$, the quotient $M = H/G$ is a compact \emph{non-K\"ahler} Calabi-Yau 3-fold with fundamental group $G$, and the corresponding resonance variety $\mathcal{R}(G)$ is isomorphic to $\mathcal{R}(V,K)$. Hence its associated Koszul module is supported on a non-reduced projective line.
\end{example}

\begin{example}
\label{ex:HA_finite}
The fundamental group of the complement of a graphic arrangement has two-dimensional 
resonance. This example will be studied in more detail in Subsection \ref{subsec:graphic}.
\end{example}

An immediate consequence of Theorem \ref{thm:finite} is the following.

\begin{corollary}
\label{thm:transversal}
Assume $\dim K=2n-4$ and that $\B(V,K)$ is finite. 
Then the intersection $\B(V,K)$ is transverse 
if and only if $\Rproj(V,K)$ is reduced, in which case the 
Hilbert function of the Koszul module equals
 \[
 \dim W_q(V,K) = \frac{1}{n-1}\binom{2n-4}{n-2}\cdot(q+1),\mbox{ for }q\geq n-3.
 \]    
\end{corollary}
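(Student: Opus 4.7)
The strategy is to combine Theorem \ref{thm:finite} with a classical intersection-theoretic computation of the length of $\B(V,K)$, and to extract the transversality statement from the geometry of the incidence correspondence together with Proposition \ref{prop:finite-reduced}. Under the hypothesis $\dim K = 2n-4$, the linear subspace $\P K^{\perp} \subset \P(\bwedge^2 V^{\vee})$ has codimension $2n-4$, which matches $\dim \Gr$ exactly. Since $\B(V,K) = \Gr \cap \P K^{\perp}$ is assumed zero-dimensional, the intersection is proper; and since $\Gr$ is Cohen--Macaulay and $\P K^{\perp}$ is linear of complementary codimension, the length of $\B(V,K)$ equals the Pl\"ucker degree of $\Gr$. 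A standard calculation, yielding the $(n-2)$-nd Catalan number, gives
\[
\deg \Grass_2(V^{\vee}) = \frac{(2n-4)!}{(n-1)!\,(n-2)!} = \frac{1}{n-1}\binom{2n-4}{n-2}.
\]

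Plugging $\ell = \frac{1}{n-1}\binom{2n-4}{n-2}$ into Theorem \ref{thm:finite} then yields the claimed formula $\dim W_q(V,K) = \ell(q+1)$ for $q \geq n-3$. For the equivalence of transversality and reducedness, assume first that $\B(V,K)$ is transverse, hence a disjoint union of reduced points $[L_1], \dots, [L_{\ell}]$. Using the incidence correspondence $\Xi \subset \P \times \Gr$, realized as the projectivization of the tautological quotient bundle $\mc{Q}$, one has $\pr_2^{-1}(\B(V,K)) = \bigsqcup_i \P(L_i)$, a disjoint union of reduced projective lines. As discussed in Section \ref{sec:defs} and \cite{AFRS}*{Lemma 2.5}, the projection $\Rproj(V,K) = \pr_1(\pr_2^{-1}(\B(V,K)))$ is then the reduced union of the lines $\P(L_i) \subset \P$, so $\Rproj(V,K)$ is reduced.

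The converse direction, namely that reducedness of $\Rproj(V,K)$ forces $\B(V,K)$ to be transverse, is the content of Proposition \ref{prop:finite-reduced}. The main obstacle is to track the scheme structure through the incidence diagram \eqref{eq:res-base-diagram}: any non-reduced structure at a point $[L] \in \B(V,K)$ must be shown to propagate to nilpotents along the corresponding line $\P(L) \subset \Rproj(V,K)$. This requires a careful local analysis at $[L]$, ensuring that non-trivial nilpotents in the ideal of $\B(V,K)$ cannot be killed upon projection along $\pr_1$, and is the only step that genuinely uses more than elementary intersection theory.
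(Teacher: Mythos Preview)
Your computation of the Hilbert function is correct and is exactly what the paper does: since $\Gr$ is Cohen--Macaulay and $\P K^{\perp}$ is a linear space of complementary codimension, any finite intersection has length equal to $\deg\Gr = c_{n-2}$, and Theorem~\ref{thm:finite} gives the formula. Note that this holds whenever $\B(V,K)$ is finite---transversality is not needed for the Hilbert function computation itself.

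For the equivalence, however, you have the roles of the two implications reversed. Proposition~\ref{prop:finite-reduced} says: if $\B(V,K)$ is finite and \emph{reduced}, then $\RR(V,K)$ is strongly isotropic (equivalently, by \cite{AFRS}*{Theorem~5.1}, $\Rproj(V,K)$ is reduced). That is the implication ``$\B$ transverse $\Rightarrow$ $\Rproj$ reduced,'' which is precisely the direction you tried to prove by hand via the incidence correspondence. Your incidence argument is not quite rigorous as stated, since the identification $\Rproj(V,K) = \pr_1\bigl(\pr_2^{-1}(\B(V,K))\bigr)$ from \cite{AFRS}*{Lemma~2.5} is only asserted set-theoretically, and the scheme structure on $\Rproj(V,K)$ is defined as the support of $W(V,K)$, not as a scheme-theoretic image; Proposition~\ref{prop:finite-reduced} is exactly the tool that closes this gap.

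The genuine converse---``$\Rproj(V,K)$ reduced $\Rightarrow$ $\B(V,K)$ transverse''---is \emph{not} the content of Proposition~\ref{prop:finite-reduced}. It follows instead from Theorem~\ref{thm:reduced}: once $\Rproj(V,K)$ is reduced, \cite{AFRS}*{Theorem~5.1} gives that $\RR(V,K)$ is strongly isotropic, and then Theorem~\ref{thm:reduced} forces $\B(V,K)$ to be scheme-theoretically the disjoint union of the sub-Grassmannians $\Gr_t$, which here are reduced points. Your informal description (``non-reduced structure at $[L]$ must propagate to nilpotents along $\P(L)$'') actually describes the contrapositive of this direction, and it is not what Proposition~\ref{prop:finite-reduced} proves---that proposition's proof goes the other way, turning a failure of separability into a tangent vector witnessing non-transversality of $\B$.
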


\begin{proof}
We apply Theorem \ref{thm:finite} and use that the degree of $\Gr$ in its Pl\"ucker embedding  
equals the Catalan number $c_{n-2}=\frac{1}{n-1}\binom{2n-4}{n-2}$.  In particular, the length $\ell$ 
in \eqref{eqn:Chen_Finite_Intersection} equals $c_{n-2}$.
\end{proof}
If we try and adapt the proof of Theorem \ref{thm:finite} to the case where the linear section 
$\B(V,K)$ of the Grassmannian is positive-dimensional, the second spectral sequence becomes 
quickly uncontrollable. To address the positive-dimensional case, we  impose extra assumptions 
and work with a Koszul complex on the blow-up of the Grassmannian along this linear section. 

\subsection{Koszul modules associated to $K3$ surfaces}
Corollary \ref{thm:transversal} has geometric applications provided by Koszul modules associated to vector bundles \cites{AFRS, AFRW}. Suppose $X$ is a polarized $K3$ surface with $\Pic(X)\cong \Z\cdot H$, where $H^2=4r-2$, with $r\geq 2$. Then there exists a \emph{unique} stable vector bundle $E$ on $X$ with Mukai vector $v(E)=(2, H, r)$. For a smooth curve $C\in |H|$, let $E_C\coloneqq E_{|C}$ be the restriction of this vector bundle. Then $\det(E_C)\cong \omega_C$, $h^0(C,E_C)=h^0(X,E)=r+2$ and $E_C$ is stable.  We consider the determinant map 
\[
d\colon \bwedge^2 H^0(C,E_C)\longrightarrow H^0(C,\omega_C)
\]
and the associated Koszul module $W(E_C)\coloneqq W\bigl(H^0(C,E_C)^{\vee}, K\bigr)$, 
where $K^{\perp}=\ker(d)$.

\begin{proposition}
With $X$ as above, for a general curve $C\in|H|$ the projective resonance 
$\Rproj(E_C)\subseteq \P H^0(C, E_C)$ is a reduced union of 
$\frac{1}{r+1}\binom{2r}{r}$ projective lines. Furthermore, one has  
\[
\dim \ W_q(E_C)=\frac{1}{r+1}\binom{2r}{r}\cdot (q+1), \mbox{ for } q\geq r-1.
\]
\end{proposition}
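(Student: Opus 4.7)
The plan is to reduce everything to Corollary~\ref{thm:transversal}, applied to the pair $(V,K)$ with $n=\dim V = r+2$. The degree of $\Gr = \Grass_2(V^{\vee})$ equals $c_{n-2} = c_r = \frac{1}{r+1}\binom{2r}{r}$, so the claimed Hilbert function is precisely the output of the corollary once its hypotheses are established. There are three things to check: that $\dim K = 2n-4$, that $\B(V,K)$ is finite, and that the $c_r$ geometric points arising from the intersection are all distinct (equivalently, that the intersection is transverse, equivalently, that $\Rproj(E_C)$ is reduced).

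For the first point, note that the curve $C\in|H|$ has genus $g = \tfrac{1}{2}H^2 + 1 = 2r$, so $\dim H^0(C,\omega_C) = 2r$, while $\dim\bw^2 H^0(C,E_C) = \binom{r+2}{2}$. Thus $\dim K = \binom{r+2}{2} - 2r = 2r = 2n-4$ is equivalent to surjectivity of the determinant map $d$. For $C$ general in $|H|$ this surjectivity is a standard property of the Lazarsfeld--Mukai bundle $E_C$; it can be proved either by a direct degeneration argument (specializing $(X,H)$) or by quoting Mukai's results on the Petri-type map associated to rank-two bundles on $K3$ sections.

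For the second point, I would identify $\B(V,K)$ with the minimal pencils on $C$. A point $[\Lambda]\in \Gr\cap \mathbf{P}K^\perp$ is a $2$-plane $\Lambda\subseteq H^0(C,E_C)$ with $s\wedge t = 0$ in $H^0(\omega_C) = H^0(\det E_C)$ for every $s,t\in\Lambda$; this happens precisely when the evaluation $\Lambda\otimes \mathcal{O}_C \to E_C$ factors through a line subsheaf $L\hookrightarrow E_C$ with $\Lambda\subseteq H^0(L)$. Stability of $E_C$ (with $\mu(E_C) = g-1$) forces $\deg L \le g-2$, and Brill--Noether theory on a general curve of genus $g = 2r$ rules out any pencil of degree $<r+1$ and exhibits the locus $W^1_{r+1}(C)$ of minimal pencils as a reduced zero-dimensional scheme of cardinality $c_r$, by Castelnuovo's formula $(2r)!/\bigl(r!\,(r+1)!\bigr)$. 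Each minimal pencil $L$ is then realized as a line subbundle of $E_C$ with $h^0(L)=2$, giving a distinct point $[H^0(L)]\in\B(V,K)$.

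For the third point, since $\mathbf{P}K^\perp$ has codimension $2r = \dim \Gr$ in $\mathbf{P}(\bw^2 V^\vee)$, a finite intersection $\B(V,K)$ automatically has length $\deg(\Gr) = c_r$. Combined with the $c_r$ distinct geometric points produced in the previous step, this forces $\B(V,K)$ to be reduced, hence the intersection is transverse. Corollary~\ref{thm:transversal} then yields both the reducedness of $\Rproj(E_C)$ and the claimed Hilbert function. The description of $\Rproj(E_C)$ as a union of $c_r$ projective lines is then immediate from the incidence correspondence \eqref{eq:rvk-pr}: each reduced point $[\Lambda]\in\B(V,K)$ maps to the line $\mathbf{P}\Lambda\subset \mathbf{P}H^0(C,E_C)$, and distinct minimal pencils yield distinct subspaces of sections. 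The main obstacle will be the identification of $\B(V,K)$ with $W^1_{r+1}(C)$, and specifically confirming for the \emph{particular} bundle $E_C$ (and not merely for an abstract stable bundle of the prescribed invariants) that every minimal pencil on $C$ embeds as a line subbundle, and that distinct pencils give distinct elements of $\B(V,K)$; this relies on the explicit Lazarsfeld--Mukai construction of $E$ and on the reducedness of $W^1_{r+1}(C)$ for generic $C$.
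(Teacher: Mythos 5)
Your proposal follows the same overall strategy as the paper: reduce to Corollary~\ref{thm:transversal} with $n=r+2$, identify $\B(V,K)$ with the minimal pencils $W^1_{r+1}(C)$, and use Petri generality of the generic $K3$ section to get a reduced finite scheme of the right cardinality. The paper compresses the identification into a citation of \cite{AFRW}*{Lemma~4.8} and the Petri-generality input into a citation of Lazarsfeld~\cite{La}; you reconstruct essentially the same argument and end with a correct application of Corollary~\ref{thm:transversal} and the incidence correspondence \eqref{eq:rvk-pr}.

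Two points deserve to be sharpened. First, the phrase ``Brill--Noether theory on a general curve of genus $2r$'' hides the actual input: $C$ is not an abstract general curve but a general member of $|H|$ on a $K3$ with $\mathrm{Pic}(X)=\Z\cdot H$, so what is needed is Lazarsfeld's theorem \cite{La} that such a curve is Brill--Noether--Petri general; the paper cites this explicitly. Second, and more importantly, your self-diagnosed ``main obstacle'' (injectivity of the map from minimal pencils to $\B(V,K)$) is the \emph{easier} direction. The harder point is surjectivity: a $2$-plane $\Lambda$ with decomposable image in $\ker(d)$ spans a rank-one subsheaf whose saturation $L\hookrightarrow E_C$ could a priori have any degree in $[r+1,\,2r-2]$ with $h^0(L)\ge 2$ --- stability only gives the upper bound $\deg L\le g-2$, and for $d>r+1$ the Brill--Noether locus $W^1_d(C)$ is positive-dimensional on a Petri-general curve. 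If any such $L$ occurred, $\B(V,K)$ would be infinite and your degree-count argument would collapse. Ruling this out requires the specific Lazarsfeld--Mukai structure of $E_C$ (not merely stability and the numerical invariants), and this is exactly what \cite{AFRW}*{Lemma~4.8} provides; without an argument of this kind the finiteness of $\B(V,K)$, on which everything else rests, is not established.
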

\begin{proof} Denoting 
$\Gr=\Grass_2\bigl(H^0(C, E_C)\bigr)$, the intersection $\P(K^{\perp})\cap \Gr$ corresponds to elements  $[0\neq s_1\wedge s_2]\in \P\bigl(\bigwedge^2 H^0(C, E_C)\bigr)$, where $s_1, s_2\in H^0(C, E_C)$ are such that $d(s_1\wedge s_2)=0$. From \cite{AFRW}*{Lemma 4.8} it follows that this intersection can be identified with the Brill-Noether variety $W^1_{r+1}(C)$. The curve $C$ being Petri general \cite{La}, the projective resonance $\Rproj(E_C)$ consists of $\frac{1}{r+1}{2r\choose r}$ reduced lines. The last statement follows directly from Corollary \ref{thm:transversal}.
\end{proof}

\section{Isotropic and separable components of the resonance}
\label{sect:iso-sep}

We recall from \cite{AFRS} a few definitions describing the structure of resonance varieties 
of Koszul modules. We fix throughout an $n$-dimensional complex vector space $V$. 

\begin{definition}
\label{def:isotropic}
Let $K\subseteq \bw^2 V$ be a subspace. 
A subspace $\ol{V}^\vee\subseteq V^{\vee}$ is called \defi{isotropic} 
(with respect to $K$) if $\bw^2 \ol{V}^\vee\subseteq K^{\perp}$. The resonance $\RR(V, K)$ is 
isotropic if it is linear and each of its irreducible components is isotropic.
\end{definition}

By definition, an isotropic subspace $\ol{V}\subseteq V^{\vee}$ 
is automatically contained in the resonance $\mathcal{R}(V,K)$. As pointed out in \cite{AFRS}, 
isotropicity  can be described by passing to the quotient, as follows.
Let $\ol{V}^{\vee}\subseteq V^{\vee}$ be a linear subspace 
corresponding to a surjective  map $\pi\colon V\to \ol{V}$ 
and let $\ol{K}$ be the image of $K$ in $\bigwedge^2\ol{V}$. 
Then $\ol{V}^{\vee}$ is isotropic if and only if $\ol{K}=0$.

\begin{example}
\label{ex:K-1dim}
 If $K$ is one-dimensional, the quotient map $\bigwedge^2V^\vee\to K^\vee\cong \C$ corresponds 
to a alternating quadratic form on $V^\vee$. Isotropicity 
with respect to $K$ coincides with isotropicity with respect to this 
form, and the base locus $\B(V,K)$ is the isotropic Grassmannian.
\end{example}

\begin{definition}
\label{def:separable}
A subspace $\ol{V}^{\vee}\subseteq V^{\vee}$  is said to be \defi{separable} (with respect to $K$) if 
$\bigl(\ol{V}^\vee \wedge V^{\vee} \bigr) \cap K^{\perp}
\subseteq \ol{V}^\vee \wedge \ol{V}^\vee$. 
The subspace $\ol{V}^\vee$ is \defi{strongly isotropic}
if it is both separable and isotropic, that is, if
$\bigl(\ol{V}^\vee \wedge V^{\vee}\big)\cap K^{\perp} = 
\bw^2 \ol{V}^\vee$. The resonance $\RR(V, K)$ is \emph{separable} (respectively \emph{strongly isotropic}) 
if it is linear and all of its irreducible components are separable (respectively strongly isotropic).
\end{definition}

Separability can be verified in terms of the equations of the subspace $\overline{V}^{\vee}\subseteq V^{\vee}$. 
The following simple fact will be used in Section \ref{sect:arr-gps} when dealing with hyperplane arrangements. 

\begin{lemma}
\label{lem:wedge2-equations}
If $\ol{V}^\vee \subseteq V^\vee$ is defined by equations
$w_1 = \cdots = w_\ell = 0$,  for $w_1,\ldots,w_\ell\in V$, then the equations of 
$\bigwedge^2\ol{V}^\vee\subseteq \bigwedge^2V^\vee$ are
\begin{equation}
\label{eqn:wedge2-equations}
    w_1 \wedge v = \cdots = w_\ell \wedge v = 0, \mbox{ for all }v\in V,
\end{equation}
and $\ol{V}^\vee \wedge V^\vee$ is defined in $\bwedge^2V^\vee$ by
\begin{equation}
   \label{eqn:olVwedgeV-equations}
w_i \wedge w_j = 0, \mbox{ for all }i,j\in\{1,\ldots,\ell\}.
\end{equation}
\end{lemma}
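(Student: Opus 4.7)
The plan is to identify $\ol{V}^\vee$ with the annihilator $W^\perp$, where $W \coloneqq \spn\{w_1,\ldots,w_\ell\} \subseteq V$, and then use the natural perfect pairing $\bigwedge^2 V \otimes \bigwedge^2 V^\vee \to \C$ to restate both assertions as identifications of orthogonal complements inside $\bigwedge^2 V$. Under this pairing, finding equations for a subspace of $\bigwedge^2 V^\vee$ is the same as computing its orthogonal in $\bigwedge^2 V$, so the two claims become $(\bigwedge^2 \ol{V}^\vee)^\perp = W \wedge V$ and $(\ol{V}^\vee \wedge V^\vee)^\perp = W \wedge W$, where the two right-hand sides denote the images of the wedge-product maps $W \otimes V \to \bigwedge^2 V$ and $W \otimes W \to \bigwedge^2 V$.

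To verify these identifications, I would first replace $w_1,\ldots,w_\ell$ by a basis of $W$ (which changes neither the spaces $W \wedge V$ and $W \wedge W$ nor the sets of equations in \eqref{eqn:wedge2-equations}–\eqref{eqn:olVwedgeV-equations}), extend it to a basis $w_1,\ldots,w_n$ of $V$, and let $f_1,\ldots,f_n$ be the dual basis of $V^\vee$. Then $\ol{V}^\vee = \spn\{f_{\ell+1},\ldots,f_n\}$, so $\bigwedge^2 \ol{V}^\vee$ is spanned by the $f_i \wedge f_j$ with $\ell < i < j \leq n$, while $\ol{V}^\vee \wedge V^\vee$ is spanned by all $f_i \wedge f_j$ with $\max(i,j) > \ell$. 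For $\eta = \sum_{i<j}\eta_{ij}\,w_i \wedge w_j \in \bigwedge^2 V$, the pairing evaluates to $\eta(f_i \wedge f_j) = \eta_{ij}$, so $\eta$ annihilates $\bigwedge^2 \ol{V}^\vee$ iff $\eta_{ij}=0$ for all $\ell < i < j$, and annihilates $\ol{V}^\vee \wedge V^\vee$ iff $\eta_{ij}=0$ whenever $\max(i,j) > \ell$. Translating back, the first condition is equivalent to $\eta \in W \wedge V$, which is generated by the $w_i \wedge v$ with $1 \leq i \leq \ell$ and $v \in V$; the second is equivalent to $\eta \in W \wedge W$, which is generated by the $w_i \wedge w_j$ with $1 \leq i,j \leq \ell$. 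This is precisely the content of \eqref{eqn:wedge2-equations} and \eqref{eqn:olVwedgeV-equations}.

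As an alternative, or as a sanity check, the containments $W \wedge V \subseteq (\bigwedge^2 \ol{V}^\vee)^\perp$ and $W \wedge W \subseteq (\ol{V}^\vee \wedge V^\vee)^\perp$ can be verified coordinate-free by expanding $(w \wedge v')(f_1 \wedge f_2) = w(f_1)\,v'(f_2) - w(f_2)\,v'(f_1)$ and noting that each term contains a factor $w(f)$ with $w \in W$ and $f \in \ol{V}^\vee = W^\perp$, which vanishes; a dimension count then upgrades these containments to equalities. There is no serious obstacle here: the statement is essentially a bookkeeping exercise in exterior-algebra duality, and the only mild subtlety, namely potential linear dependence among $w_1,\ldots,w_\ell$, is handled by the initial reduction to a basis of $W$.
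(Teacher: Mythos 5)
Your proof is correct and uses essentially the same idea as the paper: reduce to a well-chosen basis in which the equations become transparent. The paper picks a basis of $V^\vee$ whose first $\ol{n}$ vectors span $\ol{V}^\vee$ and simply reads off the defining equations in the dual coordinates, whereas you start from a basis of $W=\spn\{w_1,\ldots,w_\ell\}$, extend to a basis of $V$, and then verify the orthogonality explicitly via the perfect pairing; the two setups are dual to each other and amount to the same computation. Your version is slightly more careful in two respects --- it explicitly handles possible linear dependence among $w_1,\ldots,w_\ell$, and it actually carries out the pairing check rather than just asserting the equations --- but it does not constitute a genuinely different route.
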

\begin{proof}
 Let $(e_1,\ldots,e_n)$ be the basis of $V^{\vee}$ such that
$(e_1,\ldots,e_{\ol{n}})$ is a basis for $\ol{V}^{\vee}$, and $(v_1,\ldots,v_n)$ 
is the dual basis of $V$. The equations of $\ol{V}^\vee$, $\bigwedge^2\ol{V}^\vee$ 
respectively $\ol{V}^\vee\wedge V^\vee$ are given by $v_{\ol{n}+1} = \cdots = v_n = 0$, 
\[
v_{\ol{n}+1} \wedge v_1 = v_{\ol{n}+1} \wedge v_2 = \cdots = v_{\ol{n}+1} \wedge v_n = 
v_{\ol{n}+2} \wedge v_1 = \cdots = v_{n-1} \wedge v_n = 0,
\] 
respectively,
\begin{equation}
   \label{eqn:olVwedgeV-equations-basis} 
v_{\ol{n}+1} \wedge v_{\ol{n}+2} = \cdots = v_{\ol{n}+1} \wedge v_n = 
\cdots = v_{n-1} \wedge v_n = 0.
\end{equation}
\end{proof}

\begin{remark}
The difficulty in verifying the separability of $\ol{V}^\vee \subseteq V^{\vee}$ 
lies in showing that the equations of $K^\perp$ bring enough contribution to  
\eqref{eqn:olVwedgeV-equations-basis} to recover the extra-equations
\[
v_{\ol{n}+1} \wedge v_1 = \cdots = v_{\ol{n}+1} \wedge v_{\ol{n}} = 
v_{\ol{n}+2} \wedge v_1 = \cdots = v_{n} \wedge v_{\ol{n}} = 0
\]
defining $\bwedge^2\ol{V}^\vee\subseteq \ol{V}^\vee\wedge V^\vee$. We note the necessary 
condition $m\ge \ol{n}(n-\ol{n})$ for separability.
\end{remark}

Suppose $\ol{V}^{\vee}\subseteq V^{\vee}$ is a subspace of dimension $\ol{n}\leq n$ 
and set $U\coloneqq\ker \bigl\{\pi\colon V\to \ol{V}\bigr\}$. As before, fix a basis
$(e_1,\ldots,e_n)$ of $V^{\vee}$ such that
$(e_1,\ldots,e_{\ol{n}})$ is a basis for $\ol{V}^{\vee}$. Letting
$(v_1,\ldots,v_n)$ denote the dual basis of $V$,
we obtain a decomposition
\begin{equation}
\label{eq:decomp-bw2V}
\bwedge^2\, V = L \oplus M \oplus H, 
\end{equation}
where $L\coloneqq \spn\bigl\{ v_s \wedge v_t : s,t\leq \ol{n}\bigr\} \notag\cong \bwedge^2\, \ol{V}$, 
$M\coloneqq \spn\bigl\{ v_s \wedge v_t : s\leq \ol{n} \mbox{ and }  t>\ol{n}\bigr\} \cong \ol{V}\otimes U$, and 
$H\coloneqq \spn\bigl\{ v_s \wedge v_t : s,t>\ol{n}\bigr\} \notag\cong \bwedge^2\, U$. We quote the following 
result from \cite{AFRS}*{\S3.1}:

\begin{lemma}
\label{lem:2nd-basis}
Assume $\ol{V}^{\vee}$ is a separable  component of 
$\RR(V,K)$. 
Then there  exists a basis of $K$ of the form
$\{\a_{s,t}:s\leq \ol{n}, t>\ol{n}\} \cup \{\b_1, \ldots, \b_N\}$,
where $N\geq 0$, such that, for 
$s\leq \ol{n}$, $t>\ol{n}$, and $1\leq j\leq N$, we have
\begin{equation}
\label{eq:abs}
\a_{s,t} = v_s \wedge v_t + h_{s,t},\quad \b_j =\ell_j + h_j,
\end{equation}
for some collection of elements  $\ell_j \in L$ and $h_{s,t}, h_j\in H$.
Furthermore, we may assume that $\ell_{s,t}\in \spn\{\ell_1, \ldots, \ell_N\}$, for each $s\leq \ol{n}$ and $t>\ol{n}$.
\end{lemma}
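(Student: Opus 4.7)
The plan is to translate the separability condition into a concrete lifting property for $K$ with respect to the decomposition $\bwedge^2 V = L \oplus M \oplus H$, and then to produce the desired basis by two elementary linear-algebraic moves.

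First I would dualize. Under the basis-induced splitting $\bwedge^2 V^\vee = L^* \oplus M^* \oplus H^*$, one identifies $\bwedge^2 \ol{V}^\vee = L^*$ and $\ol{V}^\vee \wedge V^\vee = L^* \oplus M^*$. Setting $A := p_{L \oplus M}(K) \subseteq L \oplus M$, any $\phi \in L^* \oplus M^*$ satisfies $\phi(\kappa) = \phi(p_{L\oplus M}(\kappa))$ for every $\kappa \in K$, so the pairing identifies $K^\perp \cap (L^* \oplus M^*) = A^\perp$ inside $L^* \oplus M^*$. Since $L^*$ is exactly the annihilator of $M$ in $L^* \oplus M^*$, the separability condition $A^\perp \subseteq L^*$ is equivalent, by reversing inclusions under orthogonal complements, to $M \subseteq A$. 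Concretely, this says that every $m \in M$ admits a lift $\kappa \in K$ of the form $\kappa = m + h$ with $h \in H$.

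Given this lifting property, I would first choose arbitrary lifts $\a_{s,t}^\circ \in K$ of the basis vectors $v_s \wedge v_t \in M$ under the surjection $p_M\colon K \twoheadrightarrow M$, and then a basis $\b_1, \ldots, \b_N$ of the complement $K' := K \cap (L \oplus H) = \ker(p_M|_K)$; by construction $\b_j = \ell_j + h_j$ with $\ell_j \in L$ and $h_j \in H$, and a dimension count gives $N = \dim K - \ol{n}(n-\ol{n}) \geq 0$. The \emph{Furthermore} clause is now the key linear-algebraic observation: for any $\kappa \in K$, the lifting property produces $\kappa'' = p_M(\kappa) + h \in K$ with $h \in H$, and $\kappa - \kappa'' \in K'$ has the same $L$-component as $\kappa$; hence $p_L(K) = p_L(K') = \spn\{\ell_1, \ldots, \ell_N\}$. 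Applied to $\kappa = \a_{s,t}^\circ$, this places $\ell_{s,t} := p_L(\a_{s,t}^\circ)$ inside $\spn\{\ell_1, \ldots, \ell_N\}$.

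To finish, I would write $\ell_{s,t} = \sum_j c_j^{(s,t)} \ell_j$ and replace $\a_{s,t}^\circ$ by $\a_{s,t} := \a_{s,t}^\circ - \sum_j c_j^{(s,t)} \b_j$. This kills the $L$-component while preserving the $M$-component, yielding $\a_{s,t} = v_s \wedge v_t + h_{s,t}$ with $h_{s,t} \in H$; the transition matrix from $\{\a_{s,t}^\circ, \b_j\}$ to $\{\a_{s,t}, \b_j\}$ is (block) unitriangular, so the latter is still a basis of $K$. The only delicate step is the duality calculation in the first paragraph; once separability is decoded as the lifting statement $M \subseteq p_{L\oplus M}(K)$, the remainder is mechanical basis manipulation and I expect no genuine obstacle.
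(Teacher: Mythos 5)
Your proof is correct and self-contained. The paper itself does not prove Lemma \ref{lem:2nd-basis} but quotes it from \cite{AFRS}*{\S3.1}, so there is no in-paper argument to compare against; your reconstruction is the natural one. The decisive step is the duality computation: since the graded pairing between $\bigwedge^2 V$ and $\bigwedge^2 V^\vee$ is block-diagonal with respect to $L\oplus M\oplus H$ and $L^*\oplus M^*\oplus H^*$, separability $\bigl((L^*\oplus M^*)\cap K^\perp\subseteq L^*\bigr)$ reads as $A^\perp\subseteq L^*$ with $A\coloneqq p_{L\oplus M}(K)$, which dualizes to $M\subseteq A$; this is strictly stronger than surjectivity of $p_M|_K$ in general (though equivalent to it under isotropy, which explains the phrasing of \cite{AFRS}*{Corollary 3.12}), and it is precisely the lifting property that lets you clear the $L$-component of each $\a_{s,t}^\circ$ by subtracting a suitable combination of the $\b_j$. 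You in fact prove the sharper normalization $\ell_{s,t}=0$, which of course implies the ``Furthermore'' membership $\ell_{s,t}\in\spn\{\ell_1,\dots,\ell_N\}$. The remaining steps (dimension count $N=\dim K-\ol n(n-\ol n)$ via surjectivity of $p_M|_K$, unitriangular change of basis) are routine and correct.
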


Let $p_M$ denote the restriction to $K$
of the second  projection $\bw^2 V\to  M$ with respect to the decomposition given by \eqref{eq:decomp-bw2V}.  
It is shown in  \cite{AFRS}*{Corollary 3.12} that an isotropic subspace $\ol{V}^{\vee}\subseteq V^{\vee}$ 
is separable (with respect to $K$) if and only if $p_M\colon K\to M$ is surjective.

\subsection{Strong isotropicity and components of the base locus of $|K|$}
\label{subsec:base-sep}
It is established in \cite{AFRS}*{Theorem 5.1} that an isotropic component $\overline{V}^{\vee}$ of $\RR(V,K)$ is strongly isotropic if and only if $\P\overline{V}^{\vee}$ is a reduced component of the projective resonance $\Rproj(V,K)$. In the sequel, we establish a similar result for the mirror scheme 
$\B(V,K)$. We assume the resonance is 
linear and $\mc{R}(V,K)=\overline{V}_1^\vee\cup\cdots\cup \overline{V}_k^\vee$. Denote by $\Gr_t\coloneqq\Grass_2(\overline{V}_t^\vee)\subseteq \Gr$, for $t=1, \ldots, k$.

In this setup, we prove the following.
\begin{theorem}
    \label{thm:reduced}
    If $\mc{R}(V,K)$ is strongly isotropic, then the base locus $\B(V,K)$ is 
    reduced and  coincides with $\Gr_1 \sqcup \cdots \sqcup \Gr_k$. 
\end{theorem}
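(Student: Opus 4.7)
The plan is a three-step argument: first, verify that distinct components of $\mathcal{R}(V,K)$ pairwise intersect in a subspace of dimension at most $1$, so that the sub-Grassmannians $\Gr_t$ are pairwise disjoint inside $\Gr$; second, identify $\B(V,K)$ set-theoretically with $\bigsqcup_t\Gr_t$; and third, show by a Zariski tangent space computation that $\B(V,K)$ is smooth of the correct dimension at every closed point. Together these yield the scheme-theoretic equality and reducedness simultaneously.

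For disjointness, I would argue by contradiction. Suppose $s\neq t$ and $\dim(\overline{V}_s^\vee\cap \overline{V}_t^\vee)\geq 2$, and pick a nonzero $v\in \overline{V}_s^\vee\cap \overline{V}_t^\vee$ together with $a\in \overline{V}_s^\vee\setminus \overline{V}_t^\vee$, which exists since distinct irreducible components cannot contain one another. Then $a\wedge v\in \bigwedge^2 \overline{V}_s^\vee\subseteq K^\perp$ by isotropy of $\overline{V}_s^\vee$, while $a\wedge v\in \overline{V}_t^\vee\wedge V^\vee$ because $v\in \overline{V}_t^\vee$. Strong isotropy of $\overline{V}_t^\vee$ then forces $a\wedge v\in \bigwedge^2 \overline{V}_t^\vee$; decomposing $V^\vee=\overline{V}_t^\vee\oplus W'$ and inspecting the mixed component of $a\wedge v$ gives $a\in \overline{V}_t^\vee$, a contradiction. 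For the set-theoretic equality, any $[L]\in \B(V,K)$ corresponds to an isotropic $2$-plane $L\subseteq V^\vee$, and since $L$ is irreducible and contained in $\mathcal{R}(V,K)=\bigcup_t\overline{V}_t^\vee$, it lies in some $\overline{V}_t^\vee$, so $[L]\in \Gr_t$. Conversely, the scheme-theoretic inclusion $\Gr_t\subseteq \B(V,K)$ already follows from isotropy, because isotropy is exactly the vanishing of $\bigwedge^2\pi\colon K\to \bigwedge^2\overline{V}_t$, i.e., $K\subseteq (\overline{V}_t\otimes U_t)\oplus \bigwedge^2 U_t$ inside $\bigwedge^2 V$, which is the ideal cutting out $\Gr_t$ in $\Gr$.

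For the third step, fix $[L]\in \Gr_t$, choose a basis $(e_1,\ldots,e_n)$ of $V^\vee$ with $L=\operatorname{span}(e_1,e_2)$ and $(e_1,\ldots,e_{\overline{n}_t})$ a basis of $\overline{V}_t^\vee$, together with a splitting complement $U_t^\vee$. A tangent vector $\phi\in T_{[L]}\Gr=\Hom(L,V^\vee/L)$ produces the first-order Pl\"ucker direction $\phi(e_1)\wedge e_2+e_1\wedge \phi(e_2)$ modulo $\bigwedge^2 L$, and $\phi\in T_{[L]}\B(V,K)$ precisely when this expression lies in $K^\perp$. Since the expression automatically lies in $\overline{V}_t^\vee\wedge V^\vee$, strong isotropy $(\overline{V}_t^\vee\wedge V^\vee)\cap K^\perp=\bigwedge^2\overline{V}_t^\vee$ forces it into $\bigwedge^2\overline{V}_t^\vee$. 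Decomposing $\phi(e_i)=\phi_t(e_i)+\phi_u(e_i)$ along $V^\vee=\overline{V}_t^\vee\oplus U_t^\vee$, the projection onto the mixed summand $\overline{V}_t^\vee\otimes U_t^\vee$ of $\bigwedge^2 V^\vee$ becomes $e_1\otimes\phi_u(e_2)-e_2\otimes\phi_u(e_1)$, which vanishes iff $\phi_u(e_1)=\phi_u(e_2)=0$. Thus $\phi\in \Hom(L,\overline{V}_t^\vee/L)=T_{[L]}\Gr_t$, and combined with the automatic inclusion $T_{[L]}\Gr_t\subseteq T_{[L]}\B(V,K)$ coming from $\Gr_t\subseteq \B(V,K)$ as subschemes, one gets $T_{[L]}\B(V,K)=T_{[L]}\Gr_t$. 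Since $\dim T_{[L]}\B(V,K)=\dim \Gr_t$ and $\Gr_t$ is a smooth irreducible closed subscheme of $\B(V,K)$ of the same dimension, $\B(V,K)$ is smooth at $[L]$, hence reduced there and locally equal to $\Gr_t$. Ranging over all $[L]$ yields the scheme-theoretic equality $\B(V,K)=\Gr_1\sqcup\cdots\sqcup \Gr_k$.

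The main obstacle is Step~3: mere isotropy already gives the scheme-theoretic inclusion $\Gr_t\subseteq \B(V,K)$, but leaves open the possibility of extra infinitesimal directions of $\B(V,K)$ normal to $\Gr_t$ inside $\Gr$; strong isotropy is exactly the hypothesis needed to kill these directions in the tangent space calculation. Without it, $\B(V,K)$ can genuinely be non-reduced, as illustrated by Example~\ref{ex:non-red-bvk}.
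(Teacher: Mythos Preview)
Your argument is correct, and it takes a genuinely different (and somewhat more transparent) route than the paper's proof. The paper establishes the set-theoretic equality in essentially the same way you do, but for reducedness it argues on the conormal side: it shows that the restriction
\[
K\otimes \mc{O}_{\overline{\Gr}_t}\longrightarrow (\mc{I}/\mc{I}^2)(1)\cong U_t\otimes \ol{\mc{Q}}_t
\]
is surjective by an explicit Pl\"ucker-coordinate computation relying on the adapted basis of $K$ from Lemma~\ref{lem:2nd-basis}, and then concludes $\mc{I}_{\B,p}=\mc{I}_p$ via Nakayama's lemma. Your Step~3 is the dual statement: instead of showing that the differentials of the equations in $K$ span the conormal bundle of $\Gr_t$, you show directly that no tangent vector of $\B(V,K)$ at $[L]$ has a nonzero component normal to $\Gr_t$. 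The translation between the two is precisely that surjectivity onto $(\mc{I}/\mc{I}^2)(1)$ is equivalent to $T_{[L]}\B(V,K)\subseteq T_{[L]}\Gr_t$ at every point.

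What your approach buys is that it is coordinate-free and does not require the structural Lemma~\ref{lem:2nd-basis} on bases of $K$; the single line $(\overline{V}_t^\vee\wedge V^\vee)\cap K^\perp=\bigwedge^2\overline{V}_t^\vee$ is applied directly to the first-order Pl\"ucker direction. Your disjointness argument in Step~1 is likewise a self-contained replacement for the paper's citation of \cite{AFRS}*{Corollary~4.6}, and in fact proves the stronger fact $\overline{V}_s^\vee\cap\overline{V}_t^\vee=\{0\}$ (your hypothesis ``$\dim\geq 2$'' is never used; the argument already yields a contradiction from any nonzero $v$ in the intersection). The paper's route, on the other hand, packages the computation as a sheaf-level surjection that is reused later in Section~\ref{subsec:strong-isotropicity} (Lemma~\ref{lem:ev-Gr-surj}) and in the spectral-sequence arguments, so its extra machinery is not wasted.
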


\proof
We first establish this equality set-theoretically. Assume $0\ne a\wedge b\in K^\perp$, then $a\in \mc{R}(V,K)=\overline{V}_1^\vee\cup\cdots\cup \overline{V}_k^\vee$, and hence $a\in \overline{V}_t^\vee$ for some $t$. On the other hand, $a\wedge b\in  \overline{V}_t^\vee \wedge V^{\vee}$, which implies, by the separability hypothesis, that $a\wedge b\in \bwedge^2\overline{V}_t^{\vee}$. We obtain $b\in \overline{V}_t^{\vee}$, that is, $[a\wedge b]\in \Gr_t$, as claimed. By \cite{AFRS}*{Corollary 4.6}, the projective resonance $\Rproj(V,K)$ consists of projectively disjoint components, hence also the sub-Grassmannians $\Gr_t$ are mutually disjoint.
Finally, if all $\overline{V}_t$ are isotropic, then by definition  $\bigwedge^2\overline{V}_t^{\vee}\subseteq K^\perp$,  hence also $\Gr_t\subseteq \mathbf{P}K^\perp$ for $t=1, \ldots, k$. 

We verify $\B\coloneqq\B(V,K)$ is reduced. This being a local property, we focus on a component
$\ol{V}^{\vee}=\overline{V}_t^{\vee}$. Set $\overline{\Gr}\coloneqq\Gr_t$ and let
$\mc{I}_\B$ and $\mc{I}$ be the ideal sheaves of $\B$ and $\overline{\Gr}$ in $\Gr$ respectively. 
The  map  $K\otimes \mathcal{O}_{\Gr} \twoheadrightarrow \mc{I}_\B(1)$
restricts to a surjective morphism $K\otimes \mc{O}_{\overline{\Gr}}\twoheadrightarrow 
\mc{I}_\B(1)_{|\overline{\Gr}}$.

\begin{claim*} The composition
\begin{equation}
\label{eqn:restriction-ideal}
\begin{tikzcd}[column sep=18pt]
K\otimes \mc{O}_{\overline{\Gr}}\ar[r, two heads] 
&\mc{I}_\B(1)_{|\overline{\Gr}}\ar[r, hook] 
&\mc{I}(1)_{|\overline{\Gr}}
\end{tikzcd}
\end{equation}
is a surjective morphism.
\end{claim*}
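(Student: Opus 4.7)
The plan is to identify $\mc{I}(1)|_{\ol{\Gr}}$ explicitly as a bundle on $\ol{\Gr}$, factor the natural map of the Claim through an obvious surjection, and reduce the problem to a linear-algebra statement that follows from Lemma~\ref{lem:2nd-basis}. Write $U=\ker(V\twoheadrightarrow\ol{V})$ and let $\ol{\mc{Q}}$ denote the tautological rank-$2$ quotient bundle on $\ol{\Gr}$. Using $T\Gr=\mc{U}^{\vee}\oo\mc{Q}$ together with the extension $0\to\ol{\mc{U}}\to\mc{U}|_{\ol{\Gr}}\to U\oo\mc{O}_{\ol{\Gr}}\to 0$ coming from the restriction of the tautological sequence, one computes $N^{\vee}_{\ol{\Gr}/\Gr}\cong U\oo\ol{\mc{Q}}^{\vee}$. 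Twisting by $\mc{O}_{\Gr}(1)|_{\ol{\Gr}}=\det\ol{\mc{Q}}$ and using the rank-$2$ identity $\ol{\mc{Q}}^{\vee}\oo\det\ol{\mc{Q}}\cong\ol{\mc{Q}}$ then gives the canonical identification $\mc{I}(1)|_{\ol{\Gr}}\cong U\oo\ol{\mc{Q}}$.

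Next, I would observe that the set-theoretic inclusion $\ol{\Gr}\subseteq\B$ already established in the proof forces $K\subseteq U\wedge V\subseteq \bw^2 V$. Indeed, the evaluation $\bw^2 V\oo\mc{O}_{\ol{\Gr}}\to\mc{O}_{\ol{\Gr}}(1)$ factors through the canonical projection $\bw^2 V\twoheadrightarrow\bw^2\ol{V}$, and the Pl\"ucker embedding of $\ol{\Gr}$ makes the induced map $\bw^2\ol{V}\to H^0(\ol{\Gr},\mc{O}_{\ol{\Gr}}(1))$ injective, so the image of $K$ in $\bw^2\ol{V}$ must vanish. With this in hand, I would factor the composition of the Claim as
\[
K\oo\mc{O}_{\ol{\Gr}}\longrightarrow (U\oo\ol{V})\oo\mc{O}_{\ol{\Gr}}\longrightarrow U\oo\ol{\mc{Q}}\cong\mc{I}(1)|_{\ol{\Gr}},
\]
where the first arrow is induced by the vector-space projection $K\subseteq U\wedge V\twoheadrightarrow (U\wedge V)/\bw^2 U\cong U\oo\ol{V}$, and the second is $\mathrm{id}_U$ tensored with the tautological surjection $\ol{V}\oo\mc{O}_{\ol{\Gr}}\twoheadrightarrow\ol{\mc{Q}}$. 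The second arrow is surjective by construction; for the first, Lemma~\ref{lem:2nd-basis} supplies distinguished elements $\a_{s,t}=v_s\wedge v_t+h_{s,t}\in K$ for $s\le\ol{n}$, $t>\ol{n}$, whose correction terms $h_{s,t}\in\bw^2 U$ lie in the kernel of the projection, so they map to $\pm v_t\oo v_s$ and span $U\oo\ol{V}$.

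The main obstacle is the compatibility underpinning this factorization, namely that the natural restriction map of the Claim coincides with the explicit two-step composition. This reduces to showing that sections of $\mc{O}_{\Gr}(1)$ coming from $\bw^2 U\subseteq\bw^2 V$ vanish to order $2$ along $\ol{\Gr}$, which is transparent in a Pl\"ucker chart centered at any point $[L]\in\ol{\Gr}$: the Pl\"ucker coordinate $v_t\wedge v_{t'}$ with $t,t'>\ol{n}$ becomes the quadratic expression $a_{1t}a_{2t'}-a_{1t'}a_{2t}$, manifestly in $\mc{I}^2$. Once this routine local verification is in place, the Claim follows by composing two surjections of sheaves.
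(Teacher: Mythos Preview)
Your argument is correct and rests on the same two ingredients as the paper's proof: the identification $(\mc{I}/\mc{I}^2)(1)\cong U\otimes\ol{\mc{Q}}$, and the surjectivity of the projection $K\to U\otimes\ol{V}$ supplied by the basis of Lemma~\ref{lem:2nd-basis} (equivalently, separability). The paper verifies the Claim by an explicit local computation in a Pl\"ucker chart centred at a point of $\ol{\Gr}$, writing down generators $p_{1,j},p_{2,j}$ of $I$ and checking $\partial(h_{s,t})\in I^2$ and $\ol{\partial}(\alpha_{i,j})=\ol{p_{i,j}}$ directly, whereas you package the same computation as a global factorization through $(U\otimes\ol{V})\otimes\mc{O}_{\ol{\Gr}}$; your chart argument that sections from $\bw^2 U$ land in $\mc{I}^2$ is exactly the paper's relation $p_{i,j}=p_{1,i}p_{2,j}-p_{1,j}p_{2,i}\in I^2$. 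In fact the paper itself recasts the map in your global form later, in \eqref{eq:ev-Gr} and Lemma~\ref{lem:ev-Gr-surj}. One minor slip: the restriction of the tautological sequence reads $0\to U\otimes\mc{O}_{\ol{\Gr}}\to\mc{U}|_{\ol{\Gr}}\to\ol{\mc{U}}\to 0$ (as in \eqref{eq:UUbar-seq}), the reverse of what you wrote, though this does not affect your identification of the conormal bundle.
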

Accepting the claim, it follows that $\mc{I}_{\B}\otimes \mathcal{O}_{\overline{\Gr}}=
\mc{I}\otimes \mathcal{O}_{\overline{\Gr}}=\mc{I}/\mc{I}^2$. This shows that $\B$ 
is reduced along $\overline{\Gr}$. Indeed, if $p\in \overline{\Gr}$ 
is an arbitrary point, we verify that the inclusion $\mc{I}_{\B,p}\subseteq 
\mc{I}_p$ of ideals in $\mc{O}_{\Gr,p}$ is in fact an equality. Let $\mf{m}$ 
be the maximal ideal of $\mc{O}_{\Gr,p}$. Localizing the equality 
$\mc{I}_{\B}\otimes \mathcal{O}_{\overline{\Gr}}=\mc{I}/\mc{I}^2$ at $p$,
we conclude that $\mc{I}_{\B,p} + \mc{I}_p^2 = \mc{I}_p$.

We therefore obtain a chain of inclusions of ideals 
\[
\mc{I}_{\B,p} \subseteq \mc{I}_p =
\mc{I}_{\B,p} + \mc{I}_p^2 \subseteq \mc{I}_{\B,p} + \mf{m}\cdot  \mc{I}_p,
\]
which can be reinterpreted as stating that 
$\mf{m} \cdot (\mc{I}_p / \mc{I}_{\B,p})=\mc{I}_p / \mc{I}_{\B,p}$. 
By Nakayama's lemma, we conclude
that $\mc{I}_{\B,p} = \mc{I}_p$, as desired.

In order to complete the proof, it remains to verify that the morphism 
\eqref{eqn:restriction-ideal} is surjective. To that end,
we  perform a local analysis. Set 
$\ol{n} = \dim(\ol{V}) \geq 2$ and consider bases for $\ol{V}^{\vee}$ 
and $V^{\vee}$ as in Lemma~\ref{lem:2nd-basis}. Consider a point $p\in\ol{\Gr}$ 
and we may assume $p=[e_1\wedge e_2]$. We prove that $p$ is a reduced
point of $\B$. Considering the basis of $V$ as in Lemma~\ref{lem:2nd-basis},
we obtain a basis $\bigl(X_{i,j} = v_i\wedge v_j\bigr)_{1\leq i<j\leq n}$
for $H^0(\Gr,\mc{O}_\Gr(1)) = \bw^2 V$.
We think of $X_{i,j}$ as homogeneous coordinates on $\P$ and let $U_{1,2}\subseteq \P$ be the open affine subset defined by $X_{1,2}\neq 0$. We have
$U_{1,2} = \Spec(R)$, where $R = \C[x_{i,j}]$ and $x_{i,j} =
X_{i,j}/X_{1,2}$ for $1\leq i<j\leq n$. In this chart, the ideal of Pl\"ucker relations
defining $U_{1,2}\cap\Gr \subseteq U$ is generated by
\begin{equation}
\label{eq:x-ij-plucker}
\bigl\{ x_{i,j} - x_{1,i}x_{2,j}+x_{1,j}x_{2,i}: 3\leq i<j\leq n\bigr\}.
\end{equation}

Writing $p_{i,j}$ for the restriction of $x_{i,j}$ to $U_{1,2}\cap\Gr$, we infer that
$U_{1,2}\cap \Gr = \Spec(A)$, where 
$A=\C\bigl[p_{1,j},p_{2,j}:3\leq j\leq n\bigr]$.
Restricting the evaluation map \eqref{eqn:ev_K} to $U_{1,2}\cap\Gr$,
we obtain a map of free $A$-modules, $\pd\colon K\oo A \to A$.  It is easily
seen that this map coincides with the restriction to $K\oo A$ of
the map $\pd\colon \bw^2 V \oo A \to A$ defined by
$(v_i\wedge v_j)\otimes 1 \mapsto p_{i,j}$.  Note that the image of
$\pd\colon K\oo A \to A$  is the defining ideal $I_{\B}$
of the scheme $U_{1,2} \cap \B$.

If we write $I\subseteq A$ for the defining ideal of $U_{1,2}\cap\ol{\Gr}$, 
then $\op{Im}(\pd)=I_{\B}\subseteq I$, since 
$U_{1,2}\cap\ol{\Gr}\subseteq U_{1,2}\cap\B$
is a closed subscheme. Moreover, $I$ has an explicit generating
set of Pl\"ucker coordinates, namely
$I =\langle p_{1,j},p_{2,j}:\ol{n}+1\leq j \leq n\rangle.$

Regarding $\pd$ as a map $K\oo A \to I$,  after tensoring with
$\ol{A} = A/I$, we obtain a map
\[
\begin{tikzcd}[column sep=18pt]
\ol{\pd} \colon K \oo \ol{A} \ar[r]& I/I^2 , 
\end{tikzcd}
\]
and the surjectivity of the composed map \eqref{eqn:restriction-ideal} 
reduces to the surjectivity of $\ol{\pd}$. Using the explicit generating set for $I$, 
we get that the module $I/I^2$ is generated by the classes $\ol{p_{1,j}}, \ol{p_{2,j}}$ of
$p_{1,j},p_{2,j}$ modulo $I^2$, where $j=\ol{n}+1,\dots,n$, thus it suffices to show that 
these classes lie in the image of~$\ol{\pd}$. For $\ol{n}+1\leq i<j\leq n$, we have
\[
\pd(v_i\wedge v_j) = p_{i,j} = p_{1,i}p_{2,j} - p_{1,j} p_{2,i} \in I^2.
\]
Using the notation \eqref{eq:abs} for the elements
$\alpha_{s,t}\in K$, then $\pd(h_{s,t}) \in I^2$,  therefore $\ol{\pd}(\alpha_{i,j}) = \ol{p_{i,j}}$, 
for $i=1,2$ and $j=\ol{n}+1,\dots,n$. This shows $\ol{\pd}$ is surjective.
\endproof

\begin{remark}
The base locus $\mathbf{B}(V,K)$  may be reduced even in the non-separable case. 
For instance, take $V^\vee$ to be a 6-dimensional space with a basis $(e_1,\ldots,e_6)$ and we choose 
$
K^{\perp}=\spn\{ e_1\wedge e_2,e_1\wedge e_3,e_2\wedge e_3,e_1\wedge e_4+e_2\wedge e_5+e_3\wedge e_6\}.
$    
In this case, $\mathbf{B}(V,K)$ is scheme-theoretically the projective plane spanned by $[e_1\wedge e_2]$, $[e_1\wedge e_3]$, and $[e_2\wedge e_3]$. 
\end{remark}

When $\mathbf{B}(V,K)$ is finite, we have the following converse of Theorem \ref{thm:reduced}:

\begin{proposition}
\label{prop:finite-reduced}
With notation as above, we assume the scheme-theoretic intersection 
$\Gr\cap\mathbf{P}K^\perp$ is finite and reduced. Then the 
resonance  $\RR(V,K)$ is strongly isotropic.
\end{proposition}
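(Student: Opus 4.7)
The plan is to deduce strong isotropicity from a local analysis at each point of $\B\coloneqq\B(V,K)$, combined with the characterization of separability from \cite{AFRS}*{Corollary 3.12}.

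First I would verify that $\RR(V,K)$ is linear and isotropic using only the finiteness of $\B$. Write $\B = \{p_1,\dots,p_k\}$ with $p_t = [a_t\wedge b_t]$ and set $\overline{V}_t^\vee \coloneqq \spn\{a_t,b_t\}$. For any $a\in\RR(V,K)\setminus\{0\}$, there exists $b$ making $a\wedge b$ a nonzero decomposable element of $K^\perp$, so $[a\wedge b]\in\B$, forcing $a\in \overline{V}_t^\vee$ for some $t$. The reverse inclusion is immediate, so set-theoretically $\RR(V,K)=\bigcup_t\overline{V}_t^\vee$. Since $\bwedge^2\overline{V}_t^\vee = \C\cdot(a_t\wedge b_t)\subseteq K^\perp$, every two-dimensional component is isotropic.

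Next I would fix $t$ and show that the reducedness of $p_t$ as a point of $\B$ is equivalent to the projection $p_M\colon K\to M$ (from the decomposition \eqref{eq:decomp-bw2V} with respect to $\overline{V}_t$) being surjective. I adopt the local setup from the proof of Theorem \ref{thm:reduced}: pick a basis $(e_1,\dots,e_n)$ of $V^\vee$ with $e_1=a_t$, $e_2=b_t$, and work on the chart $U_{1,2}\cap\Gr = \Spec A$ with $A = \C[p_{1,j},p_{2,j} : 3\leq j\leq n]$. The map $\pd\colon K\otimes A \to A$ whose image is the local ideal $I_\B$ sends $v_i\wedge v_j$ to $p_{i,j}$, and the Plücker relations yield $\pd(H)\subseteq \mf{m}_{p_t}^2$, while $p_L(K)=0$ because $a_t\wedge b_t\in K^\perp$ annihilates $K$ as a linear form on $\bwedge^2 V$. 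Hence every $\omega\in K$ satisfies $\pd(\omega)\equiv \pd(p_M(\omega))\pmod{\mf{m}_{p_t}^2}$, and under the identification $M\cong \mf{m}_{p_t}/\mf{m}_{p_t}^2$ sending $v_i\wedge v_j$ to $\overline{p_{i,j}}$, the image of $I_\B$ in $\mf{m}_{p_t}/\mf{m}_{p_t}^2$ coincides with $p_M(K)$. Reducedness of $p_t$, namely $I_\B + \mf{m}_{p_t}^2 = \mf{m}_{p_t}$, is therefore equivalent to surjectivity of $p_M\colon K\to M$.

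Finally I would invoke \cite{AFRS}*{Corollary 3.12} to translate surjectivity of $p_M$ into separability of the isotropic component $\overline{V}_t^\vee$. Since $\B$ is assumed reduced at every point, each $\overline{V}_t^\vee$ is both isotropic and separable, and hence strongly isotropic. The main obstacle lies in correctly identifying the linear contribution to $I_\B$ at $p_t$, but this parallels the analysis carried out in the proof of Theorem \ref{thm:reduced} and reduces to bookkeeping with the decomposition $\bwedge^2 V = L\oplus M\oplus H$ under $\pd$.
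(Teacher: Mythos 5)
Your proof is correct, but it takes a genuinely different route from the paper's. You reduce the statement to a local computation in the Pl\"ucker chart (parallel to the paper's proof of Theorem~\ref{thm:reduced}), showing that for each point $p_t=[e_1\wedge e_2]$ of $\B(V,K)$ the passage to linear terms of $I_\B$ identifies the conormal image of $K$ with $p_M(K)\subseteq M$, so that reducedness at $p_t$ (i.e.\ $I_{\B,p_t}=\mf{m}_{p_t}$, via Nakayama) is exactly surjectivity of $p_M$; you then invoke the AFRS criterion (\cite{AFRS}*{Corollary 3.12}) that an isotropic subspace is separable iff $p_M$ is surjective. The paper instead argues by contradiction directly from the definition of separability: a hypothetical $e_1\wedge f_1 + e_2\wedge f_2\in K^\perp\setminus\bwedge^2\ol{V}^\vee$ either produces a line in $\Gr\cap\mathbf{P}K^\perp$ (if $f_1\wedge f_2=0$, contradicting finiteness) or a nonzero vector in $T_{[\ol{V}^\vee]}\Gr\cap T_{[\ol{V}^\vee]}\mathbf{P}K^\perp$ (contradicting transversality/reducedness), without passing through $p_M$ or the chart. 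Your version makes the equivalence \emph{reduced at $p_t$ $\Leftrightarrow$ $p_M$ surjective} explicit and lets the separability statement fall out of a general criterion, at the cost of relying on the AFRS machinery and the local bookkeeping ($p_L(K)=0$ because $e_1\wedge e_2\in K^\perp$; $\pd(H)\subseteq\mf{m}_{p_t}^2$ via the Pl\"ucker relations); the paper's version is shorter and self-contained, and it more sharply isolates where finiteness (as opposed to mere transversality) is used. Both are valid proofs.
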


\begin{proof}
    By the hypothesis, the resonance is linear and every component is of minimal dimension two. In particular, isotropy follows immediately, as already noticed before.
    To conclude, we need to prove that every component is separable. 
    
    Let $\overline{V}^\vee$ be a component of $\RR(V,K)$, generated by $e_1,e_2$. Notice that $e_1\wedge e_2\in K^\perp$, and for every $f\in V$ such that $e_1\wedge f\in K^\perp$ or $e_2\wedge f\in K^\perp$, we automatically have $f\in\overline{V}^\vee$. Assume $\overline{V}^\vee$ is not separable. Then there exist $f_1,f_2\in V^\vee$ such that $e_1\wedge f_1+e_2\wedge f_2\in K^\perp\setminus \bigwedge^2\overline{V}^\vee$. By the observation above, it follows that $f_1,f_2\not\in \overline{V}^\vee$. If $f_1\wedge f_2=0$, then $f_1$ and $f_2$ are collinear and we would have a line contained in $\Gr\cap \mathbf{P}K^\perp$, which contradicts the finiteness of this intersection. Hence, $f_1\wedge f_2\ne 0$, and then, factoring through $\bigwedge^2\overline{V}^\vee$, the element $e_1\wedge f_1+e_2\wedge f_2\in K^\perp$ defines a non-zero vector in $T_{[\overline{V}^\vee]}\Gr\cap T_{[\overline{V}^\vee]}\mathbf{P}K^\perp$.  This contradicts the reducedness of the intersection, 
    and we are done.
\end{proof}

\section{The geometry of sub-Grassmannians}
\label{sect:normal-bundle}

Let $\ol{V}^{\vee}\subsetneqq V^{\vee}$ be a proper linear 
subspace, corresponding to a projection $\pi\colon V\onto \ol{V}$ and set $U\coloneqq \ker(\pi)$. 
We describe the geometry of the Grassmannian  
$\ol{\Gr} \coloneqq \op{Gr}_2(\ol{V}^{\vee})\hookrightarrow \Gr$. As before, set 
$\mc{I}\coloneqq\mc{I}_{\ol{\Gr}/\Gr}$. We have tautological sequences on $\Gr$ 
(respectively $\ol{\Gr}$):
\begin{equation}
\label{eq:tautologica-GGbar}
\begin{tikzcd}[column sep=18pt, row sep=2pt]
0 \ar[r]& \mc{U} \ar[r]& V \oo \mc{O}_{\Gr} \ar[r]& \mc{Q} \ar[r]& 0 ,
\\
0 \ar[r]& \ol{\mc{U}} \ar[r]& \ol{V} \oo \mc{O}_{\ol{\Gr}} \ar[r]
& \ol{\mc{Q}} \ar[r]& 0 ,
 \end{tikzcd}
\end{equation}
where $\mc{Q}$ and $\ol{\mc{Q}}$ are the tautological rank $2$ 
quotients of $V$ (respectively $\ol{V}$). We have 
$\ol{\mc{Q}} \cong \mc{Q}_{|_{\ol{\Gr}}}$, and the tautological subbundles bundles $\mc{U}$ and 
$\ol{\mc{U}}$ are related by the exact sequence
\begin{equation}
\label{eq:UUbar-seq}
\begin{tikzcd}[column sep=18pt]
0 \ar[r]& U \oo \mc{O}_{\ol{\Gr}} \ar[r]&
\mc{U}_{|_{\ol{\Gr}}} \ar[r]& \ol{\mc{U}}\ar[r]& 0 \, .
\end{tikzcd}
\end{equation}
It is well known that $\Omega_{\Gr} = \mc{U} \oo \mc{Q}^{\vee}$, respectively 
$\Omega_{\ol{\Gr}} = \ol{\mc{U}} \oo \ol{\mc{Q}}^{\vee}$.
Therefore, if we tensor \eqref{eq:UUbar-seq} with $\ol{\mc{Q}}^{\vee}\cong
\mc{Q}^{\vee}_{|_{\ol{\Gr}}}$, we obtain the conormal exact sequence
\begin{equation}
\label{eq:conormal-ses}
\begin{tikzcd}[column sep=18pt]
0 \ar[r]& U \oo \ol{\mc{Q}}^{\vee} \ar[r]& (\Omega_{\Gr})_{|_{\ol{\Gr}}} 
\ar[r]& \Omega_{\ol{\Gr}} \ar[r]& 0 .
 \end{tikzcd}
\end{equation}
We find $\mc{I}/\mc{I}^2 \cong U \oo \ol{\mc{Q}}^{\vee}$ and the normal bundle of
$\ol{\Gr}$ in $\Gr$ is given by 
$\mc{N}\cong  U^{\vee} \oo \ol{\mc{Q}}$. Let 
\begin{equation}
\label{eq:def-ProjN}
E \coloneqq \op{\ul{Proj}}\bigl(\Sym_{\mc{O}_{\ol{\Gr}}}(\mc{I}/\mc{I}^2)\bigr) =
\op{\ul{Proj}}\bigl(\Sym_{\mc{O}_{\ol{\Gr}}}(U \oo \ol{\mc{Q}}^{\vee})\bigr) = 
\P(\mc{N})
\end{equation}
be the projectivized normal bundle of $\ol{\Gr}$, and let
\begin{equation}
\label{eq:def-ol-gamma}
\begin{tikzcd}[column sep=16pt]
\ol{\gamma}\colon E \ar[r]& \ol{\Gr}
\end{tikzcd}
\end{equation}
denote the structure map of this bundle. Then $\mc{O}_E(-E)$ is the
tautological quotient of $\ol{\gamma}^*(\mc{I}/\mc{I}^2)$. Setting
$n=\dim V$ and $\ol{n} = \dim \ol{V}$, then  $\rk(\mc{N})=2\cdot(n-\ol{n})$.
We write $N=2\cdot(n-\ol{n})$, hence $E$ is a $\P^{N-1}$-bundle
over $\ol{\Gr}$. Using \cite{hartshorne}*{Exercise~III.8.4}, we obtain
\begin{equation}
\label{eq:gamma*-OE}
\ol{\gamma}_*(\mc{O}_E(-dE)) = \begin{cases}
 \Sym^{d}(U \oo \ol{\mc{Q}}^{\vee}) & \mbox{if }d\geq 0, \\
 0 & \mbox{otherwise.}
\end{cases}
\end{equation}

To compute the higher direct images of $\mc{O}_E(-dE)$, note that $\det(\mc{I}/\mc{I}^2) \cong \mc{O}_{\ol{\Gr}}(\ol{n}-n)$. 
It follows from \cite{hartshorne}*{Exercise~III.8.4} that, for $t\geq 1$ we have
\begin{equation}
\label{eq:Rgamma*-OE}
R^t\ol{\gamma}_*(\mc{O}_E(dE)) = \begin{cases}
 \Sym^{d-N}(U^{\vee} \oo \ol{\mc{Q}}) (n-\ol{n})& 
 \mbox{if }t=N-1\mbox{ and }d\geq N, \\[2pt]
 0 & \mbox{otherwise.} \\
\end{cases}
\end{equation}
Note that above we have used the identification which holds in characteristic 0 that
\[
\bigl(\Sym^{d-N}(\mc{I}/\mc{I}^2)\bigr)^{\vee} = \bigl(\Sym^{d-N}(U \oo
\ol{\mc{Q}}^{\vee})\bigr)^{\vee}\cong\Sym^{d-N}(U^{\vee} \oo \ol{\mc{Q}}).
\]

\subsection{Strong isotropicity and normal bundles of base loci}
\label{subsec:strong-isotropicity}

We assume now that $W(V,K)$ is a Koszul module whose 
resonance is strongly isotropic, hence by 
\cite{AFRS}*{Theorem 5.1} it is projectively
reduced and projectively disjoint. We write
$\mc{R}(V,K) = \overline{V}_1^{\vee} \cup \cdots \cup \overline{V}_k^{\vee}$ for its 
decomposition \eqref{eq:linear resonance}. As before, let $\Gr_t = \op{Gr}_2(V_t^{\vee})\subseteq \Gr$ for
$t=1,\dots,k$ and $\B=\B(V,K)$ defined by \eqref{eq:linearsection}. Restricting the surjection
$K\oo\mc{O}_{\Gr} \onto \mc{I}_{\B}(1)$ to $\B$, we obtain a 
surjection 
\[
\begin{tikzcd}[column sep=20pt]
\op{ev}_{\B}\colon K \oo \mc{O}_{\B} \ar[r]& (\mc{I}_{\B}/\mc{I}_{\B}^2)(1) .
\end{tikzcd}
\]
Restricting this map further to a component 
$\ol{\Gr}=\ol{\Gr}_t$ of $\B$, we obtain a surjective morphism
\begin{equation}
\label{eq:ev-Gr}
\begin{tikzcd}[column sep=20pt]
\op{ev}_{\ol{\Gr}}\colon K \oo \mc{O}_{\ol{\Gr}}  \ar[r]& (\mc{I}/\mc{I}^2)(1)\cong  U\oo\ol{\mc{Q}} ,
 \end{tikzcd}
\end{equation}
 where the
identification $(\mc{I}/\mc{I}^2)(1) \cong U\oo\ol{\mc{Q}}$
follows from the isomorphism 
$\ol{\mc{Q}}^{\vee}(1)\cong \ol{\mc{Q}}$.

The strong isotropicity  of $\mc{R}(V,K)$ does not only imply 
the surjectivity of \eqref{eq:ev-Gr} as a morphism of sheaves, but also at the 
level of global sections.

\begin{lemma}
\label{lem:ev-Gr-surj}
The map 
$H^0(\ol{\Gr},\op{ev}_{\ol{\Gr}})\colon H^0(\ol{\Gr},K\oo \mc{O}_{\ol{\Gr}}) 
\longrightarrow H^0(\ol{\Gr},U\oo\ol{\mc{Q}})$ is surjective.
\end{lemma}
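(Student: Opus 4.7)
The plan is to identify the induced map $H^0(\op{ev}_{\ol{\Gr}})$ with the projection $p_M \colon K \to M$ from Section~\ref{sect:iso-sep}, and then invoke \cite{AFRS}*{Corollary~3.12}, which characterizes separability of $\ol{V}^{\vee}$ (a consequence of the strong isotropicity of $\mc{R}(V,K)$) as the surjectivity of $p_M$. First, $H^0\bigl(\ol{\Gr}, K \oo \mc{O}_{\ol{\Gr}}\bigr) = K$ trivially, and from $H^0(\ol{\Gr},\ol{\mc{Q}}) = \ol{V}$ we get $H^0(\ol{\Gr}, U \oo \ol{\mc{Q}}) = U \oo \ol{V}$.

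The core claim is that $\op{ev}_{\ol{\Gr}}$ factors as
\[
K \oo \mc{O}_{\ol{\Gr}} \xrightarrow{\,p_M\,} \ol{V} \oo U \oo \mc{O}_{\ol{\Gr}} \xrightarrow{\op{ev}_{\ol{V}}\oo\,\op{id}_U} \ol{\mc{Q}} \oo U,
\]
where the first arrow is well-defined because isotropicity forces $K \subseteq (\bw^2 \ol{V}^{\vee})^{\perp} = M \oplus H$, and the second arrow is induced by the tautological evaluation $\ol{V} \oo \mc{O}_{\ol{\Gr}} \to \ol{\mc{Q}}$. To justify this, the key point is that $H = \bw^2 U$, viewed through $H \hookrightarrow \bw^2 V = H^0(\Gr,\mc{O}_{\Gr}(1))$, consists of sections whose image in $\mc{I}(1)$ actually lies in $\mc{I}^2(1)$. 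I would verify this in the Plücker chart $U_{1,2}$ used in the proof of Theorem~\ref{thm:reduced}: for $i,j>\ol{n}$, the Plücker relation $p_{i,j} = p_{1,i}p_{2,j} - p_{1,j}p_{2,i}$ exhibits $v_i\wedge v_j$ as an element of $I^2$, where $I = (p_{1,k}, p_{2,k} : k > \ol{n})$, and the analogous computation is valid in every Plücker chart centered at a point of $\ol{\Gr}$. Consequently, the contribution of $H$ to $\op{ev}_{\ol{\Gr}}$ vanishes modulo $\mc{I}^2$, and a parallel local computation identifies the remaining contribution from $M$ with the composition displayed above.

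Passing to global sections, the evaluation $\ol{V}\oo\mc{O}_{\ol{\Gr}}\to \ol{\mc{Q}}$ induces the identity on $\ol{V}$, so $H^0(\op{ev}_{\ol{\Gr}})$ becomes $p_M\colon K \to \ol{V}\oo U = U\oo \ol{V}$, whose surjectivity is immediate from separability. The main technical step is the identification of $\op{ev}_{\ol{\Gr}}$ with the factorization above; however, this rests on the local Plücker analysis already carried out in the proof of Theorem~\ref{thm:reduced}, so no substantially new computation is required.
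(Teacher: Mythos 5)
Your proof follows essentially the same route as the paper's: identify $H^0(\op{ev}_{\ol{\Gr}})$ with the projection $p_M\colon K\to M$ and then invoke \cite{AFRS}*{Corollary~3.12}. You supply more local justification than the paper's proof (the explicit factorization through $\ol{V}\oo U\oo\mc{O}_{\ol{\Gr}}$ and the Pl\"ucker-chart check that the $\bw^2 U$ part of $K$ lands in $\mc{I}^2(1)$, so only the $M$-component survives), whereas the paper simply asserts the identification after choosing bases as in Lemma~\ref{lem:2nd-basis}; the underlying argument is the same.
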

\begin{proof}
From Theorem \ref{thm:bott} we have the identification $H^0(\ol{\Gr},U\oo\ol{\mc{Q}}) \cong U \oo \ol{V}$. Choosing bases as in Lemma~\ref{lem:2nd-basis}, we infer that $U=\spn\{v_{\ol{n}+1},\dots,v_n\}$
and $\ol{V} = \spn\{v_1,\dots,v_{\ol{n}}\}$, so that $U\oo\ol{V}$ equals
the vector space $M$ from the decomposition \eqref{eq:decomp-bw2V}. Moreover, under these 
identifications the map $H^0(\ol{\Gr},\op{ev}_{\ol{\Gr}})$ is given by the 
projection 
\[
p_M\colon K\to M,
\]
which, as already pointed out, is surjective by \cite{AFRS}*{Corollary 3.12}.
\end{proof}

The surjection \eqref{eq:ev-Gr}, together with the natural multiplication
of symmetric powers, induces surjective moprphisms of sheaves  
$K \oo \Sym^d\left((\mc{I}/\mc{I}^2)(1)\right)
\twoheadrightarrow\Sym^{d+1}\left((\mc{I}/\mc{I}^2)(1)\right)$ on $\ol{\Gr}$. 
Tensoring with $(\Sym^q\ol{\mc{Q}})(-d)$
and using the isomorphism
\begin{equation}
\label{eq:Symd-I/I2-P}
\Sym^d\left((\mc{I}/\mc{I}^2)(1)\right)
\cong (\Sym^d(\mc{I}/\mc{I}^2))(d) 
\end{equation}
yields a surjection
\begin{equation}
\label{eq:surj-K-to-PSymQ}
\begin{tikzcd}[column sep=20pt]
 K \oo \Sym^d(\mc{I}/\mc{I}^2) \oo_{\mc{O}_{\ol{\Gr}}} \Sym^q\ol{\mc{Q}} \ar[r, two heads]&
 \Sym^{d+1}(\mc{I}/\mc{I}^2) 
 \oo_{\mc{O}_{\ol{\Gr}}} (\Sym^q\ol{\mc{Q}})(1)\, .
 \end{tikzcd}
\end{equation}

\begin{lemma}
\label{lem:str-iso-1st-surj}
If $0\leq d\leq\ol{n}-3$ and $q>d$, the map 
\eqref{eq:surj-K-to-PSymQ} is surjective on global sections.
\end{lemma}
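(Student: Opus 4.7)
The plan is to split \eqref{eq:surj-K-to-PSymQ} as the composition
\[
K\otimes \mathcal{F} \xrightarrow{\mathrm{ev}_{\ol{\Gr}} \otimes \mathrm{id}} (\mathcal{I}/\mathcal{I}^2)(1) \otimes \mathcal{F} \xrightarrow{\mathrm{mult}} \Sym^{d+1}(\mathcal{I}/\mathcal{I}^2)(1) \otimes \Sym^q \ol{\mathcal{Q}},
\]
with $\mathcal{F} := \Sym^d(\mathcal{I}/\mathcal{I}^2) \otimes \Sym^q \ol{\mathcal{Q}}$, and to treat the two arrows separately. Since we are in characteristic zero there is a Pieri splitting $(\mathcal{I}/\mathcal{I}^2)(1)\otimes \Sym^d(\mathcal{I}/\mathcal{I}^2)\cong \Sym^{d+1}(\mathcal{I}/\mathcal{I}^2)(1)\oplus \SS_{(d,1)}(\mathcal{I}/\mathcal{I}^2)(1)$, so $\mathrm{mult}$ is a projection onto a direct summand and is automatically surjective on $H^0$.

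I would then invoke Bott's theorem together with the hypotheses $d\leq \ol{n}-3$ and $q>d$ to check that every Bott weight $\alpha=(\alpha_1,\alpha_2)$ appearing in the explicit decomposition of $\mathcal{F}$ or of $(\mathcal{I}/\mathcal{I}^2)(1)\otimes \mathcal{F}$ satisfies $\alpha_2\geq -(\ol{n}-2)$, so that Lemma~\ref{lem:Sa-Q-vanishing} forces $H^{>0}=0$ throughout. A Cauchy-formula computation identifies
\[
H^0(\ol{\Gr},\mathcal{F}) \;=\; \Sym^d U\otimes \Sym^{q-d}\ol{V},
\]
and the target's sections as $\Sym^{d+1}U\otimes \ol{V}\otimes \Sym^{q-d}\ol{V}\oplus \SS_{(d,1)}U\otimes \Sym^{q-d+1}\ol{V}$ when $d\geq 1$; the case $d=0$ reduces immediately to Lemma~\ref{lem:ev-Gr-surj}. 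By Lemma~\ref{lem:ev-Gr-surj} the induced map on sections $K\to H^0(\ol{\Gr},(\mathcal{I}/\mathcal{I}^2)(1))=M=U\otimes \ol{V}$ is precisely the surjection $p_M$, so the composition on $H^0$ factors through a $\GL(U)\times\GL(\ol{V})$-equivariant map
\[
\Phi\colon M\otimes \Sym^d U\otimes \Sym^{q-d}\ol{V} \longrightarrow \Sym^{d+1}U\otimes \ol{V}\otimes \Sym^{q-d}\ol{V}\;\oplus\; \SS_{(d,1)}U\otimes \Sym^{q-d+1}\ol{V}.
\]

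By Pieri the source of $\Phi$ decomposes into the four irreducible $\GL(U)\times\GL(\ol{V})$-summands $\SS_\lambda U\otimes \SS_\mu \ol{V}$ with $\lambda\in\{(d+1),(d,1)\}$ and $\mu\in\{(q-d+1),(q-d,1)\}$; three of these appear as irreducible constituents of the target, while the fourth $\SS_{(d,1)}U\otimes \SS_{(q-d,1)}\ol{V}$ does not. Since $\Phi$ is equivariant, Schur's lemma reduces surjectivity to showing that on each of the three common irreducibles $\Phi$ acts by a nonzero scalar. This non-vanishing is the main obstacle; I would verify it by unwinding the Pieri isomorphism at a single closed point of $\ol{\Gr}$ and tracing a highest-weight vector of each irreducible through the multiplication map $(\mathcal{I}/\mathcal{I}^2)(1)\otimes \Sym^d(\mathcal{I}/\mathcal{I}^2)\twoheadrightarrow \Sym^{d+1}(\mathcal{I}/\mathcal{I}^2)(1)$ in the symmetric algebra, whereupon a short calculation shows the resulting scalars are nonzero rational multiples, and $\Phi$ is surjective.
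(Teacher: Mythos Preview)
Your reduction is correct through the point where you identify $H^0(\mc F)=\Sym^dU\otimes\Sym^{q-d}\ol V$, decompose the target into three irreducibles, and observe that surjectivity of the equivariant map $\Phi$ comes down to three nonzero scalars. The gap is that you never actually verify those scalars. The phrase ``unwinding the Pieri isomorphism at a single closed point'' does not constitute a proof: the identification of $H^0(\mc F)$ with $\Sym^dU\otimes\Sym^{q-d}\ol V$ passes through a specific Clebsch--Gordan inclusion $\Sym^{q-d}\ol{\mc Q}\hookrightarrow\Sym^d\ol{\mc Q}^\vee\otimes\Sym^q\ol{\mc Q}$, and the analogous identification on the target side involves a different such inclusion; whether the induced map on each irreducible summand is nonzero requires tracking highest-weight vectors through both embeddings simultaneously. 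For the top summand $\Sym^{d+1}U\otimes\Sym^{q-d+1}\ol V$ this is plausible by inspection, but for the two ``hook'' summands $\Sym^{d+1}U\otimes\SS_{(q-d,1)}\ol V$ and $\SS_{(d,1)}U\otimes\Sym^{q-d+1}\ol V$ the calculation is genuinely delicate, and you have not done it.

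The paper sidesteps this scalar-by-scalar verification entirely. It recognises $\Phi$ as the last differential of the complex $B^\bullet=H^0\bigl(\ol\Gr,\mc B^\bullet\otimes(\Sym^q\ol{\mc Q})(-d)\bigr)$, where $\mc B^\bullet$ is the length-$(d{+}2)$ resolution of $\bigwedge^{d+1}(U\otimes\ol{\mc U})$ obtained as the $(d{+}1)$-st exterior power of the two-term complex $U\otimes\ol V\twoheadrightarrow U\otimes\ol{\mc Q}$. Two Bott vanishings then finish the job: Lemma~\ref{lem:Sa-Q-vanishing} gives $H^{>0}$ of every term, so $\bb H^{d+1}=\mc H^{d+1}(B^\bullet)=\coker\Phi$, and Lemma~\ref{lem:q-dd-beta-vanishing} (which is exactly where the hypotheses $d\le\ol n-3$ and $q>d$ enter) gives $H^{d+1}\bigl(\ol\Gr,\bigwedge^{d+1}(U\otimes\ol{\mc U})\otimes(\Sym^q\ol{\mc Q})(-d)\bigr)=0$, hence $\bb H^{d+1}=0$. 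This replaces your three unproved scalar checks with a single uniform cohomological vanishing.
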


\begin{proof}
We begin by observing that the map \eqref{eq:ev-Gr} factors 
as the composition
 \begin{equation}
\label{eq:K-factor-UV}
\begin{tikzcd}[column sep=22pt]
K \oo \mc{O}_{\ol{\Gr}} \ar[r]& H^0(\ol{\Gr},U\oo\ol{\mc{Q}}) \oo
\mc{O}_{\ol{\Gr}} \ar[r, "\epsilon"]& U\oo\ol{\mc{Q}} ,
\end{tikzcd}
 \end{equation}
where the first map is induced by the surjection 
$H^0(\ol{\Gr},\op{ev}_{\ol{\Gr}})$ in Lemma~\ref{lem:ev-Gr-surj}, and the 
second one is the evaluation of sections. Recalling that 
$H^0(\ol{\Gr},U\oo\ol{\mc{Q}}) = U\oo\ol{V}$, we can construct the analogue of \eqref{eq:surj-K-to-PSymQ}, 
where $K$ is replaced by $U\oo\ol{V}$.
The map 
\[
K\oo \Sym^d(\mc{I}/\mc{I}^2) \oo_{\mc{O}_{\ol{\Gr}}} \Sym^q\ol{\mc{Q}}
\rightarrow U\oo\ol{V} \oo \Sym^d(\mc{I}/\mc{I}^2) \oo_{\mc{O}_{\ol{\Gr}}} \Sym^q\ol{\mc{Q}}
\]
inducing a surjection  
at the level of global sections, it then suffices to show that the map
\begin{equation}
\label{eq:surj-UV-to-PSymQ}
\begin{tikzcd}[column sep=22pt]
U\oo\ol{V} \oo \Sym^d(\mc{I}/\mc{I}^2) \oo_{\mc{O}_{\ol{\Gr}}} \Sym^q\ol{\mc{Q}}
\ar[r]& \Sym^{d+1}(\mc{I}/\mc{I}^2) 
 \oo_{\mc{O}_{\ol{\Gr}}} (\Sym^q\ol{\mc{Q}})(1)
 \end{tikzcd}
\end{equation}
(constructed in the same way as \eqref{eq:surj-K-to-PSymQ}) induces a surjection
on global sections. We will show that \eqref{eq:surj-UV-to-PSymQ} arises 
naturally from a Koszul-type complex, and prove it induces a surjection 
on global sections by analyzing a hypercohomology spectral sequence. The map 
$\epsilon$ in \eqref{eq:K-factor-UV} is part of the short exact sequence
\begin{equation}
\label{eq:uvv-bar}
\begin{tikzcd}[column sep=20pt]
0 \ar[r]& U \oo \ol{\mc{U}}\ar[r]& U \oo \ol{V} \oo \mc{O}_{\ol{\Gr}}
\ar[r, "\epsilon"]& U \oo \ol{\mc{Q}}\ar[r]&0\, ,
\end{tikzcd}
\end{equation}
obtained from the tautological sequence \eqref{eq:tautologica-GGbar} on 
$\ol{\Gr}$ by tensoring with $U$. We can think of this sequence as a 
resolution of $U \oo \ol{\mc{U}}$ by a $2$-term complex. Taking the 
$(d+1)$-st exterior power of this complex, we obtain a resolution of 
$\bw^{d+1}(U \oo \ol{\mc{U}})$ given by the complex $\mc{B}^{\bullet}$,
\begin{equation}
\label{eq:uv-seq-bbullet}
\begin{tikzcd}[column sep=20pt, row sep=3pt]
\bw^{d+1}(U \oo \ol{V}) \oo \mc{O}_{\ol{\Gr}} \ar[r]
& \bw^d(U \oo \ol{V})\oo (U \oo \ol{\mc{Q}}) \rar[shorten >= 43pt] 
&\hspace*{-40pt}\cdots
\\
\hspace*{50pt}\cdots\rar[shorten <=15pt]
&  U \oo \ol{V} \oo \Sym^d(U \oo \ol{\mc{Q}}) \ar[r]
&\Sym^{d+1}(U \oo \ol{\mc{Q}}) \,,
\end{tikzcd}
\end{equation}
where $\mc{B}^i = \bw^{d+1-i}(U \oo \ol{V}) \oo \Sym^i(U \oo \ol{\mc{Q}})$ for
$i=0,\dots,d+1$. 
We obtain an identification,
\begin{multline}
\label{eq:sh-cohom=hyper-cohom}
H^{d+1}\Big(\ol{\Gr},\bw^{d+1}(U \oo \ol{\mc{U}}) \oo 
(\Sym^q\ol{\mc{Q}})(-d)\Big) = 
\bb{H}^{d+1}\left(\mc{B}^{\bullet}
\oo_{\mc{O}_{\ol{\Gr}}} (\Sym^q\ol{\mc{Q}})(-d)\right).
\end{multline}

\begin{claim*}
The following vanishing statements hold:
\begin{equation}
\label{eq:olGr-vanH}
H^{d+1}\Bigl(\ol{\Gr},\bw^{d+1}(U \oo \ol{\mc{U}}) \oo
(\Sym^q\ol{\mc{Q}})(-d)\Bigr)=0,
\end{equation}
and
\begin{equation}
\label{eq:olGr-vanHH}
H^j\bigl(\ol{\Gr},\mc{B}^i  \oo \Sym^q\ol{\mc{Q}}(-d)\bigr) = 0\mbox{ for }j>0\mbox{ and all }i.
\end{equation}
\end{claim*}

Assuming the claim, we complete the proof of the lemma. 
Using \eqref{eq:olGr-vanHH}, it follows that the 
hypercohomology groups of $\mc{B}^{\bullet} 
\oo_{\mc{O}_{\ol{\Gr}}} (\Sym^q\ol{\mc{Q}})(-d)$ are the 
same as the cohomology groups of the complex
\begin{equation}
\label{eq:bbullet-h0}
B^{\bullet} = H^0\left(\ol{\Gr},\mc{B}^{\bullet}
\oo_{\mc{O}_{\ol{\Gr}}} (\Sym^q\ol{\mc{Q}})(-d)\right).
\end{equation}
Using \eqref{eq:sh-cohom=hyper-cohom} and \eqref{eq:olGr-vanH},
it follows further that $\mc{H}^{d+1}(B^{\bullet}) = 0$; 
that is, the last differential  in $B^{\bullet}$, which we 
denote $\xi\colon B^d \lra B^{d+1}$, is surjective. 

Moreover, using the identification \eqref{eq:Symd-I/I2-P},
we obtain that
\begin{equation}
    \begin{aligned}
\label{eq:mcbd-gr-sym}
\mc{B}^d  \oo_{\mc{O}_{\ol{\Gr}}}
(\Sym^q\ol{\mc{Q}})(-d)& = U\oo\ol{V} \oo \Sym^d(\mc{I}/\mc{I}^2)
\oo_{\mc{O}_{\ol{\Gr}}} \Sym^q\ol{\mc{Q}} ,
\\ 
\mc{B}^{d+1} \oo_{\mc{O}_{\ol{\Gr}}}
(\Sym^q\ol{\mc{Q}})(-d) & = \Sym^{d+1}(\mc{I}/\mc{I}^2) 
\oo_{\mc{O}_{\ol{\Gr}}} (\Sym^q\ol{\mc{Q}})(1) \, ,
\end{aligned}
\end{equation}
and $\xi$ is the map induced on global sections by 
\eqref{eq:surj-UV-to-PSymQ}, which is what we wanted to prove.

We are left to verify \eqref{eq:olGr-vanH} and 
\eqref{eq:olGr-vanHH}. Since 
$\Sym^q\ol{\mc{Q}}(-d)=
 \bb{S}_{(q-d,-d)}\ol{\mc{Q}}$,
 it follows from \eqref{eq:cauchy} that 
$\bw^{d+1}(U \oo \ol{\mc{U}}) 
 \oo (\Sym^q\ol{\mc{Q}})(-d)$ 
decomposes as a direct sum of copies of 
$\bb{S}_{(q-d,-d)}\ol{\mc{Q}} \oo 
\bb{S}_{\b}\ol{\mc{U}}$,
where $\abs{\b}=d+1$. Our hypothesis guarantees that we 
can apply Lemma~\ref{lem:q-dd-beta-vanishing} to conclude 
that $H^{d+1}(\ol{\Gr},\bb{S}_{(q-d,-d)} \ol{\mc{Q}} 
\oo \bb{S}_{\b}\ol{\mc{U}})=0$, which 
implies \eqref{eq:olGr-vanH}. The vanishing in 
\eqref{eq:olGr-vanHH} follows from \eqref{eq:Sa-Q-only-0} by 
noting that  the tensor product
$\mc{B}^i \oo_{\mc{O}_{\ol{\Gr}}}
(\Sym^q\ol{\mc{Q}})(-d)$ decomposes (using for instance
\cite{weyman}*{Theorem~2.3.2(a) and Corollary~2.3.5}) into 
a direct sum of copies of $\SS_{(\a_1,\a_2)}\ol{\mc{Q}}$ 
with $\a_2\geq -d\geq-(\ol{n}-2)$.
\end{proof}

\section{The blow-up of a sub-Grassmannian}
\label{sect:blowup-Gr}

As before, let $\ol{V}^{\vee}\subseteq V^{\vee}$ be an $\overline{n}$-dimensional subspace, and set 
$N=2(n-\overline{n})$. Let
$\pi\colon V\onto \ol{V}$ be the corresponding projection and, again  
$U = \ker(\pi)$. We let $\wtl{\Gr}$ denote the blow-up of 
$\Gr=\Grass_2(V^{\vee})$ along 
$\ol{\Gr}=\op{Gr}_2(\ol{V}^{\vee})$, and write
$E \hookrightarrow\wtl{\Gr}$ for the exceptional divisor 
of the blow-up, that is, the projectivized normal bundle of 
$\ol{\Gr}$ in~$\Gr$, given in \eqref{eq:def-ProjN}. 

Let $\gamma\colon \wtl{\Gr} \longrightarrow \Gr$ be the blow-down morphism, whose restriction to the exceptional 
divisor is the map $\ol{\gamma}$ in \eqref{eq:def-ol-gamma}. 
Writing $\mc{I}$ for the ideal sheaf
of $\ol{\Gr}$ inside $\Gr$, we have that
\begin{equation}
\label{eq:gamma*-dE}
\gamma_*(\mc{O}_{\wtl{\Gr}}(dE)) = \begin{cases}
 \mc{O}_{\Gr} & \mbox{if }d\geq 0, \\[2pt]
 \mc{I}^{-d} & \mbox{if }d<0.
\end{cases}
\end{equation}

For higher direct images we have the following vanishing property, see  \cite{CR15}*{Theorem~1.1}
\begin{equation}
\label{eq:Rgamma*-O=0}
R^i\gamma_*\mc{O}_{\wtl{\Gr}}=0\mbox{ for all } i\geq 1,
\end{equation}
and more generally we have the following.

\begin{lemma}
\label{lem:RN-1*-OdE}
If $i\geq 1$ and $d\in\Z$, then $R^i\gamma_*(\mc{O}_{\wtl{\Gr}}(dE)) = 0$,
unless $i=N-1$ and $d\geq N$, in which case we have a short exact sequence
{\small
\begin{equation*}
\label{eq:ses-RN-1}
\begin{tikzcd}[column sep=13pt]
0 \ar[r]& R^{N-1}\gamma_*\bigl(\mc{O}_{\wtl{\Gr}}((d-1)E)\bigr) \ar[r]&
R^{N-1}\gamma_*\bigl(\mc{O}_{\wtl{\Gr}}(dE)\bigr)\ar[r]&
\bigl(\Sym^{d-N}(U^{\vee}) \oo \ol{\mc{Q}}\bigr) (n-\ol{n}) \ar[r]& 0\, .
\end{tikzcd}
\end{equation*}
}
\end{lemma}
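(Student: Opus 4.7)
My plan is to apply $\gamma_*$ to the tautological short exact sequence
\[
0 \longrightarrow \mc{O}_{\wtl{\Gr}}((d-1)E) \longrightarrow \mc{O}_{\wtl{\Gr}}(dE) \longrightarrow \mc{O}_E(dE) \longrightarrow 0
\]
and read off the higher direct images from the associated long exact sequence, inducting on $|d|$. The base case $d=0$ is \eqref{eq:Rgamma*-O=0}, while the cohomology of the quotient sheaf is supplied by \eqref{eq:gamma*-OE} and \eqref{eq:Rgamma*-OE}, using that $R^i\gamma_*\circ j_* = R^i\ol{\gamma}_*$ for the closed immersion $j\colon E\hookrightarrow\wtl{\Gr}$. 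Throughout I will use freely that $R^i\gamma_*$ vanishes for $i\geq N$ since the fibers of $\gamma$ have dimension at most $N-1$.

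For the upward induction ($d\geq 1$), I would use that $\ol{\gamma}_*\mc{O}_E(dE)=0$ and $R^i\ol{\gamma}_*\mc{O}_E(dE)=0$ for $i\geq 1$, with the sole exception $i=N-1$, $d\geq N$. Hence for $1\leq d\leq N-1$ each $R^i\gamma_*\mc{O}_{\wtl{\Gr}}(dE)$ is isomorphic to $R^i\gamma_*\mc{O}_{\wtl{\Gr}}((d-1)E)$ and vanishes by induction. For $d\geq N$ the only surviving piece of the long exact sequence is
\[
0 \to R^{N-1}\gamma_*\mc{O}_{\wtl{\Gr}}((d-1)E) \to R^{N-1}\gamma_*\mc{O}_{\wtl{\Gr}}(dE) \to R^{N-1}\ol{\gamma}_*\mc{O}_E(dE) \to R^N\gamma_*\mc{O}_{\wtl{\Gr}}((d-1)E),
\]
and the right-hand group vanishes by the fiber dimension bound, so plugging in the value of $R^{N-1}\ol{\gamma}_*\mc{O}_E(dE)$ from \eqref{eq:Rgamma*-OE} produces the claimed short exact sequence.

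For the downward induction ($d\leq -1$) the only nonzero direct image of $\mc{O}_E(dE)$ is $\ol{\gamma}_*\mc{O}_E(dE)\cong\Sym^{-d}(U\oo\ol{\mc{Q}}^\vee)$. For $i\geq 2$ the long exact sequence then reads $0\to R^i\gamma_*\mc{O}_{\wtl{\Gr}}((d-1)E)\to 0$. For $i=1$ I need the connecting map $\ol{\gamma}_*\mc{O}_E(dE)\to R^1\gamma_*\mc{O}_{\wtl{\Gr}}((d-1)E)$ to vanish, which I would verify by identifying the preceding map $\gamma_*\mc{O}_{\wtl{\Gr}}(dE)\to \ol{\gamma}_*\mc{O}_E(dE)$ with the natural projection $\mc{I}^{-d}\twoheadrightarrow \mc{I}^{-d}/\mc{I}^{1-d}\cong \Sym^{-d}(\mc{I}/\mc{I}^2)$ via \eqref{eq:gamma*-dE} and the isomorphism $\mc{I}/\mc{I}^2\cong U\oo\ol{\mc{Q}}^{\vee}$, which makes surjectivity manifest.

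The delicate point I expect to worry about is the $d\geq N$ case: one must be careful that the long exact sequence actually truncates at $R^N\gamma_*$ (which needs the fiber dimension bound, not merely the vanishing $R^N\gamma_*\mc{O}_{\wtl{\Gr}}=0$ that would follow from the base case alone), so that the map $R^{N-1}\gamma_*\mc{O}_{\wtl{\Gr}}(dE)\to R^{N-1}\ol{\gamma}_*\mc{O}_E(dE)$ is genuinely surjective. Everything else is routine bookkeeping on the blow-up exact sequence.
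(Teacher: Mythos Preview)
Your proposal is correct and follows essentially the same route as the paper: induct on $|d|$ from the base case $d=0$ using the long exact sequence associated to $0\to\mc{O}_{\wtl{\Gr}}((d-1)E)\to\mc{O}_{\wtl{\Gr}}(dE)\to\mc{O}_E(dE)\to 0$, feeding in \eqref{eq:gamma*-OE} and \eqref{eq:Rgamma*-OE} for the exceptional piece and, in the downward direction, identifying the restriction map with $\mc{I}^{-d}\twoheadrightarrow\Sym^{-d}(\mc{I}/\mc{I}^2)$. One small remark: your concern about needing the fiber-dimension bound to kill $R^N\gamma_*\mc{O}_{\wtl{\Gr}}((d-1)E)$ in the $d\geq N$ step is unnecessary, since the upward inductive hypothesis already asserts $R^i\gamma_*\mc{O}_{\wtl{\Gr}}((d-1)E)=0$ for all $i\geq 1$ with $i\neq N-1$ (in particular $i=N$); the paper relies on this rather than on the fiber dimension.
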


\begin{proof}  When $d=0$, the conclusion follows from \eqref{eq:gamma*-dE}.
We prove the assertion by induction on (the absolute value of) $d$, 
considering the  exact sequence
\begin{equation}
\label{eq:ses-res-E}
\begin{tikzcd}[column sep=18pt]
0 \ar[r]& \mc{O}_{\wtl{\Gr}}((d-1)E)\ar[r]& \mc{O}_{\wtl{\Gr}}(dE) \ar[r]
&  \mc{O}_{E}(dE) \ar[r]&0 ,
\end{tikzcd}
\end{equation}
and the associated long exact sequence obtained by applying $R\gamma_*$.
When $d>0$, we have by induction that 
$R^i\gamma_*(\mc{O}_{\wtl{\Gr}}((d-1)E)) = 0$
if $1\leq i\neq (N-1)$ or if $i=N-1$ and $d-1<N$, and a similar vanishing 
holds for $R^i\ol{\gamma}_*$ applied to $\mc{O}_{E}(dE)$ by 
\eqref{eq:Rgamma*-OE}. This, together with the expression for 
$R^{N-1}\gamma_*(\mc{O}_{E}(dE))$ for $d\geq N$
from \eqref{eq:Rgamma*-OE} completes the inductive step.

When $d\leq 0$, we assume by induction that 
$R^i\gamma_*(\mc{O}_{\wtl{\Gr}}(dE)) = 0$
for $i\geq 1$, and would like to conclude that the same vanishing holds for
$R^i\gamma_*(\mc{O}_{\wtl{\Gr}}((d-1)E))$. When $i>1$ this follows from
the long exact sequence obtained by applying $R\gamma_*$ to \eqref{eq:ses-res-E}
and from \eqref{eq:Rgamma*-OE}, while for $t=1$ it follows if we can prove the
exactness of the sequence
\[
\begin{tikzcd}[column sep=18pt]
0 \ar[r]& \gamma_*(\mc{O}_{\wtl{\Gr}}((d-1)E))\ar[r]& \gamma_*(\mc{O}_{\wtl{\Gr}}(dE))
\ar[r]&\ol{\gamma}_*(\mc{O}_{E}(dE)) \ar[r]& 0 \, .
\end{tikzcd}
\]
In view of \eqref{eq:gamma*-dE} and \eqref{eq:gamma*-OE}, the above sequence
can be rewritten as
\begin{equation}
\label{eq:ses-mcd}
\begin{tikzcd}[column sep=18pt]
0 \ar[r]& \mc{I}^{1-d} \ar[r]& \mc{I}^{-d} \ar[r]&\Sym^{-d}(\mc{I}/\mc{I}^2)\ar[r]& 0 ,
\end{tikzcd}
\end{equation}
which is exact because $\mc{I}$ defines a regular subvariety  of $\Gr$.
\end{proof}

\subsection{The blow-up of the sub-Grassmannian of a strongly isotropic component}
\label{subsec:blow-Grass}

Assume  $\ol{V}^{\vee}\subseteq V^{\vee}$ is a strongly
isotropic component of $\mc{R}(V,K)$, so that $K\subseteq\ker\bigl(\bwedge^2\pi\bigr)$ and the projection $p_M\colon K\to M$ is surjective. 
The base locus of $|K|$ on $\Gr$ contains $\ol{\Gr}$, 
so if we let $\wtl{\Gr}$ denote the blow-up of $\Gr$ along $\ol{\Gr}$,
then the base locus of $(K,\mc{O}_{\ol{\Gr}}(H))$ contains $E$. The evaluation  map
$K\oo\mc{O}_{\wtl{\Gr}} \lra \mc{O}_{\ol{\Gr}}(H)$ factors through $\mc{O}_{\ol{\Gr}}(H-E)$,
and we construct a Koszul complex $\wtl{\mc{K}}$ as expalined in \eqref{eq:def-tlK}. 
Note that this complex may not be exact, since the base locus of 
$(K,\mc{O}_{\ol{\Gr}}(H))$ may be  larger than $\ol{\Gr}$.
Recall that $2\leq\ol{n}=\dim(\ol{V})<n$ and that $N=2(n-\ol{n})$ is the rank of the conormal bundle
$\mc{I}/\mc{I}^2=U \oo \ol{\mc{Q}}^{\vee}$. We define for each $d\geq 0$ the complex
\begin{equation}
\label{eq:olK-d}
\ol{\mc{K}}_d^{\bullet} = R^{N-1}\gamma_*\bigl(\wtl{\mc{K}}^{\bullet}(-dE)\bigr).
\end{equation}
This is a complex supported on~$\ol{\Gr}$, whose terms are given 
(using the projection formula) by
\begin{equation}
\label{eq:def-olK}
 \ol{\mc{K}}_d^{-i} = \bwedge^i K \oo 
 \Bigl(R^{N-1}\gamma_*\bigl(\mc{O}_{\wtl{\Gr}}((i-1-d)E)\bigr)\Bigr)(1-i)
 \end{equation}
for $i\geq 0$ and $\ol{\mc{K}}_d^i = 0$ for $i>0$.

We fix $q\geq n-3$, and note that again by the projection formula and \eqref{eq:olK-d}
we have that
\begin{equation}
\label{eq:def-olK-bis}
\ol{\mc{K}}_d^{\bullet} \oo_{\mc{O}_{\ol{\Gr}}} \Sym^q\ol{\mc{Q}} =
R^{N-1}\gamma_*\left(\wtl{\mc{K}}^{\bullet}(-dE) \oo_{\mc{O}_{\wtl{\Gr}}}
\gamma^*(\Sym^q\mc{Q})\right).
\end{equation}
Applying the functor $H^j(\ol{\Gr},-)$ to the above complex of sheaves,
we obtain a complex of vector spaces
\begin{equation}
\label{eq:def-Fjd}
 F_{j,d}^{\bullet} = H^j(\ol{\Gr},\ol{\mc{K}}_d^{\bullet}
 \oo_{\mc{O}_{\ol{\Gr}}} \Sym^q\ol{\mc{Q}}).
\end{equation}

The goal of this section is to prove the following proposition, the case
$d=0$ of which will play a key role in the proof of Theorem~\ref{thm:main}.

\begin{proposition}
\label{prop:cohom-Fjd}
If $j,d\geq 0$, then $\mc{H}^{-i}(F_{j,d}^{\bullet}) = 0$,  for $i\leq N+j$.
\end{proposition}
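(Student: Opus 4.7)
The vanishing $\mc{H}^{-i}(F_{j,d}^\bullet)=0$ is automatic whenever the term $F_{j,d}^{-i}$ itself vanishes. By Lemma \ref{lem:RN-1*-OdE}, $A_m \coloneqq R^{N-1}\gamma_*\mc{O}_{\wtl{\Gr}}(mE)$ vanishes for $m<N$, which forces $\ol{\mc{K}}_d^{-i}=\bw^iK\oo A_{i-1-d}(1-i)$ (and hence $F_{j,d}^{-i}$) to vanish for $i\leq N+d$. Consequently, only the range $N+d+1\leq i\leq N+j$ is non-trivial, and this is empty when $d\geq j$. This suggests proceeding by descending induction on $d$, the base case $d\geq j$ being vacuous.

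For the inductive step, the canonical inclusions $A_{m-1}\hookrightarrow A_m$ from Lemma \ref{lem:RN-1*-OdE} assemble into a short exact sequence of complexes on $\ol{\Gr}$:
\begin{equation*}
0\to \ol{\mc{K}}_{d+1}^\bullet\to \ol{\mc{K}}_d^\bullet \to \mc{C}_d^\bullet\to 0,
\end{equation*}
with quotient terms $\mc{C}_d^{-i}=\bw^iK\oo \Sym^{i-N-d-1}(U^\vee)\oo \ol{\mc{Q}}(n-\ol{n}+1-i)$. Tensoring with $\Sym^q\ol{\mc{Q}}$ and applying $R\Gamma(\ol{\Gr},-)$ produces a distinguished triangle whose associated hypercohomology spectral sequences yield a long exact sequence relating $\mc{H}^{-i}(F_{j,d}^\bullet)$ to $\mc{H}^{-i}(F_{j,d+1}^\bullet)$ (controlled by induction) and the cohomology of $D_{j,d}^\bullet\coloneqq H^j(\ol{\Gr},\mc{C}_d^\bullet\oo\Sym^q\ol{\mc{Q}})$. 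The problem thus reduces to verifying the analogous vanishing for $D_{j,d}^\bullet$ in the range $i\leq N+j$.

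To compute $D_{j,d}^{-i}$, the Pieri rule decomposes $\ol{\mc{Q}}(n-\ol{n}+1-i)\oo\Sym^q\ol{\mc{Q}}$ as a direct sum of two terms $\bb{S}_\alpha\ol{\mc{Q}}$, and Lemma \ref{lem:Sa-Q-vanishing} pins down the cohomology degree. In the non-trivial range $i\geq N+d+1$ one checks that $\alpha_2<0$; the hypothesis $q\geq n-3$ ensures $\alpha_1>-\ol{n}$ for moderate values of $d$, placing us in the ``only $H^{\ol{n}-2}$'' regime of Lemma \ref{lem:Sa-Q-vanishing}. This concentrates $D_{j,d}^\bullet$ in the single degree $j=\ol{n}-2$ and identifies it, via Cauchy's formula, as a Koszul-type complex on $\bw^\bullet K$ with representation-theoretic coefficients whose exactness in the required range follows by a direct computation.

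The main technical obstacle is this final Bott bookkeeping: uniformly verifying vanishing across the full range $i\leq N+j$, handling residual edge cases where $\alpha_1\leq-\ol{n}$ (which force the cohomology into degree $2\ol{n}-4$), and carefully tracking the connecting homomorphisms through the long exact sequence to propagate vanishing from the inductive hypothesis to $\mc{H}^{-i}(F_{j,d}^\bullet)$.
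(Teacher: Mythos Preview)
Your overall architecture matches the paper's: trivial vanishing for $i\leq N+d$, descending induction on $d$, the short exact sequence $0\to\ol{\mc{K}}_{d+1}^\bullet\to\ol{\mc{K}}_d^\bullet\to\mc{C}_d^\bullet\to 0$, and Bott bookkeeping to concentrate the cohomology of $\mc{C}_d^\bullet$ in degree $j=\ol{n}-2$ (with a residual piece at $j=2(\ol{n}-2)$ for large $i$). So far so good.

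The genuine gap is the sentence ``whose exactness in the required range follows by a direct computation.'' This is precisely the hard part, and it is \emph{not} a direct computation in $\bw^\bullet K$ with representation coefficients. The terms of $D_{\ol{n}-2,d}^\bullet$ involve $\bw^i K$ and the differentials are induced by the map $K\to H^0(\ol{\Gr},U\oo\ol{\mc{Q}})$; without input about $K$, there is no reason for the complex to be exact anywhere. The paper's mechanism is to introduce an auxiliary complex $\mc{G}^\bullet$ on the exceptional divisor $E$ (the restriction of the Koszul complex twisted by $\mc{O}_E(-dE)\oo\ol{\gamma}^*\Sym^q\ol{\mc{Q}}$), prove it is \emph{exact} because the evaluation $K\oo\mc{O}_E\to\mc{O}_E(H-E)$ is surjective, and then run the hypercohomology spectral sequence for $\mc{G}^\bullet$. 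The $E_1$ page has only three potentially nonzero rows, and the crucial input killing the relevant entries is the surjectivity of $H^0(\ol{\Gr},\ol{\gamma}_*\mc{G}^{-1})\to H^0(\ol{\Gr},\ol{\gamma}_*\mc{G}^0)$, which is exactly Lemma~\ref{lem:str-iso-1st-surj} and uses strong isotropicity via the surjectivity of $p_M\colon K\to U\oo\ol{V}$. Your proposal never invokes strong isotropicity, so the argument as written cannot go through.

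Two smaller points. First, your formula for $\mc{C}_d^{-i}$ should read $\Sym^{i-N-d-1}(U^\vee\oo\ol{\mc{Q}})$ rather than $\Sym^{i-N-d-1}(U^\vee)\oo\ol{\mc{Q}}$; this matters for the Cauchy decomposition. Second, passing from the short exact sequence of sheaf complexes to a usable exact sequence of the complexes $F_{j,d}^\bullet$ is not formal: applying $H^j(\ol{\Gr},-)$ termwise gives a long exact sequence, and one needs the preliminary vanishing $F_{j,d}^{-i}=0$ for $j\notin\{\ol{n}-2,2(\ol{n}-2)\}$ (your Bott analysis) plus a truncation to extract the short exact sequence~\eqref{eq:ses-trunc-FFC}.
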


It follows from Lemma~\ref{lem:RN-1*-OdE} that
\begin{equation}
\label{eq:vanish-olK-d-i}
\ol{\mc{K}}_d^{-i} = 0\mbox{ for }i\leq N + d.
\end{equation}
Consequently, the conclusion of Proposition~\ref{prop:cohom-Fjd} holds
trivially when $d\geq j$. For instance, if $j=0$ then the condition 
$d\geq j$ is automatic, so it is only interesting to consider the 
case when $j>0$. If $\ol{n}=2$, then $\ol{\Gr}$ is a point and from 
\eqref{eq:def-Fjd} we infer that $F_{j,d}^i = 0$ for all $j>0$, so the 
conclusion of Proposition~\ref{prop:cohom-Fjd} holds.
We may therefore assume that $3\leq\ol{n}<n$.

For each $d\geq 0$, we consider the short exact sequence of 
complexes on $\wtl{\Gr}$
\[
\begin{tikzcd}[column sep=20pt]
0 \ar[r]&\wtl{\mc{K}}^{\bullet}(-(d+1)E)\ar[r]& \wtl{\mc{K}}^{\bullet}(-dE)
\ar[r]& \wtl{\mc{K}}^{\bullet}(-dE)_{|E} \ar[r]& 0 \, .
\end{tikzcd}
\]
Tensoring this sequence with the locally free sheaf $\gamma^*(\Sym^q\mc{Q})$,
applying $R^{N-1}\gamma_*$, and using Lemma~\ref{lem:RN-1*-OdE}, we obtain
a short exact sequence of complexes on~$\ol{\Gr}$,
\begin{equation}
\label{eq:ses-KKC}
\begin{tikzcd}[column sep=20pt]
0 \ar[r]& \ol{\mc{K}}_{d+1}^{\bullet} \oo_{\mc{O}_{\ol{\Gr}}} \Sym^q\ol{\mc{Q}} \ar[r]&
\ol{\mc{K}}_d^{\bullet}  \oo_{\mc{O}_{\ol{\Gr}}} \Sym^q\ol{\mc{Q}} \ar[r]& \mc{C}_d^{\bullet} \ar[r]& 0 \, ,
\end{tikzcd}
\end{equation}
where $\mc{C}_d^{\bullet} = R^{N-1}\ol{\gamma}_*\big(\wtl{\mc{K}}^{\bullet}(-dE)|_E
\oo_{\mc{O}_E} \ol{\gamma}^*(\Sym^q\ol{\mc{Q}})\big)$ is a complex whose terms
are given (using the projection formula and \eqref{eq:Rgamma*-OE}) by
\[
\mc{C}_d^{-i} = \begin{cases}
\bw^i K \oo \Sym^{i-1-d-N}(U^{\vee} \oo \ol{\mc{Q}})
\oo_{\mc{O}_{\ol{\Gr}}} (\Sym^q\ol{\mc{Q}})(1-i+n-\ol{n}) & \mbox{if }i\geq d+N+1; \\[3pt]
0 & \mbox{if }i\leq d+N.
\end{cases}
\]

\begin{lemma}
\label{lem:vanishing-Hj-C}
 If $d\geq 0$, $3\leq\ol{n}<n$, and $H^j(\ol{\Gr},\mc{C}_d^{-i}) \neq 0$,
 then one of the following holds:
 \begin{enumerate}
 \item $j=\ol{n}-2$, or
 \item $j=2\cdot(\ol{n}-2)$ and $i\geq 2n-2$.
 \end{enumerate}
\end{lemma}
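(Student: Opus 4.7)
The plan is to reduce the statement to a direct application of Bott's theorem on $\ol{\Gr} = \Grass_2(\ol{V}^\vee)$. Since $\ol{\Gr}$ has dimension $2(\ol{n}-2)$, the analogue of Lemma~\ref{lem:Sa-Q-vanishing} on $\ol{\Gr}$ immediately confines the non-vanishing cohomology to $j \in \{0, \ol{n}-2, 2(\ol{n}-2)\}$. The real content of the lemma is therefore (i) that no irreducible summand of $\mc{C}_d^{-i}$ can contribute to $H^0$, and (ii) that a contribution to $H^{2(\ol{n}-2)}$ forces $i \geq 2n - 2$.

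To carry this out, I first assume $i \geq d + N + 1$ (otherwise $\mc{C}_d^{-i}$ is zero by definition). Cauchy's formula \eqref{eq:cauchy} yields
\[
\Sym^{i-1-d-N}(U^\vee \oo \ol{\mc{Q}}) = \bigoplus_{\ll} \SS_{\ll} U^\vee \oo \SS_{\ll} \ol{\mc{Q}},
\]
where $\ll = (\ll_1, \ll_2)$ runs over partitions of $i-1-d-N$ with at most two parts, since $\ol{\mc{Q}}$ has rank $2$. Pieri's rule then gives $\SS_{\ll} \ol{\mc{Q}} \oo \Sym^q \ol{\mc{Q}} = \bigoplus_\mu \SS_\mu \ol{\mc{Q}}$ with $\mu = (\mu_1,\mu_2)$ subject to the interlacing $\mu_1 \geq \ll_1 \geq \mu_2 \geq \ll_2 \geq 0$ and $|\mu| = |\ll| + q$. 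After twisting by $\mc{O}(1 - i + n - \ol{n})$, the irreducible summands of $\mc{C}_d^{-i}$ take the form $\SS_{\ll} U^\vee \oo \SS_{\a} \ol{\mc{Q}}$ with $\a_j = \mu_j + 1 - i + n - \ol{n}$, and the cohomology on $\ol{\Gr}$ is computed purely from the $\SS_{\a} \ol{\mc{Q}}$-factors (the $\SS_{\ll} U^\vee$ playing only the role of a multiplicity space).

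The $H^0$-vanishing is then straightforward: Pieri forces $\mu_2 \leq \ll_1 \leq |\ll| = i-1-d-N$, so
\[
\a_2 = \mu_2 + 1 - i + n - \ol{n} \leq \ol{n} - n - d < 0,
\]
and hence $H^0(\ol{\Gr}, \SS_{\a} \ol{\mc{Q}}) = 0$ by Theorem~\ref{thm:bott}. The main obstacle is the analysis of $j = 2(\ol{n}-2)$: combining $\a_2 < 0$ with the analogue of \eqref{eq:Sa-Q-only-n2} on $\ol{\Gr}$ shows that $H^{2(\ol{n}-2)}(\ol{\Gr}, \SS_{\a} \ol{\mc{Q}}) \neq 0$ forces $\a_1 \leq -\ol{n}$, equivalently $\mu_1 \leq i - n - 1$. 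The complementary lower bound $\mu_1 \geq q$ falls out of Pieri as well, since $\mu_1 = |\ll| + q - \mu_2 \geq |\ll| + q - \ll_1 = \ll_2 + q \geq q$, and together these yield $q \leq i - n - 1$. Invoking the standing hypothesis $q \geq n - 3$ fixed just before \eqref{eq:def-olK-bis} then forces $i \geq 2n - 2$, as required.
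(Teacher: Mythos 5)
Your proof is correct and follows the same route as the paper: both decompose $\mc{C}_d^{-i}$ via Cauchy's formula for $\Sym^{i-1-d-N}(U^\vee\oo\ol{\mc{Q}})$ followed by Pieri, derive precisely the same bounds $\a_1 \geq 1-i+n-\ol{n}+q$ and $\a_2 \leq \ol{n}-n-d < 0$, and apply Lemma~\ref{lem:Sa-Q-vanishing} on $\ol{\Gr}$ together with $q\geq n-3$. The only cosmetic difference is that the paper argues the contrapositive (assuming $i\leq 2n-3$ and showing $H^{2(\ol{n}-2)}=0$) while you argue the direct implication.
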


\begin{proof}
 Using Cauchy's formula and Pieri's formula, it follows that $\mc{C}_d^{-i}$
 decomposes as a direct sum of copies of $\bb{S}_{\a}\ol{\mc{Q}}$, where
 $\a=(\a_1,\a_2)$ satisfies
 \begin{align}
 \label{eq:ineq-mu}
 \a_1 &\geq 1-i+n-\ol{n}+q, \\
 \a_2 &\leq i-1-d-N + 1-i + n - \ol{n} = -d - n + \ol{n} < 0. \notag
 \end{align}

 Using Lemma~\ref{lem:Sa-Q-vanishing}, it follows that $H^j(\ol{\Gr},\bb{S}_{\a}\ol{\mc{Q}}) = 0$
 for $j\not\in\bigl\{0,\ol{n}-2,2\cdot(\ol{n}-2)\bigr\}$. Moreover, since $\a_2<0$, we have 
 $H^0(\ol{\Gr},\bb{S}_{\a}\ol{\mc{Q}}) = 0$, by Theorem~\ref{thm:bott}. It suffices to show that if $i\leq 2n-3$, then
 $H^{2\cdot(\ol{n}-2)}(\ol{\Gr},\bb{S}_{\a}\ol{\mc{Q}}) = 0$. Using
 \eqref{eq:ineq-mu}, we write $a_1 + \ol{n} \geq 1-i + n + (n-3) = 2n-2 - i > 0$,
so \eqref{eq:Sa-Q-only-n2} applies to yield the desired conclusion.
\end{proof}

\begin{corollary}
\label{cor:vanish-Fjd}
If $3\leq\ol{n}<n$, then 
 \[
 F^{-i}_{j,d} = 0\mbox{ for all }d\geq 0,\ i\in\Z,\mbox{ and }j\not\in
 \{\ol{n}-2,2\cdot(\ol{n}-2)\}.
 \]
Moreover, if $j=2\cdot(\ol{n}-2)$, then $F^{-i}_{2\cdot(\ol{n}-2),d} = 0$, for all $d\geq 0$  and $i\leq 2n-3$.
In particular, Proposition~\ref{prop:cohom-Fjd} holds for all $j\neq \ol{n}-2$.
\end{corollary}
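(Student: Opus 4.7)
The plan is to derive both vanishing statements by downward induction on $d$, using the short exact sequence of complexes \eqref{eq:ses-KKC} together with the cohomological vanishing just established in Lemma~\ref{lem:vanishing-Hj-C}.

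For the base case I would invoke \eqref{eq:vanish-olK-d-i}: since $\ol{\mc{K}}_d^{-i} = 0$ as soon as $d \geq i - N$, the groups $F^{-i}_{j,d}$ are automatically zero whenever $d$ is sufficiently large relative to $i$. This starts the induction, which will then propagate downward in $d$ to cover all $d\geq 0$.

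For the inductive step I would apply $H^j(\ol{\Gr},-)$ to the short exact sequence \eqref{eq:ses-KKC} in position $-i$, obtaining the exact piece
\begin{equation*}
F^{-i}_{j,d+1} \longrightarrow F^{-i}_{j,d} \longrightarrow H^j\bigl(\ol{\Gr},\mc{C}_d^{-i}\bigr).
\end{equation*}
Under the inductive hypothesis $F^{-i}_{j,d+1} = 0$, exactness forces $F^{-i}_{j,d}=0$ as soon as the right-hand term vanishes. Lemma~\ref{lem:vanishing-Hj-C} is tailored exactly for this: it gives $H^j(\ol{\Gr},\mc{C}_d^{-i})=0$ whenever $j\notin\{\ol{n}-2,2(\ol{n}-2)\}$, as well as when $j=2(\ol{n}-2)$ and $i\leq 2n-3$. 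These are precisely the ranges featuring in the two assertions of the corollary, so the induction yields both.

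The final clause concerning Proposition~\ref{prop:cohom-Fjd} for $j\neq\ol{n}-2$ is then a formal consequence of the term-wise vanishing just established. If $j\notin\{\ol{n}-2,2(\ol{n}-2)\}$, the entire complex $F^{\bullet}_{j,d}$ is zero, so all its cohomology vanishes. If $j = 2(\ol{n}-2)$, then $N+j = 2n-4 \leq 2n-3$, so $F^{-i}_{j,d}=0$ for every $i\leq N+j$, which trivially implies $\mc{H}^{-i}(F^{\bullet}_{j,d})=0$ in that range. There is no genuine obstacle in this argument: the nontrivial Bott-type analysis has been packaged into Lemma~\ref{lem:vanishing-Hj-C}, and the corollary is a purely formal consequence via the long exact sequence.
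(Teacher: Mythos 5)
Your proof is correct and follows essentially the same route as the paper's: descending induction on $d$ starting from \eqref{eq:vanish-olK-d-i}, the inductive step via the long exact sequence in $H^j(\ol{\Gr},-)$ applied to \eqref{eq:ses-KKC} combined with Lemma~\ref{lem:vanishing-Hj-C}, and the final clause as a formal consequence of the term-wise vanishing (with $N+j=2n-4 \le 2n-3$ handling the case $j=2(\ol{n}-2)$).
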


\begin{proof} We prove the vanishing of $F^{-i}_{j,d}$ by descending induction
on $d$. If $d\geq i-N$ then we get that $F^{-i}_{j,d}=0$ using \eqref{eq:def-Fjd}
and \eqref{eq:vanish-olK-d-i}. Suppose now that $d\geq 0$ and $F^{-i}_{j,d+1} = 0$.
Applying the functor $H^j(\ol{\Gr},-)$ to \eqref{eq:ses-KKC}, we obtain an exact sequence
\begin{equation}
\label{eq:0f-i}
\begin{tikzcd}[column sep=20pt]
0=F^{-i}_{j,d+1} \ar[r]&  F^{-i}_{j,d} \ar[r]&  H^j(\ol{\Gr},\mc{C}_d^{-i}) \, .
\end{tikzcd}
\end{equation}
It follows from Lemma~\ref{lem:vanishing-Hj-C} that $F^{-i}_{j,d}=0$ if
$j\not\in\{\ol{n}-2,2\cdot(\ol{n}-2)\}$, or if $j=2\cdot(\ol{n}-2)$ and $i\leq 2n-3$,
thereby completing the inductive step.

To prove the last assertion, it suffices to note that if $j=2\cdot(\ol{n}-2)$ then
$N+j = 2n-4$, so $\mc{H}^{-i}(F_{j,d}^{\bullet}) = F^{-i}_{j,d}=0$ for $i\leq N+j$
(and in fact also for $i=N+j+1$).
\end{proof}

We are left with proving Proposition~\ref{prop:cohom-Fjd} in the case when
$j=\ol{n}-2>0$. Using \eqref{eq:vanish-olK-d-i}, we may therefore assume
that $d\leq\ol{n}-3$. If we truncate the short exact sequence \eqref{eq:ses-KKC}
in cohomological degrees $\bullet\geq-(2n-3)$ and apply the long exact sequence
in sheaf cohomology, then the vanishing from Corollary~\ref{cor:vanish-Fjd} 
yields a short exact sequence of complexes
\begin{equation}
\label{eq:ses-trunc-FFC}
\begin{tikzcd}[column sep=20pt]
0\ar[r]&  F_{\ol{n}-2,d+1}^{\bullet\geq-(2n-3)}
\ar[r]& F_{\ol{n}-2,d}^{\bullet\geq-(2n-3)} \ar[r]&
H^{\ol{n}-2}(\ol{\Gr},\mc{C}_d^{\bullet\geq-(2n-3)}) \ar[r]&0 \,.
\end{tikzcd}
\end{equation}
From the long exact sequence in cohomology associated to this short
exact sequence of complexes, we derive exact sequences
\begin{equation}
\label{eq:ses-hi-f2d}
\begin{tikzcd}[column sep=20pt]
\mc{H}^{-i}(F_{\ol{n}-2,d+1}^{\bullet}) \ar[r]&  \mc{H}^{-i}(F_{\ol{n}-2,d}^{\bullet})
\ar[r]& \mc{H}^{-i}(H^{\ol{n}-2}(\ol{\Gr},\mc{C}_d^{\bullet}))
\end{tikzcd}
\text{ for all $i\leq 2n-4$}.
\end{equation}
Notice that $N+\ol{n}-2 = 2n-\ol{n}-2\leq 2n-4$, so if we apply descending induction
on $d$, then the proof of Proposition~\ref{prop:cohom-Fjd} reduces to verifying that
\begin{equation}
\label{eq:need-vanishing-HHC}
 \mc{H}^{-i}(H^{\ol{n}-2}\bigl(\ol{\Gr},\mc{C}_d^{\bullet})\bigr) = 0, \mbox{ for }i\leq N+\ol{n}-2.
\end{equation}

To that end, we consider the hypercohomology spectral sequence associated
with the complex
\begin{equation}
\label{eq:mcg-bullet}
\mc{G}^{\bullet} = \left(\wtl{\mc{K}}^{\bullet}(-dE)\oo_{\mc{O}_{\wtl{\Gr}}}
\gamma^*\Sym^q\mc{Q}\right)_{|E} = (\wtl{\mc{K}}^{\bullet})_{|E} \oo \mc{O}_E(-dE) \oo \ol{\gamma}^*\Sym^q\ol{\mc{Q}}.
\end{equation}

\begin{lemma}\label{lem:G-exact}
 The complex $\mc{G}^{\bullet}$ is exact.
\end{lemma}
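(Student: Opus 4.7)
The plan is to reduce the exactness of $\mc{G}^{\bullet}$ to the exactness of $\wtl{\mc{K}}^{\bullet}$ itself, which was asserted earlier in the paper as a consequence of the identification of $E$ with the base locus of $(K,\mc{O}_{\wtl{\Gr}}(H-E))$. The point is that $\wtl{\mc{K}}^{\bullet}$ is not just an exact complex of sheaves, but the Koszul complex associated with a surjection of vector bundles
\[
\phi\colon K \oo \mc{O}_{\wtl{\Gr}} \twoheadrightarrow \mc{O}_{\wtl{\Gr}}(H-E).
\]
The standard fact I intend to invoke is that the Koszul complex of any surjection of vector bundles with rank-one target is exact: locally such a surjection splits as $K \oo \mc{O} \cong L \oplus K'$ with $\phi$ the projection onto $L = \mc{O}_{\wtl{\Gr}}(H-E)$, and the corresponding Koszul complex factors as the tensor product of a contractible two-term complex $L^{-1} \to \mc{O}$ (on the $L$-summand) with an exterior algebra on $K'$ that is then contracted. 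Consequently the complex is locally split exact, not merely exact.

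The first step is to observe that restricting $\phi$ to $E$ produces again a surjection of vector bundles $\phi_{|E}\colon K\oo\mc{O}_E \twoheadrightarrow \mc{O}_E(H-E)$. This is because $\phi$ has locally free kernel on $\wtl{\Gr}$, so the short exact sequence $0 \to \ker(\phi) \to K \oo\mc{O}_{\wtl{\Gr}} \to \mc{O}_{\wtl{\Gr}}(H-E) \to 0$ remains exact (and split locally) after restriction to the closed subscheme $E$. Moreover, by inspection of the formula \eqref{eq:def-tlK} for the terms, $(\wtl{\mc{K}}^{\bullet})_{|E}$ is precisely the Koszul complex on $E$ associated with $\phi_{|E}$. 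The same general principle from the previous paragraph then shows that $(\wtl{\mc{K}}^{\bullet})_{|E}$ is locally split exact on $E$.

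The second step is purely formal: $\mc{O}_E(-dE)$ and $\ol{\gamma}^*\Sym^q\ol{\mc{Q}}$ are locally free sheaves on $E$, so tensoring the exact complex $(\wtl{\mc{K}}^{\bullet})_{|E}$ with them preserves exactness. In view of the description \eqref{eq:mcg-bullet}, this gives the desired exactness of $\mc{G}^{\bullet}$.

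There is no real obstacle; the argument is essentially formal. The only nuance worth emphasizing is that everything hinges on $\phi$ being a surjection of vector bundles rather than merely a surjection of coherent sheaves: it is this property that guarantees both that $\phi_{|E}$ remains surjective and that exactness is preserved under the restriction operation.
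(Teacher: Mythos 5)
Your proof contains a genuine gap. The premise that $\phi\colon K \oo \mc{O}_{\wtl{\Gr}} \to \mc{O}_{\wtl{\Gr}}(H-E)$ is a surjection of vector bundles, and hence that $\wtl{\mc{K}}^{\bullet}$ is exact with $\ker(\phi)$ locally free, is false in the setting where Lemma~\ref{lem:G-exact} lives. In \S\ref{subsec:blow-Grass}, $\wtl{\Gr}$ is the blow-up of $\Gr$ along a \emph{single} component $\ol{\Gr}$ of $\B(V,K)$, not along the entire base locus (the latter is what the $\wtl{\Gr}$ of the introduction denotes, which is likely the source of the confusion). The paper warns of precisely this immediately after introducing the complex: the base locus of $(K,\mc{O}_{\wtl{\Gr}}(H-E))$ may be strictly larger than $E$, since it contains the strict transforms of the other components $\Gr_{t'}$, $t'\neq t$, and therefore $\wtl{\mc{K}}^{\bullet}$ ``may not be exact.'' So you cannot invoke a locally free kernel for $\phi$ or the global exactness of the Koszul complex; both fail along those strict transforms.

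What you actually need is the surjectivity of $\psi\coloneqq\phi|_E\colon K\oo\mc{O}_E\to\mc{O}_E(H-E)$, and you derive it only from the false global surjectivity of $\phi$. The conclusion is true, but the argument as written is not. One repair is to note that the other components of $\B(V,K)$ are disjoint from $\ol{\Gr}$, hence their strict transforms are disjoint from $E$, so $\phi$ is surjective in a neighborhood of $E$; but you would need to say this, as it is where the geometric hypothesis (disjointness of components, coming from strong isotropicity) enters. The paper instead argues directly: by functoriality $(\wtl{\mc{K}}^{\bullet})|_E$ is the Koszul complex of $\psi$, and $\psi$ factors as the pullback to $E$ of the evaluation map $\op{ev}_{\ol{\Gr}}\colon K\oo\mc{O}_{\ol{\Gr}}\to(\mc{I}/\mc{I}^2)(1)$ of~\eqref{eq:ev-Gr} (surjective by strong isotropicity via Lemma~\ref{lem:ev-Gr-surj} and its surroundings), composed with the tautological surjection $\ol{\gamma}^*(\mc{I}/\mc{I}^2)\to\mc{O}_E(-E)$. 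Once the surjectivity of $\psi$ is secured, the remaining two steps of your argument --- that a Koszul complex of a surjection of vector bundles onto a line bundle is locally split exact, and that tensoring with the locally free sheaf $\mc{O}_E(-dE)\oo\ol{\gamma}^*\Sym^q\ol{\mc{Q}}$ preserves exactness --- coincide with the paper's and are correct.
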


\begin{proof} 
Since the sheaf $\mc{O}_E(-dE) \oo
\ol{\gamma}^*(\Sym^q\ol{\mc{Q}})$ is locally free, 
it suffices to prove that $(\wtl{\mc{K}}^{\bullet})_{|E}$ 
is an exact complex. The formation of the Koszul 
complex being functorial, we can identify
$(\wtl{\mc{K}}^{\bullet})_{|E}$ with the Koszul complex 
on $E$ associated with the map 
\[
\begin{tikzcd}[column sep=18pt]
\psi\colon K\oo\mc{O}_E \ar[r]& \mc{O}_E(H-E),
\end{tikzcd}
\]
and the exactness of $(\wtl{\mc{K}}^{\bullet})_{|E}$
reduces to proving the surjectivity of $\psi$. To establish this 
claim, note that $\psi$ factors as the composition of the map
\begin{equation}
\label{eq:koo-mco}
\begin{tikzcd}[column sep=20pt]
K\oo\mc{O}_E \ar[r] & \ol{\gamma}^*((\mc{I}/\mc{I}^2)(1)) ,
\end{tikzcd}
\end{equation}
which is the pull-back of \eqref{eq:ev-Gr} and is therefore 
surjective, with the map
\[
\begin{tikzcd}[column sep=20pt]
\ol{\gamma}^*((\mc{I}/\mc{I}^2) (1)) =
\ol{\gamma}^*(\mc{O}_{\ol{\Gr}}(1)) \oo_{\mc{O}_E} \ol{\gamma}^*(\mc{I}/\mc{I}^2)
\ar[r]& \ol{\gamma}^*(\mc{O}_{\ol{\Gr}}(1))  \oo \mc{O}_E(-E)
\end{tikzcd}
\]
induced by the surjective quotient map 
$\ol{\gamma}^*(\mc{I}/\mc{I}^2) \to \mc{O}_E(-E)$. This completes the proof.
\end{proof}

The terms of the complex $\mc{G}^{\bullet}$ are described by
\begin{equation}
\label{eq:mcg-koo-ed}
\mc{G}^{-i} = \bwedge^i K \oo \mc{O}_E(-(d+1-i)E) \oo_{\mc{O}_{E}}
\ol{\gamma}^*\left((\Sym^q\ol{\mc{Q}})(1-i)\right),
 \end{equation}
and therefore by \eqref{eq:gamma*-OE} we have
\[
\ol{\gamma}_*\mc{G}^{-i} = \begin{cases}
\bw^i K \oo \Sym^{d+1-i}(U \oo \ol{\mc{Q}}^{\vee}) \oo_{\mc{O}_{\ol{\Gr}}}
(\Sym^q\ol{\mc{Q}})(1-i)
& \mbox{if }0\leq i\leq d+1, \\[2pt]
0 & \mbox{otherwise}.
\end{cases}
\]
Moreover, $R^j\ol{\gamma}_*\mc{G}^{\bullet} = 0$ for $(N-1)\neq j\geq 1$
by \eqref{eq:Rgamma*-OE}, and $R^{N-1}\ol{\gamma}_*\mc{G}^{\bullet} =
\mc{C}_d^{\bullet}$ by definition.

\begin{lemma}
\label{lem:vanishing-Hj-G}
 We have that $H^j(\ol{\Gr},\ol{\gamma}_*\mc{G}^{-i}) = 0$, unless 
 $j=0$ and $i=0,1$. Moreover, the induced~map
 $
 H^0(\ol{\Gr},\ol{\gamma}_*\mc{G}^{-1}) \longrightarrow H^0(\ol{\Gr},\ol{\gamma}_*\mc{G}^0)
 $
 is surjective if $d\leq\ol{n}-3$.
\end{lemma}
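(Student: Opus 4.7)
The plan is to reduce both assertions to the Bott vanishing results of Section \ref{sec:rep-thy} and to Lemma \ref{lem:str-iso-1st-surj}. First, applying the projection formula together with \eqref{eq:gamma*-OE} to the expression \eqref{eq:mcg-koo-ed}, for $0\leq i\leq d+1$ one obtains
$$\ol{\gamma}_*\mc{G}^{-i}=\bwedge^i K\,\oo\,\Sym^{d+1-i}(U\oo\ol{\mc{Q}}^\vee)\,\oo_{\mc{O}_{\ol{\Gr}}}\,(\Sym^q\ol{\mc{Q}})(1-i),$$
and zero outside this range. A routine Cauchy/Pieri decomposition, exploiting the fact that $\ol{\mc{Q}}$ has rank two (so the partitions $\ll\vdash d+1-i$ appearing in $\Sym^{d+1-i}(U\oo\ol{\mc{Q}}^\vee)$ have at most two parts $\ll=(\ll_1,\ll_2)$), expresses $\ol{\gamma}_*\mc{G}^{-i}$ as a direct sum of copies of $\SS_{(\a_1,\a_2)}\ol{\mc{Q}}$ whose second indices satisfy
$$-d\;\leq\;1-i-\ll_1\;\leq\;\a_2\;\leq\;1-i-\ll_2\;\leq\;1-i.$$

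For the vanishing statement, the lower bound $\a_2\geq-d\geq-(\ol{n}-3)$, valid under the standing assumption $d\leq\ol{n}-3$, together with \eqref{eq:Sa-Q-only-0}, forces $H^j(\ol{\Gr},\SS_\a\ol{\mc{Q}})=0$ for all $j\neq 0$. For $j=0$, Theorem \ref{thm:bott} requires $\a_2\geq 0$, which contradicts the upper bound $\a_2\leq 1-i$ once $i\geq 2$. Hence $H^j(\ol{\Gr},\ol{\gamma}_*\mc{G}^{-i})$ vanishes except when $(j,i)\in\{(0,0),(0,1)\}$, as claimed.

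For the surjectivity, I would identify the induced map $H^0(\ol{\Gr},\ol{\gamma}_*\mc{G}^{-1})\to H^0(\ol{\Gr},\ol{\gamma}_*\mc{G}^0)$ with the map on global sections of \eqref{eq:surj-K-to-PSymQ}. Indeed, the Koszul differential $K\oo\mc{O}_{\wtl{\Gr}}\to\mc{O}_{\wtl{\Gr}}(H-E)$, after twisting by $-dE$, restricting to $E$, tensoring with $\ol{\gamma}^*\Sym^q\ol{\mc{Q}}$, and pushing forward by $\ol{\gamma}$, factors as the evaluation \eqref{eq:ev-Gr} composed with the natural symmetric multiplication in $\Sym^{\bullet}(\mc{I}/\mc{I}^2)$, which is exactly the map \eqref{eq:surj-K-to-PSymQ} tensored with $\Sym^q\ol{\mc{Q}}$. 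Since $d\leq\ol{n}-3$ and $q\geq n-3\geq\ol{n}-2>\ol{n}-3\geq d$, Lemma \ref{lem:str-iso-1st-surj} applies and yields the required surjectivity. The main (mild) obstacle is keeping the Pieri and projection-formula bookkeeping straight so that the identification with \eqref{eq:surj-K-to-PSymQ} is correct; once this is done, both assertions follow directly from the cited results.
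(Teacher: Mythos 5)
Your proof is correct and follows essentially the same approach as the paper: compute $\ol{\gamma}_*\mc{G}^{-i}$ via the projection formula and \eqref{eq:gamma*-OE}, obtain the bound $-d\le\a_2\le 1-i$ from the Cauchy/Pieri decomposition, apply \eqref{eq:Sa-Q-only-0} and Theorem~\ref{thm:bott} for the vanishing, and identify the induced map on $H^0$ with the global sections map of \eqref{eq:surj-K-to-PSymQ} to invoke Lemma~\ref{lem:str-iso-1st-surj}. The only (shared and inconsequential) subtlety is that the vanishing part implicitly uses the standing assumption $d\le\ol{n}-3$, which is exactly the regime in which the lemma is applied.
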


\begin{proof}
 Using Cauchy's formula and Pieri's formula as in the proof of 
 Lemma~\ref{lem:vanishing-Hj-C}, it follows that 
 $\ol{\gamma}_*\mc{G}^{-i}$ decomposes as a direct sum of copies of 
 $\bb{S}_{\a}\ol{\mc{Q}}$, where $\a=(\a_1,\a_2)$ satisfies
 \[
 1-i \geq \a_2 \geq 1-i - (d+1-i) = -d\geq -(\ol{n}-3).
 \]
 Since $\a_2 \geq -(\ol{n}-2)$, we have by \eqref{eq:Sa-Q-only-0} that
 $H^j(\ol{\Gr}, \bb{S}_{\a}\ol{\mc{Q}}) = 0$ for $j>0$. 
 Moreover, if $i\geq 2$, then $\a_2<0$, and therefore $H^0(\ol{\Gr}, 
 \bb{S}_{\a}\ol{\mc{Q}}) = 0$, from which the desired vanishing follows.

For the final assertion, we note that since $U \oo \ol{\mc{Q}}^{\vee} \cong 
 \mc{I}/\mc{I}^2$, we have that
\[
 \ol{\gamma}_*\mc{G}^{-1} = K \oo \Sym^d(\mc{I}/\mc{I}^2) \oo \Sym^q\ol{\mc{Q}} \ \ \mbox{ and } 
 \ \  \ol{\gamma}_*\mc{G}^0= \Sym^{d+1}(\mc{I}/\mc{I}^2) \oo (\Sym^q\ol{\mc{Q}})(1),
\]
and the differential is given by \eqref{eq:surj-K-to-PSymQ}. The 
surjectivity of the induced map on global sections then follows from 
Lemma~\ref{lem:str-iso-1st-surj}, since $q\geq n-3>\ol{n}-3$.
\end{proof}

We are now ready to complete the proof of Proposition~\ref{prop:cohom-Fjd}.

\begin{proof}[Proof of Proposition~\ref{prop:cohom-Fjd}]
Recall that we are left with proving the vanishing \eqref{eq:need-vanishing-HHC},
and that we have $q\geq n-3$, $0\leq d\leq\ol{n}-3$, and $3\leq\ol{n}<n$. We analyze the
hypercohomology spectral sequence
\begin{equation}
\label{eq:sp-seq-G}
  E_1^{-i,j} = H^j(E,\mc{G}^{-i})  \Longrightarrow \bb{H}^{-i+j}(\mc{G}^{\bullet}) = 0,
\end{equation}
where the hypercohomology vanishing comes from the exactness of 
$\mc{G}^{\bullet}$ in Lemma~\ref{lem:G-exact}. Using the Leray spectral 
sequence in order to compute
$H^j(E,\mc{G}^{-i})$, it follows from Lemmas~\ref{lem:vanishing-Hj-C}
and~\ref{lem:vanishing-Hj-G} that the only non-zero terms occur when
\begin{itemize}[itemsep=2pt]
 \item $j=0$ and $i=0,1$, in which case the differential $d_1^{-1,0} \colon
 E_1^{-1,0} \lra E_1^{0,0}$ is surjective by Lemma~\ref{lem:vanishing-Hj-G},
 and therefore $E_2^{0,0}=0$.
 \item $j=(N-1)+(\ol{n}-2)$, in which case
$E_1^{\bullet,N+\ol{n}-3} = H^{\ol{n}-2}(\ol{\Gr},\mc{C}_d^{\bullet})$.
 \item $j=(N-1) + 2\cdot(\ol{n}-2)=2n-5$, in which case
\begin{equation}
\label{eq:E1-2n-3-vanishing}
E_1^{-i,2n-5} = H^{2\cdot(\ol{n}-2)}(\ol{\Gr},\mc{C}_d^{-i}) = 0\mbox{ for }i\leq 2n-3.
\end{equation}
\end{itemize}
The vanishing in \eqref{eq:need-vanishing-HHC} is then equivalent to the assertion that
\begin{equation}
\label{eq:E2-in-vanishing}
E_2^{-i,N+\ol{n}-3}=0\mbox{ for }i\leq N+\ol{n}-2.
\end{equation}
Since the spectral sequence converges to $0$, it suffices to check that there are no
non-zero differentials with source or target $E_r^{-i,N+\ol{n}-3}$ for $r\geq 2$. If we
write $d_r^{i,j}:E_r^{i,j} \lra E_r^{i+r,j-r+1}$ for the differentials in the spectral sequence,
then the only potentially non-zero one with source $E_r^{-i,N+\ol{n}-3}$, $r\geq 2$,
may occur for $r=N+\ol{n}-2$, given by
\begin{equation}
\label{eq:dNoln-2-i=0}
\begin{tikzcd}[column sep=20pt]
d_{N+\ol{n}-2}^{-i,N+\ol{n}-3}\colon E_{N+\ol{n}-2}^{-i,N+\ol{n}-3} \ar[r]&
E_{N+\ol{n}-2}^{-i+N+\ol{n}-2,0}\, .
\end{tikzcd}
\end{equation}
Since $E_1^{-i,0} = 0$ for $i<0$, and $E_2^{0,0}=0$ as earlier noted, we conclude
that $E_2^{-t,0} = E_r^{-t,0}=0$ for all $t\leq 0$ and $r\geq 2$. Applying this to
$t=-i+N+\ol{n}-2 \geq 0$, we conclude that the target of the map \eqref{eq:dNoln-2-i=0}
vanishes, so in fact every differential with source $E_r^{-i,N+\ol{n}-3}$, $r\geq 2$, vanishes.
The only potentially non-zero differential with target $E_r^{-i,N+\ol{n}-3}$, where $r\geq 2$,
may occur when $r=\ol{n}-1$. However, in this case the differential is given by
\[
\begin{tikzcd}[column sep=20pt]
d_{\ol{n}-1}^{-i-\ol{n}+1,2n-5}  \colon E_{\ol{n}-1}^{-i-\ol{n}+1,2n-5}
\ar[r]& E_{\ol{n}-1}^{-i,N+\ol{n}-3}
\end{tikzcd}
\]
whose source vanishes by \eqref{eq:E1-2n-3-vanishing}, since
$i+\ol{n}-1 \leq N+2\cdot\ol{n}-3 = 2n-3$.
\end{proof}

\section{Proof of the main Theorem}
\label{sec:proof-main-2}

We now complete the proof of Theorem \ref{thm:main} and we recall the setup explained in the Introduction. 
With the notation as above, we form the following Koszul complex
$\mc{K}^{\bullet}$ on $\wtl{\Gr}$:
\begin{equation}
\label{eq:def-tlK'}
 \mc{K}^{-i} = \begin{cases}
 \bw^i K \oo \mc{O}_{\wtl{\Gr}}((1-i)(H-E))
 & \text{for $i>0$},\\[1pt]
 \mc{O}_{\wtl{\Gr}}(H) & \text{for $i=0$}, \\[1pt]
 0 & \text{otherwise}.
 \end{cases}
\end{equation}

If we write 
$\gamma_t\colon E_t \to \Gr_t$ for the restriction of $\gamma$ to $E_t$, 
then we infer from \eqref{eq:H0-K} that the only non-zero cohomology 
group of $\mc{K}^{\bullet}$ is $\mc{H}^0(\mc{K}^{\bullet}) \cong \bigoplus_{t=1}^k \mc{O}_{\ol{\Gr}_t}(H)$.

For the rest of the proof we fix $q\geq n-3$. 
Since 
$\Sym^q\mc{Q}_{|\Gr_t}=\Sym^q\mc{Q}_t$, 
we conclude from the above that
\begin{equation}
\label{eq:H0-K-oo-Symq}
\mc{H}^j\bigl(\mc{K}^{\bullet}\oo_{\mc{O}_{\wtl{\Gr}}}\gamma^*(\Sym^q\mc{Q})\bigr) =
\begin{cases}
\ds\bigoplus_{t=1}^k \gamma_t^*((\Sym^q\mc{Q}_t)(1))
& \text{if $j=0$}, \\
0 & \text{otherwise}.
\end{cases}
\end{equation}
Using the projection formula \cite{hartshorne}*{Exercise~III.8.3}, together with
the case $d=0$ of \eqref{eq:gamma*-OE} and the Leray spectral sequence
(as in \cite{hartshorne}*{Exercise~III.8.1}), it follows that
\begin{equation}
\label{eq:Hj-Et=Gt}
H^j\bigl(E_t,\gamma_t^*((\Sym^q\mc{Q}_t)(1)\bigr) \cong
H^j\bigl(\Gr_t,(\Sym^q\mc{Q}_t)(1)\bigr), \mbox{ for all }j.
\end{equation}

Moreover, since $\Sym^q\mc{Q}_t(1) =
\bb{S}_{(q+1,1)}\mc{Q}_t$, it follows from Theorem~\ref{thm:bott}
that the above cohomology groups vanish for $j>0$. Finally, it follows
from \cite{AFPRW}*{Lemma~3.4} that
\begin{equation}
\label{eq:Wq=H0-G}
H^0(\Gr_t,(\Sym^q\mc{Q}_t)(1)) =  W_q(V_t,0).
\end{equation}
Putting together \eqref{eq:H0-K-oo-Symq} with \eqref{eq:Hj-Et=Gt} and
\eqref{eq:Wq=H0-G}, we conclude that $\mc{K}^{\bullet} \oo \gamma^*\Sym^q\mc{Q}$
has only one non-zero hypercohomology group, namely
\begin{equation}
\label{eq:bbh0-mck}
\bb{H}^0\big(\mc{K}^{\bullet} \oo_{\mc{O}_{\wtl{\Gr}}} \gamma^*\Sym^q\mc{Q}\big) =
\bigoplus_{t=1}^k W_q(V_t,0).
\end{equation}

We compute this in a different way, using the hypercohomology spectral sequence
\begin{equation}
\label{eq:e1-ij-hjgr}
E_1^{-i,j} = H^j(\wtl{\Gr},\mc{K}^{-i} \oo_{\mc{O}_{\wtl{\Gr}}}
\gamma^*\Sym^q\mc{Q}) \Longrightarrow \bb{H}^{-i+j}(\mc{K}^{\bullet}
\oo_{\mc{O}_{\wtl{\Gr}}} \gamma^*\Sym^q\mc{Q}),
\end{equation}
while noting that unlike in the outline at the beginning of the section,
we do not work with the entire sheaf of algebra $\mc{S}=\Sym(\mc{Q})$,
but instead we are focusing on a single (degree $q$) component.

Using the fact that $\gamma_*\mc{O}_{\wtl{\Gr}} = \mc{O}_{\Gr}$ and
$R^j\gamma_*\mc{O}_{\wtl{\Gr}}=0$ for $j>0$ (as in \eqref{eq:gamma*-dE}
with $d=0$, and \eqref{eq:Rgamma*-O=0}), we can again apply the
projection formula and the Leray spectral sequence to conclude that
the following equalities hold
\begin{align}
\label{eq:E1-i=01}
H^j(\wtl{\Gr},\mc{K}^{-1} \oo_{\mc{O}_{\wtl{\Gr}}}
\gamma^*\Sym^q\mc{Q}) &= H^j(\Gr, K \oo
\Sym^q\mc{Q}),\\
H^j(\wtl{\Gr},\mc{K}^0 \oo_{\mc{O}_{\wtl{\Gr}}} \gamma^*\Sym^q\mc{Q})& =
H^j(\Gr, (\Sym^q\mc{Q})(1))\notag
\end{align}
for all $j$. Using Theorem~\ref{thm:bott} we infer that these groups
vanish for $j>0$, and so
\begin{align}
\label{eq:E1j-i=01}
E_1^{-i,j}&=0 \qquad \text{for $i=0,1$ and $j>0$},\notag
\\
E_1^{-1,0}& = H^0(\Gr,K \oo \Sym^q\mc{Q}) = K \oo \Sym^q V,
\\
E_1^{0,0}&=H^0(\Gr,(\Sym^q\mc{Q})(1)) =
\bb{S}_{(q+1,1)}V.\notag
\end{align}

Recalling that $\bb{S}_{(q+1,1)}V$ is the kernel of the multiplication map
\[
V\oo\Sym^{q+1} V \rightarrow \Sym^{q+2}V,
\]
we obtain the following identification 
\begin{align}
\label{eq:wqvk-e2}
W_q(V,K)& = \coker\Bigl\{H^0(\Gr,K \oo \Sym^q\mc{Q}) \lra H^0(\Gr, \Sym^q\mc{Q}(1))\Bigr\} 
= \coker(d_1^{-1,0}) = E_2^{0,0}, \notag
\end{align}
where we use the notation \eqref{eq:maps-drij} for maps in
spectral sequences. The map 
\begin{equation}
\label{eq:wmap-bis}
\begin{tikzcd}[column sep=20pt]
W_q(V,K) \ar[r]& \bigoplus_{t=1}^k W_q(V_t,0)
\end{tikzcd}
\end{equation}
discussed in ({\ref{subsec:outline}) factors as the composition $E_2^{0,0} \onto E_{\infty}^{0,0} \hookrightarrow
\bb{H}^0(\mc{K}^{\bullet} \oo_{\mc{O}_{\wtl{\Gr}}} \gamma^*\Sym^q\mc{Q})$. 
Moreover, as mentioned in \eqref{subsec:outline}, in order to prove 
Theorem~\ref{thm:main}, it suffices to verify that 
\begin{equation}
\label{eqn:vanishing-SpecSeq}
E_2^{-i,i} = E_2^{-i-1,i} = 0  
\end{equation}
for $i\neq 0$. 
To see that this is indeed enough, we note that the vanishing $E_2^{-i,i} = 0$
forces $E_{\infty}^{-i,i} = 0$ for $i\neq 0$, and therefore the edge homomorphism in
\eqref{eq:E2-Einf-edge} is an isomorphism in degree $q$, i.e.,
\begin{equation}
\label{eq:Einf-00}
E_{\infty}^{0,0} = \bigoplus_{t = 1}^k W_q(V_t,0).
\end{equation}
Moreover, the vanishing $E_2^{-i-1,i} = 0$ for $i\neq 0$ implies that
$E_r^{-r,r-1} = 0$ for $r\geq 2$ and in particular the differential
$d_r^{-r,r-1}\colon E_r^{-r,r-1}\rightarrow E_r^{0,0}
$
is identically $0$ in degree $q$, which implies that $E_{r+1}^{0,0} = E_r^{0,0}$
for all $r\geq 2$, so that the quotient map in \eqref{eq:E2-Einf-edge} 
is in fact an isomorphism.

The vanishing \eqref{eqn:vanishing-SpecSeq} is immediate for
$i<0$ since $E_1^{-i,j}=0$ in this case. We will then prove the 
stronger vanishing statement mentioned, namely,
\begin{equation}
\label{eq:vanishing-E2-ij-bis}
\text{$E_2^{-i,j} = 0$  if $j>0$ and $i\leq j+1$}.
\end{equation}

Since $\dim(\wtl{\Gr})= 2n-4$, it follows  that $E_1^{-i,j} = 0$
for $j>2n-4$. To prove the vanishing \eqref{eq:vanishing-E2-ij}, we 
show for each $1\leq j\leq 2n-4$ that one of the following holds:
\begin{itemize}[itemsep=2pt]
 \item $E_1^{-i,j} = 0$ for all $i\leq j+1$.
 \item $E_1^{\bullet,j}$ is a complex whose homology vanishes in degree $-i$
 for $i\leq j+1$.
\end{itemize}
In order to compute the homology of $E_1^{\bullet,j}$, we will need to 
understand the groups $E_1^{-i,j}$ for $i\leq j+2$.
In particular, we are only interested in the values of $i$ for which 
$i\leq j+2\leq 2n-2$, and $i=2n-2$ will only be relevant when $j=2n-4$. 
To warn the reader of a somewhat awkward case analysis that will occur, 
we note that in our proofs below we will treat the case $j=2n-4$ separately, 
and then reduce to the range $i\leq 2n-3$, where more vanishing of cohomology 
occurs, allowing us to give a more uniform proof of 
\eqref{eq:vanishing-E2-ij} in that case.

We compute the groups $E_1^{-i,j}$ using the Leray spectral sequence.
Note that we have already done this for $i=0,1$ in \eqref{eq:E1j-i=01}
and \eqref{eq:E1-i=01}, so we will assume when needed that $i\geq 2$,
and in particular that $\mc{K}^{-i}=\wtl{\mc{K}}^{-i}$. Using the projection
formula, together with \eqref{eq:gamma*-dE} and \eqref{eq:def-tlK'},
we have that
\begin{equation}
\label{eq:gamma*-K}
 \gamma_*\left(\mc{K}^{-i} \oo_{\mc{O}_{\wtl{\Gr}}} \gamma^*\Sym^q\mc{Q}\right) =
 \bwedge^i K \oo (\Sym^q\mc{Q})(1-i)\quad\mbox{ for }i\geq 2.
\end{equation}

\begin{lemma}
\label{lem:Hu-gamma*-K}
 If $i\geq 2$ then $H^u\big(\Gr,\gamma_*\big(\mc{K}^{-i}
 \oo_{\mc{O}_{\wtl{\Gr}}} \gamma^*\Sym^q\mc{Q}\big)\big)=0$
 if both of the conditions below fail:
 \begin{itemize}[itemsep=2pt]
  \item $u=n-2$ and $i\geq n$; 
  \item $u=2n-4$ and $i\geq 2n-2$.
 \end{itemize}
\end{lemma}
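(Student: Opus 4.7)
The plan is to use the explicit description of the pushforward in \eqref{eq:gamma*-K} together with Bott's theorem on $\Gr$. By \eqref{eq:gamma*-K}, since $i \geq 2$ and $\bw^i K$ is a constant vector space on $\Gr$, we have
\[
H^u\bigl(\Gr,\, \gamma_*(\mc{K}^{-i} \oo_{\mc{O}_{\wtl{\Gr}}} \gamma^*\Sym^q\mc{Q})\bigr) \cong \bw^i K \oo H^u\bigl(\Gr,\, \SS_{(q+1-i,\,1-i)}\mc{Q}\bigr),
\]
so the problem reduces to analyzing the cohomology of $\SS_{\a}\mc{Q}$ for $\a=(q+1-i,1-i)$. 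By Lemma~\ref{lem:Sa-Q-vanishing} these groups vanish outside $u \in \{0, n-2, 2n-4\}$, so the remaining task is to rule out the combinations $u=0$ (any $i\geq 2$), $u=n-2$ (for $i\leq n-1$), and $u=2n-4$ (for $i\leq 2n-3$).

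To handle the first two, I would apply Bott's theorem directly. Setting $\gamma=(q+1-i,\,1-i,\,0^{n-2})$ and $\delta=(n-1,n-2,\ldots,0)$, we compute $\gamma+\delta = (q+n-i,\, n-1-i,\, n-3,\, n-4,\, \ldots,\, 0)$. In the range $2\leq i\leq n-1$, the index $i+1$ lies in $\{3,\ldots,n\}$, and
\[
(\gamma+\delta)_{i+1} \;=\; n-(i+1) \;=\; n-1-i \;=\; (\gamma+\delta)_2,
\]
so $\gamma+\delta$ has a repeated entry; by Theorem~\ref{thm:bott} all cohomology vanishes, in particular the $u=n-2$ contribution. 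For the vanishing of $H^0$ for arbitrary $i\geq 2$, the same inequality $(\gamma+\delta)_2 = n-1-i \leq n-3 = (\gamma+\delta)_3$ applies, with equality when $i=2$ (a repeated entry) and strict inequality when $i\geq 3$ (at least one inversion); either way Bott's theorem yields $H^0=0$.

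For the $u=2n-4$ case in the range $2\leq i\leq 2n-3$, I would invoke \eqref{eq:Sa-Q-only-n2}. We have $\a_2 = 1-i < 0$ since $i\geq 2$, while
\[
\a_1 \;=\; q+1-i \;\geq\; (n-3)+1-(2n-3) \;=\; 1-n \;>\; -n,
\]
using $q\geq n-3$ and $i\leq 2n-3$. Hence $H^u=0$ for all $u\neq n-2$, and in particular $H^{2n-4}=0$ throughout this range. Combining the three vanishings confirms that the only possibly nonzero cohomology groups occur in the two cases listed in the statement.

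This proof is purely combinatorial bookkeeping with Bott's theorem, so there is no substantive obstacle; the mild inconvenience is only the need to split into two sub-ranges of $i$ and to track where a repeated entry versus an inversion in $\gamma+\delta$ is responsible for each vanishing. The technical content of the paper lies not in this lemma itself but in the way it feeds, together with the parallel vanishing in higher direct images, into the spectral sequence \eqref{eq:e1-ij-hjgr} controlling the map \eqref{eq:wmap-bis}.
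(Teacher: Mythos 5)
Your proof is correct and follows essentially the same route as the paper's: reduce via \eqref{eq:gamma*-K} to the cohomology of $\SS_{(q+1-i,1-i)}\mc{Q}$ on $\Gr$, then rule out $u=0$, $u=n-2$ (for $i<n$), and $u=2n-4$ (for $i<2n-2$) via Bott's theorem. The only cosmetic difference is that for $u=0$ and $u=n-2$ you re-derive the Bott combinatorics by computing $\gamma+\delta$ and locating repeated entries/inversions by hand, whereas the paper simply invokes the consequences already packaged in Lemma~\ref{lem:Sa-Q-vanishing}, specifically \eqref{eq:Sa-Q-only-0} with $\a_2=1-i\geq -(n-2)$ and the observation that $\a_2<0$ kills $H^0$; for $u=2n-4$ both you and the paper apply \eqref{eq:Sa-Q-only-n2} identically.
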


\begin{proof}
 We use \eqref{eq:gamma*-K}, note that $\Sym^q\mc{Q})(1-i) 
 \bb{S}_{(q+1-i,1-i)}\mc{Q}$ and apply Lemma~\ref{lem:Sa-Q-vanishing}. The only potentially
 non-zero cohomology occurs for $u=0,n-2$, or $2n-4$. Since $i\geq 2$ 
 we have $1-i<0$ and therefore $H^0(\Gr,\bb{S}_{(q+1-i,1-i)}\mc{Q})=0$. If
 $H^{n-2}(\Gr,\bb{S}_{(q+1-i,1-i)}\mc{Q})\neq 0$ then it follows
 from \eqref{eq:Sa-Q-only-0} that $1-i < -(n-2)$, or equivalently
 $i\geq n$. If $H^{2n-4}(\Gr,\bb{S}_{(q+1-i,1-i)}\mc{Q})\neq 0$ then,
 since $1-i<0$, it follows from \eqref{eq:Sa-Q-only-n2} that $q+1-i\leq -n$,
 that is $i\geq q+n+1$. Since $q\geq n-3$, this yields $i\geq 2n-2$.
\end{proof}

In order to complete the proof of Theorem~\ref{thm:main}, 
there are two cases to consider.

\begin{case} 
{\it The case when $\mc{R}(V,K)$ is irreducible $(k=1)$.}
\end{case}
\label{case:irred-res}
We assume that $\mc{R}(V,K) = \ol{V}^{\vee}$, and write 
$\ol{n}=\dim\ol{V} \geq 2$ and $N = 2(n-\ol{n})$. If $\ol{n} = n$, 
then the isotropicity condition implies that $K = 0$,
and the conclusion of Theorem~\ref{thm:main} holds trivially.
We will therefore assume that $\ol{n}<n$. For $i\geq 2$ we compute the
terms $E_1^{-i,j}$ using the Leray spectral sequence,
 \begin{align}
 \label{eq:Leray-E-tilde}
  \widetilde{E}_2^{u,v} = &H^u\left(\Gr,R^v\gamma_*(\mc{K}^{-i}
  \oo_{\mc{O}_{\wtl{\Gr}}} \gamma^*\Sym^q\mc{Q})\right)  \Longrightarrow \\
&\hspace*{1in} H^{u+v}\big(\wtl{\Gr},\mc{K}^{-i}
  \oo_{\mc{O}_{\wtl{\Gr}}} \gamma^*\Sym^q\mc{Q}\big) = E_1^{-i,u+v}. \notag
 \end{align}

Using Lemma~\ref{lem:RN-1*-OdE}, we have that $\widetilde{E}_2^{u,v} = 0$
for $v\neq 0,N-1$, and the vanishing of the groups $\widetilde{E}_2^{u,0}$ has
been described in Lemma~\ref{lem:Hu-gamma*-K}. For $v=N-1$, we
have $\widetilde{E}_2^{u,N-1} = F^{-i}_{u,0}$ using the notation \eqref{eq:def-Fjd}
and the fact that $i>0$ (so that $\wtl{\mc{K}}^{-i} = \mc{K}^{-i}$). 
Using Corollary~\ref{cor:vanish-Fjd}, in addition to the previous observations,
we conclude that the only potentially non-zero groups $\widetilde{E}_2^{u,v}$ 
occur in the following four cases, which we now analyze separately.
\begin{itemize}[itemsep=2pt]
 \item $(u,v)=(n-2,0)$ if $i\geq n$. We then have $u+v=n-2<2n-4$.
 \item $(u,v)=(2n-4,0)$ if $i\geq 2n-2$. We then have $u+v = 2n-4$.
 \item $(u,v) = (\ol{n}-2,N-1)$. We then have $u+v = 2n-\ol{n}-3 <2n-4$.
 \item $(u,v) = (2\ol{n}-4,N-1)$ if $i\geq 2n-2$. We then have $u+v = 2n-5 < 2n-4$.
\end{itemize}

It follows that the only non-vanishing groups $E_1^{-i,2n-4}$ can occur when
$i\geq 2n-2$, which proves \eqref{eq:vanishing-E2-ij} for $j=2n-4$. As explained
earlier, we may assume from now on that $j\leq 2n-5$ and $i\leq 2n-3$,
and in particular $\widetilde{E}_2^{u,v}$ may only be non-zero when $(u,v) = (n-2,0)$
or $(u,v)=(\ol{n}-2,N-1)$. It follows that the spectral sequence 
\eqref{eq:Leray-E-tilde} degenerates, and we have $E_1^{-i,j} = 0$ unless
$j=n-2$ and $i\geq n$, or $j=N+\ol{n}-3=2n-\ol{n}-3$.

Suppose first that $\ol{n}<n-1$, so that $n-2 \neq 2n-\ol{n}-3$. 
For $j=n-2$ we infer that $E_1^{-i,n-2} = 0$ when $i\leq n-1$, 
proving \eqref{eq:vanishing-E2-ij}, while for $j = N+\ol{n}-3$, 
we have used the notation \eqref{eq:def-Fjd} that
\begin{equation}
\label{eq:E1-ij-hif}
E_1^{-i,N+\ol{n}-3} = \widetilde{E}_2^{\ol{n}-2,N-1} = F_{\ol{n}-2,0}^{-i}
\end{equation}
for $i\leq 2n-3$, and in particular for $i\leq N+\ol{n}-1$.
It follows from Proposition~\ref{prop:cohom-Fjd} that
$E_2^{-i,j} = \mc{H}^{-i}(F_{\ol{n}-2,0}^{\bullet}) = 0$  for $i\leq N + \ol{n} - 2 = j+1$,
as desired.

Suppose now that $\ol{n}=n-1$, so that $n-2 = 2n-\ol{n}-3$. We set $j=n-2$ and
analyze the complex $E_1^{\bullet,j}$. For $i\leq n-1$ we have as before that
$E_1^{-i,j} = F_{\ol{n}-2,0}^{-i}$, while for $i=n$ we derive from the spectral
sequence \eqref{eq:Leray-E-tilde} a natural short exact sequence
\begin{equation}
\label{eq:E2-n-20}
\begin{tikzcd}[column sep=20pt]
0 \ar[r]& \widetilde{E}_2^{n-2,0}\ar[r]& E_1^{-n,j} \ar[r]& 
\widetilde{E}_2^{n-3,1} = F_{\ol{n}-2,0}^{-n} \ar[r]& 0 .
\end{tikzcd}
\end{equation}
We obtain from this a commutative diagram
\begin{equation}
\label{eq:e1-nj}
\begin{tikzcd}[column sep=20pt]
E_1^{-n,j} \ar[r] \ar[d, two heads] & E_1^{-n+1,j} \ar[r] 
\ar[equal]{d} & \cdots \ar[r] 
& E_1^{0,j} \ar[equal]{d}\phantom{\, .}\\
F_{\ol{n}-2,0}^{-n} \ar[r] &F_{\ol{n}-2,0}^{-n+1} \ar[r] 
& \cdots \ar[r] & F_{\ol{n}-2,0}^0 \,.
\end{tikzcd}
\end{equation}

Recall now from Proposition~\ref{prop:cohom-Fjd} that 
$\mc{H}^{-i}(F_{\ol{n}-2,0}^{\bullet}) = 0$
for $i\leq N + \ol{n} - 2 = n-1$. It follows that the 
same vanishing holds for the cohomology of
$E_1^{\bullet,j}$, that is $E_2^{-i,j} = 0$ for $i\leq n-1 = j+1$, 
concluding our proof in the case when the resonance is irreducible.

\begin{case}
{\it The case when $\mc{R}(V,K)$ has at least two components $(k\geq 2)$.}
\end{case}
\label{case:two-comps}
We let $n_t = \dim(V_t)$ for $1\leq t\leq k$, and note that 
$n_t\leq n-2$ for all $t$. Indeed, since $n_t\geq 2$ for all $t$, 
a component of $\mc{R}(V,K)$ of dimension
$n-1$ or higher would meet every other component non-trivially, which is a contradiction.
It follows that if we let
$N_t = 2(n-n_t)$ denote the codimension of $\Gr_t$ inside $\Gr$, then
\begin{equation}
\label{eq:n-2-ineq}
n-2 < N_t + n_t - 3 = 2n - n_t - 3 < 2n-4.
\end{equation}

For $v>0$, the higher direct images $R^v\gamma_*\left(\mc{K}^{-i} 
\oo_{\mc{O}_{\wtl{\Gr}}} \gamma^*\Sym^q\mc{Q}\right)$ are given 
by sheaves supported on the base locus $\B$. We can isolate the 
contributions of each of the connected components $\Gr_t$ of $\B$,
as follows. We factor the map $\gamma\colon \wtl{\Gr} \lra \Gr$ 
as the composition
\begin{equation}
\label{eq:gr-phi-t}
\begin{tikzcd}[column sep=24pt]
\wtl{\Gr} \ar[r, "\phi_t"]&  \wtl{\Gr}_t \ar[r, "\tl{\gamma}_t"] 
&  \Gr \, ,
\end{tikzcd}
\end{equation}
where $\tl{\gamma}_t$ is the blow-up of $\Gr_t$, and $\phi_t$ denotes the blow-up
of the (preimage via $\tl{\gamma}_t$ of the) remaining components of $\B$.
We let $\wtl{\mc{K}}_t$ denote the Koszul complex on $\wtl{\Gr}_t$ associated
as in Section~\ref{subsec:blow-Grass} with the map
$K\oo\mc{O}_{\wtl{\Gr}_t} \lra \mc{O}_{\wtl{\Gr}_t}(H-E_t)$.
Applying \eqref{eq:gamma*-dE} to the blow-up map $\phi_t$ (whose
exceptional divisor is $E_1\sqcup\cdots\sqcup\hat{E_t}\sqcup\cdots \sqcup E_k$),
we infer that
\begin{equation}
\label{eq:phi-t-kkt}
(\phi_t)_* \wtl{\mc{K}}^{-i} = \wtl{\mc{K}}_t^{-i}\mbox{ for all }i>0 \, .
\end{equation}

Since $\wtl{\mc{K}}^{-i} = \mc{K}^{-i}$ for $i>0$, we conclude using the
projection formula that
\begin{equation}
\label{eq:higher-R*-K}
 R^v\gamma_*(\mc{K}^{-i} \oo_{\mc{O}_{\wtl{\Gr}}} \gamma^*\Sym^q\mc{Q}) =
 \bigoplus_{t=1}^k R^v\tl{\gamma}_{t\,*}(\wtl{\mc{K}}_t^{-i} \oo_{\mc{O}_{\wtl{\Gr}_t}}
 \tl{\gamma}_t^*\Sym^q\mc{Q})
\end{equation}
for all $i,v>0$. In view of Lemma~\ref{lem:RN-1*-OdE}, we obtain that
\begin{equation}
\label{eq:Rv-gammat*=0}
\text{$R^v\tl{\gamma}_{t\,*}(\wtl{\mc{K}}_t^{-i} \oo_{\mc{O}_{\wtl{\Gr}_t}}
\tl{\gamma}_t^*\Sym^q\mc{Q}) = 0$ for $v>0$ and $v\neq N_t-1$}\, .
\end{equation}

If we assume that $i\geq 2$ and compute each of the terms $E_1^{-i,j}$ using
the Leray spectral sequence \eqref{eq:Leray-E-tilde}, then it follows as in the
case when $\mc{R}(V,K)$ was irreducible that the only potentially non-zero
terms $\widetilde{E}_2^{u,v}$ occur when
\begin{itemize}[itemsep=2pt]
 \item $(u,v)=(n-2,0)$ if $i\geq n$, in which case $u+v=n-2<2n-4$.
 \item $(u,v)=(2n-4,0)$ if $i\geq 2n-2$, in which case  $u+v = 2n-4$.
 \item $(u,v) = (n_t-2,N_t-1)$ for some $t$, in which case  $n-2< u+v = 2n-n_t-3 <2n-4$.
 \item $(u,v) = (2n_t-4,N_t-1)$ for some $t$, if $i\geq 2n-2$, in which case 
 $n-2<u+v = 2n-5 < 2n-4$.
\end{itemize}

This shows that if $E_1^{-i,j} \neq 0$ for some $i\leq j+1$, then $n-2<j<2n-4$.
Thus, we need to analyze the complexes $E_1^{\bullet,j}$ for $j$ in this range.
As explained previously, we only need to consider $i\leq j+2$, that is, we may
assume that $i\leq 2n-3$. It follows that $\widetilde{E}_2^{u,v}$ may only 
be non-zero when $(u,v) = (n-2,0)$ or $(n_t-2,N_t-1)$ for some $1\leq t\leq k$, 
which implies that the spectral sequence \eqref{eq:Leray-E-tilde} degenerates.

Indeed, since the differentials in the spectral sequence are given by mas
\begin{equation}
\label{eq:druv}
\begin{tikzcd}[column sep=22pt]
\tl{d}_r^{u,v} \colon \widetilde{E}_r^{u,v} \ar[r]& \widetilde{E}_r^{u+r,v-r+1}
\end{tikzcd}
\end{equation}
for $r\geq 2$, and since $n>n_t$, the only non-zero such maps could occur
in the following two cases, which we treat separately.
\begin{itemize}[itemsep=2pt]
 \item $(u,v) = (n_t-2,N_t-1)$ for some $1\leq t\leq k$, 
 and $(u+r,v-r+1) = (n-2,0)$.
 This implies that $r=n-n_t$ and $r-1 = N_t-1 = 2(n-n_t)-1$,
 which in turn yields $r=n-n_t=0$, which is impossible.
 \item $(u,v) = (n_t-2,N_t-1)$ and $(u+r,v-r+1) = (n_{t'}-2,N_{t'}-1)$, 
 for some $1\leq t\neq t'\leq k$. This implies that $r=n_{t'} - n_t$ 
 and $r-1 = N_t - N_{t'} = 2(n_{t'}-n_t)$. Since $r>0$, it follows that 
 $0< r = (n_{t'} - n_t) < 2(n_{t'}-n_t) = r-1$, which is again impossible.
\end{itemize}
Since the spectral sequence \eqref{eq:Leray-E-tilde} degenerates,
for $n-2<j<2n-4$ and $i\leq 2n-3$, it follows that 
\begin{equation}
\label{eq:e1-ntn}
E_1^{-i,j} = \bigoplus_{N_t+n_t-3=j} H^{n_t-2}
\Big(\Gr_t,R^{N_t-1}\tl{\gamma}_{t\,*}(\wtl{\mc{K}}_t^{-i} \oo_{\mc{O}_{\wtl{\Gr}_t}} 
\tl{\gamma}_t^*\Sym^q\mc{Q})\Big),
\end{equation}
that is, each chain complex $E_1^{\bullet,j}$ is isomorphic, in the range $\bullet\geq-(2n-3)$,
to a direct sum of complexes of type $F^{\bullet}_{\ol{n}-2,0}$, constructed
as in \eqref{eq:def-Fjd} from an irreducible component $\ol{V}^{\vee} = V_t$
of $\mc{R}(V,K)$ with $\ol{n}=n_t$, $N=N_t$, and $N+\ol{n}-3=j$. Since
$j+2\leq 2n-3$, Proposition~\ref{prop:cohom-Fjd} implies that
$E_2^{-i,j} = \mc{H}^{-i}(E_1^{\bullet,j}) = 0$,
for $i\leq N+\ol{n}-2 = j+1$, thus concluding our proof.\hfill\qed

\section{Generic vanishing for Koszul modules}\label{sect:Koszul_van}
We now discuss Conjecture \ref{conj:Koszul_vanishing}, amounting to the statement $W_{n-4}(V, K)=0$, for a general $(2n-2)$-dimensional subspace $K\subseteq \bigwedge^2 V$. From (\ref{eq:def-WVK}), we have that 
	\[
	W_{q}(V,K)=\coker\Bigl\{K\otimes \Sym^{q}V\stackrel{\delta_2}\longrightarrow \ker(\delta_{1,q+1})\Bigr\},
	\]
where $\delta_2$ is the Koszul differential and $\delta_{1,q+1}\colon V\otimes \Sym^{q+1}V\longrightarrow \Sym^{q+2}V$ is the multiplication map. When $q=n-4$ and $m=\mbox{dim}(K)=2n-4$, the source and target of $\delta_2$ have the same dimension. Therefore the degeneracy locus of  $\delta_2$ defines a virtual divisor on  $\Grass_{2n-2}\bigl(\bigwedge^{2}V\bigr)$.

Recalling the presentation (\ref{eqn:presentation_W}) of the Koszul module $W(V,K)$, we see that $W_q(V,K)=0$ if and only $\widetilde{\delta}_3\colon \bigwedge^3 V\otimes \Sym^{q-1}(V)\rightarrow \bigl(\bigwedge^2 V/K\bigr)\otimes \Sym^q V$ is surjective. We dualize and since $K^{\perp}=\bigl(\bigwedge^2 V/K\bigr)^{\vee}$, we conclude that $W_q(V,K)=0$ if and only if the map 
\begin{equation}\label{eq:D_3}
D_3\colon K^{\perp}\otimes \Sym^q(V)^{\vee} \longrightarrow 
\bwedge^3 V^{\vee}\otimes \Sym^{q-1}(V)^{\vee}
\end{equation}
is injective. With respect to a basis $(e_1, \ldots, e_n)$ of $V^{\vee}$, the 
differential $D_3$ is given by 
\[
D_3\bigl((u\wedge v)\otimes f\bigr)=\sum_{i=1}^n (u\wedge v\wedge e_i)\otimes \partial_{e_i}f,
\]
for elements $u, v\in V^{\vee}$ and $f\in \Sym^{q}(V)^{\vee}$.

\subsection{Generic Koszul vanshing fails for $n=5$}
We now explain that Conjecture \ref{conj:Koszul_vanishing} fails for a $5$-dimensional vector space $V$ for every subspace $K\in \Grass_8\bigl(\bigwedge^2 V\bigr)$.

\begin{proposition}\label{prop:gen5}
If $V$ is $5$-dimensional, then $W_1(V,K)\neq 0$, for every $8$-dimensional subspace $K\subseteq \bigwedge^2 V$.    
\end{proposition}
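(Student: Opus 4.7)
The plan is to translate $W_1(V,K)\neq 0$ into a concrete linear-algebra statement via \eqref{eq:D_3}: taking $q=1$ there, the goal becomes showing that
\[
\mu\colon K^{\perp}\otimes V^{\vee}\longrightarrow \bigwedge^3 V^{\vee},\qquad \omega\otimes f\longmapsto \omega\wedge f,
\]
is not injective. Since $\dim K^\perp\otimes V^\vee = 2\cdot 5 = 10 = \binom{5}{3}= \dim\bigwedge^3 V^\vee$, this is a square map, and I need to exhibit a non-zero element of $\ker(\mu)$ for every $2$-dimensional subspace $K^\perp\subseteq \bigwedge^2 V^\vee$.

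Fix a basis $\omega_1,\omega_2$ of $K^\perp$ and a volume form $\mathrm{vol}\in \bigwedge^5 V^\vee$. The isomorphism $V\cong \bigwedge^4 V^\vee$, $w\mapsto \iota_w\mathrm{vol}$, produces a unique $w\in V$ with $\omega_1\wedge \omega_2 = \iota_w \mathrm{vol}$. Applying the Leibniz rule for $\iota_w$ on $2$-forms together with $\iota_w\circ\iota_w=0$ yields
\[
\mu\bigl(\omega_1\otimes \iota_w\omega_2 + \omega_2\otimes \iota_w\omega_1\bigr) = \iota_w(\omega_1\wedge \omega_2) = \iota_w\iota_w(\mathrm{vol}) = 0,
\]
so the element $\zeta := \omega_1\otimes \iota_w\omega_2 + \omega_2\otimes \iota_w\omega_1$ always lies in $\ker(\mu)$. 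Since $\omega_1,\omega_2$ are linearly independent in $K^\perp$, $\zeta$ vanishes precisely when $\iota_w\omega_1=\iota_w\omega_2=0$, i.e., when $w$ lies in the common kernel of the $\omega_i$ viewed as maps $V\to V^\vee$.

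I will then split into two cases. If the common kernel is trivial, an auxiliary lemma stating that $\omega_1\wedge \omega_2=0$ in $\bigwedge^4 V^\vee$ forces the existence of a common kernel vector will guarantee $w\neq 0$, and $\zeta\neq 0$ supplies the desired element. If instead the common kernel contains a nonzero vector $v$, a short computation with the decomposition $V^\vee = v^\perp\oplus \langle e\rangle$ (with $e(v)=1$) shows $K^\perp\subseteq \bigwedge^2 v^\perp$, so the restriction of $\mu$ to $K^\perp\otimes v^\perp\to \bigwedge^3 v^\perp$ sends an $8$-dimensional space into a $4$-dimensional one and has kernel of dimension at least $4$.

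The main obstacle will be the auxiliary lemma. When $\omega_1$ has maximal rank $4$ with kernel line $\langle v_1\rangle\subseteq V$, I would decompose $\omega_2 = \omega_2^{(0)} + \omega_2^{(1)}\wedge e$ with $\omega_2^{(0)}\in \bigwedge^2 v_1^\perp$ and $\omega_2^{(1)}\in v_1^\perp$, and observe that the vanishing $\omega_1\wedge \omega_2=0$ decomposes in $\bigwedge^4 V^\vee = \bigwedge^4 v_1^\perp \oplus (\bigwedge^3 v_1^\perp \wedge e)$ into two pieces, the second reading $\omega_1\wedge \omega_2^{(1)}=0$ in $\bigwedge^3 v_1^\perp$. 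Since $\omega_1$ is a symplectic form on the $4$-dimensional space $v_1^\perp$, hard Lefschetz forces $\omega_2^{(1)}=0$, so $\omega_2\in \bigwedge^2 v_1^\perp$ and $v_1$ is a common kernel vector. The degenerate cases $\mathrm{rank}(\omega_1)\leq 2$ are handled by a direct analysis of the radical of $\omega_1$.
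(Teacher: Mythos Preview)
Your proof is correct and takes a genuinely different route from the paper. The paper invokes Kronecker's classification of pencils of skew-symmetric matrices to put a \emph{general} $K^\perp$ into the normal form $P=e_1\wedge e_3+e_2\wedge e_4$, $Q=e_1\wedge e_4+e_2\wedge e_5$, and then exhibits the single syzygy $P\otimes e_1+Q\otimes e_2$; the passage from the generic orbit to every $K$ is left implicit (it relies on the $\GL(V)$-invariance of the condition and the openness of $\{D_3\text{ injective}\}$). Your argument instead produces the kernel element $\zeta=\omega_1\otimes\iota_w\omega_2+\omega_2\otimes\iota_w\omega_1$ canonically from $\omega_1\wedge\omega_2=\iota_w\mathrm{vol}$, so that the identity $\iota_w\iota_w=0$ \emph{explains} the syzygy rather than merely verifying it, and it treats every $K$ at once without appealing to any classification. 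Your auxiliary lemma (that $\omega_1\wedge\omega_2=0$ forces a common kernel vector) and the dimension count in Case~2 are both sound; the rank-$4$ case is exactly the hard Lefschetz isomorphism $v_1^\perp\to\bigwedge^3 v_1^\perp$, and in the rank-$2$ case the hypothesis $\omega_1\wedge\omega_2=0$ kills the $e_3,e_4,e_5$-block of $\omega_2$, reducing the common-kernel condition to two linear equations in three unknowns. It is perhaps worth noting that when you specialize your construction to the paper's normal form one finds $w=v_4$ and $\zeta=-(P\otimes e_1+Q\otimes e_2)$, so your canonical element reproduces the paper's ad hoc syzygy on the nose.
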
    
\begin{proof}
We fix a general subspace $K\in \Grass_8\bigl(\bigwedge^2 V\bigr)$ and note that $K^{\perp}$ is $2$-dimensional. In particular 
$K^{\perp}\subseteq \P\bigl(\bigwedge^2 V^{\vee}\bigr)$ can be regarded as a \emph{pencil} of $5\times 5$ skew-symmetric forms. Using Kronecker's structure theorem for such pencils, see e.g. \cite{Th}*{Theorem 1}, there exists a basis $(e_1, \ldots, e_5)$ of $V^{\vee}$, such that with respect to this basis $K^{\perp}$ is generated by 
	\[
	P=e_{1}\wedge e_{3}+e_{2}\wedge e_{4}, \quad Q=e_{1}\wedge e_{4}+e_{2}\wedge e_{5}.
	\]
To conclude that the map $D_3$ is not injective it suffices to produce a syzygy between $P$ and $Q$ and we immediately notice that $P\wedge e_1+Q\wedge e_2\in \ker(D_3)$. It follows that $D_3$ is not injective for every $K\in \Gr_8\bigl(\bigwedge^2 V\bigr)$.
\end{proof}

\subsection{Generic Koszul vanshing via Macaulay2} By semicontinuity, in order to prove the vanishing of $W_q(V,K)$ for a general subspace $K\in \Grass_{2n-2}\bigl(\bigwedge^2 V\bigr)$, it suffices to do so for a specific choice of $K$. For small values of $n=\dim(V)$, precisely for $6\leq n\leq 10$, this can be done using a computer algebra software. We checked Theorem~\ref{thm:surprise_9} using Macaulay2 \cite{M2}, and we include the relevant code below.

\begin{verbatim}
S = ZZ/32003[x_1..x_n]; 
kosz1 = koszul(1,vars S); 
kosz2 = koszul(2,vars S); 
m = 2*n-2; 
K = random(S^(binomial(n,2)),S^m); 
cc = chainComplex{kosz1,kosz2 * K}; 
W = HH_1(cc); 
hilbertFunction(n-2,W)
\end{verbatim}

In the above, we have chosen to work with coefficients in a finite field to speed up calculations (this suffices in order to verify generic vanishing). The chain complex \texttt{cc} we construct refers to the complex \eqref{eq:def-WVK} defining the Koszul module $W=W(V,K)$. We note that the default grading convention in Macaulay2 differs from ours by $2$, hence the line \texttt{hilbertFunction(n-2,W)} computes $\dim W_{n-4}(V,K)$, as desired.

Running the code above with $n=9$ yields $\dim W_{n-4}(V,K)=1$ for the random examples that the computer generates, which is highly suggestive of the fact that $W_{n-4}(V,K)\neq 0$ for all subspaces $K\in \Grass_{16}\bigl(\bigwedge^2 V\bigr)$. Unlike in the case $n=5$ discussed in Proposition~\ref{prop:gen5}, we do not have a theoretical explanation for this intriguing fact. This is reminiscent of the Prym-Green Conjecture \cite{CEFS} predicting the vanishing $K_{\frac{g}{2}-3,2}(C, \omega_C\otimes \eta)=0$ for a general Prym curve $[C, \eta]\in \mathcal{R}_g$. 
The genera for which this conjecture is known to fail are precisely $g=8$ (in which case \cite{CFVV} provides several geometric explanations for this surprising fact) and $g=16$, see \cite{CEFS}, which is suggested by a random Macaulay2 calculation, though no geometric explanation for this fact has been found. 

We can ask more generally whether for $n=2^{r}+1$ and $m=2^{r+1}$ we always get $W_{n-4}(V,K)\neq 0$, but already the next case $n=17$ and $m=32$ appears to be out of reach for computer experimentation.

\subsection{Generic Koszul vanishing and $K3$ surfaces}
Polarized $K3$ surfaces of odd genus provide an interesting testing ground for 
Conjecture \ref{conj:Koszul_vanishing}. Precisely, let $(X,H)$ be a polarized 
$K3$ surface with $\mbox{Pic}(X)\cong \Z\cdot H$, where $H^2=4r$, with 
$r\geq 1$, and we fix the Mukai vector $v=(2, H,r)$, therefore $v^2=0$. The 
moduli space $M_X(v)=M_X(2,H,r)$ of $H$-stable rank $2$ vector bundles $E$ 
on $X$ with $h^0(X,E)\geq r+2$ and $\det(E)\cong H$ is again a $K3$ 
surface, Fourier--Mukai dual to the surface $X$. We fix $E\in M_X(v)$ and 
consider the determinant map
\[
d\colon \bwedge^2 H^0(X,E)\longrightarrow H^0(X,H).
\]
Note than $h^0(X,H)=2r+2=2h^0(X,E)-2$ and a general such $E$ is globally generated, while $d$ is surjective. The Koszul module $W(E)\coloneqq W\bigl(H^0(X,E)^{\vee}, \ker(d)^{\perp}\bigr)$ associated in \cite{AFRW} to the vector bundle $E$ can be used to test Conjecture \ref{conj:Koszul_vanishing}. Before stating our next result, we recall that 
$M_E:= \ker\bigl\{H^0(X,E)\otimes \mathcal{O_X}\stackrel{\ev}\rightarrow E\bigr\}$ denotes the corresponding kernel bundle.

\begin{proposition}
\label{prop:Kosz_vanK3}
We fix a polarized $K3$ surface $(X,H)$ with $\mathrm{Pic}(X)\cong \Z\cdot H$ 
and $H^2=4r$.  Then if for a general vector bundle $E\in M_H(2, H,r)$ the following holds
\[
H^1\bigl(X, \Sym^r M_E\bigr)=0,
\]
then Conjecture \ref{conj:Koszul_vanishing} holds in dimension $r+2$.
\end{proposition}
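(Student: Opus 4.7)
The plan is to identify the Koszul module $W_{r-2}(V,K)$ with a first cohomology group on $X$, and then invoke Serre duality on the K3 to pass from $\Sym^r M_E^\vee$ to $\Sym^r M_E$. Dualizing the defining sequence of $M_E$ yields
\[
\begin{tikzcd}[column sep=18pt]
0 \ar[r] & E^\vee \ar[r] & V \oo \mc{O}_X \ar[r] & M_E^\vee \ar[r] & 0 ,
\end{tikzcd}
\]
where $V = H^0(X,E)^\vee$ has dimension $r+2$. Since $E^\vee$ is a rank $2$ sub-bundle with $\bwedge^2 E^\vee \cong \mc{O}_X(-H)$, the Koszul resolution of $\Sym^r M_E^\vee$ becomes the four-term exact sequence
\[
\begin{tikzcd}[column sep=14pt]
0 \ar[r] & \mc{O}_X(-H) \oo \Sym^{r-2} V \ar[r] & E^\vee \oo \Sym^{r-1} V \ar[r] & \Sym^r V \oo \mc{O}_X \ar[r] & \Sym^r M_E^\vee \ar[r] & 0,
\end{tikzcd}
\]
whose differentials are precisely the Koszul differentials associated with the inclusion $E^\vee \hookrightarrow V \oo \mc{O}_X$.

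I would then run the hypercohomology spectral sequence $E_1^{p,q} = H^q(X,C^p) \Longrightarrow H^{p+q}(X,\Sym^r M_E^\vee)$ on the truncation $C^\bullet$ placed in cohomological degrees $-2,-1,0$. For a general $E \in M_X(2,H,r)$ one has $H^0(E^\vee) = 0$ by $\mu$-stability and $H^1(E^\vee) = H^1(E)^\vee = 0$ from the $\chi$-calculation $\chi(E) = h^0(E) = r+2$; combined with the K3 vanishings $H^i(\mc{O}_X(-H)) = 0$ for $i \le 1$ (Kodaira) and $H^1(\mc{O}_X) = 0$, only two rows of $E_1$ survive. The bottom row $q=0$ contains just $E_1^{0,0} = \Sym^r V$, while the top row $q=2$ becomes
\[
\begin{tikzcd}[column sep=22pt]
K \oo \Sym^{r-2} V \ar[r, "\delta_2|_K"] & V \oo \Sym^{r-1} V \ar[r, "\delta_1"] & \Sym^r V ,
\end{tikzcd}
\]
using Serre duality to identify $H^2(\mc{O}_X(-H)) \cong H^0(\mc{O}_X(H))^\vee \cong K$ and $H^2(E^\vee) \cong H^0(E)^\vee \cong V$; functoriality of Serre duality together with naturality of the Koszul differentials identifies the induced maps with the algebraic $\delta_2|_K$ and $\delta_1$. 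The middle cohomology of this row is by definition $W_{r-2}(V,K)$, and all subsequent differentials $d_r$ with $r \ge 2$ vanish for bidegree reasons, so reading off total degree one yields the key isomorphism $H^1(X, \Sym^r M_E^\vee) \cong W_{r-2}(V,K)$.

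To finish, Serre duality on $X$ (with $K_X \cong \mc{O}_X$) gives $H^1(X, \Sym^r M_E^\vee) \cong H^1(X, \Sym^r M_E)^\vee$, so the hypothesis $H^1(X, \Sym^r M_E) = 0$ forces $W_{r-2}(V,K) = 0$ for the specific $(2r+2)$-dimensional subspace $K = \op{im}(d^\vee) \subseteq \bwedge^2 V$ produced from $X$. Upper semi-continuity of the function $K \mapsto \dim W_{r-2}(V,K)$ on $\Grass_{2r+2}\bigl(\bwedge^2 V\bigr)$ then propagates the vanishing to a general $(2r+2)$-dimensional subspace, which is Conjecture~\ref{conj:Koszul_vanishing} in dimension $n = r+2$.

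The main technical point I expect to work through carefully is the identification of the induced $E_1$-page differentials on the $q=2$ row with the algebraic Koszul maps $\delta_2|_K$ and $\delta_1$; once this is nailed down, the remainder is a routine spectral-sequence degeneration combined with the semi-continuity argument. One also needs to verify that $d\colon \bwedge^2 H^0(X,E) \to H^0(X,H)$ is surjective for a general $E$, so that $\dim K = 2r+2$ has the right value.
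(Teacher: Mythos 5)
Your argument is correct and follows the same underlying idea as the paper, but where the paper's proof is a single sentence citing \cite{AFRW}*{Theorem~4.3} for the identification $W_{r-2}(E)^{\vee}\cong H^1(X,\Sym^r M_E)$, you re-derive that identification from scratch. Your derivation — resolve $\Sym^r M_E^{\vee}$ by the length-two Koszul complex coming from $0\to E^{\vee}\to V\otimes\mc{O}_X\to M_E^{\vee}\to 0$ with $\bwedge^2 E^{\vee}\cong\mc{O}_X(-H)$, run the hypercohomology spectral sequence, kill the bottom rows using stability of $E$ ($H^0(E^{\vee})=0$), the numerical input $h^1(E)=0$ and the K3 vanishings for $\mc{O}_X(-H)$, then read off $\bb{H}^1 = E_2^{-1,2}=W_{r-2}(V,K)$, and finally apply Serre duality — is precisely the mechanism behind the cited theorem. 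You also spell out the two points the paper leaves implicit: that $d\colon\bwedge^2 H^0(E)\to H^0(H)$ is surjective for general $E$ (so $\dim K = 2r+2$ is correct), and that semi-continuity of $\dim W_{r-2}(V,K)$ over $\Grass_{2r+2}(\bwedge^2 V)$ propagates the vanishing at the specific $K$ coming from the K3 to a general subspace. The only cosmetic issue is calling the resolution a ``truncation''; mathematically everything checks out.
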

\begin{proof}
Setting $V\coloneqq H^0(X,E)^{\vee}$, note that $H^0(X,H)^{\vee}$ can be regarded as a $(2r+2)$-dimensional subspace of $\bigwedge^2 V$. From \cite{AFRW}*{Theorem 4.3} we have the following identification $W_q(E)^{\vee}\cong H^1\bigl(X, \Sym^r M_E\bigr)$, from which the conclusion follows.    
\end{proof}
    
\section{Fundamental groups of hyperplane arrangements}
\label{sect:arr-gps}
\subsection{Generalities on hyperplane arrangements}
\label{subsec:arrs}
We now explain how the main results of this paper can be applied to describe the 
Chen ranks of the fundamental groups of (complements of) hyperplane arrangements. 
We consider an arrangement $\A$ of hyperplanes in $\C^{m}$ and let 
\[
M({\A})\coloneqq \C^{m}\setminus \bigcup_{H\in \A} H
\] 
be the complement of the arrangement. As shown by Arnold and Brieskorn, 
the cohomology algebra $H^{\hdot}\bigl(M(\A),\R\bigr)$ is embedded as a sub-algebra 
of the de Rham algebra $\Omega^{\hdot}_{\rm{dR}}\bigl(M(\A)\bigr)$. It follows that 
$M(\A)$ is a formal space in the sense of Sullivan \cite{Sullivan}, and thus its 
fundamental group $G(\A)\coloneqq \pi_1(M(\A))$ is $1$-formal.
As usual, we denote by $\P(\A)$ the projectivized arrangement of hyperplanes in 
$\P^{\ell-1}$. Since $M(\A)$ is isomorphic to $M\bigl(\P(\A)\bigr)\times \C^*$, 
we can go back and forth between affine and projective hyperplane arrangements.

In \cite{OS}, Orlik and Solomon gave a description of the
cohomology algebra of $M(\A)$ in terms of the 
\emph{intersection lattice}\/ $L(\A)$ of the arrangement. As usual, a \emph{flat} of $\A$ 
is a non-empty intersection of hyperplanes in $\A$ and $L(\A)$ consists of all the flats 
of $\A$. We fix a linear order on $\A$, and let $E=E(\A)$ be the exterior algebra over $\C$ having the degree $1$ generators $\{e_H\}_{ H\in \A}$. 
We write $e_J\coloneqq e_{j_1}\wedge \cdots \wedge e_{j_p}\in E(\A)$, where 
$J=(j_1, \ldots, j_p)$ is a multiindex such that $j_1<\cdots <j_p$. 

We define a differential $\partial \colon E(\A)\to E(\A)$ of degree $-1$, by setting 
$\partial(1)=0$ and $\partial(e_H)=1$, and extending $\partial$ to
a linear operator on $E(\A)$ using the graded Leibniz rule; that is, 
\[
\partial \bigl(e_{j_1} \wedge \ldots \wedge  e_{j_p}\bigr)=\sum_{k=1}^p (-1)^{k-1} e_{j_1}\wedge\cdots 
\wedge\hat{e}_{j_k}\wedge\cdots \wedge e_{j_p}.
\]
For a subarrangement $\BB=\{H_1,\ldots,H_k\}$ of $\A$, we use the notation 
$e_\BB\coloneqq e_{H_1}\wedge\cdots\wedge e_{H_k}$ and for $X\in L_2(\A)$ we put 
$e_X\coloneqq e_{\A_X}$, where $\A_X\coloneqq \{H\in \A : X\subseteq H\}$.

The \emph{Orlik--Solomon ideal}\/ $I=I(\A)$ is the ideal of $E(\A)$ generated 
by the elements $\bigl\{ \partial \big(e_\BB\big) : 
\codim \big( \bigcap_{H\in \BB} H \big)< \abs{\BB}\bigr\}$. 
As shown in \cite{OS}, the graded algebra $H^{\hdot}\bigl(M(\A), \C\bigr)$ 
is isomorphic to the \emph{Orlik--Solomon algebra} $A(\A)\coloneqq E(\A)/I(\A)$. 

The degree $2$ part of the ideal $I(\A)$ decomposes as 
\begin{equation}
\label{eq:brieskorn}
	I^2(\A)\cong \bigoplus_{X \in L_2(\A)} I^2(\A_X).
\end{equation}
For every $X\in L_2(\A)$, define a derivation $\partial_X\colon E(\A)\to E(\A)$ 
of degree $-1$ by 
setting $\partial_X(e_H)=1$ if $H\supset X$ and $\partial_X(e_H)=0$ otherwise, 
and extending to a linear operator on $E(\A)$ using the graded Leibniz rule.   

The resonance varieties of an arrangement complement were 
introduced by Falk in \cite{Fa97}, and subsequently studied in 
\cites{CS-camb, LY00, FY, PY, Yu09, DS-plms}. We refer to 
the exposition in \cite{Su-toul} for details and further references. The resonance 
variety $\mc{R}(\A)\coloneqq \mc{R}\bigl(G(\A)\bigr)$ of the arrangement is linear, 
projectively disjoint, and isotropic.

\subsection{The Koszul module of a hyperplane arrangement}
\label{subsec:koszul-arr}
Given a hyperplane arrangement $\A$, the \emph{Koszul module}\/ 
$W(\A)=W(V,K)$ of $\A$ is the corresponding Koszul module over 
the polynomial ring $S=\C[x_1,\dots, x_n]$ obtained by setting
\begin{equation}
	\label{eq:VK-arrangement}
	V^{\vee} = E^1(\A)=H^1\bigl(M(\A), \C\bigr) \:\text{ and } K^{\perp} = I^2(\A),
\end{equation}
where $K^\perp=I^2(\A)$ is the kernel of the cup product map 
$\bwedge^2 H^1\bigl(M(\A), \C\bigr)\rightarrow H^2\bigl(M(\A), \C\bigr)$. 
With this notation, we have that $\mc{R}(\A)=\mc{R}(V,K)$.

We fix again an ordering $H_1,\dots ,H_n$ of its hyperplanes.
The  space $V^{\vee}=H^1\bigl(M(\A), \C \bigr)$ has basis 
$e_1,\dots, e_n$ corresponding to the hyperplanes of $\A$, while 
$K^{\perp}=I^2(\A)\subseteq \bwedge^2 V^{\vee}$
is the subspace spanned by elements of the form 
\[
\partial (e_{ijk})=(e_i-e_k)\wedge (e_j-e_k),
\] 
for hyperplanes $H_i,H_j,H_k$ that are not in general position. More precisely,
if $X\in L_2(\A)$ is a rank $2$ flat where $r$ hyperplanes meet, there will
be $\binom{r-1}{2}$ such elements contributing to a basis for $K^{\perp}$; 
hence, $\dim K^{\perp}=\sum_{X\in L_2(\A)} \binom{\abs{X}-1}{2}$.

Let $(v_1,\dots, v_n)$ be the basis of $V=H_1\bigl(M(\A),\C\bigr)$ dual to 
$(e_1, \ldots, e_n)$. Then the subspace $K\subseteq \bwedge^2 V$ is 
spanned by all elements of the form
\[
v_{i_q}\wedge \bigg(\sum_{s=1}^r v_{j_s}\bigg)
\]
for all flats $X=H_{i_1}\cap \cdots \cap H_{i_r}\in L_2(\A)$ 
and all indices $1\le q <r$. 
In particular, $m\coloneqq \dim K=\sum_{X\in L_2(\A)} (\abs{X} -1)$. 
For instance, a double point $(ij)$ in $\P(\A)$ contributes a 
basis element $v_i \wedge v_j$ to $K$, whereas a triple point $(ijk)$ 
contributes two basis elements, $v_i\wedge (v_j+v_k)$ and $v_j\wedge (v_i+v_k)$.

Finally, we note the following simple fact. For a subarrangement 
$\BB\subseteq \A$, consider the linear subspace
$V_\BB^\vee \coloneqq \spn \{e_H\}_{H\in \BB} \subseteq V^{\vee}$. 
We then have the decomposition:
\begin{equation}
    \label{eqn:decomposition-subarr-wedge}
    \bwedge^2 V^\vee=\bwedge^2V_\BB^\vee\oplus\left(V_\BB^\vee
    \wedge V_{\A\setminus\BB}^\vee\right)\oplus\bwedge^2V_{\A\setminus\BB}^\vee .
\end{equation}
According to the decomposition \eqref{eqn:decomposition-subarr-wedge}, any 
$\omega\in K^\perp$ can be written as 
\begin{equation}
    \label{eqn:decomposition-subarr-omega}
    \omega = \omega_\BB+\omega_M+\omega_{\A\setminus\BB}.  
\end{equation}
We will repeatedly use this decomposition in relation with separability in the sequel.

\subsection{Multinets and resonance}
\label{subsec:nets}
To describe the resonance varieties $\mc{R}(\A)$ in combinatorial terms, we recall 
an important notion due to Falk and Yuzvinsky \cite{FY}.

\begin{definition}
\label{def:multinet}
A {\em $k$-multinet} on an arrangement $\A$ consists
of a partition of $\A$ into $k$ subsets $\A_1,\ldots,\A_k$,
together with an assignment of multiplicities,
$m\colon \A\to \Z_{>0}$, and a subset $\XX\subseteq L_2(\A)$,
called the \emph{base locus} of $\NN$, such that the following 
are satisfied:
\begin{enumerate}[itemsep=2pt]
	\item \label{m1}
	There is an integer $d$ such that $\sum_{H\in\A_i} m_H=d$,
	for all $i=1, \ldots, k$.
	\item \label{m2}
	For any two hyperplanes $H$ and $H'$ in different classes,
	$H\cap H'\in \XX$.
	\item \label{m3}
	For each $X\in\XX$, the sum
	$n_X\coloneqq \sum_{H\in\A_i\cap \A_X} m_H$ is independent of $i$.
	\item \label{m4}
	For each $i=1, \ldots, k$, the space
	$\big(\bigcup_{H\in \A_i} H\big) \setminus \XX$ is connected.
\end{enumerate}
\end{definition}
We refer to such an object as a $(k,d)$-multinet. A flat $X\in L_2(\A)$ is 
either contained in some block $\A_i$,
or it meets each block, in which case it belongs to $\XX$, 
see \cite{LY00}. The base locus $\XX$ is determined by the partition 
$(\A_1,\dots, \A_k)$; indeed, for each $i\ne j$, we have that 
$\XX = \bigl\{ H \cap H': H\in \A_i,\, H'\in \A_j \bigr\}$.

Work in \cites{PY, Yu09} shows that if $\NN$ is a $k$-multinet with 
$\abs{\mathcal{X}}>1$, then $k=3$ or $4$; moreover, if at least one 
multiplicity $m_H$ is not equal to $1$, then $k=3$. Although several infinite 
families of multinets with $k=3$ are known, only one multinet 
with $k=4$ is known to exist, namely, the $(4,3)$-net on the 
Hessian arrangement of $12$ lines in $\P^2$. 

By a Lefschetz-type argument, since $\mc{R}(\A)$ is determined by $L_{\le 2}(\A)$, 
we may take a projective $2$-dimensional slice of $\A$ and assume without loss 
of generality that $\P(\A)$ is a line arrangement in $\P^2$, so that $L_2(\A)$ consists 
of the multiple points on $\P(\A)$. With this in mind, following \cite{FY}*{Theorem 3.11}, 
a $k$-multinet $\NN$ on $\A$ determines an orbifold fibration
\[
f\colon M(\A) \longrightarrow \Sigma, \qquad f=[C_1:C_2],
\]
where $\Sigma\coloneqq \P^1\setminus \{\text{$k$ points}\}$ and 
$C_i\coloneqq \prod_{H\in \A_i} H^{m_H}\in \bigl|\mathcal{O}_{\P^1}(d)\bigr|$, 
for $i=1, \ldots, k$. It follows from Definition \ref{def:multinet} that 
$\dim \spn \{C_1, \ldots, C_k\} =2$, and the $k$ points in 
$\P^1$ that $\im(f)$ avoids correspond precisely to the values of 
$[C_1], \ldots, [C_k]$ in this pencil.

The induced map in cohomology, $f^{*} \colon H^{\hdot}(\Sigma,\C) \to H^{\hdot}(M(\A),\C)$, 
sends a loop $c_i$ about the $i$-th puncture of $\P^1$ to the element
\[
u_{i}=\sum_{H\in \A_{i}} m_H\cdot e_H\in H^1\bigl(M(\A), \C\bigr)=E^1(\A).
\]
Consequently, the homomorphism $f^{*} \colon H^1(\Sigma,\C) \to H^1(M(\A),\C)$ 
is injective, and thus sends $\mc{R}(\Sigma)$ to $\mc{R}(M(\A))$. Upon identifying 
$\mc{R}(\Sigma)$ with $H^1(\Sigma,\C)=\C^{k-1}$,
one sees that $P_{\NN}\coloneqq f^*(H^1(\Sigma,\C))$ is the $(k-1)$-dimensional 
linear subspace of $V^{\vee}=H^1(M(\A),\C)$ spanned by 
$u_2-u_1,\dots , u_k-u_1$. Since $H^1(\Sigma,\C)$ is isotropic, $P_{\NN}$ is also 
isotropic. As shown in \cite{FY}*{Theorem 2.4}, this subspace is an irreducible component of 
$\mc{R}(\A)$.

\subsection{Multinets on subarrangements}
\label{subsec:multi-subs}
Suppose there is a sub-arrangement
$\BB\subseteq \A$ supporting a multinet $\NN$. In this case,
the inclusion $M(\A) \inj M(\BB)$ induces an
injection $H^1(M(\BB),\C) \inj H^1(M(\A),\C)$,
which restricts to an embedding $\mc{R}(\BB) \inj \mc{R}(\A)$. 
Thus, the resonance component $P_{\NN} \subseteq \mc{R}(\BB)$ 
may be viewed as a linear subspace $P_{\NN} \subseteq \mc{R}(\A)$.
It is shown in \cite{FY} that $P_{\NN}$ is an irreducible component 
of $\mc{R}(\A)$. Furthermore, all (positive-dimensional) 
irreducible components of $\mc{R}(\A)$ are of the form $P_{\NN}$, for some 
multinet $\NN$ on a sub-arrangement $\BB\subseteq \A$, maximal with the 
property that it supports a multinet. The components corresponding 
to multinets of $\A$ are called {\em essential components}. 

\begin{lemma}
\label{lem:proportional}
Let $\NN$ be a $k$-multinet on a sub-arrangement $\BB=\BB_1 \sqcup \cdots \sqcup \BB_k$ 
with corresponding resonance component $P_{\NN}$. If $a=\sum_{H\in \A} a_H\cdot  e_H \in P_{\NN}$, then:
\begin{enumerate}
    \item \label{pp1}
    The vector $a$ is supported on $\BB$, that is, $a_H = 0$ for $H \notin \BB$.
    \item  \label{pp2}
    For any $H \in \BB_j$, the ratio $\frac{a_H}{m_H}$ is independent of $H$.
\end{enumerate}
\end{lemma}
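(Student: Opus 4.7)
The plan is to unwind the definition of the resonance component $P_{\NN}$ given in Section~\ref{subsec:nets}. When $\NN$ is a $k$-multinet on $\BB = \BB_1 \sqcup \cdots \sqcup \BB_k$, the associated orbifold fibration determines the classes
\[
u_j \coloneqq \sum_{H \in \BB_j} m_H \cdot e_H \in H^1\bigl(M(\BB), \C\bigr) \hookrightarrow H^1\bigl(M(\A),\C\bigr),\qquad j=1,\ldots,k,
\]
and $P_{\NN}$ is, by construction, the $(k-1)$-dimensional linear subspace spanned by the differences $u_2 - u_1,\ldots, u_k - u_1$.

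Given $a \in P_{\NN}$, I would write $a = \sum_{j=2}^{k} \lambda_j (u_j - u_1)$ for some scalars $\lambda_2, \ldots, \lambda_k \in \C$, and introduce $\lambda_1 \coloneqq -\sum_{j=2}^{k} \lambda_j$, so that $\sum_{j=1}^{k} \lambda_j = 0$ and
\[
a = \sum_{j=1}^{k} \lambda_j \cdot u_j = \sum_{j=1}^k \lambda_j \sum_{H\in \BB_j} m_H \cdot e_H.
\]
Since $\BB = \BB_1 \sqcup \cdots \sqcup \BB_k$ is a partition, each hyperplane $H\in \BB$ belongs to a unique block $\BB_{j(H)}$, so the right-hand side expands as
\[
a = \sum_{H\in \BB}\bigl(\lambda_{j(H)} \cdot m_H\bigr)\cdot e_H,
\]
with no $e_H$ appearing for $H\notin \BB$. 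This immediately yields assertion \eqref{pp1}, namely $a_H = 0$ for $H\notin \BB$, and also assertion \eqref{pp2}, since for every $H \in \BB_j$ we read off $a_H = \lambda_j \cdot m_H$, so that $a_H/m_H = \lambda_j$ depends only on the block index $j$ and not on the particular hyperplane $H \in \BB_j$.

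There is no substantive obstacle in this argument; the content of the lemma is essentially bookkeeping in the basis $\{u_j - u_1\}$ of $P_{\NN}$, once one uses the definition of the $u_j$ in terms of the multiplicities $m_H$ of the multinet. The only subtle point worth flagging explicitly is that the inclusion $P_{\NN}\subseteq H^1(M(\A),\C)$ factors through $H^1(M(\BB),\C)$, which is precisely what ensures the vanishing $a_H = 0$ for $H\notin \BB$ in \eqref{pp1}.
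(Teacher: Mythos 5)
Your proof is correct and follows essentially the same route as the paper: expand $a$ in the spanning set $\{u_j - u_1\}$, absorb the negative sign into a coefficient $\lambda_1 = -\sum_{j\geq 2}\lambda_j$ (the paper calls it $\mu_1$), and read off $a_H = \lambda_{j(H)}\, m_H$ from the disjointness of the blocks. Nothing further to add.
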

\begin{proof}
Let $a \in P_{\NN}$. Then $a = \sum_{i=2}^k \lambda_{i} (u_i - u_1)$, where 
$u_{i} = \sum_{H\in \BB_{i}} m_H\cdot e_H$. Since each $u_i$ is supported on $\BB$, 
also $a$ is supported on $\BB$, which proves \eqref{pp1}. 
For claim \eqref{pp2}, we rewrite
\[
a = -(\lambda_2+\cdots+\lambda_k)u_1  + \sum_{i=2}^k \lambda_{i} u_i 
\equalscolon \sum_{j=1}^k \mu_j u_j,
\]
where $\mu_1=-(\lambda_2+\cdots+\lambda_k)$ and $\mu_j = \lambda_j$ for $j \ge 2$.
The coefficient $a_H$ for any $H \in \BB_j$ is precisely $a_H = \mu_j \cdot m_H$, 
hence the claimed proportionality holds.
\end{proof}

We set $\partial_H(a)\coloneqq a_H$ and extend $\partial_H\colon E(\A)\rightarrow E(\A)$ 
to an operator of degree $-1$. In terms of the basis $(v_H)_{H\in \A}$ of $V$, dual to the 
basis $(e_H)_{H\in \A}\subset V^\vee$, we have
\[
\partial_{|V^\vee} = v_\A \coloneqq \sum_{H\in\A}v_H, \
\
\partial_{X|V^\vee} = v_X \coloneqq \sum_{H\in\A_X}v_H,
\ \mbox{ and }\ 
\partial_{H|V^\vee} = v_H.
\]

We now describe the equations of $P_{\NN}$ in the form given by \cite{CSc-adv}*{(5.1)}. Recall that  $\XX$ denotes the base locus of  $\NN$.
\begin{proposition}
\label{prop:res-comp-equations}
The component $P_{\NN}$ of the resonance $\mc{R}(\A)$ is given by
\[
P_{\NN} = \Bigl\{ a \in V^{\vee} : 
	\partial(a) =0, \ \text{$\partial_X(a)=0$ for $X\in \XX$}, \ 
	\text{$\partial_H(a)=0$ for $H\notin \BB$}
	\big.\Bigr\}.
\]
\end{proposition}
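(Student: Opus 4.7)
The plan is to verify the claimed equality as two inclusions. The direct inclusion $P_{\NN} \subseteq \{a \in V^{\vee} : \text{the three conditions hold}\}$ follows by writing $a=\sum_{j=1}^k\mu_j u_j$ with $\sum_j \mu_j=0$ (as in the proof of Lemma~\ref{lem:proportional}): the support condition $\partial_H(a)=0$ for $H\notin\BB$ is part of Lemma~\ref{lem:proportional}(1), the equation $\partial(a)=0$ reduces via multinet axiom (1) to $d\sum_j\mu_j=0$, and $\partial_X(a)=n_X\sum_j\mu_j=0$ for $X\in\XX$ follows from axiom (3).

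The reverse inclusion is the substantive content. Suppose $a\in V^{\vee}$ satisfies the three systems of equations. The condition $\partial_H(a)=0$ for $H\notin\BB$ gives at once that $a=\sum_{H\in\BB}a_He_H$ is supported on $\BB$. The crux is to show that on each block $\BB_j$ the ratio $a_H/m_H$ is independent of $H$; once this is known, setting $\mu_j$ equal to this common value yields $a=\sum_j\mu_j u_j$, and the equation $\partial(a)=d\sum_j\mu_j=0$ forces $\sum_j\mu_j=0$, so that $a=\sum_{j\geq 2}\mu_j(u_j-u_1)\in P_{\NN}$.

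The within-block proportionality is the main obstacle. The plan is to exploit the equations $\partial_X(a)=0$ for $X\in\XX$ by a local-to-global propagation argument: given $H, H'\in\BB_j$ and an auxiliary $H''\in\BB_i$ with $i\neq j$, axiom (2) of the multinet places $X=H\cap H''$ and $X'=H'\cap H''$ in $\XX$, and the two equations $\partial_X(a)=\partial_{X'}(a)=0$, combined with the balance condition $\sum_{H'''\in\BB_i\cap\A_X}m_{H'''}=n_X$ coming from axiom (3), produce a linear relation pairing coefficients on $\BB_j$. Iterating such relations along a chain of hyperplanes in $\BB_j$ made available by the connectivity axiom (4) then propagates the proportionality from any one hyperplane of $\BB_j$ to every other. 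The most delicate point will be handling general (non-unit) multiplicities and the exceptional case $k=4$ in a uniform manner; a potentially cleaner route is to identify $P_{\NN}$ with the image of $f^{\ast}\colon H^1(\Sigma,\C)\to H^1(M(\A),\C)$ for the orbifold fibration $f$ associated with $\NN$ (Section~\ref{subsec:nets}), and to verify directly that the equations of the proposition cut out precisely this image.
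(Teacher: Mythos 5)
Your direct inclusion $P_{\NN}\subseteq Z$ and the reduction of the converse to within-block proportionality of $a_H/m_H$ are both correct, and so is the observation that once proportionality on each $\BB_j$ is established, $\partial(a)=0$ pins down $a\in P_{\NN}$. The gap is in the propagation step. You write that for $H,H'\in\BB_j$ and auxiliary $H''\in\BB_i$, the two equations $\partial_X(a)=\partial_{X'}(a)=0$ with $X=H\cap H''$, $X'=H'\cap H''$, combined with the balance $\sum_{H'''\in\BB_i\cap\A_X}m_{H'''}=n_X$, ``produce a linear relation pairing coefficients on $\BB_j$.'' But that balance condition is a statement about the \emph{multiplicities} $m_{H'''}$, not about the coefficients $a_{H'''}$, and it cannot be transferred to the $a_{H'''}$ without already knowing the proportionality one is trying to prove. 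Moreover, a flat $X\in\XX$ typically carries several hyperplanes from each block (not merely $H$ and $H''$), so $\partial_X(a)=0$ is a single equation entangling many unknowns $a_{H'''}$, and the two equations at $X$ and $X'$ do not isolate a relation between $a_H$ and $a_{H'}$ alone. The local-to-global chain you envision along axiom~(4) needs a global bookkeeping device, not a two-flat trick.

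The paper's proof sidesteps this by a dimension count: after verifying $P_{\NN}\subseteq Z$, it identifies $Z$ (restricted to vectors supported on $\BB$) with $\ker(J)\cap\ker(E)$, invokes \cite{FY}*{Theorem~2.5} to replace this by $\ker(Q)\cap\ker(E)$ with $Q=J^TJ-E$, observes that the multinet axioms make $Q$ block-diagonal with one block $Q_i$ per class $\BB_i$, and uses that each $Q_i$ has a one-dimensional kernel spanned by $(m_H)_{H\in\BB_i}$. That last fact is precisely where axiom~(4) enters and is the rigorous form of the ``propagation'' you want. Your second suggested route --- verifying that $Z$ coincides with the image of $f^*\colon H^1(\Sigma,\C)\to H^1(M(\A),\C)$ --- is essentially a restatement of what needs proving (since $P_{\NN}$ is defined as $\operatorname{im} f^*$) and still reduces to the same dimension count. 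To close the gap you should either import the Falk--Yuzvinsky matrix analysis as the paper does, or carry out the propagation at the level of the quadratic form $Q_i$ rather than at pairs of flats.
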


\begin{proof}
Let $Z$ denote the set on the right-hand side. Since $Z$ is defined by linear equations, 
it is a linear subspace of $V^{\vee}$. We start by showing that $P_{\NN} \subseteq Z$; 
for that, we must verify that each basis element $u_i - u_1$ for $P_{\NN}$ satisfies the three 
defining conditions of $Z$.
\begin{itemize}
    \item 
    By part \eqref{m1} of Definition \ref{def:multinet}, $\partial(u_i) = 
    \sum_{H\in \BB_i} m_H = d$, so $\partial(u_i - u_1) = 0$.
    \item 
    By part \eqref{m3} of Definition \ref{def:multinet}, $\partial_X(u_i) = 
    \sum_{H\in \BB_i \cap \A_X} m_H = n_X$, so $\partial_X(u_i - u_1) = 0$.
    \item 
    Since by definition $u_i$ is supported on $\BB$, we have $\partial_H(u_i - u_1) = 0$ 
    for $H \notin \BB$.
\end{itemize}
Thus, $P_{\NN}\subseteq Z$. Since $\dim P_{\NN}=k-1$, it remains to verify 
that $\dim Z = k-1$. Since the equations $\partial_H(a) = 0$, for $H \notin \BB$, 
restrict the support of $a$ to $\BB$, we may identify 
$Z$ with the kernel of the linear map from $\C^{\abs{\BB}}$ to $\C^{1 + \abs{\XX}}$ given by the 
global sum and the $\abs{\XX}$ local sums over lines through each $X \in \XX$. Equivalently, 
$W = \ker(J) \cap \ker(E)$, where $J$ is the $\abs{\XX} \times \abs{\BB}$ incidence matrix with 
$J_{X,H} = 1$ if $H \supset X$ and $0$ otherwise, and $E$ is the all-ones 
$\abs{\BB} \times \abs{\BB}$ matrix (so $\ker(E)$ is the hyperplane of vectors with 
coordinate sum zero), see \cite{LY00}.
As shown in \cite{FY}*{Theorem 2.5}, this space coincides with $\ker(Q) \cap \ker(E)$, 
where $Q = J^T J - E$. The multinet structure ensures 
that $Q$ decomposes as a direct sum of $k$ indecomposable blocks, $Q_1 \oplus \cdots \oplus Q_k$ 
(one per class $\BB_i$). Moreover, $\dim(\ker(Q_i))=1$ for all $i$, so $\dim(\ker(Q))=k$.  
Intersecting with $\ker(E)$ yields dimension $k-1$, since the kernel vectors 
of the $Q_i$ can be scaled to have equal coordinate sums $d$, and the intersection consists 
of their differences. Thus, $\dim Z = k-1$, and this completes the proof.
\end{proof}

\subsection{Linear equations and separability of essential components}
\label{subsec:eq-sep}
Let $P_{\NN}$ be the component of the resonance $\mc{R}(\A)$ 
arising from a multinet $\NN$ supported on a sub-arrangement $\BB = \BB_1 \sqcup \cdots \sqcup \BB_k$ of
$\A$ as above.  Recall that $P_{\NN}$ is isotropic and hence it is a separable component of $\mc{R}(\A)$ 
if the equality $\bigl(P_{\NN}\wedge V^{\vee}\bigr)\cap K^{\perp}=\bigwedge^2 P_{\NN}$ holds. Thus, it is 
of interest to explicitly determine the equations of these subspaces of $\bigwedge^2 V^{\vee}$ appearing 
on both sides of this (hoped for) equality.

\begin{proposition}
\label{prop:P-wedge-P-equations}
 The subspace $\bwedge^2P_{\NN}\subseteq \bigwedge^2 V^{\vee}$ is described as:
\[
\bwedge^2P_{\NN} = 
\Bigl\{
  \omega \in \bwedge^{2} V^{\vee} :
  \partial(\omega)=0,\;
  \partial_X(\omega)=0 \mbox{ for }  X\in \XX, \;
  \partial_H(\omega)=0\ \mbox{ for } H\in \A \setminus \BB
\Bigr\}.
\]
\end{proposition}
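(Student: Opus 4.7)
The plan is to deduce the description of $\bwedge^2 P_{\NN}$ directly from the description of $P_{\NN}$ given in Proposition \ref{prop:res-comp-equations}, by combining it with Lemma \ref{lem:wedge2-equations}. Under the identification $V^\vee = E^1(\A)$, the derivations $\partial$, $\partial_X$ for $X \in \XX$, and $\partial_H$ for $H \in \A\setminus\BB$ restrict on $V^\vee$ to the linear forms $v_\A$, $v_X$, $v_H \in V$ recorded in Subsection \ref{subsec:multi-subs}. Thus Proposition \ref{prop:res-comp-equations} presents $P_{\NN}$ as the linear subspace of $V^\vee$ cut out by the family
\[
\{v_\A\}\cup\{v_X:X\in \XX\}\cup\{v_H:H\in \A\setminus\BB\} \subseteq V,
\]
which we denote collectively by $w_1,\dots,w_\ell$ (discarding redundancies to obtain a linearly independent set, which does not affect $P_{\NN}$).

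Next, I would apply Lemma \ref{lem:wedge2-equations}: the subspace $\bwedge^2 P_{\NN} \subseteq \bwedge^2 V^\vee$ is cut out by the bivectors $w_i \wedge v \in \bwedge^2 V$ for $1 \leq i \leq \ell$ and $v \in V$. For fixed $w \in V$ and $\omega \in \bwedge^2 V^\vee$, the standard pairing identity
\[
\langle w \wedge v,\, \omega\rangle \;=\; \langle v,\, \iota_w(\omega)\rangle
\]
shows that $w\wedge v$ annihilates $\omega$ for every $v \in V$ if and only if the interior product $\iota_w(\omega) \in V^\vee$ vanishes. Consequently, $\bwedge^2 P_{\NN}$ is the common kernel of the contractions $\iota_{w_i} \colon \bwedge^2 V^\vee \to V^\vee$ as $i$ runs over $1,\ldots,\ell$.

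To close the argument, I would identify each interior product $\iota_{w}$ with the corresponding graded derivation on $E(\A) = \bwedge^\bullet V^\vee$. The operators $\iota_{v_\A}$, $\iota_{v_X}$, $\iota_{v_H}$ and $\partial$, $\partial_X$, $\partial_H$ are all degree $-1$ graded derivations of $E(\A)$, and they agree on the degree $1$ generators $e_H$, so they coincide on all of $E(\A)$ and in particular in degree two. Substituting these identifications into the description above yields precisely the claimed equations defining $\bwedge^2 P_{\NN} \subseteq \bwedge^2 V^\vee$.

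The main point that requires care in this outline is the derivation-contraction identification, together with its sign bookkeeping, which is elementary but needs to be checked explicitly on the generators of $E(\A)$. Beyond that, the proposition follows by a transparent application of Proposition \ref{prop:res-comp-equations} and Lemma \ref{lem:wedge2-equations}, with no further combinatorial input from the multinet structure required at this stage.
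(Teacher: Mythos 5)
Your proposal follows the same path as the paper: combine Proposition~\ref{prop:res-comp-equations} (equations for $P_{\NN}$ in terms of $\partial$, $\partial_X$, $\partial_H$ restricted to $V^\vee$) with Lemma~\ref{lem:wedge2-equations} to get the bivector equations $w_i\wedge v$, and then repackage these via the contraction/derivation identity $(w\wedge v)(\omega)=v(\iota_w\omega)$ and the fact that $\iota_{v_\A}$, $\iota_{v_X}$, $\iota_{v_H}$ agree with $\partial$, $\partial_X$, $\partial_H$ as degree $-1$ derivations of $E(\A)$. The paper compresses the last step into the remark that $v_H\wedge v = v\circ\partial_H$ together with $\partial=\sum_{H\in\A}\partial_H$ and $\partial_X=\sum_{H\in\A_X}\partial_H$, whereas you spell it out through the interior product; the content is identical.
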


\begin{proof}
Combining Propositions \ref{prop:res-comp-equations} and  \ref{lem:wedge2-equations}, the space 
$\bwedge^2P_{\NN}\subseteq \bwedge^2 V^{\vee}$ is given by
\[
\biggl\{ \omega \in \bigwedge^2 V^{\vee}  : \:
\begin{array}
[c]{l}%
\! (v_\A\wedge v)(\omega)=0,\ \: 
  (v_X\wedge v)(\omega)=0,\ \:
  (v_H\wedge v)(\omega)=0\\[2.5pt]
\hspace*{0.9in} \mbox{ for  } v\in V,\   X\in \XX, \mbox{ and }  H\in \A\setminus\BB 
\end{array}
\biggr\} .
\]

These equations can be wrapped up into the simpler ones stated above, using the identities 
$v_H\wedge v=v\circ \partial_H$, $\partial = \sum_{H\in\A}\partial_H$, and 
$\partial_X=\sum_{H\in\A_X}\partial_H$, which are immediate.
\end{proof}

The next result is established by Cohen and Schenck in \cite{CSc-adv}*{Theorem 5.1}. For the sake of completeness, we include their proof using our setup.

\begin{theorem} \label{lem:delX-omega}
If $\omega \in \bigl(P_{\NN}\wedge V^{\vee}\bigr) \cap K^{\perp}$, then 
$\partial(\omega) = 0$  and $\partial_X(\omega)=0$  for all $X\in \XX$. In particular, 
any essential component $P_{\NN}$ of $\mathcal{R}(\A)$ is separable.
\end{theorem}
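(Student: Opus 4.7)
The plan is to dispose of $\partial(\omega)=0$ essentially for free and then to combine the two hypotheses to obtain $\partial_X(\omega)=0$ for $X \in \XX$. Since $K^\perp = I^2(\A)$ is spanned by the boundaries $\partial(e_{ijk})$ over dependent triples, and $\partial$ is a graded derivation on $E(\A)$ with $\partial^2=0$, it annihilates each such generator. Hence $\partial(\omega)=0$ holds for every $\omega \in K^\perp$, with no need for the hypothesis $\omega \in P_\NN \wedge V^\vee$.

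For $\partial_X(\omega)=0$ with $X \in \XX$, the plan is to pin $\partial_X(\omega)$ down from two sides. First, from $\omega \in P_\NN \wedge V^\vee$: writing $\omega = \sum_i p_i \wedge v_i$ with $p_i \in P_\NN$ and $v_i \in V^\vee$, the Leibniz rule together with the vanishing $\partial_X(p_i)=0$ (Proposition \ref{prop:res-comp-equations}) yields $\partial_X(\omega) = -\sum_i \partial_X(v_i)\, p_i \in P_\NN$. Second, from $\omega \in K^\perp$: using the Brieskorn decomposition \eqref{eq:brieskorn}, I would expand $\omega = \sum_{Y \in L_2(\A)} \omega_Y$ with $\omega_Y \in I^2(\A_Y) \subseteq \bwedge^2 V^\vee_{\A_Y}$, then check that $\partial_X(\omega_X)=0$ (since $\partial_X$ coincides with $\partial$ on the subalgebra $\bwedge^\bullet V^\vee_{\A_X}$, and $I^2(\A_X) = \ker\partial|_{\bwedge^2 V^\vee_{\A_X}}$), while for $Y \neq X$ the intersection $\A_X \cap \A_Y$ has at most one hyperplane $H_0$, in which case $\partial_X(\omega_Y) = \partial_{H_0}(\omega_Y)$ is supported on $\A_Y \setminus \{H_0\} \subseteq \A \setminus \A_X$. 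Summing, the linear form $\partial_X(\omega)$ is supported on $\A \setminus \A_X$.

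To close the argument, I would appeal to Lemma \ref{lem:proportional}: an element $p \in P_\NN$ is determined by scalars $\mu_1,\dots,\mu_k$ with $p_H = \mu_i\, m_H$ on each block $\BB_i$, and $p_H = 0$ for $H \notin \BB$. Multinet axiom \eqref{m3} in Definition \ref{def:multinet}, together with the fact that every $X \in \XX$ arises as an intersection of hyperplanes from two distinct classes, forces $n_X > 0$ and hence $\A_X \cap \BB_i \neq \emptyset$ for every $i$. Applied to $p = \partial_X(\omega)$, whose support avoids $\A_X$, choosing $H_i \in \A_X \cap \BB_i$ gives $m_{H_i}\mu_i = 0$, so each $\mu_i$ vanishes and $\partial_X(\omega)=0$. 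Separability of an essential component $P_\NN$ is then immediate from Proposition \ref{prop:P-wedge-P-equations}, since when $\BB=\A$ there are no further equations to verify. I expect the most delicate part to be the support computation in the second step—in particular, confirming carefully that the Brieskorn summands $I^2(\A_Y)$ for $Y \neq X$ contribute only outside $\A_X$—after which the combinatorial input from Lemma \ref{lem:proportional} closes the loop cleanly.
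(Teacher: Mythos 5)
Your proof is correct, and it takes a genuinely different route from the one in the paper. Both arguments use the Brieskorn decomposition $K^\perp = I^2(\A) = \bigoplus_Y I^2(\A_Y)$, but they deploy it differently. The paper writes $\omega = \sum_i (u_i-u_1)\wedge g_i$, decomposes each $u_i-u_1$ and each $g_i$ according to $\A_X$ vs.\ $\A\setminus\A_X$, isolates the component $\omega_X = \sum_i y_i\wedge z_i \in I^2(\A_X)$, and extracts $\partial_X(z_i)=0$ from a linear-independence argument for the local pieces $y_i$. You instead observe that $\partial_X(\omega) \in P_\NN$ directly (Leibniz, plus $\partial_X|_{P_\NN}=0$ from Proposition~\ref{prop:res-comp-equations}), then use Brieskorn to show that $\partial_X(\omega)$ has support disjoint from $\A_X$, and finally invoke the block structure of $P_\NN$ from Lemma~\ref{lem:proportional} together with the fact that every block $\BB_i$ meets $\A_X$ (a consequence of multinet axiom~\eqref{m3} and positivity of $n_X$) to conclude that the only element of $P_\NN$ supported off $\A_X$ is zero. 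Your pincer argument is shorter and more conceptual: it works entirely with $\partial_X(\omega)$ rather than tracking the pieces $g_i$; it avoids the linear-independence check for the $y_i$; and it makes transparent exactly where the multinet axioms enter. The one step you correctly flag as delicate — that the summands $I^2(\A_Y)$ with $Y\neq X$ contribute to $\partial_X(\omega)$ only off $\A_X$ — does hold, since for distinct $2$-flats $X\neq Y$ one has $\abs{\A_X\cap\A_Y}\leq 1$, so $\partial_X$ applied to something supported on $\A_Y\times\A_Y$ lands in $\A_Y\setminus\A_X$.
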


\begin{proof} 
Recall that $P_{\NN}=\spn\{ u_2-u_1,\ldots,u_k-u_1\}$, where $u_i = \sum_{H\in\BB_i} 
m_{H}\cdot e_H$. Since $\omega \in P_{\NN} \wedge V^{\vee}$, 
we may write 
$\omega =(u_2-u_1) \wedge g_2+\cdots +(u_k-u_1)\wedge g_k$
for some $g_2,\ldots,g_k \in V^{\vee}$. 
	We compute 	$\partial(\omega) = - \sum_{i=2}^k   \partial(g_i) (u_i-u_1)$.
Since $\omega\in K^{\perp}=I^2(\A)$, we have
$\partial(\omega)=0$. The vectors $u_2-u_1,\ldots,u_k-u_1$ 
are linearly independent, we must also have 
	\begin{equation}
		\label{eq:partial-gi-0}
		\partial(g_i)=0 \:\text{ for $i=2,\dots, k$}.
	\end{equation}

	Now let $X\in \XX$. 
	There is a subset $\{H_{j_1}, \dots , H_{j_k}\}\subseteq \A_X$ 
	with $H_{j_i}\in \BB_{i}$ for all $i=1,\dots, k$. 
	Write $u_i - u_1 = y_{i} + \bar{y}_{i}$, with $y_{i}$ supported on $\A_X$
	and $\bar{y}_{i}$ supported on $\A\setminus \A_X$. Then 
	\begin{equation}
		\label{eq:y-x-i}
		y_{i}= m_{H_{j_i}}\cdot e_{H_{j_i}} - m_{H_{j_1}}\cdot  e_{H_{j_1}}
		+ \sum_{H\in \BB_i\cap \A_X \setminus \{H_{j_i}\}} m_H\cdot e_H 
		- \sum_{H\in \BB_1\cap A_X \setminus \{H_{j_1}\}} m_H\cdot e_H .
	\end{equation}
Since $m_{H_{j_i}}\ne 0$ and $y_{i'}$ for $i'\in \{2, \ldots, k\}\setminus \{i\}$ 
does not contain any vector from the set $\{e_H\}_{H\in \BB_i}$, it follows that 
$y_{2},\dots, y_{k}$ are linearly independent. Via condition \eqref{m3} of Definition \ref{def:multinet}, 
we have $\partial_X(u_i-u_1)=0$. Since by definition $\partial_X (\bar{y}_{i}) = 0$, we infer that 
	\begin{equation}
		\label{eq:ypartial-xyi}
		\partial_X (y_{i}) = 0 .
	\end{equation}
	
Write $g_i=z_{i}+\bar{z}_{i}$, where $\supp(z_{i})\subseteq \A_X$ 
	and $\supp(\bar{z}_{i}) \subseteq \A\setminus \A_X$. Then 
	\[
	\omega = \sum_{i=2}^k (y_{i} + \bar{y}_{i}) \wedge (z_{i} + \bar{z}_{i}).
	\]
	Set $\omega_X\coloneqq \sum_{i=2}^k y_{i}\wedge z_{i}$. By \eqref{eq:brieskorn}, 
	we have $\omega_X \in I^2(\A_X)$; hence,
	\begin{equation}
		\label{eq:xi-x}
		0=\partial (\omega_X) = \sum_{i=2}^k \Bigl(\partial_X (y_{i})\  z_{i} - y_{i}\  \partial_X (z_{i})\Bigr) . 
	\end{equation}
	In view of \eqref{eq:ypartial-xyi}, 
	we get $\sum_{i=2}^k  y_{i} \ \partial_X (z_{i} ) = 0$. 
	Since  $y_{2},\dots, y_{k}$ are linearly independent, we 
	conclude that $\partial_X(z_{i})=0$ for $i=2,\dots, k$. Since by definition
	$\partial_X (\bar{z}_{i}) = 0$, we obtain  
$\partial_X(g_i)=0$, for $i=2, \ldots, k$. 
Finally, since $u_i-u_1\in P_{\NN}$, we also have $\partial_X(u_i -u_1)=0$, for $i=2, \ldots, k$, 
therefore $\partial_X(\omega)=0$, 
and this completes the proof of the first part.

For the second part, when $P_{\NN}$ is an  essential component, then $\BB=\A$ and 
comparing the equations in  the first part of Theorem \ref{lem:delX-omega} with 
those in Proposition \ref{prop:P-wedge-P-equations}, the equality 
$\bigl(P_{\NN}\wedge V^{\vee}\bigr)\cap K^{\perp}=\bigwedge^2 P_{\NN}$ becomes immediate.
\end{proof}

\begin{remark}\label{rem:cohen-schenk}
For \emph{non-essential} components of $\RR(\A)$, when $\A\setminus \BB\neq \emptyset$, 
comparing Theorem \ref{lem:delX-omega} and Proposition \ref{prop:P-wedge-P-equations}, 
we point out the extra equations $\partial_H(\omega)=0$, where $H\in \A\setminus\BB$, 
that ensure the separability of a component $P_{\NN}$. This fact is overlooked in \cite{CSc-adv}. 
The claim at the beginning of the attempted proof in the  revised version \cite{CSc-adv}*{Theorem 5.1}, 
that non-reducedness of a component $P_{\NN}$ of $\RR(\A)$ supported on a subarrangement $\BB$ of $\A$ 
implies the non-reducedness of the essential component $P_{\NN}$ in $\RR(\BB)$ is incorrect, 
see  Remark \ref{rem:reduceness2}. 

Instead, we will verify the extra equations appearing in Proposition \ref{prop:res-comp-equations} 
in several important cases in what follows.
\end{remark}

\subsection{Local components are separable}
\label{subsec:local comps}
The simplest components of  $\mathcal{R}(\A)$ are canonically associated to 
rank $2$ flats of $\A$ lying at the intersection of at least $3$ hyperplanes.

\begin{definition}
\label{ex:local comp}
Let $X\in L_2(\A)$ be a $2$-flat which is the intersection of $k\ge 3$ hyperplanes. 
The \emph{local component} of $\RR(\A)$ 
is the component corresponding to the $k$-net on the sub-arrangement $\A_{X}$, obtained 
by assigning to each hyperplane the multiplicity $1$, placing one hyperplane in each 
class, and setting $\XX=\{X\}$.
\end{definition}

Concretely, we have the following description of a local component of the resonance: 
\begin{equation}\label{eq:res-loc}
P_{X} = \Bigl\{ a \in V^{\vee} : 
	\partial_X(a)=0 \text{ and }
\partial_H(a)=0 \text { for } H\not\supset X
	\big.\Bigr\}.
\end{equation}

\begin{proposition}
\label{prop:res-local}
For every $2$-flat $X\in L_2(\A)$ lying at the intersection of at least 
$3$ hyperplanes, the corresponding local component 
$P_{X}$ of $\RR(\A)$ is separable.
\end{proposition}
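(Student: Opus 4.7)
The strategy is to verify the three defining equations of $\bwedge^2 P_X$ provided by Proposition \ref{prop:P-wedge-P-equations}: for $\omega\in (P_X\wedge V^\vee)\cap K^{\perp}$ one must show $\partial(\omega)=0$, $\partial_X(\omega)=0$, and $\partial_H(\omega)=0$ for every $H\notin \A_X$. The first two equations are furnished for free by Theorem \ref{lem:delX-omega} applied to the trivial local net on $\A_X$ (whose base locus is $\XX=\{X\}$). The reverse inclusion $\bwedge^2 P_X \subseteq (P_X\wedge V^\vee)\cap K^{\perp}$ is immediate since $P_X$ is isotropic, and hence the whole task reduces to verifying $\partial_H(\omega)=0$ for every $H\notin \A_X$.

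The argument will split into two observations. First, expanding $\omega=\sum_\alpha p^{(\alpha)}\wedge v^{(\alpha)}$ with $p^{(\alpha)}\in P_X$ and $v^{(\alpha)}\in V^\vee$, the Leibniz rule together with $\partial_H(p^{(\alpha)})=0$ for $H\notin \A_X$ yields $\partial_H(\omega)=-\sum_\alpha \partial_H(v^{(\alpha)})\,p^{(\alpha)} \in P_X$. In particular $\partial_H(\omega)$ is supported on $\A_X$, and only its coefficients at hyperplanes of $\A_X$ still need to be controlled. Second, for such a coefficient, I would pick any $H'\in \A_X$ and form the $2$-flat $Y=H\cap H'\in L_2(\A)$. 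Since $H\in \A_Y\setminus \A_X$, the flat $Y$ is distinct from $X$, and the general fact that two distinct rank-$2$ flats of an arrangement share at most one hyperplane gives $\A_Y\cap \A_X=\{H'\}$. The defining condition $\omega\in K^{\perp}$, paired against $v_H\wedge v_Y\in K$, reads $\sum_{H''\in \A_Y\setminus\{H\}} (\partial_H\omega)_{H''}=0$; the terms indexed by $H''\in \A_Y\setminus \A_X$ vanish by the first observation, so the relation collapses to $(\partial_H\omega)_{H'}=0$.

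The step that will demand the most care is the second one, because it is essential that the $K^{\perp}$-constraint along $Y$ isolates exactly one nonzero term. As flagged in Remark \ref{rem:cohen-schenk}, projective disjointness of the components of $\mc{R}(\A)$ alone is not sufficient; the combinatorial input that distinct rank-$2$ flats intersect in at most one hyperplane is indispensable here. Once both observations are combined, $\partial_H(\omega)$ is seen to vanish on all of $\A_X$ and hence identically, completing the verification that $\omega\in \bwedge^2 P_X$ and confirming that $P_X$ is separable.
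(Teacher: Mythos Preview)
Your proof is correct and takes a genuinely different route from the paper's.

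The paper argues by explicit decomposition: it expands $\omega$ along the Brieskorn splitting $I^2(\A)=\bigoplus_Y I^2(\A_Y)$, isolates the ``mixed'' piece $\omega_M\in V_{\A_X}^\vee\wedge V_{\A\setminus\A_X}^\vee$, observes that $\omega_M$ must itself lie in $K^\perp$ (since $\omega_X\in I^2(\A_X)\subseteq K^\perp$), and then derives a contradiction by comparing two incompatible expressions for~$\omega_M$. Your argument instead works directly with the equations of $\bwedge^2 P_X$ from Proposition~\ref{prop:P-wedge-P-equations}: two of the three families of equations come for free from Theorem~\ref{lem:delX-omega}, and for the remaining family $\partial_H(\omega)=0$ (with $H\notin\A_X$) you first use the Leibniz rule to see $\partial_H(\omega)\in P_X$, then pair $\omega$ against the element $v_H\wedge v_Y\in K$ (with $Y=H\cap H'$) to isolate a single coefficient $(\partial_H\omega)_{H'}$.

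Both approaches hinge on the same combinatorial fact---that two distinct rank-$2$ flats share at most one hyperplane---but yours deploys it more economically. Your proof is shorter and more conceptual; it also makes transparent exactly where the three families of equations in Proposition~\ref{prop:P-wedge-P-equations} are verified, which aligns well with the paper's own diagnosis (Remark~\ref{rem:cohen-schenk}) of what goes wrong for non-essential components in general. The paper's proof, by contrast, is more hands-on and self-contained: it does not invoke Theorem~\ref{lem:delX-omega} or the pairing with $K$, and the explicit Brieskorn bookkeeping it develops is reused later in the proof of Theorem~\ref{thm:res-arr-separable}.
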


\begin{proof}
Let $H_1,\ldots,H_k$ be the set of hyperplanes that 
contain $X$ and let $e_1,\ldots,e_k$ be the corresponding basis elements in $V^\vee$. 
Denote by $\bigl\{X, X_1, \ldots, X_r, X_{r+1}, \ldots, X_m\bigr\}$ the set of 
$2$-flats lying at the intersection of at least $3$ hyperplanes in $\A$, where 
$\{X,X_1,\ldots, X_r\}$ is the subset of the 2-flats contained in one of the 
hyperplanes $H_1, \ldots, H_k$. Note that for any $\alpha\in\{1,\ldots,r\}$
there exists a unique hyperplane $H_i=H_{i(\alpha)}\in \A_X$ containing 
$X_\alpha$. 
With this notation, $\ol{V}^\vee\coloneqq P_X=\spn \{e_1-e_k,\ldots,e_{k-1}-e_k\}$. 
As before,  $V^\vee_{\A_X}=\spn \{e_1, \ldots, e_k\}$.

From \eqref{eq:brieskorn} an element 
$\omega\in K^\perp$ decomposes depending on the support of these $2$-flats as 
 \begin{equation}
 \label{eqn:omega123}
 \omega = \omega_{X}+\sum_{\alpha=1}^r\omega_{X_\alpha}+\sum_{\beta=r+1}^{m} \omega_{X_\beta},
 \end{equation}
 where $\omega_X=\sum_{\substack{X\subset H_i\cap H_j\cap H_\ell}} c_{ij\ell}\cdot \partial(e_{ij\ell}) \in \bigwedge^2\ol{V}^\vee$, 
 and for $\alpha=1, \ldots, r$, we have
 \begin{equation}
 \label{eq:xalpha}
\omega_{X_\alpha}=
 \sum_{\substack{H_{i(\alpha)}\cap H_j\cap H_\ell= X_\alpha\\H_j\cup H_\ell\not\supset X}}
 c_{i(\alpha) j\ell}\cdot \partial(e_{i(\alpha) j\ell})
 + \sum_{\substack{H_i\cap H_j\cap H_\ell=X_\alpha\\H_i\cup H_j\cup H_\ell\not\supset X}}
 c_{ij\ell}\cdot \partial(e_{ij\ell})
 \end{equation}
 while for $\beta\ge r+1$, we have
$\omega_{X_\beta}=\sum_{H_i\cap H_j \cap H_\ell=X_\beta} c_{ij\ell}\cdot \partial(e_{ij\ell})$.
 
According to \eqref{eqn:decomposition-subarr-omega}, we have another decomposition
\begin{equation}
\label{eqn:local-omega-decomp}  
\omega = \omega_{\A_X}+\omega_M+\omega_{\A\setminus\A_X}, 
\end{equation}
with $\omega_{\A_X}\in \bigwedge^2V_{\A_X}^\vee$, $\omega_M\in V_{\A_X}^\vee\wedge V_{\A\setminus\A_X}^\vee$ 
and $\omega_{\A\setminus\A_X}\in\bigwedge^2V_{\A\setminus\A_X}^\vee$. By the definition of $\A_X$, 
we must necessarily have $\omega_X = \omega_{\A_X}$.

Assume now $\omega\in  (\ol{V}\wedge V^\vee)\cap K^\perp$. In this case, since $\ol{V}\wedge V^\vee\subset V_{\A_X}^\vee\wedge V^\vee$, we obtain $\omega_{\A\setminus\A_X}=0$. 
Comparing the decompositions  \eqref{eqn:omega123} and \eqref{eqn:local-omega-decomp}, we find that 
\begin{equation}
  \label{eqn:omegaM-formula1}  
\omega_M = \sum_{\alpha=1}^m\left( 
 \sum_{\substack{H_{i(\alpha)} \cap H_j\cap H_\ell=X_{\alpha}\\  
 H_j\cup H_\ell\not\supset X}} c_{i(\alpha) j\ell}\cdot \bigl(e_{i(\alpha)}\wedge e_j - 
 e_{i(\alpha)}\wedge e_\ell\bigr)\right).
\end{equation}

Since $\omega_X\in I^2(\A_X)=K^\perp$, also $\omega_M\in K^\perp$, and hence 
$\omega_M$ is a linear combination  
\begin{equation}
  \label{eqn:omegaM-formula2} 
\omega_M = \sum a_{pqs}\cdot \partial(e_{pqs}).
\end{equation}

We assume that $\omega_M \ne 0$ and compare \eqref{eqn:omegaM-formula1} and \eqref{eqn:omegaM-formula2}, 
with a focus on the common contributions to basis elements. We remark that in \eqref{eqn:omegaM-formula2}, 
a non-trivial contribution can arise only from elements $a_{i(\alpha)j\ell}\cdot \partial(e_{i(\alpha)j\ell})$ 
with $a_{i(\alpha)j\ell}\ne 0$ and $X_\alpha = H_{i(\alpha)}\cap H_j\cap H_\ell$. In particular, such 
an element will also produce a non-zero term $a_{i(\alpha)j\ell}\cdot e_j\wedge e_\ell$ which has no 
canceling partner in \eqref{eqn:omegaM-formula2}, since $X_\alpha = H_j\cap H_\ell$. This is in 
contradiction with \eqref{eqn:omegaM-formula1}.
\end{proof}

\subsection{Arrangements with double and triple points}
\label{subsec:double-triple}

We now identify an important class of  arrangements for which all the 
resonance components are separable.
\begin{theorem}
\label{thm:res-arr-separable}
Let $P_{\NN}$ be a component of $\mathcal{R}(\A)$ corresponding to a 
multinet $\NN$ on a sub-arrangement $\BB \subsetneq \A$. Assume that for any   
$Y\in L_2(\A)$ that is the intersection of two hyperplanes in $\BB$ we have $\abs{\A_Y}\le 3$. Then 
$P_{\NN}$ is separable. 
\end{theorem}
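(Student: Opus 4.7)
The plan is to show $\omega\in\bwedge^2 P_{\NN}$ for every $\omega\in(P_{\NN}\wedge V^\vee)\cap K^\perp$, by reducing to the essential case of Theorem \ref{lem:delX-omega} applied to the multinet $\NN$ on $\BB$. Using \eqref{eqn:decomposition-subarr-omega}, write $\omega=\omega_\BB+\omega_M+\omega_{\A\setminus\BB}$; since $P_{\NN}\subseteq V^\vee_\BB$, the third summand vanishes. The crux is to force $\omega_M=0$ using the Brieskorn decomposition $\omega=\sum_{Y\in L_2(\A)}\omega_Y$ with $\omega_Y\in I^2(\A_Y)$, after which $\omega=\omega_\BB$ lies in $(P_{\NN}\wedge V^\vee_\BB)\cap I^2(\BB)$ and the essential case applied to $\BB$ produces the conclusion.

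The first key observation is the internal direct sum decomposition $\bwedge^2 V^\vee_{\A\setminus\BB}=\bigoplus_Y \bwedge^2 V^\vee_{\A_Y\cap(\A\setminus\BB)}$, indexed by 2-flats $Y$ with $|\A_Y\cap(\A\setminus\BB)|\geq 2$: each pair of distinct hyperplanes of $\A\setminus\BB$ lies in a unique 2-flat, while distinct 2-flats share at most one hyperplane. The projection of each $\omega_Y$ to $\bwedge^2 V^\vee_{\A\setminus\BB}$ falls into the summand indexed by $Y$, so the identity $\omega_{\A\setminus\BB}=0$ forces each such projection to vanish. When $\A_Y\subseteq\A\setminus\BB$ this yields $\omega_Y=0$. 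When $|\A_Y\cap\BB|=1$, write $\A_Y=\{H^*\}\cup\{a_1,\ldots,a_r\}$; expanding $\partial(e_{H^*}\wedge e_{a_i}\wedge e_{a_j})=e_{a_i}\wedge e_{a_j}-e_{H^*}\wedge e_{a_j}+e_{H^*}\wedge e_{a_i}$ shows that the basis $\{\partial(e_{H^*}\wedge e_{a_i}\wedge e_{a_j})\}_{i<j}$ of $I^2(\A_Y)$ maps isomorphically onto $\bwedge^2 V^\vee_{\{a_1,\ldots,a_r\}}$ under the projection, and once again $\omega_Y=0$.

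The remaining 2-flats have $|\A_Y\cap\BB|\geq 2$. By hypothesis $|\A_Y|\leq 3$, so if such $Y$ meets $\A\setminus\BB$ in some $H$ then $\A_Y=\{H^*_1,H^*_2,H\}$ with $H^*_1,H^*_2\in\BB$, and these two hyperplanes must belong to the same block $\BB_j$ of $\NN$: otherwise condition \eqref{m2} of Definition \ref{def:multinet} places $Y$ in $\XX$, whence $|\A_Y\cap\BB|\geq k\geq 3$, which combined with $|\A_Y|\leq 3$ would exclude $H$, a contradiction. For such a flat $I^2(\A_Y)$ is one-dimensional and $\omega_Y=c_Y\,\partial(e_{H^*_1}\wedge e_{H^*_2}\wedge e_H)$, contributing $\mp c_Y$ to the coefficients of $e_{H^*_1}\wedge e_H$ and $e_{H^*_2}\wedge e_H$ in $\omega$. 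Writing $\omega=\sum_{i=2}^k(u_i-u_1)\wedge g_i$ and invoking Lemma \ref{lem:proportional} applied to $u_i-u_1\in P_{\NN}$, the ratio $(u_i-u_1)_{H^*}/m_{H^*}$ is constant for $H^*\in\BB_j$, so the coefficient of $e_{H^*_\ell}\wedge e_H$ in $\omega$ equals $m_{H^*_\ell}\cdot\gamma$ for a scalar $\gamma$ independent of $\ell\in\{1,2\}$. Matching with Brieskorn yields $-c_Y=m_{H^*_1}\gamma$ and $c_Y=m_{H^*_2}\gamma$, so $(m_{H^*_1}+m_{H^*_2})\gamma=0$ and hence $c_Y=0$.

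With every Brieskorn component $\omega_Y$ for $\A_Y\not\subseteq\BB$ now shown to vanish, $\omega=\sum_{\A_Y\subseteq\BB}\omega_Y$ lies in $I^2(\BB)\cap\bwedge^2 V^\vee_\BB$. Projecting each $g_i$ to $V^\vee_\BB$ in the expansion $\omega=\sum_{i=2}^k(u_i-u_1)\wedge g_i$ preserves $\omega$, since the correction lies in $V^\vee_\BB\wedge V^\vee_{\A\setminus\BB}$ while $\omega\in\bwedge^2 V^\vee_\BB$; thus $\omega\in P_{\NN}\wedge V^\vee_\BB$. The essential case of Theorem \ref{lem:delX-omega}, applied to $\NN$ on $\BB$, now concludes $\omega\in\bwedge^2 P_{\NN}$. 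The combinatorial point that fills the gap noted in Remark \ref{rem:cohen-schenk} is the observation that under the hypothesis every mixed 2-flat of $\A$ containing a hyperplane of $\A\setminus\BB$ has both of its $\BB$-hyperplanes in a single block of $\NN$.
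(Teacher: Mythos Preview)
Your proof is correct and follows essentially the same strategy as the paper's: decompose $\omega$ via the Brieskorn splitting \eqref{eq:brieskorn} and the subarrangement splitting \eqref{eqn:decomposition-subarr-wedge}, use the same-block observation together with the proportionality Lemma~\ref{lem:proportional} to kill the mixed flats with two $\BB$-hyperplanes, and finish by invoking the essential case (Theorem~\ref{lem:delX-omega}) for $\NN$ on $\BB$.

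The one genuine organizational difference is in how you dispose of the Brieskorn components $\omega_Y$ with $|\A_Y\cap\BB|\le 1$. The paper projects $\omega$ onto $\bwedge^2 V^\vee_\BB\oplus(V^\vee_\BB\wedge V^\vee_{\A\setminus\BB})$, shows $S_2=0$, observes that the remaining $M$-piece lies in $K^\perp$, and then appeals to the argument of Proposition~\ref{prop:res-local}. You instead exploit the internal direct sum $\bwedge^2 V^\vee_{\A\setminus\BB}=\bigoplus_Y \bwedge^2 V^\vee_{\A_Y\cap(\A\setminus\BB)}$ and the explicit isomorphism $I^2(\A_Y)\cong\bwedge^2 V^\vee_{\A_Y\setminus\{H^*\}}$ when $|\A_Y\cap\BB|=1$, which lets you conclude $\omega_Y=0$ directly from $\omega_{\A\setminus\BB}=0$. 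This is a cleaner bookkeeping device and avoids the somewhat implicit step in the paper's argument where one must check that the projected expression is still in $K^\perp$; otherwise the two proofs coincide.
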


\begin{proof}
Assume the sub-arrangement $\BB \subsetneq \A$ has parts $\BB_1, \dots, \BB_k$, 
multiplicities $m_H \geq 1$ for $H \in \BB$, and base locus $\XX$. Because of our assumptions, for each $Y\in L_2(\A)$ with $|\A_Y|\ge 3$, we distinguish the following possibilities:
\begin{enumerate}[itemsep=2pt]
    \item $\A_Y\subseteq \BB$ and $|\A_Y|=3$.
    \item $|\A_Y\cap \BB| = 2$ and $|\A_Y|=3$.
    \item $\A_Y\cap \BB$  consists of 
    one element, call it $H_{a(Y)}$.
    \item $\A_Y\subseteq \A\setminus\BB$.
\end{enumerate}

Accordingly, any $\omega \in K^{\perp}$ may be expressed as 
$\omega = S_1+S_2+S_3+S_4$,  where
\begin{align*}
S_1 = \sum_{\substack{Y\in L_2(\A)\\ \A_Y\subseteq \BB}} \lambda_Y \cdot \partial(e_Y), \ \ \ S_2 = \sum_{\substack{Y\in L_2(\A)\\ |\A_Y\cap \BB| = 2}} \lambda_Y \cdot \partial(e_Y),\\
S_3 = \sum_{\substack{Y \in L_2(\A)\\ \A_Y\cap \BB = \left\{H_{a(Y)}\right\}}}  \left(\sum_{\substack{H_b<H_c\\ H_{a(Y)}\cap H_b\cap H_c = Y}}\lambda_{a(Y)bc} \cdot \partial(e_{a(Y)bc})\right),\\ 
S_4 = \sum_{\substack{H_a<H_b<H_c\\ \{H_a, H_b, H_c\}\subseteq\A_Y\setminus \BB}}\lambda_{abc} \cdot \partial(e_{abc}).
\end{align*}
In $S_3$ we regrouped a part of triples that appear in case (3), and in $S_4$ the remaining part of triples from case (3) plus the triples covered by case (4). Note that for a flat $Y\in L_2(\A)$ with $\A_Y=\{H_a,H_b,H_c\}$ and $|\A_Y\cap\BB|\ge 2$, where $H_a<H_b<H_c$,
each of the bivectors $e_a\wedge e_b$, $-e_a\wedge e_c$, and $e_b\wedge e_c$ 
that appear in $S_1$ and $S_2$ appears only once, with coefficient~$\lambda_Y$. 

We fix an element $\omega \in \bigl(P_{\NN}\wedge V^{\vee}\bigr)\cap K^{\perp}$. 
Using the decomposition  \eqref{eqn:decomposition-subarr-wedge},  we may write $\omega = \omega_{\BB} + \omega_{M}$, 
where $\omega_{\BB} \in \bwedge^2 V_{\BB}^{\vee}$ and 
$\omega_{M} \in V_{\BB}^{\vee} \wedge V_{\A\setminus\BB}^{\vee}$. Therefore, from the discussion above, $\omega$ can be written as
\begin{equation}
    \omega = S_1 + S_2 + \sum_{\substack{Y \in L_2(\A)\\ \A_Y\cap \BB = \left\{H_{a(Y)}\right\}}} \left(\sum_{\substack{H_b<H_c\\ H_{a(Y)}\cap H_b\cap H_c = Y}}   \lambda_{a(Y)bc} \cdot (e_{a(Y)}\wedge e_b-e_{a(Y)}\wedge e_c)\right).
\end{equation}

The component $\omega_{M}$ can be written as:
\begin{equation}
\label{eq:omega-sum}
\omega_{M} = \sum_{H_c \in \A \setminus \BB} v_c \wedge e_c, 
\quad \text{with } v_c = \sum_{H_a \in \BB} \omega_{ac} e_a \in P_{\NN} \subseteq V_{\BB}^{\vee}.
\end{equation}

We observe that $S_2=0$, and hence $\omega_\BB = S_1$. 
Indeed, let $Y = H_a\cap H_b\cap H_c\in L_2(\A)$, 
with $H_a, H_b \in \BB$ and $H_c \in \A \setminus \BB$. In this case we have $\A_Y=\{H_a, H_b, H_c\}$.
The contribution of $\lambda_Y\cdot \partial(e_Y)$ to $\omega_{M}$ equals $\lambda_Y(-e_a \wedge e_c + e_b \wedge e_c)$. We get
$\omega_{ac}=-\lambda_Y$ and $\omega_{bc} =
\lambda_Y$, in particular $\omega_{ac} + \omega_{bc} = 0$. Note also that $H_a, H_b$ must be in the same block $\BB_i$, otherwise $H_a \cap H_b = Y \in \XX$ and since $k\geq 3$, we obtain $\abs{\A_Y} \ge 4$, a contradiction.
Since $v_c \in P$, by Lemma \ref{lem:proportional} we have, 
$\frac{\omega_{ac}}{m_{H_a}}=\frac{\omega_{bc}}{m_{H_b}}$. Since $m_{H} \geq 1$ for all $H\in \BB$, this forces $\omega_{ac}=\omega_{bc}=0$, therefore $\lambda_Y=0$.

Since $S_1\in K^\perp$, and $\omega \in K^\perp$ it follows that 
\[
 \sum_{\substack{Y \in L_2(\A)\\ \A_Y\cap \BB = \left\{H_{a(Y)}\right\}}} \left(\sum_{\substack{H_b<H_c\\ H_{a(Y)}\cap H_b\cap H_c = Y}}\lambda_{a(Y)bc} \cdot \big(e_{a(Y)}\wedge e_b-e_{a(Y)}\wedge e_c\big)\right)\in K^\perp.
\]

Arguing as in the proof of Proposition \ref{prop:res-local}, we infer that $\lambda_{a(Y)bc} = 0$ for all $Y\in L_2(\A)$ for which $\A_Y\cap \BB = \bigl\{H_{a(Y)}\bigr\}$ and $H_{a(Y)}\cap H_b\cap H_c = Y$. Hence $\omega_M = 0$ and therefore $\omega = \omega_\BB\in K_
\BB^\perp=I^2(\BB)$. By Theorem \ref{lem:delX-omega}, we know that $P_\NN$ is separable in $V_\BB^\vee$, that is,  $K^\perp_\BB\cap \bigl(P_{\NN}\wedge V^\vee_\BB\bigr) = \bigwedge^2P_{\NN}$. It follows that $\omega = \omega_\BB\in \bigwedge^2 P_{\NN}$, which completes the proof.
\end{proof}

We have a stronger version of Theorem \ref{thm:res-arr-separable} showing that the resonance variety $\RR(B_n)$ of the Coxeter arrangement of type $B_n$ ($n\ge 2$) is separable for every $n$. The proof is slightly more involved and this result will be contained in a forthcoming paper.

An immediate consequence is the following.
\begin{corollary}
\label{cor:res-arr-separable}
If $\A$ has no $2$-flats of size greater than $3$, then 
$\mc{R}(\A)$ is separable. 
\end{corollary}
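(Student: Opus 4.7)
The plan is to reduce the statement to the three separability results already proved in the paper by analysing the possible types of components of $\mc{R}(\A)$. Recall that the resonance variety $\mc{R}(\A)$ of a hyperplane arrangement decomposes into irreducible linear components, each of which is either (i) a local component $P_X$ associated to a $2$-flat $X \in L_2(\A)$ with $\abs{\A_X}\geq 3$, or (ii) an essential component $P_{\NN}$ arising from a multinet $\NN$ supported on $\A$ itself, or (iii) a non-essential component $P_{\NN}$ arising from a multinet on some proper sub-arrangement $\BB \subsetneq \A$, maximal with this property. To prove that $\mc{R}(\A)$ is separable it is enough to show that every irreducible component in each of these three classes is separable.

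First, cases (i) and (ii) do not require the hypothesis on $2$-flats. Local components are separable by Proposition~\ref{prop:res-local}, while essential components are separable by Theorem~\ref{lem:delX-omega}. Both of these statements apply verbatim to any arrangement, so no additional work is needed here.

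The substantive step is case (iii). Suppose $P_{\NN}$ is a non-essential component arising from a multinet $\NN$ on a proper sub-arrangement $\BB \subsetneq \A$. By our standing assumption, every $2$-flat $Y \in L_2(\A)$ satisfies $\abs{\A_Y}\leq 3$; in particular this is true for any $Y$ that is the intersection of two hyperplanes belonging to $\BB$. This is precisely the hypothesis needed to invoke Theorem~\ref{thm:res-arr-separable}, which then yields the separability of $P_{\NN}$.

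Combining these three observations, every irreducible component of $\mc{R}(\A)$ is separable; since separability of the resonance is, by Definition~\ref{def:separable}, the condition that every irreducible component be separable, we conclude that $\mc{R}(\A)$ is separable. The only conceptual obstacle is making sure the trichotomy (local, essential, non-essential) is exhaustive and that the hypothesis transfers from $\A$ to the sub-arrangement $\BB$, both of which are immediate from Sections~\ref{subsec:nets}--\ref{subsec:multi-subs} and from the fact that $L_2(\BB) \subseteq L_2(\A)$ preserves the bound on the number of hyperplanes through each flat.
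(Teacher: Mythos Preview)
Your proof is correct and follows essentially the same approach as the paper, which states the corollary as an ``immediate consequence'' of Theorem~\ref{thm:res-arr-separable} without spelling out the details. You make explicit what the paper leaves implicit: the non-essential components are handled by Theorem~\ref{thm:res-arr-separable} (using the hypothesis on $2$-flats), while essential and local components are already covered by Theorem~\ref{lem:delX-omega} and Proposition~\ref{prop:res-local} regardless of the hypothesis.
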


As an application of the Main Theorem and of the above results, we prove the following 
effective version of the Chen ranks conjecture for a large class of arrangements that was 
announced in the Introduction.

\noindent \emph{Proof of Theorem \ref{thm:chen-arrs}.}
Since $G(\A)$ is a $1$-formal group, it follows  that $\dim W_q(\A) = \theta_{q+2}(G)$, 
for all $q\geq 0$. From the discussion in \S\ref{subsec:nets},  the resonance variety 
$\mc{R}(\A)$ is linear and isotropic. By Proposition \ref{prop:res-local} and 
Theorem \ref{lem:delX-omega} in the first case, and 
Corollary \ref{cor:res-arr-separable} in the second, $\mc{R}(\A)$ is also separable. 
The formula for the Chen ranks  follows from  Theorem \ref{thm:main}.
\hfill $\Box$

\subsection{Chen ranks of graphic arrangements}
\label{subsec:graphic}
A prominent class of arrangements satisfying the hypotheses of Theorem \ref{thm:res-arr-separable} is provided by \emph{graphic arrangements}.

\begin{definition}\label{def:graphic_arr}
Let $\Gamma \coloneqq (\mathsf{V},\mathsf{E})$ be a finite simple graph on vertex set 
$\mathsf{V}=\{1,\dots, m\}$ and edge set $\mathsf{E}$. 
The corresponding graphic arrangement, $\A_{\Gamma}$, consists of all the hyperplanes 
$H_{ij}=\{z_i-z_j=0\}$ in $\C^{m}$, for which $\{i,j\}\in \mathsf{E}$.
\end{definition}

The $2$-flats of $\A_{\Gamma}$  are of two types: 
either $H_{ij}\cap H_{k\ell}$, where $\{i,j\}$ and $\{k,\ell\}$ are disjoint edges of $\Gamma$, or 
$H_{ij}\cap H_{jk}\cap H_{k i}$, where the edges $\{ij\}$, $\{jk\}$, $\{k i\}$ form 
a triangle in $\Gamma$. In particular, any $2$-flat in $\A(\Gamma)$ has size at most $3$.

\begin{example}
If $\Gamma=K_{m}$ is the complete graph on $m$ vertices, then $\A_{K_{m}}$ is the 
\emph{braid arrangement}\/ and the complement $M(\A_{K_m})$ is the configuration 
space of $m$ distinct points on $\C$. Any graphic arrangement 
$\A_{\Gamma}$ is a sub-arrangement of $\A_{K_{m}}$, where $m=\abs{\mathsf{V}}$.
\end{example}

Let $T_r=T_r(\Gamma)$ be the set of $K_r$-subgraphs ($r$-cliques) of $\Gamma$. Set 
$\kappa_r \coloneqq \abs{T_r(\Gamma)}$; thus $\kappa_1=m$ and 
$\kappa_2=\abs{\mathsf{E}}$. Setting $V^{\vee}=E^1(\A_{\Gamma})$ 
and $K^{\perp}=I^2(\A_{\Gamma})$, we then find that $\dim V^{\vee}=\kappa_2$
and $\dim K^{\perp}=\kappa_3$. Using that each graphic arrangement is a subarrangement of a braid arrangement, 
it follows from \cite{CS-camb}*{Proposition 6.9} that multinets on 
$\A_{\Gamma}$ correspond either to triangles or to complete triangles contained 
in $\Gamma$. A triangle $\{i, j, k\}\in T_3(\Gamma)$ induces a local 
$2$-dimensional local component $P_{ijk}=\spn \{e_{ij}-e_{ik}, e_{ij}-e_{jk}\}$ 
of the resonance $\RR(\A_{\Gamma})$. 

A $4$-clique $\{i, j,k, \ell\}\in T_4(\Gamma)$ yields a subarrangement
$\A_{K_4}=\bigl\{H_{ij}, H_{ik}, H_{i\ell}, H_{jk}, H_{j\ell}, H_{k\ell}\bigr\}$ of $\A_{\Gamma}$. This subarrangement has one essential component $P_{ijk\ell}$ corresponding to the unique $(3,2)$-multinet given by the partition $
\A(K_4)=\{H_{ij}, H_{k\ell}\}\sqcup \{H_{ik}, H_{j\ell}\} \sqcup \{H_{i\ell}, H_{jk}\},$ with all multiplicities equal to $1$. The corresponding $2$-dimensional component of $\RR(\A_{\Gamma})$ is then given by 
$P_{ijk\ell}=\spn\bigl\{{e_{ij}-e_{ik}+e_{k\ell}-e_{j\ell}, e_{ij}-e_{jk}+e_{k\ell}-e_{i\ell}\bigr\}}$.
We summarize these facts:
\begin{proposition}
\label{prop:res-graphic}
    The resonance variety $\mathcal{R}(\A_{\Gamma})$ has the following decomposition into 
irreducible components:
\begin{equation*}
	\label{eq:g-res}
	\RR(\A_{\Gamma})= \bigcup_{\{i,j,k\}\in T_3(\Gamma)} P_{ijk} \cup 
	\bigcup_{\{i,j,k,\ell\}\in T_4(\Gamma)} P_{ijk\ell}.
\end{equation*}
\end{proposition}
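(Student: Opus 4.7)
The plan is to combine the general description of resonance components of arrangement complements via multinets on sub-arrangements (Falk--Yuzvinsky, as recalled in \S\ref{subsec:nets}--\S\ref{subsec:multi-subs}) with the combinatorial classification of multinets on sub-arrangements of the braid arrangement. More precisely, every positive-dimensional irreducible component of $\mathcal{R}(\A_\Gamma)$ is of the form $P_\NN$ for some multinet $\NN$ on a sub-arrangement $\BB\subseteq \A_\Gamma$ that is maximal with the property of supporting a multinet. Since $\A_\Gamma$ is itself a sub-arrangement of the braid arrangement $\A_{K_m}$, one may invoke \cite{CS-camb}*{Proposition 6.9}, which classifies such maximal sub-arrangements of graphic arrangements: they are exactly the $3$-cliques (triangles) and the $4$-cliques (complete graphs $K_4$) contained in $\Gamma$.

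First I would handle the triangle case. For $\{i,j,k\}\in T_3(\Gamma)$, the sub-arrangement $\{H_{ij},H_{ik},H_{jk}\}$ carries the trivial $(3,1)$-net with one hyperplane per class and multiplicities equal to $1$, and the associated component is the local component from Definition \ref{ex:local comp} at the flat $H_{ij}\cap H_{ik}\cap H_{jk}$. Applying the formula for local components after \eqref{eq:res-loc} (or directly, $P_\NN=\spn\{u_2-u_1,u_3-u_1\}$ with $u_\alpha$ the corresponding generator), yields the stated $P_{ijk}=\spn\{e_{ij}-e_{ik},\,e_{ij}-e_{jk}\}$.

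Next I would treat the $4$-clique case. For $\{i,j,k,\ell\}\in T_4(\Gamma)$, the sub-arrangement $\A_{K_4}$ consists of six hyperplanes, which combinatorially partition into three pairs of ``opposite edges'' $\{H_{ij},H_{k\ell}\}$, $\{H_{ik},H_{j\ell}\}$, $\{H_{i\ell},H_{jk}\}$; this gives the (unique) $(3,2)$-multinet on $\A_{K_4}$, with all multiplicities equal to $1$ and base locus $\XX$ consisting of the four triple points of $K_4$. Writing out $u_\a$ for each block and forming $u_2-u_1$, $u_3-u_1$ yields precisely the two generators of $P_{ijk\ell}$ displayed in the statement.

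Finally, to conclude, one observes that every maximal sub-arrangement of $\A_\Gamma$ supporting a multinet appears in one of the two lists above, and conversely each item produces an irreducible component of $\mathcal{R}(\A_\Gamma)$; uniqueness (no other positive-dimensional components) is exactly the content of \cite{CS-camb}*{Proposition 6.9} together with \cite{FY}*{Theorem 2.5}. The main point to be careful about is that these components should be claimed as components of $\mathcal{R}(\A_\Gamma)$, not merely of the ambient braid arrangement; this is justified by the fact that resonance components pull back along the natural inclusion $M(\A_\Gamma)\hookrightarrow M(\A_{K_m})$ via the injection $H^1(M(\A_{K_m}),\C)\inj H^1(M(\A_\Gamma),\C)$, as recalled in \S\ref{subsec:multi-subs}. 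The only routine verification left is the explicit evaluation of $P_\NN$ in the two cases, which is a direct computation in the basis $(e_{ij})_{\{i,j\}\in\mathsf{E}}$.
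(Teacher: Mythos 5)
Your proposal follows the same path the paper takes: invoke the Falk--Yuzvinsky description of positive-dimensional components as $P_\NN$ for multinets on maximal sub-arrangements, use \cite{CS-camb}*{Proposition 6.9} to classify those sub-arrangements of a graphic arrangement as the $3$-cliques and $4$-cliques, and then compute $P_\NN$ explicitly in each case; the paper presents exactly this reasoning in the paragraph preceding the proposition and states the result as a summary. One small slip worth correcting: since $\A_\Gamma\subseteq\A_{K_m}$, the inclusion of complements runs $M(\A_{K_m})\hookrightarrow M(\A_\Gamma)$ and hence $H^1(M(\A_\Gamma),\C)\hookrightarrow H^1(M(\A_{K_m}),\C)$, the reverse of what you wrote; more importantly, the transport you actually need is from sub-arrangements $\BB\subseteq\A_\Gamma$ into $\RR(\A_\Gamma)$ as in \S\ref{subsec:multi-subs}, while the embedding into the braid arrangement is used only to apply \cite{CS-camb}*{Proposition 6.9}, not to move resonance components.
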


We  have the following immediate consequence of Theorem \ref{thm:res-arr-separable}:

\begin{corollary}
\label{thm:sep-res-graph}
The resonance variety $\RR(\A_{\Gamma})$ is strongly isotropic.
\end{corollary}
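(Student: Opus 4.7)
The plan is to apply Corollary \ref{cor:res-arr-separable}, which reduces the problem to verifying the combinatorial condition that $\A_\Gamma$ has no $2$-flat of size greater than $3$. This is immediate from the description of $L_2(\A_\Gamma)$ recalled after Definition \ref{def:graphic_arr}: every $2$-flat $Y$ of $\A_\Gamma$ is either of the form $H_{ij}\cap H_{k\ell}$ for two disjoint edges $\{i,j\}, \{k,\ell\}\in\mathsf{E}$, giving $\abs{\A_Y}=2$, or of the form $H_{ij}\cap H_{jk}\cap H_{ki}$ for a triangle $\{i,j,k\}$ in $\Gamma$, giving $\abs{\A_Y}=3$. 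Once separability is established in this way, combining with the well-known fact (recalled in Section \ref{subsec:arrs}) that $\mc{R}(\A_\Gamma)$ is linear and isotropic for any arrangement yields strong isotropicity.

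As a more hands-on alternative, one can argue component by component using the explicit decomposition of $\mc{R}(\A_\Gamma)$ provided by Proposition \ref{prop:res-graphic}. The local components $P_{ijk}$ associated with triangles of $\Gamma$ are separable by Proposition \ref{prop:res-local}. For a component $P_{ijk\ell}$ associated with a $4$-clique, the argument splits into two cases: if $\Gamma=K_4$, then $P_{ijk\ell}$ is an essential component of $\mc{R}(\A_\Gamma)$ and separability follows from Theorem \ref{lem:delX-omega}; if $K_4\subsetneq \Gamma$, then the underlying $(3,2)$-multinet is supported on the proper subarrangement $\A_{K_4}\subsetneq \A_\Gamma$, and Theorem \ref{thm:res-arr-separable} applies, with its hypothesis supplied by the same combinatorial observation on $2$-flats.

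The only step that really requires care, as flagged in Remark \ref{rem:reduceness2} and Remark \ref{rem:cohen-schenk}, is the non-essential $4$-clique case: separability of $P_{ijk\ell}$ as an essential component of $\mc{R}(\A_{K_4})$ does not a priori propagate to $\mc{R}(\A_\Gamma)$, because one must additionally verify the equations $\partial_H(\omega)=0$ for $H\in\A_\Gamma\setminus \A_{K_4}$ appearing in Proposition \ref{prop:P-wedge-P-equations}. Theorem \ref{thm:res-arr-separable} is designed precisely to handle this propagation by exploiting the bound $\abs{\A_Y}\leq 3$ on $2$-flats, so beyond the combinatorial check on $L_2(\A_\Gamma)$ no further work is needed.
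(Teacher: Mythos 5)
Your proposal is correct and follows essentially the same route as the paper, which also deduces the corollary from the bound $\abs{\A_Y}\le 3$ on $2$-flats of $\A_\Gamma$ (observed just after Definition \ref{def:graphic_arr}) together with Theorem \ref{thm:res-arr-separable}/Corollary \ref{cor:res-arr-separable} and the general fact that $\mc{R}(\A)$ is linear and isotropic. Your component-by-component alternative is a useful unpacking of what is implicit in Corollary \ref{cor:res-arr-separable} — in particular the observation that for a non-essential $P_{ijk\ell}$ one really needs Theorem \ref{thm:res-arr-separable} rather than Theorem \ref{lem:delX-omega} alone — but it is not a genuinely different argument.
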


As a consequence of Theorem \ref{thm:chen-arrs} and Proposition \ref{prop:res-graphic}, 
we obtain the following Chen rank formula for graphic 
arrangement groups. Related results have been obtained in \cite{CS-conm}
in the case when $\Gamma$ is a complete graph $K_n$, in \cite{PS-cmh} 
in the case when $\kappa_4=0$, and in \cite{SS-tams}*{Theorem 3.4}, 
where the following formula is stated without a proof. 

\begin{corollary}
\label{cor:chen-ranks-graphic}
The Chen ranks of every graphic arrangement group $G(\A_{\Gamma})$ are 
given by $\theta_q\bigl(G(\A_{\Gamma})\bigr) = (q-1) (\kappa_3 + \kappa_4)$, 
for all $q\ge \kappa_2-1$.
\end{corollary}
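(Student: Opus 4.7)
The plan is to apply Theorem \ref{thm:chen-arrs}(2) directly, since graphic arrangements satisfy its hypothesis. First I would observe that every $2$-flat of $\A_\Gamma$ has size at most $3$: indeed, as recalled in \S\ref{subsec:graphic}, a $2$-flat of $\A_\Gamma$ is either the intersection of two hyperplanes coming from disjoint edges of $\Gamma$ (size $2$), or $H_{ij}\cap H_{jk}\cap H_{ki}$ for a triangle $\{i,j,k\}$ in $\Gamma$ (size exactly $3$). No $2$-flat of size $\geq 4$ can occur, because four or more hyperplanes $H_{ab}$ meeting in a common $2$-flat would force the corresponding edges to form a $K_4$ in $\Gamma$ whose edges all pass through a common codimension-$2$ subspace, which does not happen in the braid-type setting. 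Consequently, Theorem \ref{thm:chen-arrs}(2) applies.

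Next I would count the resonance components by dimension. By Proposition \ref{prop:res-graphic}, the irreducible components of $\RR(\A_\Gamma)$ are exactly the local components $P_{ijk}$ indexed by triangles $\{i,j,k\}\in T_3(\Gamma)$ and the essential components $P_{ijk\ell}$ indexed by $4$-cliques $\{i,j,k,\ell\}\in T_4(\Gamma)$. Each of these components is $2$-dimensional: the local $P_{ijk}$ because it is spanned by the two vectors $e_{ij}-e_{ik}$ and $e_{ij}-e_{jk}$, and the essential $P_{ijk\ell}$ because it is the $2$-dimensional component attached to the unique $(3,2)$-multinet on the braid sub-arrangement $\A_{K_4}$. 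Thus, in the notation of Theorem \ref{thm:chen-arrs}, $h_2 = \kappa_3+\kappa_4$ and $h_m = 0$ for all $m>2$.

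Finally I would substitute into formula \eqref{eq:theta-q-effective}. Since only $m=2$ contributes and $\binom{m+q-2}{q}=\binom{q}{q}=1$, we obtain
\[
\theta_q\bigl(G(\A_\Gamma)\bigr) = (q-1)\cdot (\kappa_3+\kappa_4),
\]
valid for every $q\geq \abs{\A_\Gamma}-1 = \kappa_2-1$. There is no serious obstacle: the only thing to double-check is the $2$-flat hypothesis, so that Theorem \ref{thm:chen-arrs}(2) (rather than the more delicate clause (1)) can be invoked; once that is in hand, the corollary is purely a matter of bookkeeping using Proposition \ref{prop:res-graphic}.
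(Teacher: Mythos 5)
Your proposal is correct and follows the paper's approach exactly: apply Theorem~\ref{thm:chen-arrs}(2), note via Proposition~\ref{prop:res-graphic} that all components of $\RR(\A_\Gamma)$ are $2$-dimensional so that $h_2=\kappa_3+\kappa_4$ and $h_m=0$ for $m>2$, and substitute $\binom{q}{q}=1$ into~\eqref{eq:theta-q-effective} with $\abs{\A_\Gamma}=\kappa_2$. One minor quibble: your aside justifying that no $2$-flat of $\A_\Gamma$ has size $\geq 4$ is not quite right (four concurrent hyperplanes $H_{ab}$ need not correspond to a $K_4$; the correct reason is that a codimension-$2$ flat of a graphic arrangement corresponds to a connected set of edges on at most $3$ vertices, so at most $3$ hyperplanes can contain it), but this fact is already recalled in \S\ref{subsec:graphic}, so the overall argument is unaffected.
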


\newcommand{\arxiv}[1]
{\texttt{\href{http://arxiv.org/abs/#1}{arXiv:#1}}}
\newcommand{\arx}[1]
{\texttt{\href{http://arxiv.org/abs/#1}{arxiv:}}
	\texttt{\href{http://arxiv.org/abs/#1}{#1}}}
\newcommand{\arxx}[2]
{\texttt{\href{https://arxiv.org/abs/#1.#2}{arxiv:#1.}}
	\texttt{\href{https://arxiv.org/abs/#1.#2}{#2}}}
\newcommand{\doi}[1]
{\texttt{\href{http://dx.doi.org/#1}{doi:#1}}}
\renewcommand{\MR}[1]
{\href{http://www.ams.org/mathscinet-getitem?mr=#1}{MR#1}}
\bibliographystyle{amsplain}

\end{document}